
%
%

\documentclass[12pt]{report}
\usepackage[margin=1in]{geometry}

\setlength{\parindent}{0pt} 
\setlength{\parskip}{5pt} 

\usepackage{amsmath}

\usepackage{amsmath,amsthm,amssymb}

\usepackage{graphicx}

\usepackage{color}
\usepackage{enumerate}
\usepackage{multicol}

\usepackage{hyperref}

\usepackage[nameinlink]{cleveref}
\usepackage[all]{xy}
\usepackage{ marvosym }
\usepackage{comment}

\usepackage{breqn}

\usepackage{halloweenmath}
\usepackage{tikz-cd} 

\newtheorem{thm}{Theorem}[section]
\newtheorem*{thm*}{Theorem}

\newtheorem{prop}[thm]{Proposition}
\newtheorem*{prop*}{Proposition}

\newtheorem{lemma}[thm]{Lemma}
\newtheorem*{lemma*}{Lemma}

\newtheorem*{assm*}{Assumption}

\newtheorem{corollary}[thm]{Corollary}
\newtheorem*{corollary*}{Corollary}

\newtheorem{claim}[thm]{Claim}
\newtheorem*{claim*}{Claim}

\newtheorem{defi}[thm]{Definition}
\newtheorem*{defi*}{Definition}

\newtheorem{fact}[thm]{Fact}
\newtheorem*{fact*}{Fact}

\newtheorem*{example*}{Example}

\newtheorem*{examples*}{Examples}

\newtheorem{remark}[thm]{Remark}
\newtheorem*{remark*}{Remark}

\newtheorem{conjecture}[thm]{Conjecture}
\newtheorem*{conjecture*}{Conjecture}

\newtheorem*{goal*}{Goal}

\newtheorem*{subgoal*}{Subgoal}

\newtheorem*{question*}{Question}

\newtheorem*{problem*}{Problem}

\newenvironment{claimproof}[1]{\par\noindent\emph{Proof:}\space#1}{\hfill $\square_{Claim}$ \bigskip}

\DeclareMathOperator{\acl}{acl}
\DeclareMathOperator{\dcl}{dcl}

\DeclareMathOperator{\cl}{cl}

\DeclareMathOperator{\stab}{stab}

\DeclareMathOperator{\RM}{RM}

\DeclareMathOperator{\DM}{DM}
\DeclareMathOperator{\tp}{tp}

\newcommand{\menosmult}{\cdot^{-1}}

\newcommand{\+}{\oplus}
\newcommand{\om}{\ominus}

\makeatletter
\newcommand{\tangent}{\mathrel{\vphantom{\cap}\mathpalette\@tangent\relax}}
\newcommand{\@tangent}[2]{%
  \vbox{\offinterlineskip
    \sbox\z@{$#1\pitchfork$}%
    \ialign{##\cr
      \raisebox{-0.37\height}[0pt][0pt]{%
        \resizebox{\ht\z@}{\height}{$\m@th\mspace{-1.5mu}-\mspace{-1.5mu}$}%
      }\cr
      \hidewidth\scalebox{1}[0.89]{$\m@th#1\cap$}\hidewidth\cr
    }%
  }%
}
\makeatother

\begin{document}

\title{Strongly minimal reducts of algebraically closed valued fields.} 
\author{\textbf{Santiago Pinzon} \\
Advisor: Alf Onshuus\\
Co-advisor: Assaf Hasson\\ \\ \\
Thesis submitted in fulfillment of requirements for the degree of \\
Doctor in Mathematics
} 
 %


\maketitle 

\begin{center}
    \large\textbf{Abstract}
\end{center}

Let $\mathbb K=(K,+,\cdot,v,\Gamma)$ be an algebraically closed valued field and let $(G,\+)$ be a $\mathbb K$-definable group that is either the multiplicative group or contains a finite index subgroup that is $\mathbb K$-definably isomorphic to a $\mathbb K$-definable subgroup of $(K,+)$. Then if $\mathcal G=(G,\+,\ldots)$ is a strongly minimal non locally modular structure definable in $\mathbb K$ and expanding $(G,\oplus)$, it interprets an infinite field.

This document is the PhD thesis of the author and it was advised by professors Assaf Hasson and Alf Onshuus.
\tableofcontents

\chapter{Introduction and general facts}\label{introductionAndPreliminaries}

\section{Introduction}

Given any field $k$ there are many families of curves in $k^2$ that one can define. For example, one may define the family of all lines on $k^2$. Such lines together with the points of $k^2$ form a plane geometry in the sense of Chapter 2.1 of \cite{algGeoArt}.  Artin proved a converse: if one starts with a set $E$ and an incidence system of lines and points on $E$ satisfying some geometrical axioms, then one can build a field $k$ such that the lines of $E$ are given by linear equations on $k$. 

One can wonder whether this result holds in some other settings where one has a good notion of dimension and a big enough family of curves. For example, in \cite{rabinovich}, Rabinovich proved that if $k$ is an algebraically closed field and $\mathcal D=(D,\ldots)$ is some structure whose universe is $\mathbb A ^1 (k)$ and whose definable sets are constructible sets, then if $\mathcal D$ has a big enough definable family of curves contained in $D^2$, then $\mathcal D$ interprets an infinite field. In \cite{castleHasson} Castle and Hasson proved that the family of curves could be recovered as definable over the recovered field.

An example of such a family is the family of all lines contained in $\mathbb A^1 \times \mathbb A^1$. This is a two-dimensional family of curves, and in this case it is not hard to see that one can recover an infinite field. For example, if one considers the subfamily of lines passing through $(0,0)$ this can be identified with $\{l_m:m\in k\}\cup l_\infty$, where for $m\in k$ $l_m$ is the line of equation $y=mx$ and $l_\infty$ is the line of equation $x=0$. In this case, one has that $l_m\circ l_n=l_{m\cdot  n}$ so one can recover the multiplicative group of $k$, $\mathbb G_m$ from composition between elements of that subfamily. With some more work, we can also recover the additive group $\mathbb G_a$ and also the action of $\mathbb G_m$ on $\mathbb G_a$.


In the late 1970s, this principle was abstracted in the following conjecture:

\begin{conjecture}(Zilber's Trichotomy Principle)

If $\mathcal D$ is a strongly minimal structure and there is a big enough $\mathcal D$-definable family of plane curves, then $\mathcal D$ interprets an infinite field.
\end{conjecture}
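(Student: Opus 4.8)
The plan is to run the classical passage from a rich definable family of plane curves, to a definable group, and from there to an interpretable field. I should note at the outset that the conjecture as literally stated cannot be proved without a hypothesis on the ambient structure: Hrushovski's amalgamation construction produces strongly minimal structures that are not locally modular, carry sizeable definable families of plane curves, and yet interpret no infinite group. So ``big enough'' has to be read in conjunction with the tameness available in the cases one can actually hope to settle --- Rabinovich's theorem when $\mathcal D$ is definable in an algebraically closed field, Peterzil--Starchenko when it is definable in an o-minimal structure, and, in the setting of this thesis, when $\mathcal D$ is definable in an algebraically closed valued field $\mathbb K$. I therefore treat the conjecture under the standing assumption that $\mathcal D$ is interpretable in such a tame structure; being strongly minimal it is in any case $\omega$-stable with a well-behaved dimension.

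\textbf{Curves to group.} First I would normalize the family: a dimension count inside $\mathcal D$, together with coding of finite sets in $\mathcal D^{eq}$, reduces us to a subfamily $\{C_a : a \in A\}$ that is \emph{normal} --- $\dim A = 2$, the map $a \mapsto C_a$ is finite-to-one, distinct curves meet in finitely many points, through two independent generic points of $D^2$ passes a curve of the family unique up to finite multiplicity, and the curves have no common point. Then, for generic independent $a, b \in A$, the plane set $\{(x,y) : \exists z\ (x,z) \in C_a \wedge (z,y) \in C_b\}$ has by normality and strong minimality a unique one-dimensional component, hence agrees up to a finite set with $C_{a \ast b}$ for a well-defined $a \ast b \in A$; one checks that $\ast$ is generically associative with generic inverses. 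This is exactly a group configuration, i.e.\ the hypotheses of the Weil--Hrushovski group-chunk theorem, inside $\mathcal D^{eq}$, and applying that theorem yields an infinite connected group $G$, definable in $\mathcal D^{eq}$, acting faithfully and generically transitively on a one-dimensional strongly minimal set $X$. If one starts, as in the abstract, from a strongly minimal \emph{expansion of a group}, this whole step is skipped and one enters the argument here with $G$ in hand.

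\textbf{Group to field.} I would then argue that non-local-modularity of $\mathcal D$ forces $\dim G = 2$: a one-dimensional group acting on a one-dimensional set gives only a locally modular geometry, contrary to hypothesis. So $\dim G = 2$, $\dim X = 1$, the stabilizer $H = \stab(x)$ of a generic $x \in X$ is one-dimensional, $G$ has a one-dimensional normal ``translation'' subgroup $V$ with $G = V \rtimes H$, and the conjugation action of $H$ on $V$ is faithful. The subring of $\End(V)$ generated by $0$, $\id$ and the image of $H$ is then definable; connectedness and strong minimality --- in particular, $V$ has no definable subgroup of infinite index --- force every nonzero element of this ring to be invertible, and commutativity yields a field $k$ interpretable in $\mathcal D$ with $V \cong k^{+}$ and $H \hookrightarrow k^{\times}$, as desired.

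\textbf{Where the difficulty lies.} The schematic above is robust; the real work is making it legitimate in the intended setting. One must verify that a strongly minimal structure definable in an algebraically closed valued field genuinely supports the $\omega$-stable dimension calculus and the group configuration theorem needed above --- even though $\mathbb K$ itself is unstable and geometrically infinite-dimensional --- and then control the group $G$ that comes out. This last point is where the structure theory of $\mathbb K$ enters decisively: stable embeddedness of the value group and residue field, the Haskell--Hrushovski--Macpherson description of definable sets and of $\mathbb K^{eq}$, and the classification of $\mathbb K$-definable groups, which under the hypothesis of the abstract is narrowed to the multiplicative group or to an additive-type group. Showing that $G$ can only be of this restricted shape, and that its action must assemble into the affine configuration $V \rtimes H$ of the previous step, is the crux of the proof.
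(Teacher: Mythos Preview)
The statement you were asked to prove is not a theorem in the paper; it is stated as a \emph{conjecture}, and the very next paragraph of the paper records that it was refuted by Hrushovski. There is therefore no ``paper's own proof'' to compare against, and no correct proof exists. You recognize this yourself in your first paragraph, which is to your credit: the Hrushovski amalgamation examples are exactly the obstruction.

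What you then do is silently replace the conjecture by a restricted version (``$\mathcal D$ interpretable in a tame structure'') and sketch an approach to that. This is a reasonable thing to do in a thesis introduction, but it is not a proof of the stated conjecture, and your sketch is not the route the paper takes even for the restricted ACVF case. Two concrete points of divergence:
\begin{itemize}
\item Your ``curves to group'' step produces a group via composition of curves and the group-chunk theorem, and then argues that non-local-modularity forces the resulting group to be two-dimensional. The paper does not do this: it builds a rank~$2$ group configuration \emph{directly}, using analytic intersection theory (continuity of roots, power-series expansions, counting intersection multiplicities) to manufacture the algebraic dependencies required by the configuration. The passage from rank~$2$ configuration to field is then a single citation of Hrushovski's theorem (Fact~\ref{fieldConfig} in the paper).
\item Your claim that ``a one-dimensional group acting on a one-dimensional set gives only a locally modular geometry'' is not correct as stated and is not how the paper rules out the one-dimensional case. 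In the multiplicative setting the paper genuinely does obtain a one-dimensional group first (Section~\ref{findingAGroup}), identifies it as locally additive, and then reruns the additive-case argument on that group to get the field.
\end{itemize}
So: the honest answer to the question as posed is that the conjecture is false and admits no proof; your sketch of the restricted problem is in the right spirit but both coarser and structurally different from what the paper actually does.
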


In the conjecture, ``big enough'' will be read for us as ``Morley rank $2$''. The family of lines in the plane is an example.

Ravinovich's result is a particular positive case of this conjecture where $\mathcal D$ is some reduct of the full field structure on the affine line of an algebraically close field.

This conjecture was proved false by Hrushovski in \cite{hruNew}. He built examples of strongly minimal sets that are not trivial nor locally modular but do not interpret any infinite field.  However, the principle of Zilber's trichotomy still plays an important role in modern model theory.

In \cite{hrz} Hrushovski and Zilber defined (1-dimensional) Zariski geometries and proved that Zilber's trichotomy holds for them. This covers a vast class of examples, generalizing the algebraic case: if $k$ an algebraically closed field and $N$ is the set of $k$-points of a smooth algebraic curve (over $k$) then, if $\mathcal N$  is a structure with universe $N$ having as definable subsets of $N^k$ all the $k$-constructible sets, then $\mathcal N$ is a Zariski Geometry and it interprets an infinite field isomorphic to $k$. In fact there is a finite cover of $N$ by an algebraic curve over $k$.


There are other settings where the conjecture has been proved true without the use of Zariski Geometries but using intersection theory. That is the case of \cite{HS}. Here Hasson and Sustretov proved that if $D$ has as universe an algebraic curve over an algebraically closed field $k$ and all the definable sets on $D^k$ are definable in the field structure, then Zilber's trichotomy is true for $D$. Note that this is a generalization of Ravinovich result.

In \cite{HE} the authors used intersection theory coming from open functions to prove that if $k$ is an o-minimal expansion of a real closed field and $(D,+)$ is a $k$ definable group of o-minimal dimension $2$, then if $\mathcal D=(D,+,\ldots)$ is $\mathbb K$-definable, strongly minimal and not locally modular structure, then $\mathcal D$ interprets an infinite field. 

There are also cases where the conjecture was proved to be true using intersection theory coming from analytic functions. It is proved in \cite{KR} that if $D$ has as universe an algebraically closed valued field $K$ with $\text{char} K=0$ and we assume that addition is definable and that all the definable sets of $D^k$ are definable in the valued field structure, then $D$ satisfies Zilber's trichotomy.

This is the setting in which we are interested. In the introduction of \cite{HS} it is suggested that their methods should be suitable to be used for proving generalizations of \cite{KR}. For example, we should be able to prove the result for positive characteristic or get rid of the assumption that $+$ is definable on $D$.

In this thesis we work on both possible generalizations:

First, we prove that the results of \cite{KR} are true even in positive characteristic. This is Theorem \ref{thmAditiveVersion}.

Moreover, we also prove that if $\mathcal G$ is a strongly minimal non locally modular expansion of an definable group  $G$, and we assume that $G$ is either the multiplicative group or it is locally isomorphic to $\mathbb G_a=(K,+)$, then $\mathcal G$ interprets an infinite field.

For us, an infinite group $G$ is locally isomorphic to $\mathbb G_a=(K,+)$ if there is $A\leq \mathbb G_a$ and $i:A\to G$ an injective group homomorphis, definable in the valued field structure such that $G/i(A)$ is finite.


We expect that our results can be used for proving Zilber's trichotomy for $\mathcal D=(D,\ldots)$, assuming that $D$ is any definable subset of an algebraically closed valued field $K$ whose Zariski closure is $1$-dimensional and whose definable sets are definable in the valued field structure. This could be done by finding a group interpretable in $\mathcal D$ as in Section \ref{findingAGroup}, proving that such a group is locally isomorphic either to $(K,+)$ or to $(K\setminus 0,\cdot)$. In the first case we could use the results of Chapter \ref{additiveCaseVersion} to conclude, in the second one, a generalization of the results of Chapter \ref{multiplicativeCaseVersion} would be needed.

We will use results from \cite{LAG} which require $\mathbb K$ to be complete. In order to do so we will use completeness of the field only when we prove first order statements. Since $ACVF_{p,q}$ is complete, the results will hold in any model.

Now we present the structure of the document:

In Chapter \ref{introductionAndPreliminaries} we present the basic preliminaries on model theory and valued fields that we will need. 

In Chapter \ref{preliminaries} we present the basics that we will use in the subsequent chapters. Importantly, in Section \ref{computations} we present the main intersection theory needed in order to interpret groups and fields.

In Chapter \ref{additiveCaseVersion} we state and prove Theorem \ref{thmAditiveVersion}. This theorem asserts that Zilber's trichotomy is true in the case where the strongly minimal structure is a $\mathbb K$-definable expansion of a locally additive group. 

In Chapter \ref{multiplicativeCaseVersion} we prove Theorem \ref{thmMultiplicativeVersion}. This asserts that Zilber's trichotomy is true for $\mathbb K$-definable expansions of the multiplicative group. The proof of this is divided in two cases:
If there is a definable set with infinitely many derivatives, one may construct a field in the same way as in Chapter \ref{additiveCaseVersion}. If this is not the case, we build an interpretable group that is locally isomorphic to $(K,+)$, so we use the results of Chapter \ref{additiveCaseVersion} to conclude.

\section{Preliminaries on Model Theory}\label{preliminariesModelTheory}

Let $\mathcal L$ be a first order language and let $\mathcal P=(P,\ldots)$ be an infinite $\mathcal L$ structure. 
Here we adopt the usual definitions of $\mathcal P$-definable and $\mathcal P$-interpretable sets. See for example, Section 1.3 of \cite{marker}. In particular, whenever we say $\mathcal P$-definable (interpretable) we mean with parameters in $P$.  

Assume moreover that $\mathcal P$ is $\kappa$-saturated, for some large enough cardinal $\kappa$. In particular $\kappa>\omega$.

\begin{defi}
Let $D$ be a $\mathcal P$-interpretable set. 

We say that $D$ is strongly minimal if $D$ is infinite and the only $\mathcal P$-interpretable subsets of $D$ are the finite and the cofinite sets. 

We say that $\mathcal P$ is strongly minimal if $P$ is strongly minimal as a $\mathcal P$-definable set.

If $T$ is a theory, we say that $T$ is strongly minimal if $\mathcal P$ is strongly minimal for all $\mathcal P\models T$.
\end{defi}
We will use the following notion of rank:

\begin{defi}
If $X\subseteq P^n$ is an interpretable set we say that $\RM_{\mathcal P}(X)\geq 0$ if $X$ is non empty and for an ordinal $\alpha$, $\RM_{\mathcal P}(X)\geq \alpha$ if there are $X_1,X_2\ldots$ infinitely many interpretable subsets of $X$ such that:
\begin{itemize}
    \item $\bigcup X_i=X$, 
    \item for all $i\neq j$, $X_i\cap X_j=\emptyset$  and
    \item for all $i$ and for all $\beta <\alpha$,  $\RM_{\mathcal P}(X_i)\geq \beta$.
\end{itemize}

We say that $\RM_{\mathcal P}(X)=\alpha$ if $\RM_{\mathcal P}(X)\geq \alpha$ and is not the case that $\RM_{\mathcal P}(X)\geq \alpha +1$. 

If there is some $\alpha<\kappa$ such that $\RM_{\mathcal P}(X)=\alpha$, we say that $X$ has bounded Morley rank and that $\alpha$ is the morley rank of $X$.

If $\RM_{\mathcal P}(X)=n\in \omega$ then we say that $X$ has finite Morley rank.

\end{defi}



We omit the subscript if $\mathcal P$ is clear from the context.

\begin{defi}
If $A\subseteq P$ and $p(x)\in S_n(A)$ we define 
$$\RM(p)=\min\{\RM(X):X \in p\}.$$

For $a\in P^n$ let  $\RM(a/A)=\RM(tp(a/A))$.
\end{defi}

The following is well known and can be found on Lemma 6.2.7 of \cite{marker}.

\begin{fact}\label{factGoodDimension}
If  $X,Y$ are  definable subsets of $P^n$, then:

\begin{enumerate}
    \item If $X\subseteq Y$ then $\RM(X)\leq\RM(Y)$.
    \item $\RM(X\cup Y)=\max(\RM(X),\RM(Y))$.
    \item If $X\neq \emptyset$ then $\RM(X)=0$ if and only if $X$ is finite.
\end{enumerate}
\end{fact}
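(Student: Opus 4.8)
The plan is to prove all three items by transfinite induction on the ordinal parameter in the definition of Morley rank, resting on one elementary observation, \emph{monotonicity in the ordinal}: if $\RM(X)\geq\alpha$ and $\beta\leq\alpha$, then $\RM(X)\geq\beta$. This is immediate, since a family $X_1,X_2,\dots$ witnessing $\RM(X)\geq\alpha$ also witnesses $\RM(X)\geq\beta$ (the conditions ``infinitely many'', ``pairwise disjoint'', ``$\bigcup_i X_i=X$'' are unchanged, and ``$\RM(X_i)\geq\gamma$ for all $\gamma<\alpha$'' implies the same for all $\gamma<\beta$). I will also use a bookkeeping device: given pairwise disjoint definable subsets $X_1,X_2,\dots$ of a definable set $Z$, replacing $X_1$ by $X_1\cup(Z\setminus\bigcup_i X_i)$ yields a family covering $Z$ whose rank bounds are no worse (by item~(1), once it is available).

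For item~(1) I would show, by induction on $\alpha$, that $X\subseteq Y$ and $\RM(X)\geq\alpha$ imply $\RM(Y)\geq\alpha$. The base case $\alpha=0$ is ``$X\neq\emptyset\Rightarrow Y\neq\emptyset$''. For the step, take $(X_i)$ witnessing $\RM(X)\geq\alpha$, put $Y_1=X_1\cup(Y\setminus X)$ and $Y_i=X_i$ for $i\geq 2$; this is an infinite partition of $Y$, and since $X_i\subseteq Y_i$ the inductive hypothesis (applied at each $\gamma<\alpha$) gives $\RM(Y_i)\geq\gamma$ for all $\gamma<\alpha$, so $(Y_i)$ witnesses $\RM(Y)\geq\alpha$. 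The lower bound $\RM(X\cup Y)\geq\max(\RM(X),\RM(Y))$ of item~(2) then follows at once from $X,Y\subseteq X\cup Y$.

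The substance is the reverse inequality in item~(2), which I would obtain by induction on $\alpha$ in the form: $\RM(X\cup Y)\geq\alpha$ implies $\RM(X)\geq\alpha$ or $\RM(Y)\geq\alpha$. Given $(Z_i)$ witnessing $\RM(X\cup Y)\geq\alpha$, write $Z_i=A_i\sqcup B_i$ with $A_i=Z_i\cap X$ and $B_i=Z_i\setminus X\subseteq Y$. For each $i$ and each $\gamma<\alpha$, since $\RM(Z_i)\geq\gamma$, the inductive hypothesis gives $\RM(A_i)\geq\gamma$ or $\RM(B_i)\geq\gamma$. The key point is that this dichotomy is uniform in $\gamma$: for a fixed $i$, either $\RM(A_i)\geq\gamma$ for all $\gamma<\alpha$, or $\RM(B_i)\geq\gamma$ for all $\gamma<\alpha$ — otherwise choose $\gamma_1,\gamma_2<\alpha$ with $\RM(A_i)\not\geq\gamma_1$ and $\RM(B_i)\not\geq\gamma_2$, set $\gamma=\max(\gamma_1,\gamma_2)$, and monotonicity forces $\RM(A_i)\not\geq\gamma$ and $\RM(B_i)\not\geq\gamma$, contradicting the dichotomy at $\gamma$. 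By the pigeonhole principle infinitely many indices land on the same side, say the $A_i$'s; absorbing the leftover into one of them (as above) gives a family witnessing $\RM(X)\geq\alpha$. I expect this ``the side cannot oscillate with $\gamma$'' step to be the only delicate point; the rest is routine.

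Finally, item~(3) comes from unwinding the definition at $\alpha=1$: $\RM(X)\geq 1$ says exactly that $X$ is a disjoint union of infinitely many nonempty definable sets, which can be arranged precisely when $X$ is infinite (splitting off points one at a time). Hence, for $X\neq\emptyset$, one has $\RM(X)=0$ iff $\RM(X)\not\geq 1$ iff $X$ is finite, which is item~(3) of Fact~\ref{factGoodDimension}. The same arguments apply verbatim to interpretable sets, working in $\mathcal P^{\mathrm{eq}}$.
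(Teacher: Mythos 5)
The paper does not actually prove this Fact; it cites Lemma 6.2.7 of \cite{marker}. So your argument has to stand on its own, and on the whole it does: the transfinite induction, the use of monotonicity in the ordinal, and in particular the ``the side cannot oscillate with $\gamma$'' step in item (2) --- which you correctly identify as the delicate point --- are all sound for the partition-based definition of $\RM$ used in this paper.

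There is, however, one concrete gap, and it sits in your bookkeeping device rather than in the delicate step. You propose to replace $X_1$ by $X_1\cup\bigl(Z\setminus\bigcup_i X_i\bigr)$, and in item (2) you ``absorb the leftover'' $X\setminus\bigcup_{i\in I}A_i$ into one of the $A_i$ with $i\in I$. When the family is infinite, $\bigcup_{i\in I}A_i$ is an infinite union of definable sets and need not be definable, so the absorbed set need not be definable either --- but the definition of $\RM(X)\geq\alpha$ requires every member of the witnessing partition to be interpretable. This genuinely arises in item (2): if infinitely many indices land on the $A$-side \emph{and} infinitely many on the $B$-side, the leftover $\bigcup_{j\notin I}A_j$ is an infinite union. (In item (1) you are safe, since there the leftover is $Y\setminus X$, a Boolean combination of two definable sets.) The repair is easy: instead of lumping all the bad pieces into one, enumerate $I=\{i_1,i_2,\dots\}$ and its complement $\{j_1,j_2,\dots\}$ and set $A'_m:=A_{i_m}\cup A_{j_m}$ (distributing the $j$'s among finitely many $A_{i_m}$ if the complement is finite); each $A'_m$ is a finite union of definable sets, the $A'_m$ still partition $X$, and $\RM(A'_m)\geq\RM(A_{i_m})\geq\gamma$ for all $\gamma<\alpha$ by your item (1). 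With that substitution the proof is complete.
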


\begin{defi}
If $\RM(X)=\alpha$ then there is no partition of $X$ into infinitely many definable subsets of Morley rank greater than $\alpha$. Therefore, by compactness, there is a maximal natural number $n$ such that there are $X_1,\ldots,X_n$ definable and disjoints subsets of $X$ covering $X$ with $\RM(X_i)=\alpha$.
We define the Morley degree of $X$ as $\DM(X)=n$. 

We say that $X$ is stationary if $\DM(X)=1$.
\end{defi}

\begin{lemma}
An interpretable set $D$ is strongly minimal if and only if $\RM(D)=\DM(D)=1.$
\end{lemma}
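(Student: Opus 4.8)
The proof is a direct unwinding of the definitions, using Fact \ref{factGoodDimension} as the main tool. I will prove the two implications separately.

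\textbf{($\Leftarrow$)} Suppose $\RM(D) = \DM(D) = 1$. Since $\RM(D) = 1 > 0$, by Fact \ref{factGoodDimension}(3) the set $D$ is infinite. Now let $X \subseteq D$ be an interpretable subset; I must show $X$ is finite or cofinite. Write $D = X \cup (D \setminus X)$. By Fact \ref{factGoodDimension}(1), $\RM(X) \leq \RM(D) = 1$ and $\RM(D \setminus X) \leq 1$, and by Fact \ref{factGoodDimension}(2), $\max(\RM(X), \RM(D\setminus X)) = \RM(D) = 1$, so at least one of the two has Morley rank exactly $1$. If both had Morley rank $1$, then $X$ and $D \setminus X$ would be two disjoint definable subsets of $D$ of rank $1$ covering $D$, contradicting $\DM(D) = 1$. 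Hence exactly one of them has rank $1$ and the other has rank $\leq 0$, i.e.\ is empty or finite by Fact \ref{factGoodDimension}(3). So $X$ is finite or cofinite, and $D$ is strongly minimal.

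\textbf{($\Rightarrow$)} Suppose $D$ is strongly minimal. First, $\RM(D) \geq 1$: indeed $D$ is infinite, and I would like to exhibit the structure witnessing rank $\geq 1$; alternatively, $\RM(D) \geq 1$ because $D$ infinite forces $\RM(D) \neq 0$ by Fact \ref{factGoodDimension}(3) (using $D \neq \emptyset$, so $\RM(D) \geq 0$) and $\RM(D) \geq 1$ will follow once we rule out $\RM(D) \geq 2$ via the degree argument combined with minimality — but the cleanest route is: since every interpretable subset of $D$ is finite or cofinite, any partition of $D$ into infinitely many disjoint interpretable pieces must have all but finitely many pieces finite (two cofinite sets cannot be disjoint), hence of rank $0$; so there cannot be infinitely many pieces all of rank $\geq 1$, i.e.\ $\RM(D) \not\geq 2$. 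Combined with $D$ infinite (so $\RM(D) \geq 1$ by the argument that $\RM(D) = 0$ is impossible), we get $\RM(D) = 1$. Finally $\DM(D) = 1$: if not, there would be disjoint definable $X_1, X_2 \subseteq D$ covering $D$ with $\RM(X_1) = \RM(X_2) = 1$, so both are infinite; but then neither is cofinite (their complements contain the other), contradicting strong minimality.

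\textbf{Main obstacle.} There is no serious obstacle here — the proof is routine bookkeeping with the rank axioms. The only point requiring a moment's care is establishing $\RM(D) \geq 1$ cleanly and, dually, ruling out $\RM(D) \geq 2$: one must use the observation that among a family of pairwise disjoint interpretable subsets of $D$ at most one can be cofinite, so all but finitely many are finite (rank $0$), which caps the rank at $1$ and simultaneously forces Morley degree $1$. I would state this observation as a one-line sub-claim and then read off both $\RM(D) = 1$ and $\DM(D) = 1$ from it.
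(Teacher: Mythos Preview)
Your proposal is correct and follows essentially the same approach as the paper. The only cosmetic difference is that the paper obtains $\RM(D)\geq 1$ in one line by exhibiting the partition $D=\bigcup_{d\in D}\{d\}$ into singletons, rather than arguing indirectly via ``$\RM(D)\neq 0$ and $\RM(D)\not\geq 2$''; otherwise the two arguments coincide.
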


\begin{proof}
Suppose $D$ is strongly minimal, then as $D$ is infinite and $D=\cup_{d\in D}\{d\}$ one has that $\RM (D)\geq 1$. As each infinite definable subset of $D$ is cofinite it is not the case that there are two infinite and disjoints definable subsets of $D$. It shows that $\RM(D)=1$ and also that $\DM(D)=1$.

Now assume that $\RM (D)=\DM (D)=1$, and suppose by contradiction that there is an infinite definable set $Y\subseteq D$ such that $D\setminus Y$ is also infinite. Then as $D=Y\cup (D\setminus Y)$ one has that $\DM(X)\geq 2$, a contradiction.
\end{proof}

The following can be found on page 196 of \cite{marker2017strongly}.
\begin{fact}\label{factDimN}
If $\mathcal P=(P,\ldots)$ is strongly minimal, then $\RM(P^n)=n$.
\end{fact}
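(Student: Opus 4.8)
The plan is to prove $\RM(P^n)\ge n$ and $\RM(P^n)\le n$ separately, each by induction on $n$; for $n=1$ both are exactly the statement $\RM(P)=1$, which is the preceding lemma applied to the strongly minimal set $P$.

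For the lower bound, assume $\RM(P^{n-1})\ge n-1$. For $a\in P$ put $X_a=\{a\}\times P^{n-1}$. The sets $X_a$ (for $a\in P$) are definable, pairwise disjoint, and cover $P^n$, and there are infinitely many of them since $P$ is infinite. The projection to the last $n-1$ coordinates is a definable bijection $X_a\to P^{n-1}$, through which one may transport the definable partitions witnessing $\RM(P^{n-1})\ge n-1$; hence $\RM(X_a)\ge n-1$, and since $n$ is finite this means $\RM(X_a)\ge\beta$ for every ordinal $\beta<n$. By the very definition of Morley rank, the family $(X_a)_{a\in P}$ witnesses $\RM(P^n)\ge n$.

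For the upper bound I would identify Morley rank with the dimension of the pregeometry given by algebraic closure. One first shows that, because $\mathcal P$ is strongly minimal, $\acl$ satisfies exchange on $P$ — if $a\in\acl(Bc)\setminus\acl(B)$ then $c\in\acl(Ba)$ — via the usual uniform bound on the sizes of the finite sets in a definable family; thus $(P,\acl)$ is a pregeometry with a well-defined dimension $\dim(\bar a/B)$ for finite tuples, namely the size of a maximal $\acl$-independent subtuple. One then notes that $\RM(a/B)=0$ if $a\in\acl(B)$ and $\RM(a/B)=1$ otherwise (immediate from strong minimality together with Fact~\ref{factGoodDimension}(3)), and establishes the additivity relation $\RM(\bar a\bar b/B)=\RM(\bar a/B\bar b)+\RM(\bar b/B)$. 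Feeding these into an induction on tuple length gives $\RM(\tp(\bar a/B))=\dim(\bar a/B)$ for every finite tuple $\bar a$. In particular any $\bar a\in P^n$ has $\dim(\bar a)\le n$, so $\RM(\tp(\bar a))\le n$; since $\RM$ of a definable set is the maximum of the Morley ranks of the complete types containing it — and, by saturation, $P^n$ does contain an $\acl$-independent $n$-tuple, of dimension exactly $n$ — we conclude $\RM(P^n)=n$.

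The genuinely non-formal step, and the main obstacle, is the additivity of Morley rank; its nontrivial half is the inequality $\RM(\bar a\bar b/B)\le\RM(\bar a/B\bar b)+\RM(\bar b/B)$, which says that rank cannot be created along fibres of bounded rank and is also what rules out $\RM(P^n)$ being infinite a priori. Proving it requires working with definable families of definable sets and the fibration $P^{|\bar a\bar b|}\to P^{|\bar b|}$, using strong minimality (in the guise of uniform finiteness) to control how the pieces of a partition distribute over fibres. One can also phrase the whole upper bound without the pregeometry language: split a definable $X\subseteq P^n$ over $P^{n-1}$ according to whether the fibre is finite or cofinite (legitimate by strong minimality of $P$ and uniform finiteness), reducing $\RM(X)\le n$ to the estimates ``a definable set with uniformly finite fibres over $Y$ has rank $\le\RM(Y)$'' and ``$\RM(Y\times P)\le\RM(Y)+1$'' — but these are again instances of the same fibration estimate, so the core difficulty is unchanged.
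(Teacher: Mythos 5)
The paper does not actually prove this statement: it is quoted as a Fact with a reference to page~196 of \cite{marker2017strongly}, so there is no in-paper argument to compare against. Your proposal is, in outline, exactly the standard proof found in that kind of reference, and the approach is sound.

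Your lower bound is complete and correct: the fibres $\{a\}\times P^{n-1}$ form an infinite definable partition of $P^n$, each piece is in definable bijection with $P^{n-1}$ (and Morley rank is preserved under definable bijections, by an easy induction on rank), so the paper's definition of $\RM\geq n$ is verified directly from the inductive hypothesis $\RM(P^{n-1})\geq n-1$. Your upper bound correctly identifies the one genuinely nontrivial ingredient, namely the subadditivity $\RM(\bar a\bar b/B)\le\RM(\bar a/B\bar b)+\RM(\bar b/B)$ (equivalently, the fibration estimate that a definable set with fibres of rank $\le m$ over a base of rank $\le k$ has rank $\le m+k$), and you correctly indicate that it is proved via uniform finiteness in definable families, which is where strong minimality enters. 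However, you do not carry this step out, and it is where essentially all of the work in the cited proof lives; as written, the upper bound is a correct roadmap rather than a proof. Two smaller points are handled properly: you invoke saturation where it is needed (to realize an $\acl$-independent $n$-tuple and to pass from types to definable sets), and the characterization $\RM(a/B)\in\{0,1\}$ for singletons is immediate from strong minimality as you say. If you fill in the subadditivity lemma (or the equivalent fibrewise splitting of a definable $X\subseteq P^n$ over $P^{n-1}$ into finite-fibre and cofinite-fibre parts), the argument is complete.
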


\begin{corollary}
If $\mathcal P=(P,\ldots)$  is strongly minimal and $X\subseteq P^n$ is $\mathcal{P}$-definable, then $X$ has finite Morley Rank.
\end{corollary}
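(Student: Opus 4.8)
The plan is straightforward: combine the monotonicity of Morley rank (Fact~\ref{factGoodDimension}(1)) with the computation $\RM(P^n)=n$ (Fact~\ref{factDimN}). First I would dispose of the trivial case $X=\emptyset$. Then, assuming $X\neq\emptyset$, I note $\RM(X)\geq 0$ by definition, and since $X\subseteq P^n$ I apply Fact~\ref{factGoodDimension}(1) to get $\RM(X)\leq\RM(P^n)$, which by Fact~\ref{factDimN} equals $n$. Hence $0\leq\RM(X)\leq n$, so $\RM(X)$ is a natural number and, by definition, $X$ has finite Morley rank.

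The one point worth stating carefully is how to read the inequality $\RM(X)\leq\RM(P^n)$ before one knows $\RM(X)$ is bounded: it should be understood as the assertion that for every ordinal $\alpha$, $\RM(X)\geq\alpha$ implies $\RM(P^n)\geq\alpha$. This follows immediately from the definition of $\RM$ by induction on $\alpha$, since any infinite family of pairwise disjoint definable subsets of $X$ witnessing $\RM(X)\geq\alpha$ is in particular a family of definable subsets of $P^n$. Since $\RM(P^n)=n$, this rules out $\RM(X)\geq n+1$, giving $\RM(X)\leq n$.

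I do not expect any genuine obstacle here; the statement is a direct corollary of the two facts just quoted, and the proof is a one-line monotonicity argument once the empty set is handled separately (or excluded by convention).
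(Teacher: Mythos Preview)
Your proof is correct and follows exactly the same approach as the paper's own proof: apply Fact~\ref{factDimN} to get $\RM(P^n)=n$ and then clause~1 of Fact~\ref{factGoodDimension} to conclude $\RM(X)\leq n$. Your extra care about the empty set and the meaning of the inequality is fine but not needed in the paper's terse version.
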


\begin{proof}
By Fact \ref{factDimN} one has that $\RM (P^n)=n$ and by clause 1 of Fact \ref{factGoodDimension} one conclude that $\RM (X)\leq n$, in particular, $X$ has finite Morley Rank.
\end{proof}

\begin{defi}
Suppose $\mathcal P$ is strongly minimal and let $X$ be a $\mathcal P$-interpretable set over $A$. We say that a tuple $z=(z_1,\ldots,z_n)\in X^n$ is $\mathcal P$-generic independent (over $A$) if $\RM(z/A)=n\RM(X)$. 

Whenever we say that some first order property holds for all generic elements we mean that it holds for any generic in any elementary extension of $\mathcal P$.
\end{defi}

We now define curves and families of curves. 

\begin{defi}
If $\mathcal P$ is an strongly minimal, a plane curve $C$ (of $\mathcal P$) is a $\mathcal P$-definable subset of $P^2$ with $\RM(C)=1$. 

A definable family of plane curves is a $\mathcal P$-definable set $X\subseteq P^{2+n}$ such that for all $a$ in some $\mathcal P$-definable set, $Q\subseteq P^n$ one has that $$X_a:=\{(x,y)\in P^2:(x,y,a)\in X\}$$ is a plane curve.

We usually write such a family as $(X_a)_{a\in Q}$.
\end{defi}

\begin{defi}
We say that a family of curves $(X_a)_{a\in Q}$ is \emph{almost faithful} if for all except finitely many $b\in Q$ the set $\{a\in Q: |X_a\cap X_b|=\infty \}$ is finite. 
\end{defi}

The idea of this definition is that an almost faithful family of curves has as many different curves as parameters, that is why from now when we say ``definable family of curves'' we mean ``definable and almost faithful family of curves''.

Now we define the notion of locally modular strongly minimal structures.

\begin{defi}
If $\mathcal P$ is strongly minimal we say that $\mathcal P$ is locally modular if any $\mathcal P$-definable and almost faithful family of plane curves $(X_q)_{q\in Q}$ satisfies that $\RM(Q)\leq 1$.
\end{defi}

Zilbers Trichotomy states:

Let $\mathcal P$ be strongly minimal, and assume that it is not locally modular, then there is an infinite field $k$ interpretable in $\mathcal P$.

In its full generality, it was proved false by Hrushovski on \cite{hruNew}. But it can be restated relative to a restricted setting:

Let $T$ be some complete theory 

\begin{conjecture}\label{zilbTric}(Zilber's trichotomy principle relative to $T$)

Let $\mathcal N=(N,\ldots)$ be any model of $T$ and let $P$ be some $\mathcal N$-interpretable set. Let $\mathcal P=(P,\ldots)$ be some  $\mathcal L'$-structure over $P$ (for some language $\mathcal L'$) such that any $\mathcal P$-definable set is also $\mathcal N$-definable. Then if $\mathcal P$ strongly minimal and non locally modular, there is an infinite field interpretable in $\mathcal P$.
 
\end{conjecture}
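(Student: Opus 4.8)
Since no single argument can cover an arbitrary theory $T$ — one must use geometric tameness of $T$ to control intersections of definable sets — the plan is to describe the route for the situation driving this thesis, where $\mathcal N=\mathbb K$ is an algebraically closed valued field and $T=ACVF_{p,q}$, and to indicate which steps are carried out here and which remain.

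First I would turn non-local-modularity into a usable configuration of curves. As $\mathcal P$ is strongly minimal of finite Morley rank and not locally modular, there is a $\mathcal P$-definable almost faithful family of plane curves $(X_q)_{q\in Q}$ with $\RM(Q)\geq 2$; by the standard reductions — restricting to subfamilies, enlarging the family so that enough curves pass through points, and passing to canonical parameters in $\mathcal P^{\mathrm{eq}}$ — one arranges a faithful family with $\RM(Q)=2$ for which a unique generic member passes through two $\mathcal P$-generic points of $P^2$ and two generic members meet in a single point. This part is purely model-theoretic and uses only finite Morley rank.

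Next I would interpret a group. Viewing $\mathcal P$-definable plane curves as $\mathbb K$-definable subsets of $(K^{\mathrm{eq}})^2$, one equips them with local intersection multiplicities coming from the analytic structure on $K$, the essential feature being conservation: the total intersection number of a fixed curve with a moving member of a $\mathbb K$-definable family is locally constant. Granting this, the relation ``$X_q$ and $X_{q'}$ are tangent at their generic common point'' cuts out a subset of $Q\times Q$ of rank $3$, and quotienting by it produces a germ of a composition law on curve-germs through a generic point — a Hrushovski group configuration — from which one extracts a group $G$ of Morley rank $1$ interpretable in $\mathcal P$ (this is the role of Section \ref{findingAGroup}). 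One then shows $G$ is locally isomorphic to $\mathbb G_a=(K,+)$ or to $\mathbb G_m$. Since $(G,\oplus,\dots)$ — that is, $G$ equipped with all of the $\mathcal P$-structure — is still strongly minimal and not locally modular, Theorem \ref{thmAditiveVersion} concludes in the additive case and Theorem \ref{thmMultiplicativeVersion} (or a suitable extension of it) in the multiplicative case, yielding the interpretable infinite field.

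The hard part is the conservation of intersection numbers above, carried out for an arbitrary $1$-dimensional $\mathcal P$ with no group assumed a priori: one must argue directly with the analytic germs of $\mathbb K$-definable maps between curves inside $K$ and prove local constancy of the total multiplicity of intersection with a deforming curve. In this thesis that conservation — and hence the field — is obtained precisely once a group has already been interpreted and identified as additive-like (Chapter \ref{additiveCaseVersion}) or multiplicative-like (Chapter \ref{multiplicativeCaseVersion}); so what remains, beyond these, is to carry out the interpretation and classification of such a group for a general $1$-dimensional $\mathcal P$, together with the announced generalization of the multiplicative case.
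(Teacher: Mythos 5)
You are being asked to ``prove'' a statement that the paper itself presents only as a conjecture (Conjecture \ref{zilbTric}); the paper contains no proof of it and cannot, since as literally quantified over all complete theories $T$ it is \emph{false} --- Hrushovski's constructions in \cite{hruNew} give a strongly minimal, non locally modular $\mathcal N$ interpreting no infinite field, and taking $T=\mathrm{Th}(\mathcal N)$ and $\mathcal P=\mathcal N$ refutes the statement. Your proposal correctly recognizes this and retreats to a program for the instance $T=ACVF_{p,q}$, which is honest and matches the strategy the paper itself announces in its introduction: obtain a rank-one group interpretable in $\mathcal P$, show it is locally isomorphic to $\mathbb G_a$ or to $\mathbb G_m$, and then invoke Theorem \ref{thmAditiveVersion} or Theorem \ref{thmMultiplicativeVersion}. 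Your description of the intersection-theoretic engine (conservation of local intersection numbers under deformation, tangency as a definable rank-$3$ relation feeding a group configuration) is a fair summary of what Section \ref{computations} and Lemmas \ref{propIntCreceGeneralVersionOrigen}--\ref{intersectionSubeLemmaAnalitic} actually deliver, and you correctly locate the open steps: interpreting and classifying the group for a general one-dimensional $\mathcal P$ with no ambient group, and extending the multiplicative analysis beyond expansions of $\mathbb G_m$ itself.

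The one substantive point to flag is that what you wrote is not a proof of anything, and two of its steps are genuinely nontrivial rather than ``remaining details.'' First, producing a group configuration from tangency for an arbitrary interpretable $P$ requires the conservation/multiplicity machinery to work on curves inside $P^2$ where $P$ need not sit inside $K$ as a smooth object, and the paper's arguments (e.g.\ Lemma \ref{existsHLemma}) lean heavily on translating by a group operation to normalize curves through a fixed point with Property $(\star)$; without a group this normalization is not available and a different argument (as in \cite{HS} or \cite{CAS}) is needed. Second, the claim that the resulting rank-one group must be locally isomorphic to $\mathbb G_a$ or $\mathbb G_m$ is itself a classification statement about $\mathbb K$-definable rank-one groups (elliptic curves must be excluded, and valuation-theoretic subgroups handled), which the paper only addresses in the restricted setting of Theorem \ref{gcIsLocallyAdditiveThm} where interalgebraicity with a standard additive configuration is already in hand. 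So your proposal should be read as a correct restatement of the paper's program, not as a proof of the conjecture.
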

 
There are several instances of this conjecture that have been proved true, more relevant for us are:

\begin{fact} (Theorem 4.3.3 of \cite{HS})
If $T$ is the theory of algebraically closed fields of a fixed characteristic then Conjecture \ref{zilbTric} is true if we assume that $P$ is an algebraic curve. 
\end{fact}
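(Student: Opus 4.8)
The plan is to run Rabinovich's coordinatisation strategy, made effective through intersection theory on the algebraic surface $S:=P\times P$, in the spirit of Hrushovski--Zilber and of \cite{HS}. First, non-local-modularity of $\mathcal P$ gives a $\mathcal P$-definable almost faithful family of plane curves $(X_q)_{q\in Q}$ with $\RM(Q)\ge 2$; applying the standard normalisation lemmas for such families (pass to a definable subset of $Q$, factor out the relation ``$X_q\cap X_{q'}$ infinite'', intersect $Q$ with generic fibres) I would reduce to $\RM(Q)=2$, with the family faithful, each generic $X_q$ stationary of rank $1$, the map $q\mapsto X_q$ injective, and the fibre $\{q:p_1,p_2\in X_q\}$ finite and non-empty for generic $(p_1,p_2)$. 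Since every $\mathcal P$-definable set is constructible over the ambient algebraically closed field $k$, each $X_q$ is, off a finite set, a $1$-dimensional subvariety of $S$, and $q\mapsto\overline{X_q}$ is a constructible map of bounded degree into the Chow variety of $\overline S$; after shrinking $Q$ one therefore has an honest $2$-dimensional irreducible algebraic family of irreducible plane curves, generically injectively parametrised.

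The decisive input --- the feature absent in Hrushovski's counterexamples --- is intersection-theoretic control of the family: B\'ezout's theorem on $\overline S$ together with upper-semicontinuity of local intersection multiplicities bounds $|X_{q_1}\cap X_{q_2}|$ uniformly over distinct $q_1,q_2$, and shows that two \emph{generic} members meet transversally at each intersection point, avoiding the base points and the singular and flex loci of the family. From this I would extract the precise rank data needed downstream: a generic point of $S$ lies on a rank-$1$ subfamily, distinct generic members through a generic point have distinct tangents there, and through two generic points passes a unique member of the family, varying generically with the points.

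With this in hand one builds a group configuration inside $\mathcal P$: choose generic independent $a,b\in Q$ with $x\in X_a\cap X_b$ generic, take $y$ a second generic point of $X_a$ and $z$ a second generic point of $X_b$, and let $c$ parametrise the unique member of the family through $y$ and $z$; the transversality and B\'ezout bounds force $(a,b,c,x,y,z)$ to satisfy Hrushovski's group-configuration axioms. As $\mathcal P$ is strongly minimal, hence $\omega$-stable, the group configuration theorem yields an infinite $\mathcal P$-interpretable definable group $G$ of Morley rank $2$ acting faithfully and transitively on a strongly minimal $\mathcal P$-interpretable set, and a connected rank-$2$ group acting faithfully and transitively on a strongly minimal set interprets an (algebraically closed) field $K$ --- it is the affine group $K\rtimes K^{\times}$ of such a $K$. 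Hence $\mathcal P$ interprets an infinite field. Alternatively, and more in the style of \cite{HS}, one can bypass the abstract configuration theorem and coordinatise $S$ directly from the germs at a generic point of the curves through it, using the tangency data above to define addition and multiplication and intersection multiplicities to check the ring axioms.

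The main obstacle is the step from the soft facts ``the family is faithful and its members pairwise intersect in boundedly many points'' to the rigid genericity statements that the group configuration (or the direct coordinatisation) requires: that a generic point of a generic curve is in no way special, that the curve through two generic points genuinely exists, is unique and moves generically, and that no hidden algebraic dependence collapses the rank-$2$ parameter space. This is exactly the place where the intersection theory --- the behaviour of multiplicities, transversality and tangencies in algebraic families of curves on a surface --- does the essential work, and, as Hrushovski's examples show, without such control the approach fails.
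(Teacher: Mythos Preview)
The paper does not prove this statement; it is quoted as a known result (Theorem 4.3.3 of \cite{HS}) and no argument is given. So there is no in-paper proof to compare against, and your write-up is being measured against the actual content of \cite{HS}.

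Your overall strategy --- normalise the family, pass to its algebraic closure, use intersection theory to control multiplicities, and feed the result into the group/field configuration machinery --- is the right philosophy, and is indeed the backbone of \cite{HS} and of the present paper in the ACVF setting. But the specific group configuration you write down does not work. You take $a,b\in Q$ generic independent (so $\RM(a,b)=4$), then $x\in X_a\cap X_b$ (finite, so $x\in\acl(a,b)$), then $y$ generic on $X_a$ and $z$ generic on $X_b$ (each contributing one more unit of rank), and finally $c$ the curve through $y,z$ (so $c\in\acl(y,z)$). This gives $\RM(a,b,c,x,y,z)=6$, not the $2d+1=5$ required for a rank-$2$ configuration. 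The missing ingredient is a \emph{dependency} among $a,b,c$: in a genuine group configuration one needs $\RM(a,b,c)=4$, i.e.\ $c\in\acl(a,b)$, and nothing in your incidence picture produces that. (You also assert that through two generic points passes a \emph{unique} curve of the family; a $2$-dimensional family gives finitely many, not one, and uniqueness is neither automatic nor needed.)

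What actually generates the dependency --- in \cite{HS}, in Hrushovski--Zilber, and in Chapters~\ref{additiveCaseVersion}--\ref{multiplicativeCaseVersion} of this paper --- is not incidence but \emph{tangency}: one associates to each curve through a fixed point its slope (or higher-order jet), and then composition and sum of curves act on slopes by multiplication and addition. It is this operation that makes $c$ algebraic over $a,b$ and turns the picture into a field configuration. Your final sentence (``coordinatise $S$ directly from the germs\dots using tangency data'') gestures at exactly this, but that is not an alternative route: it \emph{is} the proof, and the incidence-only configuration you propose as the main line cannot substitute for it.
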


Recently, Castle proved in \cite{CAS} the following:

\begin{fact}
 Conjecture \ref{zilbTric} is true relative to the theory $ACF_0$.
\end{fact}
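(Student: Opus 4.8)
The plan is to reduce the general statement to the special case already recorded above, namely the Fact of Hasson--Sustretov (Theorem 4.3.3 of \cite{HS}) that Conjecture \ref{zilbTric} holds in $ACF$ of a fixed characteristic when $P$ is an algebraic curve. Fix $\mathcal N\models ACF_0$, an $\mathcal N$-interpretable set $P$, and a strongly minimal, non locally modular structure $\mathcal P=(P,\ldots)$ all of whose definable sets are $\mathcal N$-definable; we want an infinite field interpretable in $\mathcal P$. First, since $ACF_0$ is complete and the steps below only invoke first order properties of concrete definable families, we may argue geometrically over $\mathbb C$; since $ACF_0$ has elimination of imaginaries, $P$ is a constructible subset of some $\mathbb C^n$, and passing to a cofinite subset (which preserves strong minimality and non local modularity) we may take $P$ to be a smooth irreducible variety.

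The core of the argument is to show that $\mathcal P$ is interalgebraic, inside $\mathcal P^{\mathrm{eq}}$, with the structure induced on an algebraic curve. Since $\mathcal P$ is not locally modular there is a $\mathcal P$-definable — hence $\mathcal N$-definable, hence algebraic — almost faithful family of plane curves in $P^2$ whose parameter set has Morley rank $\geq 2$; shrinking and normalizing, we may take a rank-exactly-$2$, faithful, generically irreducible algebraic family $(D_a)_{a\in A}$ of algebraic curves $D_a\subseteq P^2$. Studying the generic intersection numbers $|D_a\cap D_b|$ by intersection theory — Bézout-type bounds on $P^2$ together with constructibility of $\{(a,b):|D_a\cap D_b|=\infty\}$ — yields strong constraints on the incidence variety $\{(p,a):p\in D_a\}\subseteq P^2\times A$. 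From these constraints, using tools available in characteristic zero (such as prolongations and jet spaces of $P$ and of the family, and the good behaviour of derivations at generic points), one extracts a $\mathcal P$-interpretable strongly minimal set $C$ which, as an $\mathcal N$-definable set, is an algebraic curve, and which is interalgebraic with $P$ in $\mathcal P$; replacing $C$ by a stationary component if necessary, the induced structure $\mathcal C=(C,\ldots)$ is strongly minimal and every $\mathcal C$-definable set is $\mathcal N$-definable. I expect this descent to a curve to be the main obstacle, and the principal new ingredient over \cite{HS}: it is exactly the step whose characteristic-zero inputs (prolongations, generic smoothness, genericity of derivations) have no clear substitute in characteristic $p$, which is why the conclusion is restricted to $ACF_0$.

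It remains to transfer the hypotheses to $\mathcal C$ and apply the cited Fact. The family $(D_a)_{a\in A}$ passes, through the interalgebraicity of $C$ and $P$, to an almost faithful $\mathcal C$-definable family of plane curves in $C^2$ over a parameter set of Morley rank $2$, so $\mathcal C$ is again non locally modular; and its universe $C$ is an algebraic curve. Hence Theorem 4.3.3 of \cite{HS} applies to $\mathcal C$ and produces an infinite field interpretable in $\mathcal C$; since $C$ is $\mathcal P$-interpretable and interalgebraic with $P$, $\mathcal C$ and $\mathcal P$ are bi-interpretable, so this field is interpretable in $\mathcal P$, as required. For orientation, the curve case itself is proved by feeding the rank-$2$ algebraic family of plane curves into Hrushovski's group configuration theorem via the same intersection-theoretic analysis, thereby interpreting an infinite group; this group is algebraic by the theory of groups definable in algebraically closed fields, so, after reducing to the connected rank-$1$ case, it is $\mathbb G_a$, $\mathbb G_m$, or an elliptic curve; and a concluding case analysis — using that definable maps between powers of the curve are piecewise rational and that the structure is not locally modular — upgrades the group to a field via the group--field configuration.
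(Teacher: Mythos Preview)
The paper does not prove this statement at all: it is stated as a \emph{Fact}, attributed to Castle \cite{CAS}, and used only as background context. There is therefore no ``paper's own proof'' to compare against; the correct move in a thesis citing this result is simply to reference \cite{CAS}.

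That said, your sketch has a genuine gap worth flagging. The entire content of the theorem is concentrated in your ``descent to a curve'' step, and you have not actually carried it out: you assert that from Bézout-type bounds and characteristic-zero jet/prolongation machinery ``one extracts a $\mathcal P$-interpretable strongly minimal set $C$ which, as an $\mathcal N$-definable set, is an algebraic curve, and which is interalgebraic with $P$,'' and you yourself note that this ``descent to a curve'' is ``the main obstacle.'' It is: if $P$ has, say, field-dimension $2$, there is no a priori mechanism producing a $\mathcal P$-interpretable one-dimensional algebraic variety interalgebraic with $P$ --- producing such a $C$ is tantamount to already having the field, not a preliminary to invoking \cite{HS}. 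Nothing in the intersection-theory or jet-space toolkit you name yields this without substantial further argument.

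For orientation, Castle's proof in \cite{CAS} does \emph{not} proceed by reducing to the curve case and quoting \cite{HS}. It introduces new machinery (a notion of coherent definable sets and a purity analysis for definable families in higher ambient dimension) that allows the group/field configuration to be built directly in arbitrary field-dimension, bypassing the need for a one-dimensional carrier. So even as a plan your outline diverges from the actual argument, and the step you identify as the obstacle is precisely the one that \cite{CAS} circumvents rather than executes.
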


The following is also true:

\begin{fact} (Theorem  3.17  on \cite{KR})
If $T$ is the theory of algebraically closed valued fields of characteristic zero (as defined in Section \ref{preliminariesOnValuedFields}) then Conjecture \ref{zilbTric} is true if we assume that $\mathcal N=(N,+,\cdot,v,\Gamma)$, $ P=N$ and addition is definable in $\mathcal P$.
\end{fact}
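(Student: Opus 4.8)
The plan is to carry out the Hrushovski--Zilber reconstruction of a field from a two-dimensional definable family of plane curves, the one new ingredient being that the intersection theory this requires is supplied by the local (rigid-)analytic structure of $ACVF_0$ in place of algebraic B\'ezout theory. First I would unwind the failure of local modularity of $\mathcal P$ to obtain a $\mathcal P$-definable, almost faithful family of plane curves $(C_q)_{q\in Q}$ with $\RM(Q)=2$; since $P=K$ and every $\mathcal P$-definable set is $\mathbb K$-definable, each $C_q$ is a $\mathbb K$-definable subset of $K^2$, and since addition on $P$ is $\mathcal P$-definable we have translations at our disposal. By the usual model-theoretic surgery --- discarding non-generic parameters, composing with coordinate changes, passing to a stationary rank-$2$ subfamily --- I would reduce to a normalized family in which, for generic $q$, the curve $C_q$ passes through a generic point $(a,b)\in P^2$, is near $(a,b)$ the graph of a partial $\mathbb K$-definable function, and two generic members meet transversally at their common generic point.

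The technical core is to equip $\mathbb K$-definable curves with a usable intersection theory at a point. For this I would invoke the local structure theory for definable sets and functions in $ACVF_0$: at any point a $\mathbb K$-definable curve has only finitely many analytic branches, each carrying a well-defined local intersection multiplicity with another definable curve, and this multiplicity is additive over branches, upper-semicontinuous, and generically constant in $\mathbb K$-definable families. This ``analytic B\'ezout'' package --- obtained through Weierstrass-type preparation for $\mathbb K$-definable functions, with $\operatorname{char}K=0$ used to guarantee separability and hence tameness of branches --- is precisely what makes composition of curve germs well defined.

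With this in hand I would fix a generic $p\in P$, consider germs at $p$ of curves in the family, and define a composition of germs through the $\mathcal P$-definable addition. The intersection theory of the previous paragraph shows that the composite of two generic germs is again the germ of a member of a $2$-dimensional family, that composition is generically associative, and that generic germs are invertible; assembling this into a Hrushovski group configuration produces a $\mathbb K$-definable infinite (strongly minimal) group $H$ acting $\mathbb K$-definably, faithfully and transitively on a strongly minimal set $S$, with $\RM(H)=2$ and finite generic two-point stabilizers. Finally I would apply the classical reconstruction: a connected rank-$2$ group acting faithfully and transitively on a strongly minimal set with finite generic two-point stabilizers is definably isomorphic to $K'\rtimes (K')^{\times}$ acting on $K'$ for an infinite field $K'$, which is then interpretable in $\mathcal P$.

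The main obstacle is the second step. The passage from a pseudo-plane of curves to a group configuration and the reconstruction of $\mathbb G_a\rtimes\mathbb G_m$ from a rank-$2$ action are by now formal, and the first step is bookkeeping; what genuinely separates this setting from Hrushovski's trichotomy counterexamples is the existence of a well-behaved intersection theory for $\mathbb K$-definable curves, and deriving its additivity and its invariance in definable families from the analytic structure of $ACVF_0$ --- which is also where the characteristic-zero hypothesis is really used --- is where essentially all the work lies.
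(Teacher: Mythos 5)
The paper does not actually prove this statement: it is imported verbatim as a Fact from \cite{KR} (Theorem 3.17 there), so there is no in-house proof to match your sketch against except Theorem \ref{thmAditiveVersion}, which generalizes it (arbitrary characteristic, groups locally isomorphic to $\mathbb G_a$) and whose characteristic-zero instance with $G=(K,+)$ recovers exactly this Fact. Measured against that, your plan follows the same overall strategy as both \cite{KR} and Chapter \ref{additiveCaseVersion}: extract a non-affine curve from the failure of local modularity, set up an analytic intersection theory (continuity of roots in place of B\'ezout), convert ``tangency forces the intersection count to jump'' into algebraic dependence in the reduct, and assemble a rank-$2$ group configuration to which Hrushovski's field-reconstruction theorem applies; you also correctly locate the role of characteristic zero (no Frobenius twists, so curves are generically analytic graphs). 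Two points that your sketch glosses over carry real weight. First, an arbitrary model of $ACVF_0$ has no metric, so ``the local analytic structure of $ACVF_0$'' is not directly available; the thesis's device is to package every intersection-theoretic assertion as a first-order sentence in the parameters (as in Lemma \ref{lemmaIntersectionSubeVersion}), prove it in a complete model, and transfer by completeness of the theory --- your plan needs some such reduction. Second, one does not compose germs of the original curves: the construction first passes to the family $X_{a,c}=(Y-t_a(Y))\circ(Y-t_c(Y))^{-1}$ built from translates (Lemma \ref{existsHLemma}) precisely so that the germs at the origin realize infinitely many first-order jets, which is what makes the tangency relation non-degenerate and pins the configuration to $\mathbb G_a\ltimes\mathbb G_m$; your normalization step must secure this non-degeneracy, since transversality of two generic members alone does not yield a rank-$2$ configuration. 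Neither point is fatal, but both are where essentially all of the work in \cite{KR} and in Chapter \ref{additiveCaseVersion} actually lives.
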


We will use techniques of both, \cite{HS} and \cite{KR} for almost all of our work.

\section{Group Configurations}

In this section we introduce the group configuration which is the main tool we will use to interpret groups and fields in strongly minimal structures.

From now on, in this section, we fix $\mathcal N=(N,\ldots)$ a strongly minimal structure.
\begin{defi}
A rank $d$ group configuration for $\mathcal N$ over a set of parameters $A$ is a $6$-tuple,  $\mathfrak g=(a,b,c,x,y,z)$ of tuples of elements of $N$ such that:

\begin{itemize}
    \item $\RM (\mathfrak g/A)= 2d+1$
    \item $\RM(\alpha,\beta/A)=\RM(\alpha/A)+\RM(\beta/A)$ for all $\alpha\neq \beta\in \mathfrak g$,
    \item  $\RM(a/A)=\RM(b/A)=\RM(c/A)=d$, 
    \item  $\RM(x/A)=\RM(y/A)=\RM(z/A)=1$, 
    \item  $\RM(a,b,c/A)=2d$, 
    \item $\RM(a, x, y/A)=\RM(b,z,y/A)=\RM(c,x,z/A)=d+1$.
\end{itemize}
\end{defi}

We often write such a configuration in the form of a diagram

  \begin{equation}\label{GCDiagram}
 	\begin{tikzcd}
 	a \arrow[ddd, dash] \arrow[rrr, dash] &&& x \arrow[rrr, dash]  &&& y \arrow[dddllllll,  dash] 
  \\
&&&&&&
\\
& & && 
&&
\\
 b  \arrow[ddd, dash] && z  &&&& \\ &&&&&&
 \\
 &&&&&&
 \\
 c\arrow[uuuuuurrr, crossing over, dash] &&&&&&
  	\end{tikzcd}
 	\end{equation}
 in which the collinear vertices are dependent and each set of tree non-collinear vertices are independent.

\begin{defi}
If $G$ is an interpretable Abelian group with $\RM (G)=1$, a \emph{standard group configuration of $G$} is 
$$\mathfrak g_G=(a,b, ab, c, ac, abc)$$ for some choice of $a,b,c$ $\mathcal N$-generic independent elements of $G$.
\end{defi}

\begin{defi}
We say that $\mathfrak g=(a,b,c,x,y,z)$ is a reduced group configuration for $\mathcal N$ if it is a group configuration and in addition if for $\alpha\in\mathfrak g$, $\alpha'\in \acl(a)$ are such that $\mathfrak g'=(a',b',c',x',y',z')$ is still a group configuration, then $\alpha\in \acl(\alpha')$ for each $\alpha\in\mathfrak g$.  

If $\mathfrak g_1$ is another group configuration, we say that $\mathfrak g$ and $\mathfrak g_1$ are interalgebraic if the corresponding coordinates satisfies $\acl(\alpha)=\acl(\alpha')$.
\end{defi}

The following is due to Hrushovski (\cite{hrushovski1986contributions}). The precise statement we need can be found in Facts 4.4 and  4.6 of \cite{HS}.

\begin{fact}\label{groupConfig}

If $\mathfrak g$ is a rank $d$ group configuration for $\mathcal N$ (over some set of parameters), then there is a minimal group $G$ with $\RM(G)=d$, a strongly minimal set $X$ and a faithful action of $G$ on $X$ all the data interpretable in $N$.

In addition, if $\mathfrak g$ is reduced, then there is a group configuration of $G$ that is interalgebraic with $\mathfrak g$.

\end{fact}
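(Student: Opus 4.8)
The plan is to reconstruct the group by the method of Hrushovski's original argument: read the configuration as a coherent system of generic correspondences, upgrade these to honest germs of definable bijections, assemble a ``group chunk'', and then invoke the group chunk theorem. I sketch the architecture and flag the technical heart.

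\emph{Step 1 (from collinearity to functions).} The collinearity of $a,x,y$ together with $\RM(a,x,y/A)=d+1=\RM(a/A)+\RM(x/A)$ forces $y\in\acl(Aax)$ and, symmetrically, $x\in\acl(Aay)$, so $a$ codes a finite-to-finite correspondence between the strongly minimal set $X\ni x$ and the strongly minimal set $Y\ni y$ (over the base $A$); likewise $b$ relates $Y$ and $Z\ni z$, and $c$ relates $X$ and $Z$. From $\RM(a,b,c/A)=2d=\RM(a,b/A)$ we get $c\in\acl(Aab)$, and a rank count shows that generically the $c$-correspondence is the composite of the $a$- and $b$-correspondences. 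One then passes to a finite algebraic cover and adds finitely many parameters, replacing $a,b,c$ by codes for single branches; using strong minimality and additivity of $\RM$ one checks that the six resulting tuples still form a group configuration, now with $a$ coding a definable bijection $f_a\colon X\to Y$, $b$ coding $f_b\colon Y\to Z$, $c$ coding $f_c\colon X\to Z$, and $f_c=f_b\circ f_a$ off a finite set.

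\emph{Step 2 (group chunk).} Fix $a_0\models\tp(a)$ generic, and for $a\models\tp(a)$ independent from $a_0$ over $A$ let $\sigma_a$ be the germ at a generic point of $X$ of the self-map $f_a^{-1}\circ f_{a_0}$ of $X$ (identifying $Y$ with $X$ via $f_{a_0}$). The set $\Sigma$ of such germs carries the partial composition inherited from composition of functions. Realizing $x,y,z$ generically and chasing independence of $a,b$ over $A$, one verifies that this composition is generically associative, that generic germs are invertible, and that a germ is interalgebraic with its parameter; hence $\Sigma$ is a group chunk of Morley rank $d$. By the group chunk theorem (Weil, Hrushovski) in the finite Morley rank setting there is a connected interpretable group $G$ with $\RM(G)=d$ into which $\Sigma$ embeds densely, and $G$ acts on $X$ extending the action of the germs; the action is faithful because distinct germs act distinctly. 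All objects produced are interpretable in $\mathcal N$.

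\emph{Step 3 (the reduced case).} When $\mathfrak g$ is reduced, track algebraic closures through Steps 1--2: $a$ is interalgebraic with $\sigma_a$, hence with the element $g_a\in G$ it represents, $x$ is interalgebraic with its image $h\in X$, and similarly for $b,c,y,z$. This exhibits a standard configuration $\mathfrak g_G=(g_1,g_2,g_1g_2,h,g_1h,g_2g_1h)$ of $G$ (for suitable generic independent $g_1,g_2\in G$ and generic $h\in X$) as coordinatewise interalgebraic with $\mathfrak g$; reducedness is precisely what upgrades the one-directional algebraicity that the construction hands us to genuine interalgebraicity.

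The main obstacle is Step 1 together with the verification of generic associativity in Step 2: absorbing the finite multiplicities of the correspondences so that the configuration axioms are preserved under the reduction, and doing the independence bookkeeping that makes composition of germs well defined and associative on a generic set, are the real content; once a group chunk is in hand, the passage to a genuine interpretable group and the faithfulness of its action on $X$ are routine.
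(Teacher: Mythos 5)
The paper does not prove this statement at all: it is recorded as a Fact, attributed to Hrushovski's thesis, with the precise version cited from Facts 4.4 and 4.6 of \cite{HS}. Your sketch reproduces the architecture of that standard argument — passing from the collinearity/rank conditions to finite-to-finite correspondences, refining them to germs of invertible maps, assembling a group chunk and invoking the Weil--Hrushovski group chunk theorem, and in the reduced case tracking interalgebraicity to produce a standard configuration of $G$ interalgebraic with $\mathfrak g$ — so it is essentially the same approach as the cited source, presented in outline rather than in full detail.
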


The following is also due to Hrushovski. Can be found in  \cite{bouscaren1989group} Main Theorem, Proposition 2.

\begin{fact}\label{fieldConfig}
Let $G$ be an $\mathcal N$-interpretable group acting transitively and faithfully on a strongly minimal set $X$. Assume $\RM(G)=2$ then there is an $\mathcal N$-interpretable field such that $G$ is isomorphic to the semi-direct product of the multiplicative and the additive group of such a field.\\

In particular, if $\mathfrak g$ is a rank 2 reduced group configuration, then $\mathcal N$ interprets an infinite field.
\end{fact}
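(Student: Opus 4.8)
The plan is to prove the displayed structural statement about $G$; the ``in particular'' is then immediate, since Fact~\ref{groupConfig} hands us exactly such a $G$, $X$ and action once one performs the harmless reductions to a \emph{transitive}, faithful action of a rank~$2$ group on a strongly minimal set (pass to a cofinite orbit and quotient by the necessarily finite kernel of that action). The aim is to identify $G$ with the affine group $K^{+}\rtimes K^{\times}$ of an interpretable field $K$ and to reconstruct $K$ from the action on $X$.

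First I would set up generic $2$-transitivity. Replace $G$ by $G^{0}$ and, if necessary, $X$ by an infinite --- hence cofinite and still strongly minimal --- $G^{0}$-orbit, so that $G$ may be taken connected. Fix $x\in X$ and put $H=G_{x}$; since $G$ is transitive, $X$ is in definable bijection with $G/H$, so by additivity of Morley rank over definable quotients $\RM(H)=\RM(G)-\RM(X)=1$, and after passing to $H^{0}$ I may assume $H$ is connected of rank $1$. A connected group of finite Morley rank that has a finite orbit fixes it pointwise (the stabiliser of a point in such an orbit has finite index, hence is the whole connected group); so if every $H$-orbit on $X$ were finite, $H$ would act trivially, contradicting faithfulness. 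Thus $H$ has a cofinite orbit on $X$, which is unique since two cofinite subsets of the strongly minimal $X$ intersect. Together with transitivity of $G$, this gives that $G$ acts \emph{generically $2$-transitively}: any two pairs of independent generic points of $X$ lie in one $G$-orbit. Hence the $G$-orbit of a generic pair in $X^{2}$ has rank $2=\RM(G)$, so the stabiliser $G_{x,y}$ of a generic pair is finite (and $G$ cannot be generically $3$-transitive, which would need $\RM(G)\ge 3$).

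The crux is to produce a normal subgroup $T\trianglelefteq G$ of Morley rank $1$ acting \emph{regularly} on $X$ --- the analogue of the translation subgroup of $K^{+}\rtimes K^{\times}$ acting on the affine line. This is exactly where one exploits that a connected group of this small finite Morley rank acting (generically) sharply $2$-transitively is of affine type: one extracts $T$ from a genericity/torsion analysis of $G$ using the finiteness of $G_{x,y}$, and I expect this to be the main obstacle, everything afterwards being formal. Granting $T$, identify $X$ with $T$ by sending $x$ to $1_{T}$; then $H=G_{x}$ normalises $T$ and therefore acts on $T$ by group automorphisms, an action which under this identification is precisely the given action of $H$ on $X$. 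Moreover $T\cap H=1$ by freeness, while $\RM(TH)=\RM(T)+\RM(H)=2=\RM(G)$ and $TH$ is a subgroup (product of a normal subgroup with a subgroup) of the connected group $G$, so $TH=G$ and $G=T\rtimes H$.

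Finally I would reconstruct the field. Since $T$ is connected of rank $1$ it is abelian (Reineke); write it additively, and observe it has no proper infinite definable subgroup. Let $K$ be the interpretable subring of $\End(T)$ (definable group endomorphisms $T\to T$) generated by $\id_{T}$ and the image of $H$ in $\operatorname{Aut}(T)$. For nonzero $\lambda\in K$, the kernel $\ker\lambda$ is finite --- in fact trivial, since $T$ is minimal --- and the image $\im\lambda$ is cofinite, hence all of $T$ by connectedness; so every nonzero element of $K$ is invertible, $K$ is an interpretable division ring of finite Morley rank, and therefore a field by Cherlin--Shelah. Then $T$ is a $K$-vector space with $\RM(T)=1$, forcing $\dim_{K}T=1$, so $T\cong(K,+)$; and $H$ embeds in $\operatorname{GL}_{1}(K)=K^{\times}$, with $[K^{\times}:H]$ finite and $K^{\times}$ connected (a strongly minimal group has no proper cofinite subgroup), so $H=K^{\times}$. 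Hence $G\cong(K,+)\rtimes(K^{\times},\cdot)$, and $K$ is the required infinite field interpretable in $\mathcal N$.
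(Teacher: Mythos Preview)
The paper does not prove this statement; it records it as a fact due to Hrushovski, with a pointer to Bouscaren's exposition \cite{bouscaren1989group}. So there is no ``paper's own proof'' to compare against, and your sketch must stand on its own.

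Your outline is the right one, and the endgame --- once a rank-$1$ normal subgroup $T\trianglelefteq G$ acting regularly on $X$ is in hand --- is carried out cleanly: Reineke gives $T$ abelian, the interpretable endomorphism ring you build is a field by minimality of $T$ together with Macintyre/Cherlin--Shelah, and a rank count forces $T\cong(K,+)$ and $H\cong K^{\times}$. The ``in particular'' reduction via Fact~\ref{groupConfig} (pass to a cofinite orbit, quotient by the finite kernel) is also fine.

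The genuine gap is exactly where you flag it: you do not produce $T$. You write that ``one extracts $T$ from a genericity/torsion analysis'' and then proceed ``granting $T$''. But this \emph{is} the content of the theorem. For a connected rank-$2$ group acting (generically) $2$-transitively on a strongly minimal set, the existence of a regular normal abelian subgroup is not automatic; it is precisely what Hrushovski's argument establishes. One standard route shows that distinct point stabilisers intersect trivially (a Frobenius-type configuration), then analyses the set $\{g\in G:g\text{ fixes no point}\}\cup\{1\}$ and proves it is a subgroup --- this step uses the finite Morley rank hypothesis and the structure of rank-$1$ groups in a nontrivial way. Without that argument, what you have is a plan, not a proof. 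A smaller point: after replacing $G$ by $G^{0}$ you owe a sentence on why the conclusion lifts back to $G$ (equivalently, why $G$ was already connected); this does follow once $G^{0}\cong\mathrm{AGL}_{1}(K)$ is known, but it should be said.
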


So it makes sense to define:

\begin{defi}
A field configuration is a reduced rank $2$ group configuration.
\end{defi}

\section{Valued Fields}\label{preliminariesOnValuedFields}

We start with some basic definitions.

\begin{defi}
Given a field $\mathbb K=(K,+,\cdot,0,1)$ we treat it as a first order structure in the language of rings $\mathcal L_R=\{+,\cdot,0,1\}$. Let $ACF_p$ be the first order theory of algebraically closed fields of characteristic $p$.
\end{defi}

\begin{fact} (It follows for example from Theorem 3.2.2 of \cite{marker})
The theory $ACF_{p}$ is complete and strongly minimal.
\end{fact}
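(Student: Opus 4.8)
The plan is to deduce both assertions from quantifier elimination for $ACF_p$ in the language of rings $\mathcal L_R=\{+,\cdot,0,1\}$, which is the content of the cited result. First I would establish quantifier elimination via the standard embedding/back-and-forth test. A substructure of a field in $\mathcal L_R$ is a subring, hence an integral domain; so let $\mathcal M,\mathcal N\models ACF_p$ be $\aleph_1$-saturated, let $A$ be a common substructure, and let $b\in M$. Passing to the fraction field of $A$ and then to its algebraic closure $\bar F$, which embeds over $A$ into both $\mathcal M$ and $\mathcal N$ because both are algebraically closed (and such an algebraic closure is unique up to isomorphism over $A$), one reduces to two cases. If $b$ is algebraic over $\operatorname{Frac}(A)$, then its quantifier-free type over $A$ is determined by its minimal polynomial, and any root of that polynomial inside $\mathcal N$ realizes the same quantifier-free type; if $b$ is transcendental over $\operatorname{Frac}(A)$, then any element of $\mathcal N$ transcendental over $\bar F$ works, and saturation provides one. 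This algebraic back-and-forth — together with the fact that field-theoretic algebraic closure computes model-theoretic algebraic closure — is the technical heart of the argument and the step I expect to demand the most care.

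Granting quantifier elimination, completeness is immediate: every $\mathcal L_R$-sentence is $ACF_p$-equivalent to a quantifier-free sentence, i.e.\ a Boolean combination of equalities between variable-free terms, whose truth value is fixed once the characteristic is fixed; hence all models of $ACF_p$ satisfy the same sentences. (Alternatively one may invoke the {\L}o\'s--Vaught test: $ACF_p$ has no finite models and is $\kappa$-categorical for every uncountable $\kappa$, since two algebraically closed fields of characteristic $p$ of the same uncountable cardinality have equal transcendence degree over the prime field and are therefore isomorphic.)

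For strong minimality, fix $\mathcal M=(K,+,\cdot,0,1)\models ACF_p$ and let $D\subseteq K$ be $\mathcal M$-definable. By quantifier elimination $D$ is a finite Boolean combination of sets of the form $\{x\in K: q(x)=0\}$ with $q\in K[x]$; each such set is finite when $q\neq 0$ and equals $K$ when $q=0$, so $D$ is a Boolean combination of finite and cofinite subsets of $K$, hence finite or cofinite. Since $K$ is infinite, $K$ is strongly minimal as a definable set in $\mathcal M$; as $\mathcal M$ was arbitrary and these estimates are uniform in the model, $ACF_p$ is strongly minimal in the sense of the definition above.
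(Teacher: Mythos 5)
Your argument is correct and is exactly the route the paper's citation points to: Theorem 3.2.2 of Marker is quantifier elimination for $ACF$, from which completeness (fixing the characteristic) and strong minimality (definable subsets of $K$ are Boolean combinations of zero sets of one-variable polynomials) follow just as you describe. The paper states this as a Fact without proof, so there is nothing further to compare.
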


\begin{defi}
Given a field $(K,+,\cdot,0,1)$ we say that a set $X\subseteq K^n$ is Zariski closed if there is a set of polynomials $\mathfrak a \subseteq K[x_1,\ldots,x_n]$ such that for all $x\in K^n$, $x\in X$ if and only if $f(x)=0$ for all $f\in \mathfrak a$. 

If $X$ is Zariski closed, we say that $X$ is irreducible if there are no proper Zariski closed subsets $X_1$ and $X_2$ of $X$ such that $X=X_1\cup X_2$.
\end{defi}

Now we make some definitions:

\begin{defi}
    If $F(x,y)$ is an irreducible polynomial and $C\subseteq K\times K$ is the set of zeros of $F$, we say that a point $(a,b)\in C$ is a \emph{singular point of $C$} if:
    $$\frac{\partial F}{\partial x}(a,b)=\frac{\partial F}{\partial y}(a,b)=0.$$ We say that $(a,b)$ is \emph{regular} if it is not singular.
\end{defi}

The next fact is a consequence of Theorem 5.3 and the proof of Theorem 5.1 of \cite{hartshorne}.
\begin{fact}\label{finiteSingularPointsFact}
    If $F(x,y)$ is any polynomial, then its set of zeros has finitely many singular points.
\end{fact}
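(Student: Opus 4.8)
The plan is to reduce to the case where $F$ is irreducible and then exploit that an irreducible polynomial cannot divide both of its formal partial derivatives. First I would observe that whether a point of the zero set $V(F)\subseteq K^2$ is singular, in the sense that $\partial F/\partial x$ and $\partial F/\partial y$ both vanish there, is insensitive to the multiplicities of the factors of $F$ (the singular locus of the zero set depends only on the radical of $(F)$). So, replacing $F$ by the product of its distinct irreducible factors, I may assume $F=cF_1\cdots F_k$ is squarefree with the $F_i$ pairwise distinct and irreducible. A singular point of $V(F)$ is then either a singular point of one of the irreducible curves $V(F_i)$, or lies on some intersection $V(F_i)\cap V(F_j)$ with $i\neq j$. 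For $i\neq j$ the polynomial $F_j$ does not vanish identically on $V(F_i)$, since otherwise $F_j$ would lie in $I(V(F_i))=(F_i)$ (the latter by the Nullstellensatz, $K$ being algebraically closed, and because $(F_i)$ is prime), forcing $F_i\mid F_j$, which is impossible. Hence $V(F_i)\cap V(F_j)$ is a proper Zariski-closed subset of the irreducible curve $V(F_i)$, and therefore finite, since an irreducible plane curve has Morley rank $1$ in $ACF_p$ (Fact \ref{factDimN} together with strong minimality). As there are only finitely many pairs, it suffices to bound the singular points of each irreducible $F_i$, so from now on I assume $F$ irreducible.

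In the irreducible case, the singular locus is $V(F)\cap V(\partial F/\partial x)\cap V(\partial F/\partial y)$, and I would argue that $F$ fails to divide at least one of $\partial F/\partial x,\partial F/\partial y$. If $\partial F/\partial x\neq 0$, then its total degree is strictly smaller than $\deg F$, so $F$ cannot divide it; likewise for $\partial F/\partial y$. The only remaining possibility is $\partial F/\partial x=\partial F/\partial y=0$. In characteristic $0$ this forces $F$ constant, so $V(F)$ is empty and there is nothing to prove; in characteristic $p>0$ it means $F\in K[x^p,y^p]=K[x,y]^p$, using that $K$ is perfect because it is algebraically closed, whence $F=G^p$, contradicting irreducibility. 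So, say, $F\nmid\partial F/\partial x$; since $I(V(F))=(F)$, this means $\partial F/\partial x$ does not vanish identically on $V(F)$, and therefore $V(F)\cap V(\partial F/\partial x)$ is a proper Zariski-closed subset of the irreducible curve $V(F)$, hence finite. The singular locus is contained in this set, so it is finite, completing the proof.

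The step that genuinely requires attention — and the only place the argument departs from the characteristic-zero situation — is ruling out $\partial F/\partial x\equiv\partial F/\partial y\equiv 0$ when $F$ is irreducible of positive characteristic; this is where perfectness of $K$ is essential. The other ingredient, used repeatedly above, namely that a proper Zariski-closed subset of an irreducible plane curve is finite, is immediate from the strong minimality of $ACF_p$ and Fact \ref{factDimN} (an irreducible curve is an infinite, co-infinite definable subset of $K^2$, so it has Morley rank exactly $1$, and its proper closed subsets have Morley rank $0$).
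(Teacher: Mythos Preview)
The paper does not give its own proof of this fact; it simply cites Hartshorne. Your self-contained argument is correct in outline and in fact anticipates the reasoning the paper deploys shortly afterward: Lemma~\ref{pickCoordinateLemmaP} applies B\'ezout to $F$ and $\partial F/\partial y$ for irreducible $F$, and Lemma~\ref{pickCoordinateLemma} rules out $\partial F/\partial x\equiv\partial F/\partial y\equiv 0$ via $F\in K[x^p,y^p]=(K[x,y])^p$, just as you do. Your reduction to the squarefree case is also the right move, since the paper only defines singular points for irreducible $F$, and the literal Jacobian condition applied to a non-reduced $F$ (say $F=G^2$) would declare every point singular.

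One step is under-justified. You claim an irreducible plane curve has Morley rank exactly $1$ because it is ``infinite and co-infinite in $K^2$''; but co-infiniteness does not bound the rank from above (e.g.\ $K^2\setminus\{\text{point}\}$ has rank $2$). What you actually need is the standard identification of Morley rank with Krull dimension in ACF, or a direct fiber argument: writing $F_i$ as a polynomial in $y$ over $K[x]$, all but finitely many fibers of the projection to the $x$-axis are finite, whence $\RM(V(F_i))\le 1$. Once this is patched the proof goes through. For both $V(F_i)\cap V(F_j)$ and $V(F)\cap V(\partial F/\partial x)$ you could alternatively bypass Morley rank entirely and invoke B\'ezout (Fact~\ref{bezoutTheorem}) directly, which is precisely what the paper does in Lemma~\ref{pickCoordinateLemmaP}.
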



We will need:

\begin{fact}\label{bezoutTheorem}(Bezout Theorem, see for example Theorem 2.2.7. of \cite{annala2016bezout})

Let $k$ be an algebraically closed field and let $F(x,y)$ and $G(x,y)$ be polynomials with coefficients on $k$ with no common non-constant divisors, then if $V$ is the set of zeros for $F$ and $W$ is the set of zeros for $G$ then $V\cap W$ is finite, and it has at most $\deg(F)\deg(G)$ points. If we consider the closures of $V$ and $W$ in the projective space $\mathbb P^2$, then the number of points of intersection is exactly $\deg(F)\deg(G)$ (counting multiplicities).
\end{fact}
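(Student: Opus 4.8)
This is a classical result; the plan, were one to prove it directly rather than invoke \cite{annala2016bezout}, is to get finiteness and the affine upper bound $|V\cap W|\le\deg F\cdot\deg G$ from elimination theory (resultants), and the exact projective count from a Hilbert--polynomial computation using the Koszul resolution of $k[X,Y,Z]/(F^h,G^h)$.

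For the affine part, first observe that after a generic $k$-linear change of coordinates of $\mathbb{A}^2$ --- which affects neither the hypotheses nor $|V\cap W|$ --- we may assume that $F$ and $G$ both involve $y$, that their leading coefficients as polynomials in $y$ over $k[x]$ are nonzero constants (equivalently, that neither projective closure contains $[0:1:0]$), and, once finiteness is known, that no two points of $V\cap W$ share an $x$-coordinate. Write $F=\sum_i a_i(x)y^i$, $G=\sum_j b_j(x)y^j$ and set $R(x)=\operatorname{Res}_y(F,G)\in k[x]$. Since $F,G$ have no common non-constant factor in $k[x,y]$, by Gauss's lemma they have none in $k(x)[y]$, so $R\not\equiv 0$; a standard degree estimate for the Sylvester determinant gives $\deg R\le\deg F\cdot\deg G$. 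With the normalization of leading coefficients, the elimination property of the resultant gives $R(a)=\operatorname{Res}_y\bigl(F(a,y),G(a,y)\bigr)$, which vanishes precisely when $F(a,\cdot)$ and $G(a,\cdot)$ have a common root. Hence every $x$-coordinate occurring in $V\cap W$ is a root of $R$, of which there are at most $\deg F\cdot\deg G$; after separating $x$-coordinates at most one point of $V\cap W$ lies above each, giving finiteness and the bound (the degenerate cases where $F$ or $G$ lies in $k[x]$ being handled by a direct count).

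For the exact count in $\mathbb{P}^2$, homogenize $F,G$ to forms $F^h,G^h\in S:=k[X,Y,Z]$ of degrees $d:=\deg F$ and $e:=\deg G$, so that $\overline V=V(F^h)$ and $\overline W=V(G^h)$. Because $F^h,G^h$ have no common factor and $S$ is a UFD, they form a regular sequence, so the graded Koszul complex
\[
0\longrightarrow S(-d-e)\longrightarrow S(-d)\oplus S(-e)\longrightarrow S\longrightarrow S/(F^h,G^h)\longrightarrow 0
\]
is exact. Additivity of Hilbert polynomials along this sequence forces the Hilbert polynomial of $S/(F^h,G^h)$ to be the constant $de$. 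Since $V\cap W$ is finite, $V(F^h)\cap V(G^h)$ is $0$-dimensional, so its Hilbert polynomial equals its degree $\sum_{P}\dim_k\bigl(\mathcal{O}_{\mathbb{P}^2,P}/(F^h,G^h)\bigr)$, i.e.\ the number of intersection points counted with the local multiplicities $i(P;\overline V,\overline W):=\dim_k\mathcal{O}_{\mathbb{P}^2,P}/(F^h,G^h)$. Equivalently one may compute in the Chow ring $\mathbb{Z}[H]/(H^3)$ of $\mathbb{P}^2$, where $[\overline V]=dH$, $[\overline W]=eH$, and $[\overline V]\cdot[\overline W]=de\,H^2=de\,[\mathrm{pt}]$.

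The main obstacle is precisely the passage from the inequality to the equality: it requires fixing a rigorous notion of intersection multiplicity --- through lengths of Artinian local rings, or through Hilbert polynomials --- and checking the regular-sequence input feeding the Koszul complex. This is the content of a standard treatment of intersection theory on surfaces (see \cite{hartshorne} and \cite{annala2016bezout}), which is why we quote it here rather than reprove it; the affine half, by contrast, is elementary once resultants and Gauss's lemma are in hand.
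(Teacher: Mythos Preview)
Your sketch is a correct and standard outline of B\'ezout's theorem, but there is nothing to compare it against: the paper states this result as a \texttt{Fact} with a reference to \cite{annala2016bezout} and gives no proof of its own. So your proposal is strictly more than what the paper does; the paper simply quotes the result.
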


\begin{defi} 
If $K$ is a field, a valuation on $K$ is an ordered Abelian group $\Gamma$ together with a valuation map $v:K\to \Gamma\cup\{\infty\}$ (where $\infty$ is an extra element such that $\infty>\gamma$ and $\infty+\gamma=\gamma+\infty=\infty$ for each $\gamma\in \Gamma$) such that for all $x,y\in K$, $v$ satisfies:

\begin{enumerate}
    \item $v(x)=\infty$ if and only if $x=0$.
    \item $v(x\cdot y)=v(x)+v(y)$.
    \item $v(x+y)\geq \min\{v(x),v(y)\}$. 
\end{enumerate}
\end{defi}
If $K$ is valued, the valuation ring is $\mathcal O_K=\{x\in K:v(x)\geq 0\}$. It is a sub-ring of $K$ and its only maximal ideal is $\mathfrak m=\{x\in K:v(x)>0\}$. The residue field is $k=\mathcal O_K/\mathfrak m$.

We treat a valued field as a two sorted structure $(K,\Gamma,v)$ 
where $K$ is a field, $\Gamma$ is an Abelian ordered group and $v:K\to \Gamma\cup\{\infty\}$ is the valuation. Let $\mathcal L_{R,v}$ be the corresponding two sorted language.
\begin{defi}
Let $ACVF_{p,q}$ be the first order theory in the language $\mathcal L_{R,v}$ saying that $K$ is an algebraically closed field of characteristic $p$ and $v$ is a valuation into an ordered Abelian group $\Gamma$:

In ACVF we have the axioms stating that $(K,+,\cdot,0,1)$ is an algebraically closed field of fixed characteristic $p$, $\Gamma$ is an Abelian group plus the extra axioms

$(\Gamma,\leq)$ is a linear order:

\begin{itemize}
\item $\forall x,y\in \Gamma(x\leq y \vee y\leq x)$,
\item $\forall x,y,z\in \Gamma(x\leq y \wedge y\leq x\implies x\leq z)$, 
\item $\forall x,y\in \Gamma(x\leq y\wedge y\leq x\implies x=y)$ and 
\item $\forall x\in \Gamma (x\leq x)$.

\end{itemize}

Addition on $\Gamma$ is compatible with $\leq$:

$$\forall x,y,z\in \Gamma (x\leq y\implies x+z\leq y+z).$$

Moreover, the residual field $$\{x\in K:v(x)\geq 0\}/\{x\in K:v(x)> 0\}$$ has characteristic $q$.

\end{defi}




\begin{defi}
An open ball is a subset of $K$ of the form $$B_{\gamma}(a):=\{x\in K:v(x-a)>\gamma\}$$ where $a\in K$ and $\gamma\in \Gamma$.
A closed ball is a subset of $K$ of the form
$$B_{\geq\gamma}(a)=\{x\in K:v(x-a)\geq\gamma\}.$$

We say that $\gamma$ is the (valuative) radius of $B_{\gamma}(a)$

\end{defi}

In this setting we have two natural topologies on $K$:  the Zariski and the valuation topologies. The latter is generated by balls. When we say `open' we mean in the valuation topology. When we want to refer to the Zariski topology we will be explicit about it.

\begin{defi}\label{defiDimension}
If $D\subseteq K^n$ is a $\mathbb K$-definable set we define $\dim (D)$ as the usual algebraic dimension of the Zariski closure of $D$. Moreover, if $p(x)$ is an $n$-type over $A$, we define $\dim p=\min\{\dim X:X\in p\}$ and if $a\in K^n$ then $\dim (a/A)$ is defined as $\dim (tp(a/A))$.
\end{defi}

\begin{defi}\label{genericDefinition}
If $X\subseteq K^n$ is $\mathbb K$-definable over $A$, we say that $x=(x_1\ldots,x_m)\in X^m$ is a tuple of $\mathbb K$-independent generic elements of $X$ if $\dim (x/A)=m\dim (X)$. 

Whenever we say that a first order property $P$ holds for all the $\mathbb K$-generic elements of $X$ we mean that if $\bar {\mathbb K}$ is any elementary extension of $\mathbb K$, then $P$ holds for all the $\bar{\mathbb K}$- generic elements of the interpretation of $X$ in $\bar{\mathbb K}$.
\end{defi}

We have the next classic theorem that follows from Holly's work in \cite{HO}. As stated here is Theorem 7.1 of \cite{hru-haskell2005stable}.

\begin{fact}\label{QEff} (Quantifier Elimination)
$ACVF_{p,q}$ has quantifier elimination in the language $\mathcal L_{R,v}$. 
\end{fact}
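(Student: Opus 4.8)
The plan is to establish quantifier elimination by the standard substructure test: it suffices to show that whenever $\mathbb K_1=(K_1,\Gamma_1,v_1)$ and $\mathbb K_2=(K_2,\Gamma_2,v_2)$ are models of $ACVF_{p,q}$ with $\mathbb K_2$ being $|K_1|^+$-saturated, every embedding $\iota$ of a common $\mathcal L_{R,v}$-substructure $\mathfrak A\subseteq\mathbb K_1$ into $\mathbb K_2$ extends to an embedding $\mathbb K_1\hookrightarrow\mathbb K_2$. An $\mathcal L_{R,v}$-substructure consists of a subring $R\subseteq K_1$ together with an ordered subgroup $\Delta\leq\Gamma_1$ containing $v_1(R\setminus\{0\})$. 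First I would reduce to the case where $\mathfrak A$ is a valued subfield: the valuation extends uniquely to $F:=\mathrm{Frac}(R)\subseteq K_1$, and $v_1(F^{\times})$, being generated by $v_1(R\setminus\{0\})$, already lies in $\Delta$, so $\iota$ extends canonically; the remaining part of $\Delta$ is handled by an ordered-abelian-group embedding into the divisible, saturated $\Gamma_2$, and value-group elements appearing later will be supplied by saturation.

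Next I would reduce to the case where $F$ is relatively algebraically closed in $K_1$. Here I would invoke the classical fact that any two prolongations of a valuation on $F$ to a given algebraic extension are conjugate by an $F$-automorphism: extending the field embedding $F\hookrightarrow K_2$ to $F^{\mathrm{alg}}\cap K_1$ (possible since $K_2$ is algebraically closed) and composing with a suitable $F$-automorphism of $F^{\mathrm{alg}}\cap K_1$ turns it into a valued-field embedding. Once $F$ is relatively algebraically closed in $K_1$, extracting roots shows that its value group $\Gamma_F$ is divisible and its residue field $k_F$ is algebraically closed.

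If $F=K_1$ we are done; otherwise I would choose $a\in K_1\setminus F$ (necessarily transcendental over $F$) so that $F(a)/F$ is of one of three basic types, and extend $\iota$ to $F(a)$. If $\Gamma_{K_1}\supsetneq\Gamma_F$, take $v_1(a)\notin\Gamma_F$; if not but $k_{K_1}\supsetneq k_F$, take $v_1(a)=0$ with $\overline a\notin k_F$ (hence transcendental over the algebraically closed $k_F$); otherwise $K_1/F$ is immediate and any $a\in K_1\setminus F$ is a pseudolimit of a pseudo-Cauchy sequence from $F$ of transcendental type. In the first two cases the valuation on $F(a)$ is the explicit Gauss valuation, so I would use saturation of $\mathbb K_2$ to find $b\in K_2$ realizing the cut of $v_1(a)$ over $\iota(\Gamma_F)$, respectively a unit whose residue is transcendental over $\iota(k_F)$; any such $b$ is transcendental over $\iota(F)$ and gives $F(a)\cong F(b)$ over $F$ as valued fields. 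In the immediate case I would use saturation to realize the (finitely satisfiable) partial type asserting that $b$ is a pseudolimit of the image sequence, and then invoke Kaplansky's theorem that a pseudolimit of a transcendental-type pseudo-Cauchy sequence is transcendental and that $F(a)\cong F(b)$ over $F$ as valued fields. Finally I would iterate this one-generator extension transfinitely, re-taking relative algebraic closure after each step and taking unions at limit stages, until $K_1$ is exhausted.

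The main obstacle is the immediate-extension case, which depends on the theory of pseudo-Cauchy sequences—the existence of a transcendental pseudolimit and the uniqueness of the valued field it generates—together with checking finite satisfiability of the relevant partial type in $\mathbb K_2$; the reduction to relatively algebraically closed $F$ similarly rests on the (standard but non-trivial) conjugacy of prolongations of a valuation along an algebraic extension. None of the steps uses the characteristics beyond ``algebraically closed'', so the argument is uniform in $p$ and $q$.
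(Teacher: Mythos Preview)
The paper does not actually prove this fact; it merely cites it as a classical theorem due to Holly (and as stated in Haskell--Hrushovski--Macpherson). Your sketch is the standard Robinson-style back-and-forth argument that underlies those references, and it is correct: the reduction to a relatively algebraically closed valued subfield via conjugacy of prolongations, followed by the trichotomy into value-group, residue-field, and immediate transcendental extensions handled respectively by Gauss valuations and Kaplansky's theory of pseudo-Cauchy sequences, is exactly the right skeleton. One small point worth tightening in the two-sorted setting: when you pick $b\in K_2$ with $v_2(b)$ realizing the cut of $v_1(a)$, that cut must be taken over the entire already-embedded value-group sort $\iota(\Delta)$, not just over $\iota(\Gamma_F)$; this is harmless since $\Gamma_2$ is a saturated divisible ordered abelian group, but it is where the extra value-group elements you set aside earlier re-enter the picture.
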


As a consequence we have:

\begin{prop}\label{decompositionProp}
Suppose $Y\subseteq  K^m$ is $\mathbb{K}$-definable and infinite. Then $Y$ can be written as a finite union of  subsets of $K^m$ that are relatively open subsets of irreducible Zariski closed sets.

If $Y\subseteq K\times K$ does not have isolated points one can write $Y$ as a finite union $$Y=\bigcup\{(x,y)\in V_i:L_i(x,y)=0\}$$ where $L_i(x,y)$ is an irreducible polynomial, $V_i\subseteq K\times K$ is open and $\{(x,y)\in V_i:L_i(x,y)=0\}\neq \emptyset$. 
\end{prop}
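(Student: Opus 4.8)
The plan is to derive both assertions from quantifier elimination (Fact~\ref{QEff}) together with the algebraic description of Zariski closed sets.

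First I would record the basic structure of $\mathbb K$-definable subsets of $K^m$. By Fact~\ref{QEff}, any $\mathbb K$-definable $Y\subseteq K^m$ is a finite Boolean combination of sets of the form $\{x : f(x)=0\}$ for $f\in K[x_1,\dots,x_m]$ and of sets of the form $\{x : v(f(x)) \leq v(g(x))\}$ (equivalently $\{x : v(f(x)/g(x))\geq 0\}$) for polynomials $f,g$; the latter are exactly preimages under a morphism of the valuation ring, hence are closed in the valuation topology but have full-dimensional interior away from the zero locus of $fg$. Putting the Boolean combination into disjunctive normal form expresses $Y$ as a finite union of sets $Y_j$, each of which is the intersection of a Zariski locally closed set $Z_j$ (the conjunction of the polynomial equalities and the complements of polynomial equalities appearing positively/negatively) with finitely many valuation conditions $v(f_{jk}) \leq v(g_{jk})$ or their negations. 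Now refine: decompose each $Z_j$ into its irreducible components (finitely many, by Noetherianity) and intersect each with the relevant open Zariski set and with the valuation conditions; the valuation conditions, being finite conjunctions of closed-or-open sets in the valuation topology, can be handled by noting that on an irreducible Zariski set the locus where a given valuation inequality is strict versus equality is again relatively open or the complement thereof, and after finitely many further splittings every piece is a relatively open (in the valuation topology) subset of a relatively open (in the Zariski topology) subset of an irreducible Zariski closed set — and a relatively open subset of a relatively open subset of an irreducible set is a relatively open subset of that irreducible set. This gives the first paragraph of the Proposition; I would be slightly careful to absorb the "relatively open in the valuation topology" piece correctly, since the statement only claims relatively open in the Zariski-closed set — but in fact one can always shrink an irreducible variety's Zariski closure so that the intersection with a valuation-open set is Zariski-relatively-open after possibly passing to a Zariski-dense open subset, or simply observe that valuation-open subsets of an irreducible variety are already of the required form once one notes they are unions of basic valuation-open sets which are themselves differences of Zariski-closed loci of the $v(f)\leq v(g)$ type; this bookkeeping is the main obstacle.

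For the second assertion, suppose $Y\subseteq K\times K$ has no isolated points. Apply the first part to write $Y=\bigcup_i W_i$ with each $W_i$ relatively open in an irreducible Zariski closed $V_i'\subseteq K^2$. Discard the $W_i$ with $\dim V_i' = 0$: these are finite, and since $Y$ has no isolated points each point of such a $W_i$ must lie in the closure of $Y\setminus W_i$, so removing the zero-dimensional pieces does not change $Y$ except possibly on a finite set, which we can reabsorb into the closure of the remaining pieces — more carefully, one argues that a point isolated in $Y$ cannot occur, so no point of $Y$ is isolated in the union of the positive-dimensional $W_i$ either, hence $Y$ equals the union of the closures... I would instead just note: if some $W_i$ is finite then each of its points is isolated in $W_i$, hence (having no isolated points in $Y$) lies in $\overline{Y\setminus W_i}$, and one shows these finitely many points are already contained in the other $W_j$ after enlarging those slightly, or simply that they contribute nothing new. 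Discard also the $W_i$ with $\dim V_i'=2$: then $V_i' = K^2$ and $W_i$ is a nonempty Zariski-open subset of $K^2$, which is fine but not of the claimed shape — however, such a $W_i$ is a finite union of sets cut out by $v(f)\leq v(g)$ inequalities inside $K^2$; since $Y$ is strongly-minimal-dimension-one type data... actually in our setting $Y$ is a plane curve so $\dim Y = 1$ and no two-dimensional piece occurs at all. So every surviving $V_i'$ is an irreducible curve, whose ideal is principal, generated by a single irreducible polynomial $L_i(x,y)$; thus $V_i' = \{L_i = 0\}$. Finally $W_i$ being relatively open in $V_i'$ means $W_i = V_i \cap \{L_i=0\}$ for some valuation-open (or Zariski-open, which suffices) $V_i\subseteq K^2$, and we may assume $W_i\neq\emptyset$. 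Collecting the surviving pieces yields the displayed finite union. The step I expect to be genuinely delicate is the reduction handling the zero-dimensional (isolated-point) contributions and confirming that, in the dimension-one case we care about, no two-dimensional piece arises, so I would state the second part under the standing hypothesis that $Y$ has no isolated points and spell out that any zero-dimensional component would then have to be absorbed into the frontier of the one-dimensional ones — the cleanest route being to first replace $Y$ by $Y$ minus its (finite) set of points not lying on any one-dimensional component, observe this set is empty by the no-isolated-points hypothesis applied componentwise, and proceed.
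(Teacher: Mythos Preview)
Your approach is the same as the paper's --- quantifier elimination, a case analysis on the atomic valuation conditions, then irreducible decomposition --- but the execution is muddled in two places and never quite arrives at the clean statement you need.

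The first confusion is topological. In this paper ``open'' always means valuation-open, so ``relatively open subset of an irreducible Zariski closed set'' means $C\cap U$ with $C$ irreducible Zariski-closed and $U$ valuation-open. Since the Zariski topology is coarser than the valuation topology (polynomials are valuation-continuous, so Zariski-closed sets are valuation-closed), every Zariski-open set is already valuation-open. Thus a valuation-open subset of a Zariski-open subset of $C$ is simply a valuation-open subset of $C$, which is exactly what is required. Your paragraph worrying about ``absorbing the relatively-open-in-the-valuation-topology piece'' and ``shrinking the Zariski closure'' is solving a nonexistent problem.

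The second issue is that you never actually carry out the case analysis. The paper's argument is concrete: write each conjunct as an intersection of atoms $\{v(f)\mathrel{\Box} v(g)\}$ with $\Box\in\{<,=\}$, and check case by case that each atom is either valuation-open or Zariski-closed (or splits into such pieces). The conjunction is then of the form (Zariski closed)\,$\cap$\,(valuation open), and one decomposes the Zariski-closed factor into irreducibles. Your ``strict versus equality'' and ``finitely many further splittings'' gestures at this without doing it; just perform the four-case check and the bookkeeping disappears.

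For the second assertion your instincts are sound but the writing wanders. Zero-dimensional pieces are finite and, under the no-isolated-points hypothesis, each such point already lies on one of the one-dimensional components, so it is absorbed. Your worry about two-dimensional pieces is legitimate: as literally stated the proposition does not assume $\dim Y=1$, and $Y=K\times K$ is a counterexample. In the paper the second clause is only ever invoked for one-dimensional $Y$, so read it with that implicit hypothesis; once $\dim Y\le 1$, each surviving irreducible component is the zero locus of a single irreducible $L_i$, and the valuation-open $V_i$ comes for free from the first part.
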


   \begin{proof}

   This is very similar the the proof of Lemma 3.4 of \cite{KR}. We include a proof since there is a small gap in their proof.
   
Using Fact \ref{QEff}  write $Y$ as a finite union of sets of the form:

\begin{equation}\label{QEDecompositionEcuation}
\bigcap_i \{x\in K: v(f_i(x)) \Box_i v(g_i(x))\}
\end{equation}
where $\Box_i\in \{<, =\}$ and each $f_i$ and $g_i$ are polynomials.
For each $i$ let $$A_i:=\{x\in K:v(f_i(x))\Box_i v(g_i(x))\}.$$

If $\Box_i$ is $<$ and $g_i$ is not constant $0$ then 
$$A_i=\{x\in K: g_i(x)\neq 0\text{ and }v(f_i(x)/g_i(x))< 0\}\cup \{x\in K: f_i(x)\neq 0 =g_i(x)\}$$ which is the  intersection of an open (as the polynomials are continuous in the valuation topology) with a Zariski closed. If $\Box_i$ is $<$ and $g_i$ is constant $0$ then 
$$A_i=\{x\in K:v(f_i(x)) < \infty\}=\{x\in K:f_i(x)\neq 0\}$$ which is open.

If $\Box_i$ is $=$ and both, $f_i$ and $g_i$ are not the zero polynomial, then
$$A_i=\{x\in K:g_i(x)\neq 0\text{ and }v(f_i(x)/g_i(x))=0\}$$ which is open.

Finally, if $\Box_i$ is $=$ and either $f_i$ or $g_i$ is the zero polynomial, then $A_i$ is Zariski closed. 
Therefore, the intersection of Equation \ref{QEDecompositionEcuation} is a Zariski closed $C$ set intersected with an open set $U$, take the irreducible decomposition of $C$:
$$C=C_1\cup\ldots\cup C_d$$ so $$C\cap U=(C_1\cap U\cap V_1)\cup \ldots\cup (C_d\cap U\cap V_d)$$ where $$V_i=K\times K\setminus \left(C_1\cup C_2\cup\ldots\cup C_{i-1}\cup C_i\cup\ldots\cup C_d\right).$$ 
Then each $C_i\cap U\cap V_i$ is an open subset of $C_i$ that is its Zariski closure so the intersection of Equation \ref{QEDecompositionEcuation} has the desired form and then $Y$ has also the desired form.\end{proof}

 
As a corollary we have:

\begin{corollary}\label{existsInf}
    If $W\subseteq K$ is $\mathbb K$ definable then it is infinite if and only if there is an open ball contained in $W$.  
\end{corollary}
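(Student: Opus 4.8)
The plan is to prove the two implications separately: the ``only if'' direction will come from feeding $W$ into Proposition \ref{decompositionProp}, while the ``if'' direction will come from the elementary fact that open balls are infinite. The decomposition result does all the real work, so I would set things up to exploit it directly in the case $m=1$.

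For the ``only if'' direction, assume $W$ is infinite and apply Proposition \ref{decompositionProp} with $m=1$ to write $W=\bigcup_{i=1}^{n}R_i$, where each $R_i$ is a relatively open subset of an irreducible Zariski closed set $C_i\subseteq K$; discarding empty pieces, each $R_i=U_i\cap C_i$ for some nonempty $U_i\subseteq K$ open in the valuation topology. The observation I would then invoke is the classification of irreducible Zariski closed subsets of the affine line $K$: a Zariski closed subset of $K$ is either finite or all of $K$, and a finite set of size at least $2$ is reducible, so each $C_i$ is either a singleton or $C_i=K$. Since $W$ is infinite and the union is finite, some $R_i$ must be infinite, and this can only happen if $C_i=K$; then $R_i=U_i$ is a nonempty subset of $K$ open in the valuation topology, and since that topology is generated by open balls, $U_i$ — and hence $W$ — contains an open ball.

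For the converse, given an open ball $B_{\gamma}(a)\subseteq W$, it suffices to show $B_{\gamma}(a)$ itself is infinite. Recall that in $\mathbb K\models ACVF_{p,q}$ the value group $\Gamma$ is non-trivial and $v\colon K^{\times}\to\Gamma$ is onto it. Fixing some $g>0$ in $\Gamma$, the elements $\gamma+kg$ for $k\in\NN$ are pairwise distinct and all strictly greater than $\gamma$; choosing $t_k\in K$ with $v(t_k)=\gamma+kg$, we get $v\big((a+t_k)-a\big)>\gamma$, so $a+t_k\in B_{\gamma}(a)$, and the points $a+t_k$ are pairwise distinct because their differences with $a$ have distinct valuations. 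Hence $B_{\gamma}(a)$, and therefore $W$, is infinite.

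The only point requiring care is in the first direction: one must be sure that an \emph{infinite} piece $R_i$ cannot arise from a zero-dimensional $C_i$, which is exactly the classification of irreducible Zariski closed subsets of $\mathbb{A}^1$ used above; once $C_i=K$, passing from a nonempty valuation-open set to an actual open ball is immediate from the definition of the topology. The converse is essentially formal, relying only on non-triviality of the valuation — and this hypothesis is genuinely needed, since over a trivially valued field every open ball is a singleton and the statement fails.
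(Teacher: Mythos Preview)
Your proof is correct and follows essentially the same route as the paper: apply Proposition~\ref{decompositionProp} in the case $m=1$ and use that the only (irreducible) Zariski closed subsets of $K$ are finite sets and $K$ itself. The paper's proof is a single sentence leaving both the extraction of the open ball and the converse implicit; you have simply spelled out these details, including the (trivially necessary) observation that open balls in a non-trivially valued field are infinite.
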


\begin{proof}
    It follows from Proposition \ref{decompositionProp} since the only Zariski closed subsets of $K$ are the finite sets and $K$ itself.
\end{proof}

   We also have:

\begin{fact}\label{aclIgualFact}

If $\bar a\in \acl_{\mathbb K} (B)$ for some set of parameters $B$ then there is a finite set $B$-definable in the field structure of $\mathbb K$ containing $\bar a$

In other words, $\acl_{\mathbb K}=\acl_{\mathbb K^{f}}$ where $\mathbb K^{f}$ is $\mathbb K$ seen as a structure in the language of rings.
    \end{fact}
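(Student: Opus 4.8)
The plan is to reduce the equality $\acl_{\mathbb K}=\acl_{\mathbb K^f}$ (for tuples and parameters in the field sort) to a single assertion about finite definable subsets of $K$, and then extract that assertion from quantifier elimination together with the description of definable subsets of $K$ already established. The inclusion $\acl_{\mathbb K^f}(B)\subseteq\acl_{\mathbb K}(B)$ is clear, since $\mathbb K^f$ is a reduct of $\mathbb K$. For the reverse inclusion, recall that a tuple lies in the algebraic closure of a set exactly when each of its coordinates does (project a finite defining set onto each coordinate; conversely take products), so it suffices to treat $\bar a=a\in K$; and $a\in\acl_{\mathbb K}(B)$ means precisely that there is a $\mathbb K$-formula $\varphi(x)$ over $B$ with $\varphi(\mathbb K)$ finite and $a\in\varphi(\mathbb K)$. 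Hence everything reduces to proving: \emph{if $W\subseteq K$ is $\mathbb K$-definable over $B\subseteq K$ and finite, then $W\subseteq\acl_{\mathbb K^f}(B)$.} (I take $B\subseteq K$; that is the scope in which the phrase ``$B$-definable in the field structure'' is meaningful.)

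For this I would run the argument from the proof of Proposition \ref{decompositionProp}. By Fact \ref{QEff}, $W=\bigcup_k\psi_k(\mathbb K)$ where each $\psi_k$ is a conjunction of literals of the forms $v(f(x))<v(g(x))$, $v(f(x))=v(g(x))$, $h(x)=0$, $h(x)\neq 0$, for polynomials $f,g,h$ over $B$. Fix $\psi=\psi_k$ and let $p_1,\dots,p_r$ be the polynomials occurring in $\psi$; the degenerate cases (a literal involving the zero polynomial, or $v(0)=\infty$, or an ``$h=0$'' literal with $h\neq 0$) are disposed of at once, since each such literal is either trivially true, or makes $\psi(\mathbb K)$ empty, or confines $\psi(\mathbb K)$ to the zero set of some nonzero polynomial over $B$, and in the last case $\psi(\mathbb K)\subseteq\acl_{\mathbb K^f}(B)$ already. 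So assume all $p_i$ are nonzero and set $V:=K\setminus\bigcup_i Z(p_i)$, where $Z(p)$ is the (finite) zero set of $p$; then $K\setminus V$ is a finite subset of $\acl_{\mathbb K^f}(B)$. On $V$ each surviving literal of $\psi$ defines an open set: $v(f(x))<v(g(x))$ becomes $f(x)/g(x)\in K\setminus\mathcal O$, $v(f(x))=v(g(x))$ becomes $f(x)/g(x)\in\mathcal O^{\times}$, and $h(x)\neq 0$ holds throughout $V$; here $\mathcal O$ and $\mathcal O^{\times}$ are clopen in the valuation topology (closed balls are clopen), and $x\mapsto f(x)/g(x)$ is continuous on $V$. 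Thus $\psi(\mathbb K)\cap V$ is a $\mathbb K$-definable open subset of $K$ contained in the finite set $W$; since every non-empty open subset of $K$ contains an open ball, and by Corollary \ref{existsInf} any $\mathbb K$-definable subset of $K$ containing an open ball is infinite, we get $\psi(\mathbb K)\cap V=\emptyset$. Hence $\psi(\mathbb K)\subseteq K\setminus V\subseteq\acl_{\mathbb K^f}(B)$, and taking the union over $k$ gives $W\subseteq\acl_{\mathbb K^f}(B)$.

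I do not expect a genuine obstacle; the work is bookkeeping. The two points needing care are the passage from quantifier elimination in the two-sorted language to the clean list of polynomial literals above — in particular the partiality of value-group terms at the zeros of the polynomials, i.e.\ the handling of $v(0)=\infty$ — and the elementary topological facts (continuity of polynomial maps and their ratios off the zero locus of the denominator, and that $\mathcal O$, $\mathcal O^{\times}$ are clopen), all of which are already implicit in the proof of Proposition \ref{decompositionProp}. The conceptual content is simply that a finite definable subset of $K$ is ``blind'' to the valuation: its valuation-defined part is open, hence infinite, hence empty.
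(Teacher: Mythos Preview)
The paper states this as a \emph{Fact} without proof, treating it as a standard consequence of quantifier elimination. Your argument is correct and is exactly the natural derivation one would expect the paper has in mind: it recycles the decomposition of Proposition~\ref{decompositionProp} to show that any finite $B$-definable subset of $K$ is trapped inside the zero loci of the polynomials appearing in a quantifier-free description, hence lies in $\acl_{\mathbb K^f}(B)$.

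Two very minor remarks. First, you do not actually need Corollary~\ref{existsInf}: once you have shown $\psi(\mathbb K)\cap V$ is open, the mere nontriviality of the valuation makes any nonempty open ball infinite, which already contradicts finiteness of $W$. Second, your explicit restriction to $B\subseteq K$ is appropriate and matches how the fact is used later in the paper (in the proof of Theorem~\ref{gcIsLocallyAdditiveThm}), where all relevant parameters live in the field sort.
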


From now on we fix $\mathbb K=(K,+,\cdot,\Gamma,v)$ a model of ACVF.

If $\Gamma$ is contained in the real numbers we can provide to $K$ with a metric:

For $x,y\in K$ let $d(x,y)=q^{-v(x-y)}$ where $q$ is any real number bigger than one. In this case we can define:

\begin{defi}
We say that $\mathbb K$ is complete if the valuation group of $\mathbb K$ is contained in the real numbers and $K$ is complete as a metric space with the distance $d(x,y)=q^{-v(x-y)}$ for some fixed real number $q>1$. That is, any Cauchy sequence converges.
\end{defi}

This definition does not depend on the choice of $q>1$.

\begin{quote}
\textbf{For the rest of this section we assume that $K$ is complete.}
\end{quote}

We will write a power series on $n$ variables as 
$$\sum_I a_I x^I$$ where $I=(i_1,\ldots,i_n)$ varies in the $n$-tuples of natural numbers, $a_I\in \mathbb K$, $x=(x_1,\ldots,x_n)$ is a $n$-tuple of variables and $x^{(i_1,\ldots,i_n)}$ is the monomial $x_1^{i_1}x_2^{i_2}\ldots x_n^{i_n}$.

Now we fix some concepts:
\begin{defi}
Let $U \subseteq K^n$ be open and $f:U\to K$ a function,  we say that $f$ is analytic in $U$ if there is $a\in U$ and 
$$g=\sum a_I x^I$$ a power series converging in 
$$U-a:=\{x-a:x\in U\},$$
such that $f(z)=g(z-a)$ for each $z$ in $U$. 

\end{defi}

\begin{defi}
If $$F(x_1\ldots,x_n)=\sum_{I=(i_1,\ldots,i_n)}a_I x^{(i_1,\ldots,i_n)},$$  is a power series then $\frac{\partial F}{\partial x_k}(x_1\ldots,x_n)$ is the (formal) partial derivative of $F$ with respect to $x_k$, that is $$\frac{\partial F}{\partial x_k}(x_1\ldots,x_n)=\sum_{I=(i_1,\ldots,i_n)} i_k a_I  x^{(i_1,\ldots,i_k-1,\ldots,i_n)}.$$
\end{defi}

\begin{defi}
Let $U\subseteq K$ be an open set and let $a\in U$. Suppose $f:U\to K$ analytic at $a$ and its power expansion around $a$ is $$f(x)=\sum_{n\geq 0} b_n (x-a)^n.$$ We say that $a$ is a zero of $f$ with multiplicity $d$ if  $d=\min\{n:b_n\neq 0\}$.
\end{defi}

\begin{lemma}\label{derivativeZeroIsDoubleLemma}
    Let $U\subseteq K$ be open and let $a\in U$. Assume that $$f(x)=\sum_{n\geq 0} b_n (x-a)^n$$ is the power expansion of $f$ around $a$. Then if $f(a)=f'(a)=0$, $a$ is a zero of multiplicity at least $2$ for $f$.
\end{lemma}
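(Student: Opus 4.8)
The plan is to read the first two Taylor coefficients $b_0$ and $b_1$ directly off the hypotheses: since the multiplicity of $a$ as a zero is by definition the index of the first non-vanishing coefficient, it suffices to show $b_0=b_1=0$.

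First I would substitute $x=a$ into the power expansion $f(x)=\sum_{n\geq 0}b_n(x-a)^n$. Every term with $n\geq 1$ vanishes at $x=a$, so $f(a)=b_0$, and the hypothesis $f(a)=0$ forces $b_0=0$. Next I would use that the derivative of an analytic function is again analytic with power expansion obtained by differentiating term by term, so that around $a$ one has $f'(x)=\sum_{n\geq 1} n\,b_n(x-a)^{n-1}$ (this holds over a complete valued field exactly as over $\RR$ or $\CC$, and there is no characteristic obstruction in the $n=1$ term because the coefficient is simply $1\cdot b_1=b_1$). Evaluating at $x=a$ kills all terms with $n\geq 2$ and leaves $f'(a)=b_1$, so the hypothesis $f'(a)=0$ forces $b_1=0$.

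Finally, from $b_0=b_1=0$ I conclude: either all the $b_n$ vanish, in which case $f$ is identically zero on a neighbourhood of $a$ and the multiplicity is infinite (hence $\geq 2$), or $\min\{n:b_n\neq 0\}\geq 2$; in both cases $a$ is a zero of $f$ of multiplicity at least $2$. The only step needing any justification is the identification $f'(a)=b_1$, i.e.\ that differentiation of a convergent power series is legitimate term by term and produces the expansion of $f'$ around $a$; everything else is a direct substitution, so I do not expect any real obstacle here.
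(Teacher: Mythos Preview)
Your proof is correct and follows essentially the same approach as the paper: evaluate the series at $a$ to get $b_0=0$, differentiate term by term and evaluate at $a$ to get $b_1=0$, then invoke the definition of multiplicity. The paper's proof is just a terser version of exactly this argument.
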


\begin{proof}
    If $f(a)=0$ then $b_0=0$. As $$f'(x)=\sum_{n\geq 1} n b_n (x-a)^{n-1},$$ $f'(a)=b_1$. The conclusion follows from the definition. 
\end{proof}

We will also need an implicit function theorem, the following is well known and can be found for example in \cite{LAG} (Theorem (10.8), page 84)

\begin{fact}(Implicit Function Theorem)\label{implicitFunctionTheoremf}
Suppose $K$ is complete, $U\subseteq K^n$ is open and $F:U\to K$ is an analytic function at $z\in U$. Assume $z$ is such that $\frac{\partial F}{\partial x_n}(z)\neq 0$, then there are $U_1\subseteq K^{n-1}$ and $U_2$ open subsets of $K$, $U'\subseteq U$ open with $z\in U' \cap (U_1\times U_2)$ and $f:U_1\to U_2$ analytic such that:

\begin{equation*}
    \{u\in U':F(u)=0\}=\{(x,f(x)):x\in U_1\}.
\end{equation*}
\end{fact}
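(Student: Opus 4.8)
The plan is to reduce to $z=0$ with $F(0)=0$ and then to solve $F(x',x_n)=0$ for $x_n$ as an analytic function of $x'=(x_1,\dots,x_{n-1})$ by a contraction argument in a complete normed algebra of convergent power series. (If $F(z)\neq 0$ the statement is vacuous, since one may shrink $U'$ so that $F$ does not vanish on it and take $U_1=\emptyset$; so the content lies in the case $F(z)=0$, which I assume from now on.)

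First I would fix the analytic framework. After translating, $F=\sum_I a_I x^I$ converges on some closed polydisc centred at $0$, so its coefficients satisfy $v(a_I)+|I|\rho\to\infty$, where $\rho$ is the valuative radius of that polydisc; passing to a smaller polydisc improves these bounds at will. The hypothesis $\frac{\partial F}{\partial x_n}(0)\neq 0$ says that the coefficient $c$ of the monomial $x_n$ is nonzero, so $F=cx_n+R(x',x_n)$, where every monomial of $R$ either omits $x_n$ or has total degree $\geq 2$ (and $R$ has no constant term, since $F(0)=0$). For a small polydisc $\Delta'\subseteq K^{n-1}$ around $0$, let $A$ be the algebra of power series in $x'$ converging on $\Delta'$, which is complete under the Gauss norm since $K$ is complete, and let $B\subseteq A$ consist of the $g$ with $g(0)=0$ and small Gauss norm; the smallness is chosen so that $(x',g(x'))$ stays inside the polydisc of convergence of $F$ for $g\in B$, and $B$ is a complete metric space.

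Next I would run the fixed point. Rewriting $F=0$ as $x_n=-c^{-1}R(x',x_n)$, define $T\colon B\to A$ by $T(g)=-c^{-1}R\big(x',g(x')\big)$, a convergent composition on the smaller polydisc. Using the coefficient bounds on $R$ together with the ultrametric inequality, I would check that for $\Delta'$ small enough: (i) $T(B)\subseteq B$; and (ii) $T$ is a strict contraction, with contraction constant tending to $0$ as $\Delta'$ shrinks. The estimate in (ii) works because in $T(g)-T(h)$ the terms of $R$ that are linear in $x'$ and independent of $x_n$ cancel, while every surviving monomial of $R$ carries either a factor of $x'$ (hence of small norm over $\Delta'$), a surplus factor $g$ or $h$ of small norm, or a coefficient $a_I$ of large valuation. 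By completeness of $A$ and Banach's fixed-point theorem, $T$ has a unique fixed point $f\in B$: an analytic $f$ on the open polydisc $U_1$ underlying $\Delta'$, with $f(0)=0$ and $F(x',f(x'))\equiv 0$ on $U_1$. (Equivalently one could run Newton's iteration $x_n\mapsto x_n-F/\tfrac{\partial F}{\partial x_n}$ inside $A$, i.e. a fibered Hensel's lemma with quadratic convergence, which also yields $f$ with explicit radii.)

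Finally I would extract the statement: shrink once more so that on $U_1\times U_2$, with $U_2$ a small ball around $0$, the one-variable function $x_n\mapsto F(x',x_n)$ has nonvanishing derivative $c+(\text{higher order})$; then, by uniqueness in the fixed-point argument (or the $n=1$ case applied fiberwise), for each $x'\in U_1$ this function has exactly one zero in $U_2$, namely $f(x')$, so with $U'=U\cap(U_1\times U_2)$ one gets $\{u\in U':F(u)=0\}=\{(x',f(x')):x'\in U_1\}$ and $z=0\in U'$. The main obstacle is precisely the estimate in the fixed-point step: a joint choice of the two polydiscs making the perturbation operator simultaneously ball-preserving and a strict contraction on $A$, which forces one to use the explicit decay of the coefficients of a convergent power series over a complete non-archimedean field. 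The ultrametric inequality makes the majorant bookkeeping considerably cleaner than over $\mathbb R$ or $\mathbb C$, but the mixed monomials still need care; since the result is classical, it is simply cited here from \cite{LAG}.
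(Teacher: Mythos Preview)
The paper does not prove this statement at all: it is stated as a \emph{Fact} and simply cited from \cite{LAG} (Theorem (10.8)). Your proposal likewise ends by citing \cite{LAG}, so on that level the two agree. The proof sketch you give in addition --- reducing to $z=0$, $F(0)=0$, rewriting $F=cx_n+R$ and solving $x_n=-c^{-1}R(x',x_n)$ by a contraction argument in a complete ultrametric algebra of convergent power series --- is a standard and correct approach to the non-archimedean implicit function theorem, and is essentially how such results are proved in the literature (including via the equivalent Newton/Hensel iteration you mention). There is no gap to flag; you have supplied more than the paper does.
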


As a corollary we have the Inverse Function Theorem:

\begin{fact}\label{inversionf}(Inverse Function Theorem, Theorem 10.10 in \cite{LAG})
Let $F(x)$ be analytic at $0$, assume $F(0)=0$ and $F_x (0)\neq 0$ then there is an unique analytic function $G$ converging in a neighborhood of $0$ such that $F(G(y))=y$ for each $y$ in such a neighborhood.
\end{fact}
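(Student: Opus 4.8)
The plan is to deduce this from the Implicit Function Theorem (Fact \ref{implicitFunctionTheoremf}) applied to the two-variable function
$$H(y,x) := F(x) - y,$$
with the variable we wish to solve for, namely $x$, placed in the \emph{last} coordinate, as that fact requires.

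First I would check that $H$ is analytic at $(0,0)\in K^2$: re-expanding $F$ as a power series centered at $0$ on a (possibly smaller) open neighborhood of $0$ — legitimate since $F$ is analytic there — gives $H(y,x) = \sum_{n\ge 0}a_n x^n - y$ on a neighborhood of $(0,0)$, so $H$ is analytic at that point; moreover $H(0,0) = F(0) = 0$ and $\frac{\partial H}{\partial x}(0,0) = F_x(0) \neq 0$ by hypothesis. Hence Fact \ref{implicitFunctionTheoremf} (with $n=2$, the variables being $y$ then $x$, and $z = (0,0)$) produces open sets $U_1 \subseteq K$ in the $y$-variable, $U_2 \subseteq K$, an open $U'$ with $(0,0)\in U'\cap(U_1\times U_2)$, and an analytic $G\colon U_1 \to U_2$ such that
$$\{(y,x)\in U' : F(x)=y\} = \{(y,G(y)) : y\in U_1\}.$$
Since $(0,0)$ lies in the left-hand set we get $G(0)=0$, and for every $y\in U_1$ we obtain $F(G(y))=y$. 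That is the desired identity, so existence is settled.

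For uniqueness, suppose $G_1,G_2$ are analytic near $0$ with $F(G_i(y))=y$. The graph description above also shows $F$ is injective on a neighborhood of $0$ (two preimages of one value lying in $U'$ must both equal $G(y)$), so from $F(G_i(0))=0=F(0)$ and $G_i(0)$ near $0$ we get $G_i(0)=0$; then by continuity $(y,G_i(y))\in U'$ for $y$ in a small enough neighborhood of $0$, and since $F(G_1(y))=F(G_2(y))=y$ both points lie in the graph set, forcing $G_1(y)=G_2(y)$ there and hence wherever both are defined.

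The only genuinely delicate point is the bookkeeping: Fact \ref{implicitFunctionTheoremf} solves for the final coordinate, so one must feed it $H(y,x)$ rather than $H(x,y)$, and one must extract the normalization $G(0)=0$ to pin the inverse down uniquely — without it the statement is false, since $F$ is only controlled near $0$. Beyond this the argument is routine: it uses only continuity of $G$ and the function-graph form of the conclusion of the Implicit Function Theorem, and no non-archimedean subtleties intervene, all of that having been packaged into Fact \ref{implicitFunctionTheoremf}.
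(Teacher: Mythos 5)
Your derivation is correct, and it follows exactly the route the paper indicates: the paper states this Fact without proof (citing Theorem 10.10 of \cite{LAG}) and merely remarks that it is a corollary of the Implicit Function Theorem, which is precisely what you carry out by applying Fact \ref{implicitFunctionTheoremf} to $H(y,x)=F(x)-y$ with the solved-for variable in the last coordinate. Your attention to the normalization $G(0)=0$ and to the graph-form conclusion is the right bookkeeping, so nothing further is needed.
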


Now we state some strong results on valued fields, first a theorem about continuity of roots:

\begin{fact}\label{continuityOfRootsTheoremf}(Continuity of Roots)

Assume $K$ is complete. Suppose $U_1\subseteq K$ and $U_2\subseteq K^m$ are open sets. Let 
$$F:U_1\times U_2\to K$$ be an analytic function at $(a,b)\in U_1\times U_2$. Assume that the function $x\mapsto F(x,b)$ has a zero of multiplicity $d>0$ at $x=a$. Then there are open sets $U_a$ and $U_b$ with 
$a\in U_a\subseteq U_1$ and
$b\in U_b\subseteq U_2$ such that:

\begin{enumerate}
    \item $a$ is the unique zero of $F(x,b)$ in $U_a$.
    \item For each $y\in U_b$ the function $x\mapsto F(x,y)$ has exactly $d$ zeros in $U_a$ (counting multiplicities)
\end{enumerate}

\end{fact}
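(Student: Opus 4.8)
After a translation in each variable we may assume $a=0$ and $b=0$ and prove the statement in these coordinates, the general case following by translating back. Thus $F(x,0)=\sum_{n\ge d}b_nx^n$ with $b_d\neq 0$; in other words $F$ is regular of order $d$ in the variable $x$ at the origin. The plan is to apply the Weierstrass Preparation Theorem for (germs of) convergent power series over the complete field $K$ (see, e.g., \cite{LAG}), which produces a polydisc $D_1\times D_2$ around the origin, with $D_1\subseteq U_1$ and $D_2\subseteq U_2$, and a factorization
$$F(x,y)=e(x,y)\cdot\omega(x,y),\qquad \omega(x,y)=x^d+c_{d-1}(y)x^{d-1}+\cdots+c_0(y),$$
valid on $D_1\times D_2$, where $e$ is analytic and nowhere vanishing on $D_1\times D_2$ and each $c_k$ is analytic on $D_2$. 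Moreover $c_k(0)=0$ for all $k$, since $F(x,0)=e(x,0)x^d$ and $\omega(\cdot,0)$, being monic of degree $d$, must then equal $x^d$. As $e$ has no zeros, for each fixed $y\in D_2$ the zeros of $x\mapsto F(x,y)$ lying in $D_1$ are exactly the roots of the monic polynomial $\omega(\cdot,y)$ lying in $D_1$, and with the same multiplicities (a nowhere‑vanishing analytic factor does not change the order of a zero).

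Now fix a valuative radius $\gamma$ with $B_\gamma(0)\subseteq D_1$ and set $U_a:=B_\gamma(0)$. For $y=0$ we have $\omega(x,0)=x^d$, so $0$ is the unique zero of $F(\cdot,0)$ in $U_a$; this is clause (1). For clause (2), fix $y\in D_2$. Since $K$ is algebraically closed, $\omega(x,y)=\prod_{j=1}^d\bigl(x-\rho_j(y)\bigr)$ for suitable $\rho_1(y),\dots,\rho_d(y)\in K$, listed with multiplicity. A Newton polygon estimate for the monic polynomial $\omega(\cdot,y)$ gives
$$v\bigl(\rho_j(y)\bigr)\ \ge\ \frac1d\,\min_{0\le k\le d-1}v\bigl(c_k(y)\bigr)\qquad\text{for }j=1,\dots,d:$$
the rightmost segment of the Newton polygon of $\omega(\cdot,y)$ joins some vertex $(k^*,v(c_{k^*}(y)))$ with $k^*<d$ to $(d,0)$, it has the largest slope among all the segments, and hence every root has valuation at least $v(c_{k^*}(y))/(d-k^*)\ge v(c_{k^*}(y))/d$. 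Since each $c_k$ is analytic with $c_k(0)=0$, there is a ball $U_b\subseteq D_2$ around $0$ on which $v(c_k(y))>d\gamma$ for all $k=0,\dots,d-1$. Then for every $y\in U_b$ each $\rho_j(y)$ has valuation $>\gamma$, hence lies in $U_a$; so $\omega(\cdot,y)$, and therefore $F(\cdot,y)$, has exactly $d$ zeros in $U_a$ counted with multiplicity. This is clause (2), and translating the coordinates back finishes the proof.

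The essential inputs are the Weierstrass Preparation Theorem in the convergent (not merely formal) category — which is what makes the factorization hold on a genuine polydisc rather than only at the level of power series — together with the elementary Newton polygon bound; granting these, the remainder is bookkeeping about shrinking the polydiscs. (For $d=1$ one can bypass Weierstrass preparation and simply apply the Implicit Function Theorem, Fact \ref{implicitFunctionTheoremf}, to $F$, viewed as a function of $(y,x)$, to obtain an analytic $y\mapsto\alpha(y)$ with $\alpha(0)=0$ and $F(\alpha(y),y)=0$, which gives both clauses at once.) I expect the only delicate points to be organizing the simultaneous shrinking of the domains of $e$, $\omega$ and the $c_k$, and locating a reference stating Weierstrass preparation in exactly this analytic‑over‑a‑complete‑valued‑field form.
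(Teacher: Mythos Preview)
Your proof is correct and follows essentially the same route as the paper: reduce to the origin, apply Weierstrass preparation (the paper cites Theorem~(10.3)(2) of \cite{LAG} for the factorization $F=F^*\delta$ with $F^*$ a Weierstrass polynomial), and then argue that for $y$ near $0$ all $d$ roots of the Weierstrass polynomial land in a prescribed ball. The only difference is that where the paper simply invokes Theorem~(11.3) of \cite{LAG} for this last step, you supply an explicit Newton polygon bound; your version is in fact slightly more careful about naming $U_a$ explicitly.
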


\begin{proof}
We may assume $(a,b)=(0,0)$. By Theorem (10.3)(2) of \cite{LAG} there is some $F^*(x,y)=f_0(y)+f_1(y)x+\ldots+x^d$ such that each $f_j$ is analytic, $f_i(0)=0$ for $i=1,2,\ldots, d-1$, and there is some $\delta$ a unit of $K[[x,y]]$ such that $F=F^* \delta$. Because of Theorem (11.3) of \cite{LAG} there is an open set $U_b$ such that each $f_i$ is convergent in $U_b$, $0\in U_b$ and for all $y\in K$ if $y\in U_b$ and $F^*(x,y)=0$ then $x\in U_1$. Moreover, for all $y\in U_b$ the polynomial $F^*(*,y)$ is a polynomial of degree $d$ in the first variable so as $K$ is algebraically closed there are exactly $d$ roots (counting multiplicities) and all of them belong to $U_1$ 
\end{proof}

We will also need an identity theorem for expansion of power series:

\begin{fact}\label{identityTheoremf} (Identity Theorem (10.5.2) of \cite{LAG})

Let $$f(x_1,\ldots,x_n)=\sum_I a_I x_1^{i_1}\ldots x_x^{i_n}$$ be a power series converging in a neighborhood $D(f)$ of $(0,\ldots,0)$. Assume that $f(x)=0$ for all $x$ in some open set $U\subseteq D(f)$, then $a_I=0$ for all $I$.
\end{fact}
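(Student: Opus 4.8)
The plan is to prove the statement by induction on the number of variables $n$, the crux being the one-variable case: a convergent power series in one variable over the complete valued field $K$ that vanishes on a nonempty open set is identically zero.

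For $n=1$: since the open balls form a basis for the valuation topology, the nonempty open set $U$ contains some ball $B_\gamma(c)$, and such a ball is infinite, so in particular it contains points other than $c$. Replacing $f(x)$ by $f(x+c)$ — which is again a power series converging on a ball about $0$, by the recentering of a power series inside its disc of convergence (a standard fact from \cite{LAG}) — I may assume $f(x)=\sum_{k\geq 0}b_k x^k$ vanishes on $B_\gamma(0)$. Suppose toward a contradiction that $f\not\equiv 0$ and let $d=\min\{k:b_k\neq 0\}$. Then $h(x):=\sum_{k\geq d}b_k x^{k-d}$ is again a convergent power series, hence continuous in the valuation topology (its partial sums are polynomials converging uniformly on small balls), and $h$ vanishes on $B_\gamma(0)\setminus\{0\}$, which is nonempty; by continuity $h(0)=b_d=0$, a contradiction. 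Hence all $b_k$, and therefore all $a_I$, vanish.

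For the inductive step, assume the statement in $n-1$ variables and let $f(x_1,\ldots,x_n)=\sum_I a_I x^I$ vanish on an open set $U\subseteq K^n$. Products of balls form a basis for the valuation topology on $K^n$, so after recentering as above I may assume $f$ vanishes on a polyball $B_1\times\cdots\times B_n$ about $0$. Regroup the series by the last variable, $f=\sum_{j\geq 0} f_j(x_1,\ldots,x_{n-1})\, x_n^j$, where $f_j$ is a power series in $n-1$ variables converging on $B_1\times\cdots\times B_{n-1}$ (this regrouping is a Fubini-type fact for the unconditionally convergent non-archimedean series, again available from \cite{LAG}). Fixing $(c_1,\ldots,c_{n-1})\in B_1\times\cdots\times B_{n-1}$, the one-variable series $x_n\mapsto f(c_1,\ldots,c_{n-1},x_n)=\sum_j f_j(c_1,\ldots,c_{n-1})x_n^j$ converges and vanishes on $B_n$, so by the case $n=1$ every $f_j(c_1,\ldots,c_{n-1})$ is $0$. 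Since $(c_1,\ldots,c_{n-1})$ was arbitrary in the polyball $B_1\times\cdots\times B_{n-1}$, each $f_j$ vanishes on an open set, and the inductive hypothesis forces every coefficient of every $f_j$ to vanish, i.e. $a_I=0$ for all $I$.

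The routine induction above relies on three facts that I would simply cite from \cite{LAG} rather than reprove: recentering a power series at an interior point of its polydisc of convergence, continuity of a convergent power series for the valuation topology, and the regrouping $f=\sum_j f_j x_n^j$ with each $f_j$ convergent on the smaller polyball. Given these, the only point requiring real care is the recentering step, since $U$ is assumed merely open, not a polyball about the origin, so the reduction to vanishing on a polyball about $0$ must pass through a point of $U$ that need not be $0$.
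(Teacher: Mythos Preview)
The paper does not give a proof of this statement: it is recorded as a Fact and simply cited from \cite{LAG}. Your argument is correct and is the standard one. One small point worth making explicit for a reader: when you recentre by replacing $f(x)$ with $g(x)=f(x+c)$, you are using that the substitution $x\mapsto x+c$ induces an automorphism of the ring of convergent power series, so the vanishing of all coefficients of $g$ is equivalent to the vanishing of all coefficients of $f$; otherwise the reduction to a polyball about the origin would only yield that $f$ is the zero \emph{function}, which is what you are trying to upgrade to the vanishing of all $a_I$.
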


As a corollary we have:

\begin{lemma}\label{puedoRestringirderivada}
    Let $L(s)$ be an analytic function converging in some open set $V\ni 0$. Assume that 
    $L(V)$ is infinite and also that $L(0)=0$ then, there is a neighborhood $W$ of $0$ such that for all $s\in W\setminus\{0\}$, $L(s)\neq 0$.
\end{lemma}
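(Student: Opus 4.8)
The plan is to expand $L$ as a power series around $0$ and factor out its leading term, reducing to the non-vanishing near $0$ of a power series with non-zero constant term. First I would use that $L$ is analytic in $V$ to re-expand it around the point $0\in V$, obtaining a representation $L(s)=\sum_{n\ge 0}b_n s^n$ converging on some open ball $W_0$ with $0\in W_0\subseteq V$; since $L(0)=0$ we have $b_0=0$.

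Next I would check that not all the $b_n$ vanish. If they did, then $L$ would be identically $0$ on $W_0$, and applying the Identity Theorem (Fact \ref{identityTheoremf}) to the power series defining $L$ (which then agrees with the zero series on the nonempty open set $W_0$, after recentering) forces all of its coefficients to be $0$, so $L\equiv 0$ on $V$ and $L(V)=\{0\}$ is finite, contradicting the hypothesis that $L(V)$ is infinite. Hence $d:=\min\{n:b_n\neq 0\}$ is well defined, and $d\ge 1$ because $b_0=0$.

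Then I would factor $L(s)=s^d h(s)$ with $h(s)=\sum_{n\ge 0}b_{n+d}s^n$; this is again a convergent power series on $W_0$ (the summand $b_{n+d}s^n$ tends to $0$ whenever $b_n s^n$ does) and is analytic at $0$, with $h(0)=b_d\neq 0$. In the spirit of Lemma \ref{derivativeZeroIsDoubleLemma}, the point of this factorization is to isolate the zero of $L$ at $0$. Since convergent power series are continuous in the valuation topology (as used in the proof of Proposition \ref{decompositionProp}) and $K\setminus\{0\}$ is open, the preimage $h^{-1}(K\setminus\{0\})$ is an open set containing $0$, hence contains a ball $W$ around $0$, which we may assume lies in $W_0$. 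For every $s\in W\setminus\{0\}$ we then have $s^d\neq 0$ and $h(s)\neq 0$, so $L(s)=s^d h(s)\neq 0$, which is exactly the conclusion.

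I expect the only mildly delicate points to be purely bookkeeping: justifying the re-expansion of $L$ around $0\in V$ and checking that the tail series $h$ still converges on a neighborhood of $0$. The two genuinely load-bearing steps — ruling out $L\equiv 0$ via the Identity Theorem and getting non-vanishing of $h$ near $0$ from continuity — are immediate from the facts already assembled in this section.
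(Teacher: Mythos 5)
Your proof is correct, but it takes a genuinely different route from the paper's. The paper argues by contradiction: if the conclusion fails, it asserts that there is an open ball $B\subseteq V$ on which $L$ vanishes identically, and then Fact \ref{identityTheoremf} forces $L\equiv 0$ on $V$, contradicting $|L(V)|=\infty$. You instead argue directly: you use the Identity Theorem only to guarantee that the recentered series $\sum b_n s^n$ is not the zero series, extract the order of vanishing $d\geq 1$, factor $L(s)=s^d h(s)$ with $h(0)=b_d\neq 0$, and conclude by continuity of $h$. Your version is arguably the safer one: the paper's first step is not the literal negation of the conclusion (failure of the conclusion only yields zeros of $L$ accumulating at $0$, not an open ball of zeros; passing from an infinite zero set to a ball via Corollary \ref{existsInf} would require the zero set to be $\mathbb K$-definable, which an arbitrary analytic $L$ need not satisfy), whereas your factorization argument shows outright that a nonzero analytic function has an isolated zero at $0$. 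The price you pay is the bookkeeping you already flagged — recentering the series at $0$ (which Proposition \ref{coefficientsAreAnalyticProp} handles) and checking that the tail series $h$ converges, which follows from $v(b_{n+d}s^{n})=v(b_{n+d}s^{n+d})-d\,v(s)\to\infty$ together with Lemma \ref{convergenciaFacilLemma}. Both of those check out, so the argument is complete.
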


\begin{proof}
    If it is not the case, then there is an open ball $B\subseteq V$ such that for all $s\in B$ $L(s)=0$ but as $L$ is a power series we can use Fact \ref{identityTheoremf} and conclude that $s\mapsto L(s)$ is the zero function on $V$ but this contradicts the assumption that $$\{L(s):s\in V\}$$ is infinite.
\end{proof}

\section{Binary Polynomials}\label{somePreparations}


Here we give some basic definitions and reductions on plane curves that we use in Chapter \ref{additiveCaseVersion}. In this section we will not use the valuative structure on $\mathbb K$ so all the results follow for any algebraically closed field $K$. We set $p:=\text{char}(K)\geq 0$.

\begin{lemma}\label{pickCoordinateLemmaP}
 Let $F(x,y)\in K[x,y]$ be an irreducible polynomial and suppose that $F$ is not constant. If there are infinitely many points $(a,b)\in K^2$ such that 
 \begin{equation}
     F(a,b)= \frac{\partial F}{\partial y}(a,b)=0,
 \end{equation}
 then $F(x,y)=G(x,y^{p})$ for some polynomial $G$.
\end{lemma}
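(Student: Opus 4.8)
The plan is to use Bézout's theorem to force $\partial F/\partial y$ to vanish identically, and then to read off the conclusion from the shape of a polynomial whose $y$-derivative is zero in characteristic $p$.

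First I would set $H:=\partial F/\partial y$ and note that the hypothesis says the zero sets of $F$ and of $H$ meet in infinitely many points. If $F$ and $H$ had no common non-constant divisor, then in particular $H\neq 0$ and Fact~\ref{bezoutTheorem} would bound the number of common zeros by $\deg(F)\deg(H)<\infty$, contradicting the hypothesis. So $F$ and $H$ share a non-constant divisor, and since $F$ is irreducible that divisor is an associate of $F$; hence $F\mid H$. But each monomial of $H$ comes from a monomial of $F$ by lowering the $y$-exponent by one, so $\deg H\le \deg F-1$ as soon as $H\neq 0$. Together with $F\mid H$ this forces $H=0$, i.e.\ $\partial F/\partial y\equiv 0$.

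Next I would translate $\partial F/\partial y\equiv 0$ into the desired normal form. Writing $F(x,y)=\sum_{j\ge 0}c_j(x)y^j$ with $c_j\in K[x]$, the vanishing of $\sum_{j\ge 1}jc_j(x)y^{j-1}$ gives $jc_j(x)=0$ in $K[x]$ for every $j\ge 1$; since $\mathrm{char}(K)=p$, this means $c_j=0$ whenever $p\nmid j$ (when $p=0$ this degenerates to $F\in K[x]$, a case irrelevant to the intended application). Collecting the surviving terms yields $F(x,y)=\sum_{k\ge 0}c_{pk}(x)(y^p)^k=G(x,y^p)$ with $G(x,w):=\sum_{k\ge 0}c_{pk}(x)w^k$, as required.

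The step I expect to require the most care is the Bézout argument: one must be sure that $\deg(\partial F/\partial y)$ is \emph{strictly} less than $\deg F$ when the derivative is non-zero, so that divisibility $F\mid\partial F/\partial y$ genuinely fails, and one must invoke the affine finiteness clause of Fact~\ref{bezoutTheorem} rather than the exact projective intersection count. The remaining coefficient manipulation is routine.
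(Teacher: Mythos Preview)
Your proof is correct and follows essentially the same approach as the paper's: both use B\'ezout's theorem together with the strict degree drop $\deg(\partial F/\partial y)<\deg F$ and the irreducibility of $F$ to force $\partial F/\partial y\equiv 0$, and then read off the $y^p$-structure from the vanishing of $jc_j(x)$ in characteristic $p$. The only cosmetic difference is that the paper rules out a common factor \emph{before} invoking B\'ezout, whereas you argue by contradiction and then use $F\mid \partial F/\partial y$ to reach the same conclusion; the paper also records the characteristic-zero case explicitly as $F\in K[x]$, which you note in passing.
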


\begin{proof}

Note first that $\deg(\frac{\partial F}{\partial y})<\deg(F)$. So as $F$ is irreducible, if $\frac{\partial F}{\partial y}\neq 0$ we can use B\'ezout's Theorem (Fact \ref{bezoutTheorem}) to conclude that the number of common zeros of $F$ and $\frac{\partial F}{\partial y}$ is at most $\deg F\cdot \deg \frac{\partial F}{\partial y}$. So if $\frac{\partial F}{\partial y}$ has infinitely many common zeros with $F$ it is because $\frac{\partial F}{\partial y}=0$.

 If char$(K)=0$ and $\frac{\partial F}{\partial y}=0$ it follows that $F(x,y)=G(x)$ for some polynomial $G(x)$.
 
So assume char $(K)=p>0$. Let $V\subseteq K^2$ the set of zeros for $F$. 


Assume $\frac{\partial F}{\partial y}$ has infinitely many zeros at $V$ so $\frac{\partial F}{\partial y}=0$ and write $$F(x,y)=f_0(x)+f_1(x) y +\ldots+f_n(x) y^n$$ with $f_i(x)\in K[x]$ for $i=0,1,\ldots, n$.  As $$\frac{\partial F}{\partial y}(x,y)=f_1(x)+2f_2(x)y+\ldots+nf_n(x)y^{n-1}=0,$$ so for any $a$ and any $i=1,\ldots, n$ we have that $if_i(a)=0$. If some $f_i$ is different from zero we can take $a\in K$ such that $f_i(a)\neq 0$ and $if_i(a)=0$ implies that $i=0$ so $i=mp$ for some $m$. Therefore, if $f_i$ is different from zero then $p\mid i$ so $F(x,y)$ is a polynomial in the variable $y^p$.   
 \end{proof}
 
 As an immediate corollary we have:
 
 \begin{lemma}\label{pickCoordinateLemma}
 Assume $F(x,y)$ is an irreducible polynomial and let $V$ the set of zeros of $F$ then there is $E$, some finite subset of $V$, such that either for all $a\in V\setminus E$, $F_x(a)\neq 0$ or for all $a\in V\setminus E$, $F_y(a)\neq 0$.
 
 \end{lemma}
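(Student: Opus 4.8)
The plan is to argue by contradiction. Unravelling the statement, the negation of the conclusion is precisely that \emph{both} of the sets $V\cap\{a:F_x(a)=0\}$ and $V\cap\{a:F_y(a)=0\}$ are infinite (the clause ``there is a finite $E$ with $F_y(a)\neq 0$ on $V\setminus E$'' just says $V\cap\{a:F_y(a)=0\}$ is finite, and similarly for $F_x$). So I would assume both sets are infinite and then derive that $F$ fails to be irreducible.

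First I would dispose of $\mathrm{char}(K)=0$ as a warm-up. If $F_y\not\equiv 0$ then $\deg F_y<\deg F$, so by irreducibility $F$ and $F_y$ have no common non-constant factor and B\'ezout (Fact~\ref{bezoutTheorem}) makes their common zero set finite, contradicting the assumption; hence $F_y\equiv 0$, which in characteristic $0$ forces $F\in K[x]$, hence (as $K$ is algebraically closed and $F$ irreducible) $F=c(x-a_0)$, so $F_x\equiv c\neq 0$ never vanishes — again contradicting that $V\cap\{F_x=0\}$ is infinite. So from here on $\mathrm{char}(K)=p>0$.

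The heart of the argument is a double application of Lemma~\ref{pickCoordinateLemmaP}. Since $F$ has infinitely many common zeros with $F_y$, that lemma gives $F(x,y)=G(x,y^p)$; invoking it in the symmetric form with the roles of $x$ and $y$ interchanged, the infinitude of the common zeros of $F$ and $F_x$ gives $F(x,y)=H(x^p,y)$. Comparing these two expressions monomial by monomial, every monomial $c\,x^iy^j$ occurring in $F$ must have $p\mid i$ and $p\mid j$, so $F(x,y)=P(x^p,y^p)$ for some $P\in K[x,y]$. Now I would use that $K$, being algebraically closed, is perfect: replacing each coefficient of $P$ by its $p$-th root and using additivity of the Frobenius in characteristic $p$ rewrites $F$ as $Q^p$ for some $Q\in K[x,y]$ with $\deg Q=(\deg F)/p\geq 1$. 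Since $p\geq 2$, $F=Q\cdot Q^{p-1}$ is a non-trivial factorisation, contradicting the irreducibility of $F$, which finishes the proof.

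I expect the only real care to be needed in the bookkeeping around the two degenerate directions: checking that Lemma~\ref{pickCoordinateLemmaP} can honestly be invoked in the $x$-direction (it is stated only for $\partial F/\partial y$, so this is purely a symmetry remark) and making sure the $\mathrm{char}(K)=0$ case and the ``$F$ depends on one variable only'' case are not accidentally skipped. The remaining steps — the monomial comparison and the passage $P(x^p,y^p)=Q^p$ over a perfect field — are routine in characteristic $p$.
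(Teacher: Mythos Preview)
Your proposal is correct and follows essentially the same approach as the paper: argue by contradiction, apply Lemma~\ref{pickCoordinateLemmaP} in both coordinate directions to write $F(x,y)=P(x^p,y^p)$, and then use perfectness of $K$ to factor $F=\tilde G^p$, contradicting irreducibility. Your version is more explicit about the characteristic~$0$ case and the monomial-by-monomial comparison, but the paper's proof is the same argument compressed into three sentences.
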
 

\begin{proof}
Assume it is not the case, then, both $F_x$ and $F_y$ have infinitely many zeros in $V$. So using Lemma \ref{pickCoordinateLemmaP} one can conclude that $F(x,y)=G(x^p,y^p)$ for some polynomial $G$ but then $F(x,y)=(\tilde G(x,y))^p$ for some polynomial $\tilde G$ and then $F$ is not irreducible.
\end{proof}

\section{Power Series}

In this section we provide the basic facts on power series that we will use. Here we work in the setting of a complete algebraically closed valued field $\mathbb K$ and let $p=\text{char}(K)$

We start with a well known fact:

\begin{lemma}\label{convergenciaFacilLemma}
    The series 
    $$\sum_{n\geq 0} a_n$$ 
    converges if and only if 
    $$\lim_{n\to \infty} v(a_n)=\infty.$$
\end{lemma}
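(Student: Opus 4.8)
The plan is to work directly with the partial sums $S_N := \sum_{n=0}^N a_n$ and the metric $d(x,y) = q^{-v(x-y)}$, using the standing assumption that $K$ is complete to reduce convergence of the series to the sequence $(S_N)_N$ being Cauchy.

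For the forward direction, I would argue that if $\sum_{n\geq 0} a_n$ converges to some $S \in K$, then $a_N = S_N - S_{N-1}$ is a difference of two sequences both tending to $S$, hence $a_N \to 0$ in the metric; since $d(a_N,0) = q^{-v(a_N)}$ and $q > 1$, this forces $v(a_N) \to \infty$. This direction uses neither completeness nor the ultrametric inequality.

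For the converse, assume $v(a_n) \to \infty$. I would show $(S_N)_N$ is Cauchy. For $M > N$, repeatedly applying clause 3 of the definition of a valuation gives
$$v(S_M - S_N) = v\Big(\sum_{n=N+1}^M a_n\Big) \geq \min_{N < n \leq M} v(a_n).$$
Given any real $\gamma$, pick $N_0$ with $v(a_n) > \gamma$ for all $n > N_0$ (possible since $v(a_n) \to \infty$ and $\Gamma \subseteq \RR$); then for all $M > N \geq N_0$ we get $v(S_M - S_N) > \gamma$, i.e. $d(S_M, S_N) < q^{-\gamma}$. Letting $\gamma \to \infty$ shows $(S_N)_N$ is Cauchy, and completeness of $K$ yields a limit, so $\sum_{n\geq 0} a_n$ converges.

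The argument is elementary; the only point worth isolating is the ultrametric estimate $v\big(\sum b_n\big) \geq \min_n v(b_n)$ for finite sums, which follows by induction from axiom 3 of a valuation and is precisely what makes this non-archimedean convergence criterion so much simpler than its classical counterpart. I do not anticipate any genuine obstacle here.
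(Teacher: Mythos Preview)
Your proof is correct and follows essentially the same approach as the paper: both directions hinge on the identity $a_{n+1}=S_{n+1}-S_n$, with the forward direction deducing $v(a_n)\to\infty$ from convergence (you via $a_N\to 0$, the paper via the Cauchy condition directly) and the converse using the ultrametric estimate $v(S_M-S_N)\geq\min_{N<n\leq M} v(a_n)$ together with completeness. There is no substantive difference.
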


\begin{proof}
If the series converges then the sequence of partial sums $(s_n)_{n\geq 0}$ is Cauchy, in particular if $\gamma\in \Gamma$, there is some $k$ such that for $n\geq k$, $v(a_{n+1})=v(s_{n+1}-s_n)\geq \gamma$ so 
$$\lim_{n\to \infty} v(a_n)=\infty.$$

For the other direction let $\gamma\in \Gamma$, as 
$$\lim_{n\to \infty} v(a_n)=\infty$$ there is some $k$ such that for $n\geq k$, $v(a_n)\geq \gamma $ so if $m\geq n\geq k$ 
$$v(s_m-s_n)= v(a_{n+1}+\ldots+a_{m})\geq \min \{v(a_{n+1}),\ldots,v(a_m)\}\geq \gamma.$$
Thus, the sequence of partial sums is Cauchy and as $\mathbb K$ is complete, the series converges.
\end{proof}

\begin{prop}\label{coefficientsAreAnalyticProp}
   Let $U\ni 0$ be open and let $f:U\to K$ be an analytic function. For $a\in U$ and $m\in \mathbb N$ let $e_m(a)$ be the coefficient of  $(x-a)^m$ in the power expansion of $f$ at $a$, so
    $$f(x)=\sum_{m\geq 0} e_m(a) (x-a)^m,$$
    then $e_m(a)$ is a power series in the variable $a$ that converges at $a$.
\end{prop}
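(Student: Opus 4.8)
The plan is to compute $e_m(a)$ explicitly by re-expanding, around each point, the single power series that defines $f$. Since $f$ is analytic in $U$, there are $a_0\in U$ and a one-variable power series $g(x)=\sum_{n\ge 0}a_n x^n$ converging on $U-a_0$ with $f(z)=g(z-a_0)$ for all $z\in U$. Fixing $a\in U$ and writing $w_a:=a-a_0$, I would re-center $g$ at $w_a$: writing $w=w_a+(w-w_a)$ and expanding each $w^n$ by the binomial theorem gives, after interchanging the two summations, $g(w)=\sum_{m\ge 0}E_m(w_a)(w-w_a)^m$ with $E_m(t):=\sum_{n\ge m}\binom{n}{m}a_n t^{n-m}$. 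Substituting $w=x-a_0$ (so that $w-w_a=x-a$) yields $f(x)=\sum_{m\ge 0}E_m(a-a_0)(x-a)^m$, and by uniqueness of the power expansion (Fact \ref{identityTheoremf}) this forces $e_m(a)=E_m(a-a_0)=\sum_{n\ge m}\binom{n}{m}a_n(a-a_0)^{n-m}$. As a function of $a$ this is visibly the power series $E_m$ evaluated at $a-a_0$, i.e.\ the power series in the variable $a$ with coefficients $\binom{m+j}{m}a_{m+j}$, $j\ge 0$ (a harmless shift of center).

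It then remains only to check convergence at $a$. Since $a-a_0$ lies in the disc of convergence of $g$, the series $\sum_n a_n(a-a_0)^n$ converges, so Lemma \ref{convergenciaFacilLemma} gives $v\big(a_n(a-a_0)^n\big)\to\infty$. For $a\ne a_0$ and each $n\ge m$,
\[ v\!\left(\binom{n}{m}a_n(a-a_0)^{n-m}\right)=v\!\left(a_n(a-a_0)^n\right)+v\!\left(\tbinom{n}{m}\right)-m\,v(a-a_0)\ \ge\ v\!\left(a_n(a-a_0)^n\right)-m\,v(a-a_0), \]
where I use that $\binom{n}{m}$ is a positive integer, hence lies in the valuation ring, so $v\big(\binom{n}{m}\big)\ge 0$. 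The right-hand side tends to $\infty$ because $-m\,v(a-a_0)$ is a fixed element of $\Gamma$; hence by Lemma \ref{convergenciaFacilLemma} the series $E_m(a-a_0)$ converges. The case $a=a_0$ is trivial, since then only the $n=m$ term survives and $e_m(a_0)=a_m$.

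The one step needing genuine care — and the main obstacle — is the interchange of summations used to pass from $\sum_n a_n\big((w-w_a)+w_a\big)^n$ to $\sum_m E_m(w_a)(w-w_a)^m$. This is exactly where the non-archimedean setting helps: on a sufficiently small ball around $w_a$ the doubly indexed family $\binom{n}{m}a_n w_a^{n-m}(w-w_a)^m$ ($0\le m\le n$) has valuations tending to $\infty$ with $n$, uniformly in $m$, so the family is summable and may be summed in either order; grouping by $m$ then both produces the claimed expansion and identifies $E_m(w_a)$ as the true Taylor coefficient. Once this summability estimate is in place, everything else is the elementary valuation computation above together with Lemma \ref{convergenciaFacilLemma}.
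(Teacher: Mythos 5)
Your proof is correct and follows essentially the same route as the paper's: re-center via the binomial expansion, read off $e_m(a)=\sum_{k\ge 0}\binom{m+k}{m}b_{m+k}a^k$ (up to the harmless shift by the center $a_0$), and verify convergence with Lemma \ref{convergenciaFacilLemma} using $v\big(\binom{n}{m}\big)\ge 0$. The only difference is that you explicitly justify the interchange of the double summation via non-archimedean summability, a step the paper's proof leaves implicit.
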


\begin{proof}
Let $g(x)=f(x+a)$, so $g(x)$ converges in some neighborhood of $0$. Indeed, if we take $U$ as an open ball, then $g(x)$ converges on $U$. Assume that the power expansion of $f(x)$ around $0$ is:

$$f(x)=\sum_{n\geq 0} b_n x^n$$ then 

$$f(x+a)=\sum_{n\geq 0}b_n (x+a)^n  = \sum_{n\geq 0} b_n \sum_{k=0}^{n} \binom{n}{k} x^{k} a^{n-k}$$

where if $m$ is the natural number $\binom{n}{k}$, the coefficient $\binom{n}{k}$ in the last expression is gotten by adding $1$ $m$ times on $K$. 

In this expansion, the coefficient of $x^m$ is 
\begin{equation}\label{definitionEm}
e_m(a)=\sum_{n\geq m} \binom{n}{k} b_n a^{m-n}=\sum_{k\geq 0} \binom{m+k}{m} b_{m+k} a^k.
\end{equation}

Thus, $f(x)=g(x-a)$ has power expansion at $a$ given by:

$$f(x)=\sum_{m\geq 0} e_m(a)(x-a)^m.$$

So we have to prove that the power series of Equation \ref{definitionEm} converges. By Lemma \ref{convergenciaFacilLemma} it is enough to prove that: 

$$\lim_{k\to \infty} v\left(\binom{m+k}{m} b_{m+k} a^k\right)=\infty.$$ 

For this let $\gamma \in \Gamma$. Note that

$$ v\left(\binom{m+k}{m} b_{m+k} a^k\right)\geq v\left( b_{m+k} a^k \right)$$

and as the series 
$$\sum_{k\geq 0} b_k a^k$$ converges, 
$$\lim_{k\to \infty} v\left(b_k a^k\right)=\infty$$ 
so for $k$ big enough we have that $v(b_{m+k} a^{k+m})\geq \gamma+mv(a)$, but 
$$v(b_{m+k} a^{m+k})=v(b_{m+k} a^{k})+mv(a)$$

Thus, 
$$v(b_{m+k} a^k)\geq \gamma$$
so
$$\lim_{k\to\infty} v(b_{m+k} a^k)=\infty$$ and we get the result. 

\end{proof}

Notice that it makes sense to multiply power series and the product converges whenever both series converges.

\begin{defi}\label{defN}
Let $$f(x)=\sum_{n\geq 1} b_n x^n$$ be a power series converging in a neighborhood $D(f)$ of $0$. For $a\in D(f)$ define $f_a(x)=f(x+a)-f(a)$. Suppose $$f_a(x)=\sum_{n\geq 1}b_{n,a} x^n$$ and define 
\begin{equation*}
\begin{split}
s_n(f) &:=\{b_{n,a}:a\in D(f)\}  \\ 
N_+(f) &:=\min\{n:s_n(f)\text{ is infinite}\}
\end{split}
\end{equation*}
\end{defi}

\begin{lemma}\label{N}
Let $f(x)$ be a function analytic at $0$ such that $f(0)=0$ and $f'(0)\neq 0$. Let $g(x)$ be an analytic inverse for $f$ converging in some neighborhood of $0$. 
Let $$f(x)=\sum_i b_i x^i$$ and 

$$g(x)=\sum_j c_j x^j$$  be the power expansions of $f$ and $g$ respectively.

Then for each $n$, $c_n$ depends only on $b_1,\ldots,b_n$. Moreover, if $N_+(f)$ is finite, $N_+(g)\geq N_+(f)$.

\end{lemma}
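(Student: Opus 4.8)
The plan is to prove the two assertions in turn, deriving the second from the first applied to translates of $f$ and $g$.

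For the first assertion, I would substitute the power series into the identity $f(g(x))=x$ (valid in a neighbourhood of $0$ by Fact \ref{inversionf}) and compare coefficients; the resulting equalities of convergent power series are equalities of formal power series by Fact \ref{identityTheoremf}. Writing $f(g(x))=\sum_{i\geq 1}b_i\,g(x)^i$, the coefficient of $x^n$ has the shape $b_1 c_n + P_n(b_2,\dots,b_n,c_1,\dots,c_{n-1})$ for a universal integer-coefficient polynomial $P_n$, since $g(x)^i=(c_1x+c_2x^2+\cdots)^i$ contributes only $c_1,\dots,c_{n-i+1}$ to the coefficient of $x^n$. Comparing with the coefficient of $x^n$ on the right ($1$ if $n=1$, else $0$) and using $b_1=f'(0)\neq 0$, I get $c_1=b_1^{-1}$ and, for $n\geq 2$, $c_n=-b_1^{-1}P_n(b_2,\dots,b_n,c_1,\dots,c_{n-1})$. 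An induction on $n$ then shows $c_n$ is a fixed function — indeed a polynomial in $b_1^{-1},b_2,\dots,b_n$ — of $b_1,\dots,b_n$; write $c_n=\Phi_n(b_1,\dots,b_n)$ for this universal function.

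For the second assertion, I would first record that for $n\geq 1$ the coefficient $b_{n,a}$ of $x^n$ in $f_a$ equals the value at $a$ of a power series $e_n^f$ (this is Proposition \ref{coefficientsAreAnalyticProp}, since $f_a(x)=\sum_{m\geq 1}e_m(a)x^m$), and that $s_n(f)$ is infinite exactly when $e_n^f$ is non-constant: a power series taking finitely many values $v_1,\dots,v_k$ on an open ball would make $\prod_j(e_n^f(x)-v_j)$ vanish identically, hence (the power series ring being a domain) $e_n^f$ constant. Thus $N_+(f)=\min\{n:e_n^f\text{ non-constant}\}$, a property of the germ of $f$ at $0$, so shrinking domains is harmless; after shrinking I may assume $f$ and $g$ restrict to mutually inverse analytic bijections between balls about $0$ with $f'$ nowhere zero. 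The key computation is that for $a$ in the domain of $g$ and $b:=g(a)$ one has $g_a=(f_b)^{-1}$: indeed $f_b(0)=f(b)-a=0$ and $f_b'(0)=f'(b)\neq 0$, and direct substitution gives $g_a(f_b(x))=g(f(x+b))-b=x$ and $f_b(g_a(y))=y$.

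Applying the first assertion to the pair $(f_b,g_a)$ then yields $e_n^g(a)=\Phi_n\big(e_1^f(g(a)),\dots,e_n^f(g(a))\big)$ for all $a$. Hence if $n<N_+(f)$, so that $e_1^f,\dots,e_n^f$ are all constant, $e_n^g$ is constant too; therefore $N_+(g)\geq N_+(f)$. I expect the main obstacle to be isolating and verifying the composition identity $g_a=(f_{g(a)})^{-1}$, which is what makes the first assertion reusable along the family of translates; the rest is bookkeeping, plus the minor point that $N_+$ is intrinsic to the germ so that the domain reductions are legitimate.
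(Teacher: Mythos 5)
Your proof is correct and follows essentially the same route as the paper: compare coefficients in $f(g(x))=x$ to see that $c_n$ is a universal function of $b_1,\dots,b_n$, then transfer this to the translates to bound $N_+(g)$. You are in fact more careful than the paper on two points it leaves implicit — the identity $g_a=(f_{g(a)})^{-1}$ that justifies applying the first part to the translated pair, and the argument (via the identity theorem and the domain property of $K[[x]]$) that finiteness of $s_n$ forces the coefficient function to be constant — both of which are worth making explicit.
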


\begin{proof}
Fact \ref{inversionf} gives us the existence of $g$.

The inequality $N_+(g)\geq N_+(f)$ is a consequence of the usual Lagrange inversion formula but we present a proof anyway. We will prove that $c_n$ depends only on $b_1,b_2,\ldots,b_n$ by induction on $n$. For $n=1$, since $f\circ g(x)=x$, composing the power series we get $c_1=1/b_1$. Assume therefore that $c_n$ depends only on $b_1,\ldots, b_n$ and we will prove it for $n+1$. As we have that $$x=f(g(x))=\sum_i b_i\left(\sum_j c_j x^j\right)^i$$ the coefficient of $x^{n+1}$ on the right side of equality has to be $0$. But this coefficient is 
\begin{equation}\label{coefCompuestaEcuacion}
b_1 c_{n+1} + b_2 d_2 +\ldots+b_n d_n+b_{n+1}c_1^{n+1},
\end{equation}
where $d_2,\ldots,d_n$ are some polynomials quantities depending only on $c_1,\ldots,c_n$ that, by induction, depend only on $b_1,\ldots,b_n$. Therefore, $$c_{n+1}=\frac{-b_2 d_2 - \ldots -b_n d_n - b_{n+1} c_1^n}{b_1}$$ depends only on $b_1,\ldots,b_n,b_{n+1}$.

Thus $c_{n,a}$ depends only on $b_{1,a},\ldots,b_{n,a}$ and as for all $k< N_+(f)$ $b_{k,a}$ is constant as $a$ varies then if $k\leq N_+(F)$, $c_{n,a}$ is constant as $a$ varies. This implies that $N_+(g)\geq N_+(f)$.

\end{proof}

\begin{corollary}
Keeping the notation of Lemma \ref{N}. If $N_+(f)$ and $N_+(g)$ are both finite then $N(f)=N(g)$.
\end{corollary}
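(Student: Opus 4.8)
Throughout, $N$ denotes the invariant $N_+$ of Definition~\ref{defN}, so that Lemma~\ref{N} reads: \emph{if $N(f)$ is finite, then $N(g)\ge N(f)$.} The plan is to derive the asserted equality $N(f)=N(g)$ by invoking Lemma~\ref{N} twice --- once for the pair $(f,g)$ and once for the pair $(g,f)$ --- after observing that $f$ and $g$ play symmetric roles: each is an analytic inverse of the other in a neighborhood of $0$.

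Concretely, the first application is immediate: $N(f)$ is finite by hypothesis, so Lemma~\ref{N} gives $N(g)\ge N(f)$. For the reverse inequality I would first check that the pair $(g,f)$ meets the hypotheses of Lemma~\ref{N}. The analytic inverse $g$ satisfies $g(0)=0$ and $g'(0)=1/f'(0)\ne 0$, so by Fact~\ref{inversionf} applied to $g$ there is a local analytic inverse $h$ with $g\circ h=\id$ near $0$. Then near $0$ one has $h=(f\circ g)\circ h=f\circ(g\circ h)=f$, so $f=h$; hence $g\circ f=g\circ h=\id$ near $0$, i.e.\ $f$ is an analytic inverse of $g$ in the sense of Lemma~\ref{N}. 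Finally $f$ itself is analytic at $0$ with $f(0)=0$ and $f'(0)\ne 0$ by assumption.

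Applying Lemma~\ref{N} to $(g,f)$ --- legitimate because $N(g)$ is finite by hypothesis --- now yields $N(f)\ge N(g)$, and combining with $N(g)\ge N(f)$ gives $N(f)=N(g)$. I do not expect a real obstacle here: the argument is purely the symmetry of the analytic-inverse relation together with one application of Lemma~\ref{N} in each direction, and it requires no further bookkeeping with the coefficients $b_n,c_n$ beyond what Lemma~\ref{N} already carries out. The one point that deserves the short computation above is the verification that $f$ is genuinely an analytic inverse of $g$, i.e.\ the passage from $f\circ g=\id$ to $g\circ f=\id$ near $0$, which rests on the Inverse Function Theorem (Fact~\ref{inversionf}) applied now to $g$.
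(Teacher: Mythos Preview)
Your proposal is correct and follows essentially the same approach as the paper's proof: apply Lemma~\ref{N} once to get $N_+(g)\ge N_+(f)$, then apply it again with the roles of $f$ and $g$ reversed, using that the inverse of $g$ coincides with $f$ near $0$, to obtain the opposite inequality. Your argument is in fact slightly more careful than the paper's in spelling out, via Fact~\ref{inversionf}, why $g\circ f=\id$ follows from $f\circ g=\id$.
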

\begin{proof}
By Lemma \ref{N} one has that $N_+(f)\leq N_+(g)$ apply the same lemma for $g$ and get that $N_+(g)\leq N_+(h)$ where $h$ is the inverse of $g$ but then $h=f$ in some open neighborhood of $0$ so $N_+(f)=N_+(g)$.
\end{proof}

\begin{lemma}\label{lemmaN}
Assume $$f=\sum_{n\geq 1}b_n x^n$$ is an analytic function converging in a neighborhood of $0$. Assume moreover that $N_+(f)$ is finite. Let $$l=\min \{e\in \mathbb N: \exists n>1 \text{ such that }p\nmid n\wedge  b_{np^e}\neq 0\}.$$
Then for all $k<l$ and all $m\geq 1$ such that $p\nmid m$ the coefficient $b_{mp^k,a}$ does not depend on $a$. Moreover $N_+(f)=p^l$. 
\end{lemma}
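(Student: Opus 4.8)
The plan is to translate everything into a question about integer binomial coefficients modulo $p$ and then resolve it by a base-$p$ digit count; write $p=\text{char}(K)$ and assume $p>0$ (when $p=0$ one reads $l=0$ and $p^{l}=1$, and the statement is immediate from the finiteness of $N_+(f)$). First, by Proposition \ref{coefficientsAreAnalyticProp} together with Equation \ref{definitionEm} (applied with $M$ in the role of $m$), for every $M\geq 1$ and every $a$ in a ball $D(f)\ni 0$ one has
\[
b_{M,a}=\sum_{j\geq 0}\binom{M+j}{M}\,b_{M+j}\,a^{j},
\]
a power series in $a$ converging on $D(f)$ with constant term $b_{M}$, where $\binom{M+j}{M}$ denotes the image in $K$ of the integer binomial coefficient. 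A convergent power series whose positive-degree part is not identically zero has infinite image on any ball of convergence (on a small enough subball the valuation of its difference from its constant term is $j_{0}v(a)+c$, where $j_{0}\geq 1$ is the least index with a nonzero coefficient and $c$ a constant, and $\Gamma$ is infinite). Hence $b_{M,a}$ is constant in $a$ if and only if $\binom{M+j}{M}b_{M+j}=0$ in $K$ for every $j\geq 1$, and $s_{M}(f)$ is infinite exactly otherwise; in particular $N_+(f)=\min\{M\geq 1:\exists\,j\geq 1,\ \binom{M+j}{M}b_{M+j}\neq 0\text{ in }K\}$.

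Next I record the only use of the definition of $l$: if $b_{m}\neq 0$, then $m$ is a power of $p$ (possibly $p^{0}=1$) or $p^{l}\mid m$. Indeed, write $m=np^{e}$ with $p\nmid n$ and $e=v_{p}(m)$ the exponent of $p$ in $m$; if $e\geq l$ then $p^{l}\mid m$, and if $e<l$ then minimality of $l$ forbids $n>1$, so $n=1$ and $m=p^{e}$. (The same computation shows that if no such $e$ existed, then every $b_{M,a}$ would be constant and $N_+(f)$ undefined, contrary to hypothesis; so $l$ is well defined.)

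Now the core step. Fix $k<l$ and $m\geq 1$ with $p\nmid m$, and set $M=mp^{k}$; I must show that $\binom{M+j}{M}\equiv 0\pmod p$ whenever $j\geq 1$ and $b_{M+j}\neq 0$. By the previous paragraph $M+j$ is either a power $p^{e}$ or divisible by $p^{l}$. If $M+j=p^{e}$, then Lucas' theorem allows $\binom{p^{e}}{M}\not\equiv 0$ only for $M\in\{0,p^{e}\}$; but $M=0$ is impossible and $M=p^{e}$ together with $p\nmid m$ forces $m=1$, $e=k$, hence $j=0$, a contradiction, so $\binom{M+j}{M}\equiv 0$. If $p^{l}\mid M+j$, then $v_{p}(M)=k<l$ forces $v_{p}(j)=k$, say $j=m'p^{k}$ with $p\nmid m'$ and $p^{l-k}\mid m+m'$, so $p\mid m+m'$; in base $p$ the digits of $M$ and of $j$ below position $k$ all vanish, while at position $k$ they are the nonzero residues $m\bmod p$ and $m'\bmod p$, whose sum is $\equiv 0\pmod p$ and therefore at least $p$ --- a carry --- so Kummer's theorem gives $p\mid\binom{M+j}{M}$. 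This proves the first assertion of the lemma, and $N_+(f)\geq p^{l}$ follows immediately, since any $M$ with $1\leq M<p^{l}$ is of the form $mp^{k}$ with $p\nmid m$ and $k=v_{p}(M)<l$. For $N_+(f)\leq p^{l}$, choose $n>1$ with $p\nmid n$ and $b_{np^{l}}\neq 0$ and take $M=p^{l}$, $j=(n-1)p^{l}\geq 1$: adding $p^{l}$ to $(n-1)p^{l}$ in base $p$ produces no carry, because at position $l$ one adds $1$ to $(n-1)\bmod p\leq p-2$ (the inequality holding since $p\nmid n$), so $\binom{np^{l}}{p^{l}}\not\equiv 0\pmod p$, whence $b_{p^{l},a}$ is nonconstant and $s_{p^{l}}(f)$ is infinite.

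The step needing real care is this last one: tracking the base-$p$ carries correctly and, above all, using the structural restriction from the definition of $l$ on which coefficients $b_{M+j}$ may be nonzero, since the binomial coefficients alone do not force the required vanishing. Everything else is bookkeeping built on Proposition \ref{coefficientsAreAnalyticProp}.
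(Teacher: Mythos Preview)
Your proof is correct and complete. The approach differs from the paper's: the paper expands $f(x+a)$ directly, using the Frobenius identity $(x+a)^{p^{s}}=x^{p^{s}}+a^{p^{s}}$ to track by hand which powers of $x$ can appear in each term $b_{t}(x+a)^{t}$, and then computes $b_{p^{l},a}$ explicitly as a nonconstant series in $a$. You instead start from the closed formula $b_{M,a}=\sum_{j\geq 0}\binom{M+j}{M}b_{M+j}a^{j}$ and reduce everything to the vanishing or nonvanishing of integer binomial coefficients modulo $p$, which you decide via Lucas' and Kummer's theorems. The two arguments are equivalent at bottom (Frobenius on $(x+a)^{p^{s}}$ is exactly Lucas/Kummer for the relevant binomials), but yours is more systematic and makes the combinatorics transparent, at the cost of importing two outside facts; the paper's version is more self-contained but requires a careful case split on the shape of $t=up^{s}$. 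One minor point: your justification that a nonconstant convergent power series has infinite image via the valuation formula $v(g(a)-c_{0})=j_{0}v(a)+v(c_{j_{0}})$ on a small ball is correct but compressed; it might be cleaner simply to cite Fact~\ref{identityTheoremf} (as the paper does for the nonconstancy of $b_{p^{l},a}$), since a power series taking only finitely many values would force $\prod_{i}(g-v_{i})\equiv 0$ and hence $g$ constant.
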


\begin{proof}
Keeping the notation of Definition \ref{defN}, let $l$ be as in the statement of the lemma. It is well defined because of our assumption on $N(f)$: If $$\{e\in \mathbb N: \exists n>1 \text{ such that }p\nmid n\wedge  b_{np^e}\neq 0\}=\emptyset$$ then $f$ is a function in which the degree of all the non-zero monomials are powers of $p$ so $f(x+a)=f(x)+f(a)$ and therefore $s_n(f)$ is finite for all $n>0$.
 
We will show that for all $k<l$ and all $m\geq 1$ with $p\nmid m$ the coefficient $b_{mp^k,a}$ is constant as $a$ varies. And we will also show that the coefficient $b_{p^l,a}$ assumes infinitely many values as $a$ varies. 
 
Let $n>1$ such that $b_{np^l}\neq 0$ and $p\nmid n$. Therefore $$f(x)=b_1 x + b_{p^2} x^{p^2} + \ldots + b_{p^l} x^{p^l} + \sum_{i>p^l} b_i x^i.$$ 

We will show first that if $k<l$ and $m\geq 1$ with $p\nmid m$, then the coefficient $b_{mp^k,a}$ is constant as $a$ varies. Remember that $b_{mp^k,a}$ is the coefficient of $x^{mp^k}$ in the expansion of $f_a(x)=f(x+a) - f(a)$. In this expansion $x^{mp^k}$ appears only in terms of the form  $b_t (x+a)^t$ with $t\geq mp^k$. Write $t=u p^s$ with $p\nmid u$. Suppose first $u>1$ then if $s<l$, $b_t=0$. If $s\geq l>k$ then $(x+a)^t=(x^{p^s} + a^{p^s})^u$. And therefore in the expansion of $(x+a)^t$ all the exponents of $x$ are multiples of $p^s$ so none of those equals $mp^k$ (as $s>k$ and $p\nmid m$). We can assume then that $u=1$ but in this case $(x+a)^t=x^{p^s} +a^{p^s}$ and the only way the term $x^{mp^k}$ appears is $k=s$ and $m=1$. In this case the coefficient of $x^{mp^k}$ in $f_a(x)$ is $b_{mp^k}$ so it is constant as $a$ varies.\\

We show now that the coefficient $b_{p^l,a}$ is not constant as $a$ varies. As $$f(x)=b_1 x + b_{p^2} x^{p^2} + \ldots + b_{p^l} x^{p^l} + \sum_{i>p^l} b_i x^i$$ and $b_{np^k}=0$ for all $n>0$ and $k<l$ the function $\displaystyle\sum_{i>p^l} b_i x^i$ is indeed a function in the variable $x^{p^l}$. So $$f(x)=b_1 x + \ldots + b_{p^l} x^{p^l} +\sum_{i>1} b_{i p^l}x^{ip^l}$$ therefore the coefficient of $x^{p^l}$ in $f_a$ is $$b_{p^l,a}=b_{p^l} + \sum_{i>1} b_{i p^l} (a^{p^l})^i $$ but this is a series in the variable $a$ with some non-zero coefficients, using Fact \ref{identityTheoremf} we conclude that it is not constant as $a$ varies in any open neighborhood of $0$, so it takes infinitely many values in any open neighborhood of $0$ where it converges and we conclude. 
\end{proof}

We will also need:

\begin{lemma}\label{compositionMultLemma}
 Let $U\ni 1$ be open and $h:U\to K$ and $g:U\to K$ be analytic functions such that $h(1)=g(1)=1$
If $$h(x)=1+\sum_{n\geq 1} a_n (x-1)^n,$$  $$g(x)=1+\sum_{n\geq 1} b_n (x-1)^n$$ 
and $$(h\circ g)(x)=1+\sum_{n\geq 1} c_n (x-1)^n$$ then for each $N$, $c_N$ depends only on $a_j$ and $b_j$ for $j\leq N$.

In addition, if $a_1=b_1=1$ and $N\geq 2$ is such that $a_n=b_n=0$ for $1<n<N$,  then $c_N=a_N+b_N$.

\end{lemma}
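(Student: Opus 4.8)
The plan is to imitate the proof of Lemma~\ref{N}: substitute the series of $g$ into the series of $h$ and read off the coefficient of $(x-1)^N$. Write $u=x-1$ and $w:=g(x)-1=\sum_{n\geq 1}b_n u^n$, a power series in $u$ with vanishing constant term (here I write $[u^N]\,F$ for the coefficient of $u^N$ in a power series $F$). Then
$$(h\circ g)(x)=1+\sum_{m\geq 1}a_m\,w^m .$$
Since $w^m=\sum_{n_1,\dots,n_m\geq 1}b_{n_1}\cdots b_{n_m}\,u^{n_1+\cdots+n_m}$, the coefficient $[u^N]\,w^m=\sum_{n_1+\cdots+n_m=N}b_{n_1}\cdots b_{n_m}$ vanishes for $m>N$ and otherwise is a polynomial in $b_1,\dots,b_{N-m+1}$. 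Hence
$$c_N=\sum_{m=1}^{N}a_m\cdot\bigl([u^N]\,w^m\bigr)$$
depends only on $a_1,\dots,a_N$ and $b_1,\dots,b_N$, which is the first assertion. Convergence near $1$ is not an issue, since $h\circ g$ is again analytic at $1$ (and one may restrict to a ball on which everything converges, as in Proposition~\ref{coefficientsAreAnalyticProp}).

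For the second assertion I would compute $[u^N]\,w^m$ explicitly under the hypotheses $a_1=b_1=1$ and $a_n=b_n=0$ for $1<n<N$. Then $w=u+b_N u^N+(\text{terms of degree}>N)$, so $w^m=u^m\bigl(1+b_N u^{N-1}+\cdots\bigr)^m=u^m+m\,b_N\,u^{m+N-1}+\cdots$. Thus $[u^N]\,w^m$ equals $b_N$ when $m=1$, equals $1$ when $m=N$, and equals $0$ otherwise; in particular it is $0$ for $2\leq m\leq N-1$, because the second term $u^{m+N-1}$ has degree $m+N-1\geq N+1>N$ and the leading term has degree $m\neq N$. Substituting into the displayed formula for $c_N$ and using $a_1=1$ gives $c_N=a_1 b_N+a_N=a_N+b_N$.

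The computation is routine; the only point requiring a little care — and the one I would write out most carefully — is the claim that for $2\leq m\leq N-1$ there is genuinely no contribution of $w^m$ to the $u^N$-coefficient. This is the "gap" observation above: under the hypotheses $w-u$ vanishes to order at least $N$ in $u$, so $w^m-u^m$ vanishes to order at least $m+N-1$, which exceeds $N$ as soon as $m\geq 2$; combined with the fact that $u^m$ contributes to degree $N$ only when $m=N$, this forces $[u^N]\,w^m=0$ for $2\leq m\leq N-1$.
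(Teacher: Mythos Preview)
Your proof is correct and follows essentially the same route as the paper: substitute the series for $g$ into that for $h$, identify $c_N=\sum_{m=1}^N a_m\,[u^N]\,w^m$ (the paper calls $[u^N]\,w^m$ by the name $f_m$), and then observe that under the extra hypotheses the middle terms $2\le m\le N-1$ contribute nothing. Your factorisation $w^m-u^m=(w-u)(w^{m-1}+\cdots+u^{m-1})$ to justify that vanishing is a slightly cleaner justification than the paper gives, but the argument is the same.
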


\begin{proof}
Notice that 
$$h\circ g(x)=1+\sum_{n\geq 1} a_n \left(\sum_{m\geq 1} b_m(x-1)^m\right)^n$$
and the coefficient that multiply $(x-1)^N$ in the right side of this expression is 
\begin{equation}\label{composeCoefEquation}
    c_{N}= a_1 b_{N} + a_2 f_2 + \ldots +a_{N-1}f_{N-1} + a_N b_1^N
\end{equation}

Where for $n=2,\ldots, N-1$, $$f_n=\sum_{i_1+\ldots+i_n=N} b_{i_1}\cdots b_{i_n}.$$ So $f_n$ is an expression that only depends on $b_1,\ldots,b_N$ and then $c_N$ only depends on $a_1,\ldots,a_N$ and $b_1,\ldots,b_N$.

Now if $a_n=b_n=0$ for all $1<n<N$, then $f_2=f_3=\ldots=f_{N-1}=0$ so Equation \ref{composeCoefEquation} turns into:

$$c_N = a_1 b_N+ a_N b_1^N$$ and if $a_1=b_1=1$, then 
$$c_N=a_N+b_N$$ which is what was left to prove.
\end{proof}

\chapter{Preliminaries}\label{preliminaries}

In this chapter we present some of the results that we will use in Chapters \ref{additiveCaseVersion} and \ref{multiplicativeCaseVersion}.

We fix $\mathbb K=(K,+,\cdot,v,\Gamma)$ an algebraically closed valued field and denote by $\mathbb K^{f}$ the reduct of $\mathbb K$ to the language of rings so $\mathbb K^{f}$ is an algebraically closed field. We set $p=\text{char}(K)\geq 0$.

\section{Preliminaries on Groups}


\begin{defi}

    Let $(G,\oplus,e)$ be any Abelian group. For $a,b\in G$ denote by $\ominus a$ the inverse of $a$ in $G$ and we write $a\ominus b$ instead of $a\+(\ominus b)$. For $a=(a_1,a_2)\in G\times G$ and $Y\subseteq G\times G$ we define the translation of $Y$ by $a$ as:

    $$t_a(Y):=\{(x_1\ominus a_1,x_2\ominus a_2):(x_1,x_2)\in Y\}.$$

\end{defi}

\begin{defi}
    If $Y,Z\subseteq G\times G$ we define:

   $$Y\+Z:=\{(x,y_1\+y_2):(x,y_1)\in Y\text{ and }(x,y_2)\in Z\},$$ 
   $$Y\om Z:=\{(x,y_1\ominus y_2):(x,y_1)\in Y\text{ and }(x,y_2)\in Z\}$$
   and 
   $$Y\circ Z:=\{(x,y)\in G\times G:\exists z\ (x,z)\in Z\text{ and }(z,y)\in Y \}.$$   
\end{defi}

\begin{lemma}\label{lemmaSumaComp}
Let $Y,Z\subseteq G\times G$. Assume that $U\ni e$ is any subset of $G$ and $g:U\to G$ and $h:U\to G$ are functions such that $g(e)=h(e)=e$ and whose graphs are contained in $Y$ and $Z$ respectively. 

Then for all $x\in U$,
    
        $$(x,g(x)\+h(x))\in Y\+Z$$ 
        and
        $$(x,g(x)\ominus h(x))\in Y\ominus Z.$$
    Moreover if  $G$ is a topological group, $g$ is continuous and $U$ is open, there is an open set $U'\ni e$ contained on $U$ such that for all $x\in U'$
      
      $$(x,g(h(x))\in Y\circ Z.$$
    \end{lemma}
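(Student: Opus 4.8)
Lemma \ref{lemmaSumaComp} — proof plan.

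The statement is essentially bookkeeping: the sets $Y\oplus Z$, $Y\ominus Z$, $Y\circ Z$ are defined fiberwise over the first coordinate, so I just need to exhibit the right witnesses. I would proceed as follows.

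\textbf{Plan.} For the first two assertions, fix $x\in U$. By hypothesis the graph of $g$ is contained in $Y$, so $(x,g(x))\in Y$, and likewise $(x,h(x))\in Z$. Then the very definition of $Y\oplus Z$, instantiated with the common first coordinate $x$, the element $y_1=g(x)$ from $Y$ and $y_2=h(x)$ from $Z$, gives $(x,g(x)\oplus h(x))\in Y\oplus Z$; the same instantiation with $Y\ominus Z$ gives $(x,g(x)\ominus h(x))\in Y\ominus Z$. No topology is needed here.

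\textbf{The composition part.} Now assume $G$ is a topological group, $g$ continuous, $U$ open. The only subtlety is that the expression $g(h(x))$ requires $h(x)\in U$ (the domain of $g$) — this is why one must shrink $U$. Since $h(e)=e\in U$ and $h$ is continuous (note: the hypothesis as stated only asks $g$ continuous; I would either also use continuity of $h$, or, if $h$ is not assumed continuous, observe that its graph lies in $Z$ and invoke whatever regularity of $Z$ is available — but the clean route is to take $h$ continuous, which is the case in all intended applications), the set $U':=h^{-1}(U)\cap U$ is open, contains $e$, and is contained in $U$. For $x\in U'$ we have $h(x)\in U$, so $g(h(x))$ is defined; moreover $(x,h(x))\in Z$ (graph of $h$) and $(h(x),g(h(x)))\in Y$ (graph of $g$, evaluated at the point $h(x)\in U$). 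Taking $z:=h(x)$ in the definition of $Y\circ Z$ — which asks for some $z$ with $(x,z)\in Z$ and $(z,y)\in Y$ — yields $(x,g(h(x)))\in Y\circ Z$, as required.

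\textbf{Main obstacle.} There is no real obstacle; the single point requiring care is the domain issue for the composite $g\circ h$, handled by passing to $U'=h^{-1}(U)\cap U$, and making sure the continuity hypotheses in play actually guarantee $U'$ is open (so that one needs continuity of $h$, not just of $g$, unless the ambient hypotheses on $Z$ supply it another way). Everything else is a direct unfolding of the definitions of $\oplus$, $\ominus$, $\circ$ on subsets of $G\times G$.
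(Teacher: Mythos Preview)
Your proof is correct and is exactly the intended argument; the paper in fact omits the proof entirely, treating the lemma as an immediate unfolding of the definitions. Your observation that the composition part really uses continuity of $h$ (not just of $g$) to make $U'=h^{-1}(U)\cap U$ open is a genuine point --- the hypothesis as stated is slightly too weak, and in every application in the paper $h$ is indeed continuous (analytic, in fact).
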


\subsection{$\mathbb K$-definable and locally isomorphic groups}

\begin{defi}
  By an algebraic group we mean the $\mathbb K^f$ points of an algebraic group defined over $\mathbb K^f$.
\end{defi} 

\begin{defi}\label{definitionLocllyIsomorphic}
    If $(G,\oplus)$ is a $\mathbb K$-definable group we say that $G$ is locally isomorphic to an algebraic one dimensional group $(H,+_H)$ if there is $A\leq H$, an infinite $\mathbb K$-definable subgroup of $H$ and $i:A\to G$ a $\mathbb K$-definable injective group homomorphism such that $G/i(A)$ is finite. 
\end{defi}

We will need the following technical lemma:

\begin{lemma}\label{noIsolated}
 Let $(H,+_H)$ be a $\mathbb K$ definable infinite subgroup of $\mathbb G_a$ or $\mathbb G_m$. Let $Y$ and $Z$ be one dimensional $\mathbb K$-definable subsets of $H^2$. If $Y$ and $Z$ have no isolated points, then $Y+_H Z$  does not have isolated points. If, moreover, we assume that $Z$ does not contain infinitely many points in any horizontal line and $Y$ does not contain infinitely many points in any vertical line, then the composition $Y\circ Z$ does not have isolated points.
 \end{lemma}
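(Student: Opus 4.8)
The plan is to reduce everything to the local structure of $Y$ and $Z$ near a given point and then use the analytic/constructible description of one-dimensional definable sets provided by Proposition \ref{decompositionProp} together with the implicit and continuity-of-roots machinery of Section \ref{preliminariesOnValuedFields}. First I would fix a point $(a,c) \in Y +_H Z$, so there are $(a,b_1) \in Y$ and $(a,b_2) \in Z$ with $b_1 +_H b_2 = c$. Since $Y$ has no isolated points and is one-dimensional, by Proposition \ref{decompositionProp} (applied after translating, and noting $H$ is open in $K$ since it is infinite, by Corollary \ref{existsInf}) $Y$ contains, near $(a,b_1)$, a relatively open piece of the zero set of an irreducible polynomial $L_1(x,y)$; similarly $Z$ contains near $(a,b_2)$ a relatively open piece of $\{L_2(x,y)=0\}$. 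After possibly shrinking, using Lemma \ref{pickCoordinateLemma} I may choose coordinates so that $\frac{\partial L_i}{\partial y}$ does not vanish identically on the relevant branch, hence by the Implicit Function Theorem (Fact \ref{implicitFunctionTheoremf}) each branch is locally the graph of an analytic function $g_i$ defined on a common open neighborhood $U \ni a$ with $g_1(a)=b_1$, $g_2(a)=b_2$, and graph contained in $Y$ resp. $Z$.

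Next I would invoke Lemma \ref{lemmaSumaComp}: after translating so that the relevant base point is the identity (replacing $g_i$ by $x \mapsto g_i(x+a) \ominus g_i(a)$ shifts the problem to $e$, and translations of $Y, Z$ by fixed elements only translate $Y +_H Z$, which does not affect whether it has isolated points), the function $x \mapsto g_1(x) +_H g_2(x)$ is analytic on an open set, sends $e$ to $e$, and has its graph contained in (a translate of) $Y +_H Z$. An analytic curve that is genuinely a graph over an open subset of $K$ has infinitely many points near any of its points — indeed near $e$ it contains a graph over an open ball, which is infinite by Corollary \ref{existsInf} — so $(a,c)$ is not isolated in $Y +_H Z$. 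This handles the first assertion; the argument for $Y \ominus Z$ is identical since the group operation and inverse are analytic (polynomial on $\mathbb{G}_a$, rational without poles near units on $\mathbb{G}_m$).

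For the composition $Y \circ Z$: fix $(a,c) \in Y \circ Z$, so there is $z_0$ with $(a,z_0)\in Z$ and $(z_0,c)\in Y$. The hypothesis that $Z$ has no infinite horizontal fibre lets me, near $(a,z_0)$, present $Z$ as the graph of an analytic $g_2$ with $g_2(a)=z_0$ exactly as above (the branch cannot be a vertical-type branch constant in $y$, so $\frac{\partial L_2}{\partial y}\not\equiv 0$ there); the hypothesis that $Y$ has no infinite vertical fibre lets me present $Y$ near $(z_0,c)$ as the graph of an analytic $g_1$ with $g_1(z_0)=c$. Then $x \mapsto g_1(g_2(x))$ is analytic on a neighborhood of $a$ (shrink $U$ so $g_2(U)$ lies in the domain of $g_1$, which is the content of the "moreover" clause of Lemma \ref{lemmaSumaComp}) with graph inside $Y \circ Z$, and the same "graph over an open ball is infinite" argument shows $(a,c)$ is not isolated.

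The main obstacle I anticipate is the bookkeeping needed to guarantee that the local branches of $Y$ and $Z$ really are \emph{graphs} in the correct variable and that their domains overlap appropriately: Proposition \ref{decompositionProp} only gives relatively open pieces of irreducible curves, so I must rule out, on the relevant branch through the chosen point, that the curve is "vertical" (the fibre $\{y : L_i(x_0,y)=0\}$ being the whole line, i.e.\ $L_i$ independent of $y$) — this is where the no-isolated-points hypothesis and, for the composition, the no-infinite-fibre hypotheses are essential, and where Lemma \ref{pickCoordinateLemma} plus a finite-exceptional-set argument must be deployed carefully. Everything else is a routine application of the implicit function theorem and Lemma \ref{lemmaSumaComp}.
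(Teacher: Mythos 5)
There is a genuine gap at the step where you pass from Proposition \ref{decompositionProp} to ``each branch is locally the graph of an analytic function $g_i$ on a common neighborhood $U\ni a$''. The Implicit Function Theorem (Fact \ref{implicitFunctionTheoremf}) requires $\frac{\partial L_i}{\partial y}\neq 0$ \emph{at the specific point} $(a,b_i)$, and nothing in the hypotheses guarantees this: that point may be a singular point of $\{L_i=0\}$, or a point of vertical tangency such as $(0,0)$ on $\{y^2=x\}$, where the curve has finite fibres everywhere (so it is not ``vertical'' in your sense) and yet is not the graph of any analytic function of $x$ near the point. Lemma \ref{pickCoordinateLemma} does not repair this: it only produces a finite exceptional set outside of which one fixed partial derivative is nonzero, and the point whose non-isolatedness you must establish may well lie in that exceptional set --- you cannot discard it. Moreover, for the sum you need both branches to be graphs over the \emph{same} first coordinate, so you cannot ``choose coordinates'' separately for $Y$ and for $Z$; and for the composition, ``$Z$ has no infinite horizontal fibre'' does not imply $\frac{\partial L_2}{\partial y}(a,z_0)\neq 0$.

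The paper's proof never asks the branches to be graphs. Writing $F$ for the polynomial whose zero set locally fills $Y$ near $(a,c)$, it splits into two cases: if $F(a,\cdot)$ is not the zero polynomial, then $c$ is a zero of finite multiplicity of $y\mapsto F(a,y)$, and continuity of roots (Fact \ref{continuityOfRootsTheoremf}) produces, for every $a'$ near $a$, a zero $c'$ of $F(a',\cdot)$ near $c$ --- no smoothness of the branch at $(a,c)$ is needed; if $F(a,\cdot)\equiv 0$, then $Y$ contains a vertical segment through $(a,c)$ and the non-isolatedness of $(a,b)$ in $Y+_H Z$ follows at once. For $Y\circ Z$ it parametrizes by the middle coordinate $c'$ and runs the same continuity-of-roots argument in each factor, which is exactly where the finite-fibre hypotheses enter. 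To salvage your analytic-graph strategy you would need a local analytic parametrization of the branch at possibly singular points (a Puiseux/normalization statement), which is not among the tools the paper provides; replacing the implicit function theorem by continuity of roots applied directly to $F$ is the intended fix.
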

 
 \begin{proof}
 
 If $(a,b)\in Y +_H Z$ there are $c$, $d$ such that $(a,c)\in Y$, $(a,d)\in Z$ and $b=c +_H d$. Take open sets $B_1$ containing $a$ and $B_2$ containing $b$. As $+_H$ is continuous, one can find $D$ and $E$ open sets containing $c$ and $d$ respectively such that $D+_H E \subseteq B_2$. 
 
 Given that $(a,c)\in Y$ is not isolated, by Proposition \ref{decompositionProp} there is a polynomial $F$ such that $F(a,c)=0$ and for all $(a',c')$ in some neighborhood of $(a,c)$ if $F(a',c')=0$ then $(a',c')\in Y$. If the polynomial $F_a(x):=F(a,x)$ is not zero we can use continuity of roots and get $B$, a neighborhood of $a$ (we can assume $B\subseteq B_1$) such that for each $a'\in B$ there is $c'\in D$ with $F(a',c')=0$ and $(a',c')\in Y$. In the same way (by restricting $B$ if necessary) for each $a'\in B$ there is $d'\in E$ such that $(a',d')\in Z$ so for any such $a'$ the point $(a',c'+_H d')\in (Y+_H Z)\cap (B_1\times B_2)$ and then $(a,b)$ is not isolated in $Y+_H Z$. 
 
 Note that if one of $F(a,\_)$ is the zero polynomial, then $Y$ contains infinitely many points in the vertical line $\{a\}\times K$, so that any open containing $(a,b)$ contains points in such a line. Similarly for $Z$.
 
We will now prove that $Y\circ Z$ does not contain isolated points. Let $(a,b)\in Y\circ Z$ so there is $c$ with $(a,c)\in Z$ and $(c,b)\in Y$.
 Let $B_1$ and $B_2$ be open sets containing $a$ and $b$ respectively.  There is some set $B$ containing $c$ such that for each $c'\in B$ there is: $a'\in B_1$ with $(a',c')\in Z$ (using that the closure of $Z$ does not contain any horizontal line) and $b'\in B_2$ such that $(c',b')\in Y$ (using that the closure of $V$ doesn't contain any vertical line). Then, for each such $c'$, $(a',b')\in (Y\circ Z)\cap(B_1\times B_2)$. This shows that $(a,b)$ is not an isolated point of $Y\circ Z$.
 \end{proof}

\subsection{Strongly minimal expansions of groups}
 
 Let $(G,\oplus)$ be a group and let $\mathcal G=(G,\oplus,\ldots)$ be a strongly minimal first order structure expanding $(G,\oplus)$.

 \begin{defi}
     We say that $X\subseteq G\times G$ is $\mathcal G$-affine if it is a (finite) boolean combination of cosets of $\mathcal G$-definable subgroups of $(G,\oplus)\times (G,\oplus)$.
  \end{defi}

The following fact follows from Theorem 7.2 of \cite{castleHasson}.

\begin{fact}\label{existsXVersion}
    If $\mathcal G=(G,\+,\ldots)$ is strongly minimal, then $\mathcal G$ is non locally modular if and only if there is $X\subseteq G\times G$ a $\mathcal G$-definable set with $\RM_{\mathcal G}(X)=1$ that is not $\mathcal G$-affine.
\end{fact}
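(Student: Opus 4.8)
The plan is to prove the equivalence in two steps, the engine being the translation operation $t_{a}$ on subsets of $G\times G$. The direction ``a non-affine plane curve exists $\Rightarrow\ \mathcal G$ is not locally modular'' is the routine one: a single non-affine curve spawns, by translation, a Morley rank $2$ almost faithful family. The converse is the substantive direction; it reduces to showing that if every $\mathcal G$-definable plane curve is $\mathcal G$-affine then $\mathcal G$ is $1$-based as an expansion of $(G,\oplus)$, which I would obtain from the structure theory of groups in stable theories — essentially the content of Theorem 7.2 of \cite{castleHasson}.

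For $(\Leftarrow)$, let $X\subseteq G\times G$ be $\mathcal G$-definable with $\RM_{\mathcal G}(X)=1$ and not $\mathcal G$-affine. Since finite unions of $\mathcal G$-affine sets are $\mathcal G$-affine and singletons are cosets of $\{0\}$, after passing to a stationary component I may assume $X$ is stationary of rank $1$ and still not affine. Consider the translation stabilizer $S:=\{a\in G^{2}:t_{a}(X)\aeq X\}$, a $\mathcal G$-definable subgroup of $G^{2}$ ($\aeq$ is uniformly definable in a strongly minimal structure by compactness). The key claim is that $S$ is finite. Suppose not; then $H:=S^{0}$ is an infinite connected subgroup, and I examine $T:=\{(x,a)\in X\times H:x\oplus a\in X\}$. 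For $a\in H$ the fibre of $T$ over $a$ is $X\cap t_{a}(X)$, which equals $X$ up to a finite set since $a\in S$, so $\RM(T)=\RM(H)+1$; for $x\in X$ the fibre of $T$ over $x$ is $H\cap t_{x}(X)$, a translate of $X$ intersected with $H$, of rank at most $1$, so $\RM(T)\le\RM(X)+1=2$. Hence $\RM(H)=1$, and moreover for generic $x\in X$ the rank $1$ set $H\cap t_{x}(X)$ must be cofinite in $H$ (a connected group has Morley degree $1$); therefore $x\oplus H\subseteq X$ up to a finite set, and as both sides are stationary of rank $1$, $X\aeq x\oplus H$. But then $X$ is a coset of $H$, hence $\mathcal G$-affine, contradicting our choice of $X$. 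So $S$ is finite, and then $(t_{a}(X))_{a\in G^{2}}$ is a $\mathcal G$-definable family of plane curves over the rank $2$ parameter set $G^{2}$; it is almost faithful because two stationary rank $1$ sets with infinite intersection share their generic type, so $|t_{a}(X)\cap t_{b}(X)|=\infty$ forces $a$ and $b$ to differ by an element of $S$. Thus $\mathcal G$ is not locally modular.

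For $(\Rightarrow)$ I argue contrapositively and assume that every $\mathcal G$-definable rank $1$ subset of $G\times G$ is $\mathcal G$-affine. The substantive step is to upgrade this to: every $\mathcal G$-definable subset of every $G^{n}$ is a finite boolean combination of cosets of $\mathcal G$-definable subgroups of $G^{n}$, i.e.\ that $\mathcal G$ is $1$-based; that this follows from the affineness of all plane curves is precisely Theorem 7.2 of \cite{castleHasson}, in the tradition of the Hrushovski--Pillay analysis of groups in stable theories. Granting it, local modularity is classical: using $\oplus$ to translate a curve together with two of its points to the origin, one checks that an almost faithful $\mathcal G$-definable family $(X_{q})_{q\in Q}$ with $\RM(Q)\ge 2$ would encode a configuration that a $1$-based group cannot support, so necessarily $\RM(Q)\le 1$.

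The main obstacle is precisely that upgrade in $(\Rightarrow)$, from ``all plane curves affine'' to ``all definable sets affine'': this is the real content of the statement and is where the group configuration machinery and the Castle--Hasson coordinatization cannot be avoided. Everything else — the translation argument in $(\Leftarrow)$, the stabilizer bookkeeping, and the passage from $1$-basedness to local modularity — is routine.
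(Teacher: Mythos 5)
Your proposal is essentially correct and matches the paper, which gives no proof of this statement at all: it simply attributes the whole equivalence to Theorem 7.2 of \cite{castleHasson}, exactly the result you invoke for the substantive direction. Your self-contained argument for $(\Leftarrow)$ --- finiteness of the translation stabilizer $S$ via the rank computation on $T$, followed by the almost faithful rank-$2$ family $(t_a(X))_{a\in G^2}$ --- is the standard one and is sound (uniform definability of $\aeq$ and of finiteness being available since $\mathcal G$ is strongly minimal), so you in fact supply more detail than the paper does for that direction while relying on the same external citation for the other.
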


Assume that $\mathcal G$ is strongly minimal and non locally  modular, let $X$ be as provided by Fact \ref{existsXVersion} above. If $X$ has Morley degree $d$ we can write
$$X=X_1\cup\ldots \cup X_d$$ where each $X_i$ is a $\mathcal G$-definable strongly  minimal subset of $G\times G$. As $X$ is not $\mathcal G$-affine, there is some $i\leq d$ such that $X_i$ is not $\mathcal G$-affine.Thus, if we replace $X$ by $X_i$, we may assume that $X$ is strongly minimal and it is not $\mathcal G$-affine.

The following is a very well known fact, it follows for example from Lemma 3.8 of \cite{HE}.

\begin{fact}\label{nonAffineIntersectsFiniteLemmaF}
      Let $Y\subseteq G\times G$ be a $\mathcal G$-definable (over $\bar d$) strongly minimal set that is not $\mathcal G$-affine. Then if $(a_1,a_2)\in Y^2$ satisfies the $\mathcal G$-generic type of $Y\times Y$ over $\bar d$, $$t_{a_1}(Y)\cap t_{a_2}(Y)$$ is finite. 
\end{fact}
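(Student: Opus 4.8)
\noindent\emph{Proof proposal.}
The plan is to prove the contrapositive: assuming that $t_{a_1}(Y)\cap t_{a_2}(Y)$ is infinite for $(a_1,a_2)$ realizing the $\mathcal G$-generic type of $Y\times Y$ over $\bar d$, I will deduce that $Y$ is $\mathcal G$-affine. The central object is the generic stabilizer
$$
S:=\{\,g\in G\times G:\ t_g(Y)\cap Y\ \text{is infinite}\,\}.
$$
Since translation by $g$ is a $\mathcal G$-definable bijection, $t_g(Y)$ is again strongly minimal, so $t_g(Y)\cap Y$ is infinite exactly when it is cofinite in each, i.e.\ exactly when $t_g(Y)$ and $Y$ agree up to a finite set. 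As $\mathcal G$ is $\omega$-stable (being strongly minimal), Morley rank is definable, so this is a definable condition on $g$; hence $S$ is $\bar d$-definable. From $t_{g}\circ t_{h}=t_{g\oplus h}$ and the fact that ``agreeing up to a finite set'' is an equivalence relation preserved by translations, $S$ is a subgroup of $(G,\oplus)\times(G,\oplus)$.

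First I would show the hypothesis pins $Y$ inside a coset of $S$. Applying $t_{\ominus a_2}$ to both sets, the condition ``$t_{a_1}(Y)$ and $t_{a_2}(Y)$ agree up to finite'' becomes ``$t_{a_1\ominus a_2}(Y)$ and $Y$ agree up to finite'', i.e.\ $a_1\ominus a_2\in S$. Fixing $a_2$ generic in $Y$ over $\bar d$, the point $a_1$ is generic in $Y$ over $\bar d a_2$, and $a_1\in a_2\oplus S$ together with strong minimality of $Y$ forces $Y\cap(a_2\oplus S)$ to be cofinite in $Y$; in particular $S$ is infinite, so $\RM(S)\in\{1,2\}$.

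Next I would rule out $\RM(S)=2$. Since $\mathcal G$ is strongly minimal, $\RM(G\times G)=2$ (Fact \ref{factDimN}) and $G\times G$ is connected (as $\DM(G)=1$), so $\RM(S)=2$ would give $S=G\times G$, i.e.\ $t_g(Y)$ and $Y$ agree up to finite for every $g$. Consider $Z:=\{(x,g)\in(G\times G)^2:\ x\in Y\ \text{and}\ x\oplus g\notin Y\}$. Its fibre over $g$ is $Y\setminus t_g(Y)$, which is finite, so projecting to the $g$-coordinate gives $\RM(Z)\le\RM(G\times G)=2$; its fibre over $x\in Y$ is $(G\times G)\setminus t_x(Y)$, which has rank $2$ because $\RM(t_x(Y))=\RM(Y)=1$, so projecting to the $x$-coordinate gives $\RM(Z)\ge\RM(Y)+2=3$, a contradiction. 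Hence $\RM(S)=1$. Then $a_2\oplus S$ is a finite disjoint union of strongly minimal cosets of the (definable) connected component $S^{0}\le S$, and the infinite set $Y\cap(a_2\oplus S)$, being strongly minimal, is cofinite in exactly one of them; so $Y$ agrees up to a finite set with a single coset of $S^{0}$. Such a coset is $\mathcal G$-affine, finite sets are $\mathcal G$-affine (finite unions of singletons, which are cosets of the trivial subgroup), and $\mathcal G$-affine sets are closed under Boolean combinations, so $Y$ is $\mathcal G$-affine, contradicting the hypothesis.

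The main obstacle is the rank bookkeeping in the step $\RM(S)\ne 2$: one must justify that $S$ is genuinely definable (this is where definability of Morley rank is used), that the two projections of $Z$ yield the stated upper and lower bounds on $\RM(Z)$ via the standard additivity of Morley rank over definable fibrations, and that $\DM(G)=1$ does force a rank-$2$ definable subgroup of $G\times G$ to be all of $G\times G$. Once the generic stabilizer is set up correctly, the remaining steps are routine.
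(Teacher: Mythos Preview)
Your proof is correct and is the standard stabilizer argument for this fact. The paper does not give its own proof of this statement; it records it as a well-known fact and cites Lemma~3.8 of \cite{HE}, so there is no alternative argument in the paper to compare with. Your route---showing that the set $S=\{g:t_g(Y)\cap Y\text{ infinite}\}$ is a definable subgroup, that the hypothesis forces $Y$ to be almost contained in a coset of $S$, ruling out $\RM(S)=2$ by a fibre-rank count, and concluding that $Y$ is almost equal to a coset of $S^0$---is exactly the expected proof and all steps are justified in the strongly minimal setting (definability of Morley rank, additivity of $\RM$, definability and finite index of the connected component).
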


If $\mathcal G$ is definable in $\mathbb K$ then we may extend the notion of not $G$-affine for  $\mathbb K$-definable sets:

\begin{defi}\label{GaffineDefi}
    If $G$ and $Y\subseteq G\times G$ are $\mathbb K$-definable we say that $Y$ is not $G$-affine if for all but finitely many $a\in Y$ there are cofinitely many $b\in Y$ such that $t_a(Y)\cap t_b(Y)$ is infinite.
\end{defi}

Moreover this notion is definable in first order:
By Corollary \ref{existsInf}, if $\phi(\bar x,\bar y,\bar z)$ is a first order formula in the language of valued fields and for each tuple $\bar d$ we set $$Y(\bar d):=\{(\bar x,\bar y)\in G\times G:\mathbb K\models \phi(\bar x,\bar y, \bar d)\}$$ then
$$\{\bar d:Y(\bar d)\text{is not }G\text{-affine}\}$$ is $\mathbb K$-definable.


\section{Generic Intersections}

In this section we present the basic results on generic intersection between families of curves that we will need. Most of the results here are standard are well known, anyhow we present some proofs for completeness.

In this section we use the notion of generic of Definition \ref{genericDefinition} and we fix $Y\subseteq K\times K$, a one-dimensional $\mathbb K$-definable set. 
Given $Z\subseteq K^d$ we denote by $\cl(Z)$ the Zariski closure of $Z$ on $K^d$.
\begin{defi}   
    Let $(X_a)_{a\in Y}$ be a family of one-dimensional subsets of $K\times K$ $\mathbb K$-definable over $\bar d$. 

    We say that $(X_a)_{a\in Y}$ has \emph{generic finite intersection} if $X_a\cap X_b$ is finite for any $(a,b)\in Y^{2}$ $\mathbb K$-generic over $\bar d$.
\end{defi}

We start proving:

\begin{lemma}\label{bastaIndiscernibleLemma}
    Let $(X_a)_{a\in Y}$ be a family of one-dimensional Zariski closed subsets of $K\times K$ $\mathbb K$-definable over $\bar d$. Then $(X_a)_{a\in Y}$ has generic finite intersection if $X_{b_1}\cap X_{b_2}$ is finite for any $(b_1,b_2)\in Y^2$ such that there is an infinite indiscernible sequence starting with $(b_1,b_2)$.
\end{lemma}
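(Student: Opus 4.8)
The statement to prove is Lemma~\ref{bastaIndiscernibleLemma}: for a family $(X_a)_{a\in Y}$ of one-dimensional Zariski closed plane sets definable over $\bar d$, generic finite intersection follows once we know $X_{b_1}\cap X_{b_2}$ is finite for every pair $(b_1,b_2)$ that starts an infinite indiscernible sequence. The plan is to argue by contraposition and extract such an indiscernible pair from a generic pair with infinite intersection. So suppose $(X_a)_{a\in Y}$ does \emph{not} have generic finite intersection; then there is $(a_1,a_2)\in Y^2$ which is $\mathbb K$-generic over $\bar d$ and with $X_{a_1}\cap X_{a_2}$ infinite. I want to produce from this an infinite $\bar d$-indiscernible sequence whose first term $(b_1,b_2)$ still satisfies $|X_{b_1}\cap X_{b_2}|=\infty$.

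\textbf{Key steps.} First, note that ``$X_{x_1}\cap X_{x_2}$ is infinite'' is a first-order property of $(x_1,x_2)$ over $\bar d$: by Corollary~\ref{existsInf} (or just because these are Zariski closed curves, hence finite unions of irreducible curves, and infinitude of a constructible subset of $K$ is definable), the set $S:=\{(x_1,x_2)\in Y^2:|X_{x_1}\cap X_{x_2}|=\infty\}$ is $\mathbb K$-definable over $\bar d$. We have just assumed $S$ contains the generic pair $(a_1,a_2)$ of $Y\times Y$ over $\bar d$, so $\dim(S)=2\dim(Y)=\dim(Y^2)$; in particular $S$ is infinite and, by genericity, $(a_1,a_2)\notin\cl(\text{proper subvarieties})$, so $S$ contains a Zariski-dense (equivalently, generic) subset of $Y^2$. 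Second, use Ramsey/compactness to extract a $\bar d$-indiscernible sequence: start with any $\bar d$-indiscernible sequence $(e_i)_{i<\omega}$ with the same Ehrenfeucht--Mostowski type as a long sequence of realizations of $\tp(a_1,a_2/\bar d)$ (such a sequence exists by the standard Ramsey argument, e.g. since $\mathbb K$ is stable one can take a Morley sequence in the nonforking extension of $\tp(a_1,a_2/\bar d)$). Each $e_i=(b_1^{(i)},b_2^{(i)})$ realizes $\tp(a_1,a_2/\bar d)$, hence lies in $S$, so $|X_{b_1^{(0)}}\cap X_{b_2^{(0)}}|=\infty$. Setting $(b_1,b_2):=e_0$, we have found the pair the lemma's hypothesis forbids: $(b_1,b_2)\in Y^2$ starts an infinite $\bar d$-indiscernible sequence yet $X_{b_1}\cap X_{b_2}$ is infinite. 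This contradicts the hypothesis, so the family must have generic finite intersection.

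\textbf{Main obstacle.} The only real point requiring care is the definability of $S$, i.e. that ``$X_{x_1}\cap X_{x_2}$ is infinite'' is expressible in first order uniformly in $(x_1,x_2)$. This is where we use that the $X_a$ are Zariski closed and one-dimensional: an intersection of two such curves is a constructible subset of $K$, which is finite unless it contains a curve, and this dichotomy is uniformly definable (Corollary~\ref{existsInf} / Bézout-type bounds give a uniform bound on the size of the finite case, so infinitude is a definable condition). Once $S$ is definable over $\bar d$ and contains the generic pair of $Y^2$, extracting an indiscernible sequence of realizations of the generic type is routine: realizations of a complete type over $\bar d$ all satisfy the same definable properties over $\bar d$, so membership in $S$ propagates to the first (indeed every) term of the indiscernible sequence. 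I would also remark that stability of $ACVF$ makes the extraction especially clean --- one takes a Morley sequence --- but even without invoking stability, the Ramsey-plus-compactness construction of an EM-type-matching indiscernible sequence suffices, since all we need is that each term realizes $\tp(a_1,a_2/\bar d)$.
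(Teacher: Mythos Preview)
Your argument is workable in spirit but takes a longer route than the paper and contains one factual slip and one interpretive mismatch.

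First, the slip: $ACVF$ is \emph{not} stable (the value group carries a definable linear order), so the aside about Morley sequences in the stable sense should be dropped. As you note, Ramsey plus compactness is enough, so this does not break the argument, but it should not appear.

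Second, the interpretation: in the paper, ``$(b_1,b_2)$ starts an infinite indiscernible sequence'' means that $b_1,b_2$ are the first two terms of an indiscernible sequence $(b_1,b_2,b_3,\dots)$ of elements of $Y$ over $\bar d$, not that the pair $(b_1,b_2)$ is the first term of an indiscernible sequence of pairs. Your construction produces the latter: an indiscernible sequence $(e_i)$ in $Y^2$ with $e_0=(b_1,b_2)$. Under the intended reading, this does not yet verify the hypothesis you need to contradict. (Under your reading the hypothesis would be far too strong: every pair trivially starts an indiscernible sequence of pairs, e.g.\ the constant one.)

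The paper's proof bypasses all of this with a single observation: a pair $(b_1,b_2)\in Y^2$ is $\mathbb K$-generic over $\bar d$ if and only if $b_1$ is generic in $Y$ and $b_2$ is generic in $Y$ over $b_1\bar d$, if and only if $(b_1,b_2)$ can be continued to an infinite $\bar d$-indiscernible sequence in $Y$. Granting this equivalence, the lemma is immediate: the hypothesis says exactly that $X_{b_1}\cap X_{b_2}$ is finite for every generic pair, which is the definition of generic finite intersection. Your detour through definability of $S$ and Ramsey extraction is unnecessary here; once you know your $(a_1,a_2)$ is generic, it already starts an indiscernible sequence of elements, and you are done.
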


\begin{proof}
$(b_1,b_2)\in Y^2$ is $\mathbb K$-generic if and only if  $b_1\in Y$ is generic and $b_2\in Y$ is generic over $b_1$ if and only if $(b_1,b_2)$ starts an indiscernible sequence.
  \end{proof}

\begin{lemma}\label{noGenericFiniteZLemma}
    Let $(X_a)_{a\in Y}$ be a family of one-dimensional Zariski closed subsets of $K\times K$ $\mathbb K$-definable over $\bar d$. Then if $(X_a)_{a\in Y}$ has generic finite intersections, $X_a\cap Z$ is finite for any $Z\subseteq K\times K$ one dimensional $\mathbb K$-definable over $\bar e$ and any $a\in Y$ generic over $\bar d, \bar e$. 
\end{lemma}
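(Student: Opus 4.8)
The plan is a proof by contradiction. Assume $a\in Y$ is $\mathbb K$-generic over $\bar d\bar e$ yet $X_a\cap Z$ is infinite. First I would pass to the Zariski closure: since $Z\subseteq\cl(Z)$, also $X_a\cap\cl(Z)$ is infinite. Both $X_a$ and $\cl(Z)$ are one-dimensional Zariski closed subsets of $K\times K$, hence each is a finite union of irreducible plane curves together with finitely many points; as two \emph{distinct} irreducible plane curves meet in only finitely many points (Fact~\ref{bezoutTheorem}), an infinite intersection forces $X_a$ and $\cl(Z)$ to share an irreducible component $C$, which is then one of the finitely many irreducible components $C^{(1)},\dots,C^{(k)}$ of $\cl(Z)$. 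Since these components are permuted by every automorphism of $\mathbb K^{f}$ fixing $\bar e$, using elimination of imaginaries in algebraically closed fields each is coded by a tuple in $\acl_{\mathbb K^{f}}(\bar e)=\acl_{\mathbb K}(\bar e)$ (Fact~\ref{aclIgualFact}); so I may fix such a code $\bar c\in\acl_{\mathbb K}(\bar e)$ with $C=C^{(j_0)}$.

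Next I would run a dimension count to manufacture a $\mathbb K$-generic pair of the family. Put $W:=\{a'\in Y: C\subseteq X_{a'}\}$ (equivalently $C\subseteq\cl(X_{a'})$, since $X_{a'}$ is Zariski closed); this is definable over $\bar d\bar e\bar c$ and contains $a$. Because $\bar c\in\acl_{\mathbb K}(\bar e)$, adjoining it does not drop dimension, so $a$ remains $\mathbb K$-generic in $Y$ over $\bar d\bar e\bar c$ and hence $\dim W=\dim Y$. Using saturation I would choose $b\in W$ with $\dim(b/\bar d\bar e\bar c\,a)=\dim W$; then $b$ is $\mathbb K$-generic in $Y$ over $\bar d a$, and by additivity of dimension in towers $(a,b)\in Y^2$ is $\mathbb K$-generic over $\bar d$.

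The contradiction is then immediate: by the hypothesis that $(X_a)_{a\in Y}$ has generic finite intersection, $X_a\cap X_b$ is finite; yet $a,b\in W$ gives $C\subseteq X_a$ and $C\subseteq X_b$, so the one-dimensional (hence infinite) variety $C$ is contained in $X_a\cap X_b$. Therefore $X_a\cap Z$ must have been finite. I expect the only genuine obstacle to be the first step — arranging that the common component of $X_a$ and $\cl(Z)$ is defined over $\acl(\bar e)$ rather than merely over $\acl(\bar e a)$, which is exactly what lets $a$ stay generic after its code is adjoined; this relies on $\cl(Z)$ being $\bar e$-definable with only finitely many components. After that the argument is a routine transcendence-degree computation, using crucially that the members $X_{a'}$ are Zariski closed, so that sharing a one-dimensional component already forces an infinite intersection.
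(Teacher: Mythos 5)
Your proof is correct and follows essentially the same route as the paper's: decompose $X_a$ and the Zariski closure of $Z$ into irreducible components, deduce from the infinite intersection that they share a one-dimensional component $C$, and then use genericity to produce a second curve $X_b$ of the family containing $C$, contradicting generic finite intersection. If anything, you are more careful than the paper at the one delicate point — observing that $C$, being a component of the $\bar e$-definable set $\cl(Z)$, is coded over $\acl_{\mathbb K}(\bar e)$, so that $a$ stays generic after its code is adjoined.
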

\begin{proof}

Assume that there is $Z\subseteq K\times K$ one-dimensional and $\mathbb K$-definable over $\bar e$ and some $a\in Y$ generic over $\bar e, \bar d$ such that $X_a\cap Z$ is infinite. 

As $X_a$ is Zariski closed we can write

$$X_a=X_0 \cup \bigcup_i C_i $$ where the union is finite, $X_0$ is a finite set of isolated points and each $C_i$ is a one-dimensional irreducible Zariski closed set.

By Proposition \ref{decompositionProp} we can write 

$$Z=Z_0\cup \bigcup_j(D_j\cap U_j)$$

$Z_0$ is a finite set of isolated points, each $D_j$ is an  irreducible Zariski closed set of dimension one and each $U_j$ is an open subset of $K\times K$.

As $X_a\cap Z$ is infinite, there is some $i$ and some $j$ such that $C_i \cap (D_j\cap U_j)$ is infinite. In particular $C_i\cap D_j$ is infinite and as both sets are Zariski closed and irreducible it implies that $C_i=D_j$. Let $\bar f$ be a tuple of parameters defining $C:=C_i=D_j$. Then if $b\in Y$ is generic over $a,\bar d,\bar e,\bar f$, as $X_a\cap C$ is infinite, $X_b\cap C$ is infinite but as $X_a$ and $X_b$ are both Zariski closed it implies that $C$ is a irreducible component of both, so $C\subseteq X_a\cap X_b$ and then $(X_a)_{a\in Y}$ has not generic finite intersection. \end{proof}

For the following lemma $(H,+_H)$ is either the additive or the multiplicative group and $-_H$ denotes the correspondent inverse operation. 

\begin{lemma}\label{restaCompuestaFiniteIntersectionLemma}
      Let $(X_a)_{a\in Y}$ be a $\mathbb K$-definable family of one-dimensional Zarsiki closed sets with  generic finite intersection. Let $Z\subseteq K\times K$ be a one-dimensional Zariski closed set, then:
    $$(\cl(Z-_H X_a))_{a\in Y}$$
     has generic finite intersection. 

     Moreover, if we assume that $Z$ does not contain infinitely many points in any horizontal line and also that for all $a\in Y$, $X_a$ does not contain infinitely  many points in any vertical line, then
    $$(\cl(Z\circ X_a))_{a\in Y}$$
    has generic finite intersection. 
    
    If we assume that $Z$ does not contain any infinitely many points in any vertical line, and also that $X_a$ does not contain infinitely  many points in any horizontal line  for all $a\in Y$, then    $$(\cl(X_a\circ Z))_{a\in Y}$$
    has generic finite intersection.
   
\end{lemma}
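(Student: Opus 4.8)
The strategy is to reduce all three assertions to the hypothesis that $(X_a)_{a\in Y}$ has generic finite intersection, using Lemma \ref{noGenericFiniteZLemma} as the key intermediate tool and Lemma \ref{bastaIndiscernibleLemma} to keep the genericity bookkeeping clean. Fix $\bar d$ over which $(X_a)_{a\in Y}$ is defined and $\bar e$ over which $Z$ is defined. For the first claim, suppose toward a contradiction that there are $(a,b)\in Y^2$ generic over $\bar d,\bar e$ with $\cl(Z-_H X_a)\cap \cl(Z-_H X_b)$ infinite. Since both sets are one-dimensional and Zariski closed, an infinite intersection forces a common irreducible component $C$; I would argue (using Proposition \ref{decompositionProp} to decompose $Z-_H X_a$ before taking closures, exactly as in the proof of Lemma \ref{noGenericFiniteZLemma}) that $C$ being contained in $\cl(Z-_H X_a)$ means that translating back by $Z$ produces an infinite subset of $X_a$ lying in $\cl(Z +_H C)$ — more precisely, that $\cl(X_a+_H C')\cap Z$ is infinite for a suitable component $C'$ of $\cl(Z)$, contradicting Lemma \ref{noGenericFiniteZLemma} applied to the family $(X_a)_{a\in Y}$ against the fixed one-dimensional set obtained from $Z$ and $C$. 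The point is that $-_H$ is a definable bijection $K\times K \to K\times K$ in each fibre over the first coordinate, so "$X_a\cap X_b$ infinite" transports to "$(Z-_H X_a)\cap(Z-_H X_b)$ infinite" and back, and an infinite intersection of the translated curves pulls back to an infinite algebraic locus that must be a common component.

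For the composition statements I would do the same thing but with $\circ$ in place of $+_H$. The extra hypotheses (no infinitely many points in a horizontal, resp. vertical, line) are exactly what is needed so that $Z\circ X_a$ is again one-dimensional — without them composition can blow up a point into a whole line — and so that the "pull back the common component" step works: given a common component $C$ of $\cl(Z\circ X_a)$ and $\cl(Z\circ X_b)$, composing on the appropriate side with (a component of the closure of) $Z$ or its "inverse relation" $Z^{-1}=\{(y,x):(x,y)\in Z\}$ recovers an infinite algebraic subset of $X_a\cap X_b$ up to finitely many points, again contradicting Lemma \ref{noGenericFiniteZLemma} or directly the generic finite intersection of $(X_a)_{a\in Y}$. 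The finiteness conditions guarantee that $Z^{-1}$ and the composition are still governed by Proposition \ref{decompositionProp}, i.e. finite unions of relatively open subsets of irreducible one-dimensional Zariski closed sets, so the component-matching argument from Lemma \ref{noGenericFiniteZLemma} applies verbatim.

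Concretely the steps are: (1) reduce to showing $\cl(Z\,\square\,X_a)\cap Z'$ is finite for any fixed one-dimensional $\mathbb K$-definable $Z'$ and $a\in Y$ generic, by re-running the proof of Lemma \ref{noGenericFiniteZLemma} — an infinite generic intersection forces a common irreducible component shared by $\cl(Z\,\square\,X_a)$ and $\cl(Z\,\square\,X_b)$ for $b$ generic over $a$; (2) check that each of $Z-_H X_a$, $Z\circ X_a$, $X_a\circ Z$ is genuinely one-dimensional under the stated hypotheses, so that Bézout/irreducibility arguments apply; (3) transport the common component $C$ of the operated curves back to an infinite algebraic subset of $X_a\cap X_b$ via the inverse of $-_H$ (fibrewise) or via composition with $\cl(Z)$ / $\cl(Z)^{-1}$; (4) conclude that this contradicts the generic finite intersection hypothesis on $(X_a)_{a\in Y}$. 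The main obstacle I anticipate is step (3) in the composition cases: I must be careful that composing a Zariski-closed curve with the closure of $Z$ on the correct side genuinely reabsorbs the component $C$ into something contained in (the closure of) $X_a$ — this requires the finiteness-of-fibres hypotheses to rule out the degenerate case where $Z\circ X_a$ is infinite merely because $Z$ and $X_a$ share a coordinate value, and it requires tracking which closures commute with which operations (they do so only up to finite sets, which is harmless here). Once that is pinned down, everything else is the routine "infinite intersection of irreducible curves $\Rightarrow$ equal curves $\Rightarrow$ non-generic" pattern already used above.
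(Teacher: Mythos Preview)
Your step~(3) is where the plan breaks. Composing the common component $C$ with $Z^{-1}$ (or, in the $-_H$ case, forming $Z -_H C$) does not land you inside $X_a\cap X_b$: if $(\alpha,\beta)\in C\subseteq \cl(Z\circ X_a)$ then there is \emph{some} $\gamma$ with $(\alpha,\gamma)\in X_a$ and $(\gamma,\beta)\in Z$, and separately some $\gamma'$ with $(\alpha,\gamma')\in X_b$ and $(\gamma',\beta)\in Z$, but nothing forces $\gamma=\gamma'$. Equivalently $Z^{-1}\circ Z\circ X_a\supsetneq X_a$ in general, so the ``inverse'' does not undo the composition. The same obstruction appears for $-_H$: your fibrewise-bijection remark is only correct once a single point of $Z$ over $x$ is fixed, and $Z$ may have several, so $(Z-_H X_a)\cap(Z-_H X_b)$ can be strictly larger than $Z-_H(X_a\cap X_b)$.

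The paper resolves the composition case by a different mechanism: it picks $(\alpha,\beta)$ generic in $(Z\circ X_{a_1})\cap(Z\circ X_{a_2})$, uses the horizontal-fibre hypothesis to get each witness $\gamma_i\in\acl(\beta,\bar d)$, extends the indiscernible sequence $(a_1,a_2,\ldots)$ to one of the form $((a_1,\gamma_1),(a_2,\gamma_2),\ldots)$, and observes that since the $\gamma_i$ lie in a fixed finite set, indiscernibility forces them all equal --- \emph{that} is what manufactures a point $(\alpha,\gamma)\in X_{a_1}\cap X_{a_2}$, and varying $\alpha$ gives infinitely many. Your component-pullback route can be repaired, but not as written: you would need the extra step that the canonical parameter of $C$ lies in $\acl(\bar d,\bar e,a)\cap\acl(\bar d,\bar e,b)=\acl(\bar d,\bar e)$ (via Fact~\ref{aclIgualFact} and independence of $a,b$), so that $\cl(Z^{-1}\circ C)$ is a one-dimensional set over which $a$ remains generic, and then invoke Lemma~\ref{noGenericFiniteZLemma}. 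At no point do you literally obtain an infinite subset of $X_a\cap X_b$.
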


This is standard but we include a proof anyway.

\begin{proof}
    Let $\bar d$ be a tuple of parameters defining the family $(\cl(Z-_H X_a))_{a\in Y}$. Assume that $Z$ and the family $(X_a)_{a\in Y}$ are also definable over $\bar d$. Let $a_1,a_2\in Y$ be generic independent over $\bar d$.  Then $X_{a_1}\cap X_{a_2}$ is finite so 
    $$Z-_H (X_{a_1}\cap X_{a_2})=(Z -_H X_{a_1})\cap (Z -_H X_{a_2})$$
    is also finite. As $Z-_HX_{a_1}$ is constructible, $$\cl(Z-_H X_{a_1})\setminus (Z-_H X_{a_1})$$ has dimension strictly smaller than $\dim(Z-_H X_{a_1})=1$ so it is finite. The same is true for $Z-_H X_{a_2}$ so $$\cl(Z-_H X_{a_1}) \cap \cl(Z-_H X_{a_2})$$ is finite. 

   For the second statement, assume that $Z$ does not contain infinitely many points in any horizontal line. Let $\bar d$ be a tuple of parameters such that the family $(Z\circ X_a)_{a\in Y}$ is $\mathbb K$-definable over $\bar d$ as well as $Z$ and the family $(X_a)_{a\in Y}$. Let $a_1,a_2\in Y$ be generic independent elements over $\bar d$ and assume that 
   $$\cl(Z\circ X_{a_1})\cap \cl(Z\circ X_{a_2})$$
   is infinite. By an argument similar to the previous one it implies that 
   $$(Z\circ X_{a_1})\cap (Z\circ X_{a_2})$$
   is infinite. By Lemma \ref{bastaIndiscernibleLemma} we may assume that there is $\mathcal I$, an indiscernible sequence over $\bar d$ that starts with $(a_1,a_2)$.
   
   Fix $(\alpha,\beta)\in (Z\circ X_{a_1})\cap (Z\circ X_{a_2})$ generic over $a_1 a_2\bar d$. Then there is some $\gamma_{1}$ such that $(\alpha,\gamma_{1})\in X_{a_1}$ and $(\gamma_{1},\beta)\in Z$. As $Z$ does not contain infinitely many points in any horizontal line, 
   $$Z\cap \{(x,\beta):x\in K\}$$ is finite. Since it is definable over $\beta\bar d$ and contains $\gamma$, $\gamma\in \acl_{\mathbb K}(\beta,\bar d)$. 

 Notice that as $a_1$ and $a_2$ are generic $\mathbb K$-independent over $\bar d$ and $(\alpha,\beta)\in (Z\circ X_{a_1})\cap (Z\circ X_{a_2})$ is generic over $a_1,a_2,\bar d$ it implies that $a_1$ and $a_2$ are independent over $\acl(\alpha,\beta,\bar d)$. But as $\gamma_{i}\in \acl(a_i)$, $(a_1,\gamma_{1})$ and $(a_2,\gamma_{2})$ are independent over $\acl(\alpha,\beta,\bar d)$. 

As $a_1,a_2$ starts an infinite indiscernible sequence,  there is some indiscernible sequence that starts with $(a_1,\gamma_1),(a_2,\gamma_2)$ say $\mathcal I'=((a_1,\gamma_1),(a_2,\gamma_2),(a_3,\gamma_3)\ldots)$. Thus,  $\gamma_i=\gamma_j$ for all $i,j$ or $\gamma_i\neq \gamma_j$ for all $i\neq j$. As there are only finitely many $\gamma$ such that $(\gamma,\beta)\in Z$ the second option is not possible, so there is some $\gamma$ such that $\gamma=\gamma_i$ for all $i\in \mathbb N$. It implies that $(\alpha,\gamma)\in X_{a_1}\cap X_{a_2}$. But this is true for any $\alpha'$ such that there is some $\beta'$ with $(\alpha',\beta')\in (Z\circ X_{a_1})\cap (Z\circ X_{a_2})$ generic. In particular as $X_{a_1}$ does not contain any vertical line, for infinitely many $\alpha'$ there is some $\gamma'$ such that $(\alpha',\gamma')\in X_{a_1}\cap X_{a_2}$ which is a contradiction with the assumption that $(X_a)_{a\in Y}$ has generic finite intersection.

   The third statement is very similar. \end{proof}

\section{Property $(\star)$}\label{propertyStar}


In this section we define Property $(\star)$ and provide a way of finding sets with this property. Having a definable set with the Property $(\star)$ is the first step in order to get a definable field in Chapters \ref{additiveCaseVersion} and \ref{multiplicativeCaseVersion}.


\begin{quote}
\textbf{In Section \ref{propertyStar}, $(H,+_H,e)$ will denote either the multiplicative group or an infinite $\mathbb K$-definable subgroup of the additive group .} 
 \end{quote}


By Proposition \ref{decompositionProp} if $Y\subseteq K\times K$ is a $1$-dimensional $\mathbb K$-definable set with no isolated points we can write:
$$Y=\bigcup_{i=1}^l\{(x,y)\in V_i:L_i(x,y)=0\}$$
where each $L_i$ is an irreducible polynomial and $V_i$ is an open set. If we require that 
$$\{(x,y)\in V_i:L_i(x,y)=0\}\neq \emptyset$$ and that $L_i\neq \lambda L_j$ for $i\neq j$ and all $\lambda\in K$, the set of polynomials $L_i$ is uniquely determined by $Y$, in this sense we define:

\begin{defi}\label{versionLemmaBuenYdef}
  Given $\mathbb K$-definable $1$-dimensional $Y\subseteq H\times H$ and a finite set of polynomials $L_i$ and $L_{i,n_i}$, we say that $Y$ has the Property $(\star)$ witnessed by $L_i$ and $L_{i,n_i}$ if: 

  \begin{enumerate}

       \item\label{clause1} $Y$ has no isolated points.
        
       \item\label{clause2} $$Y=\bigcup^{l}_{i=1} \{(x,y)\in V_i:L_i(x,y)=0\},$$ with $V_i$ open subsets of $K\times K$.

       \item\label{clause3} $(e,e)\in Y$, $L_1$ is irreducible and $L_1(e,e)=0$. Moreover, $L_i(e,e)\neq 0$ for $i\neq 1$.

       \item\label{clause4} $$L_i(x,y)=L_{i,n_i}(x,y^{p^{n_i}})$$ and if $(a_1,a_2)\in Y$ is such that $L_i(a_1,a_2)=0$ then $$\frac{\partial L_{i,n_i}}{\partial y}(a_1,a^{p^{n_i}}_2)\neq 0.$$
       Moreover $n_1=0$, in particular $$\frac{\partial L_{1}}{\partial y}(e,e)\neq 0.$$


     
  \end{enumerate}

\end{defi}

Note that if $\phi(x,y,\bar d)$ is a formula in the language of valued fields and $L_i(x,y,\bar d)$ and $L_{i,n}(x,y,\bar d)$ are polynomials, then if we define:

$$Y(\bar d)=\{(x,y)\in K\times K: \mathbb K\models \phi(x,y,\bar d)\}$$ we have that

$$\left\{\bar d:Y(\bar d)\text{ has the Property }(\star)\text{ witnessed by }L_i(x,y,\bar d)\text{ and }L_{i.n}(x,y,\bar d)\right\}$$ is $\mathbb K$-definable:

Clauses 1. and 4. of Property $(\star)$ are clearly definable. Clause $3$ is definable because $L_1$ is irreducible if and only if $L_1\neq F_1\cdot F_2$ for all $F_1$ and $F_2$ with degree smaller than $ \deg L_1$ which can be defined quantifying over the coefficients of $F_1$ and $F_2$. 
Clause 2 is equivalent to:

\begin{quote} 
If $(\alpha_1,\alpha_2)\in Y(\bar d)$ then $\bigvee_i L_i(\alpha_1,\alpha_2,\bar d)=0$ and if $L_i(\alpha_1,\alpha_2,\bar d)=0$, there is some $\gamma\in \Gamma$ such that for all $(a_1,a_2)$, if $v(\alpha_1-a_1)>\gamma$, $v(\alpha_2-a_1)>\gamma$ and $L_i(a_1,a_2,\bar d)=0$, then $(a_1,a_2)\in Y(\bar d)$.

\end{quote}

     Which is a first order statement.





    



\begin{lemma}\label{lemmaYStar}
    Let $X\subseteq H\times H$ be a one dimensional $\mathbb K$-definable set. Then there is a point $a\in X$, some $j\in \{-1,1\}$ and a finite set $E\subseteq t_a(X)^j$ such that  $Y=t_a(X)^j\setminus E$ has Property $(\star)$ witnessed by some polynomials $L_i$ and $L_{i,n_i}$.

    Here the translations $t_a$ are taken respect to $H$, $t_a(X)^{1}=t_a(X)$ and $t_a(X)^{-1}$ is the inverse of $t_a(X)$:
    $$t_a(X)^{-1}:=\left\{(x,y)\in H\times H:(y,x)\in t_a(X)\right\}.$$
\end{lemma}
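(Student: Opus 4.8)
The strategy is to obtain $Y$ by four successive moves: delete the isolated points of $X$; choose a smooth base point lying on a single irreducible component and pass to the translate carrying it to $(e,e)$, swapping coordinates if necessary; extract the witnessing polynomials from Proposition~\ref{decompositionProp}; and sweep all remaining exceptional points into the finite set $E$. To begin, $\cl(X)$ is an algebraic curve, so it has finitely many irreducible components and finitely many singular points, and by Proposition~\ref{decompositionProp} only finitely many points of $X$ are isolated in the valuation topology; discarding these keeps $X$ one-dimensional and with no isolated points. Applying Proposition~\ref{decompositionProp} once more, write $X=\bigcup_{i=1}^{l}\{(x,y)\in V_i:L_i(x,y)=0\}$ with the $L_i$ irreducible and pairwise non-proportional, the $V_i$ open, and each piece a nonempty relatively open subset of $C_i:=\cl\{L_i=0\}$; by Bézout's theorem (Fact~\ref{bezoutTheorem}) distinct $C_i$ meet in finitely many points.

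Now I would choose the base point. By Lemma~\ref{pickCoordinateLemma}, for each $i$ there is a finite subset of $C_i$ off which $\tfrac{\partial L_i}{\partial x}$ or $\tfrac{\partial L_i}{\partial y}$ is nonzero; let $E^{\ast}$ be the union of all these, of the singular loci, and of the pairwise intersections $C_i\cap C_j$ --- still finite. Pick any $a\in X\setminus E^{\ast}$; it lies on exactly one piece, say the first, and is a smooth point of $C_1$, so $\tfrac{\partial L_1}{\partial x}(a)\neq0$ or $\tfrac{\partial L_1}{\partial y}(a)\neq0$. Set $j=1$ in the second case and $j=-1$ in the first, and replace $X$ by $t_a(X)^{j}$. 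Since translations and the coordinate swap are $\mathbb K$-definable homeomorphisms of $H\times H$, no isolated points are created, the pieces are carried to new pieces of the same form, $(e,e)$ is the image of $a$, and after relabelling $L_1$ is irreducible with $L_1(e,e)=0$ and $\tfrac{\partial L_1}{\partial y}(e,e)\neq0$, while $L_i(e,e)\neq0$ for $i\neq1$ because $a$ lay on $C_1$ alone. This gives clauses~\ref{clause1}, \ref{clause2} and \ref{clause3}, and $\tfrac{\partial L_1}{\partial y}(e,e)\neq0$ forces $L_1\notin K[x,y^p]$, so $n_1=0$.

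It remains to secure clause~\ref{clause4} at every component. For each $i$ write $L_i(x,y)=L_{i,n_i}(x,y^{p^{n_i}})$ with $n_i$ maximal (in characteristic $0$ this means $n_i=0$ and $L_{i,0}=L_i$; in any case $n_1=0$). Then $L_{i,n_i}$ is again irreducible, and unless it lies in $K[x]$ --- that is, unless $C_i$ is a line $\{x=c\}$, discussed below --- it is not a polynomial in the $p$-th power of its second variable, so the honest partial derivative $\tfrac{\partial L_{i,n_i}}{\partial y}$ is a nonzero polynomial of strictly smaller degree in that variable; by Bézout it has only finitely many common zeros with $L_{i,n_i}$, so $\tfrac{\partial L_{i,n_i}}{\partial y}(x,y^{p^{n_i}})$ vanishes at only finitely many points of $C_i$ (using that Frobenius is a bijection on $K$). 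Let $E$ be the union of all these finite loci (over all $i$, including $i=1$; note $(e,e)$ avoids them), the points of $X$ lying on more than one $C_i$, and the isolated points removed at the start. Then $Y=t_a(X)^{j}\setminus E$ has no isolated points, equals $\bigcup_i\{(x,y)\in V_i\setminus E:L_i(x,y)=0\}$ with each $V_i\setminus E$ open (finite sets being closed), contains $(e,e)$, and satisfies clause~\ref{clause4} by construction, so $Y$ has Property~$(\star)$ witnessed by the $L_i$ and $L_{i,n_i}$.

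I expect the main obstacle to be clause~\ref{clause4} in positive characteristic: the key fact is that the largest inseparable reparametrisation $L_{i,n_i}$ of an irreducible $L_i$ is still irreducible and has nonvanishing partial derivative in its true $y$-variable --- exactly the mechanism behind Lemmas~\ref{pickCoordinateLemmaP} and~\ref{pickCoordinateLemma} --- together with checking that every exceptional locus is genuinely finite so that all of them fit inside one finite $E$. A component $C_i=\{x=c\}$ parallel to the $y$-axis cannot satisfy clause~\ref{clause4}, since the corresponding polynomial does not involve $y$; such a component becomes $\{y=c\}$ under a coordinate swap, on which $\tfrac{\partial(y-c)}{\partial y}=1$, and the remaining delicate point is to check that this swap can be chosen compatibly with the choice of $j$ above --- in the situations where the lemma is applied $X$ is not $G$-affine, which should rule out the one awkward configuration (components parallel to both coordinate axes at once); otherwise it would require a short separate argument.
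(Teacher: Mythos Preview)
Your approach is essentially the same as the paper's: decompose $X$ via Proposition~\ref{decompositionProp}, choose a smooth base point lying on a unique component, translate (and swap if the nonvanishing partial is $\partial/\partial x$), and then for Clause~\ref{clause4} extract the maximal Frobenius power $n_i$ from the $y$-variable and remove the finitely many points where $\partial L_{i,n_i}/\partial y$ vanishes. The paper packages this last step as a separate Claim (there labelled Claim~\ref{existsFrClaim}), but the content is identical to your argument.

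The one substantive difference is that you explicitly flag the vertical-line obstruction, and you are right to: if some $L_i$ lies in $K[x]$ then no maximal $n_i$ exists, and Clause~\ref{clause4} cannot be arranged for that component by removing finitely many points. The paper's proof of Claim~\ref{existsFrClaim} tacitly assumes each $L_i$ genuinely depends on $y$ (otherwise ``let $n_i$ be maximal'' is ill-posed), so this is a shared gap rather than a flaw in your write-up. Your proposed patch, however, does not quite work: ``not $G$-affine'' does not by itself exclude the bad configuration, since a set of the form $C\cup\{x=c\}\cup\{y=d\}$ with $C$ a non-affine curve is still not $G$-affine yet has components parallel to both axes, and neither choice of $j$ eliminates both. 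In the actual applications the lemma is invoked only for $\mathcal G$-definable strongly minimal $X$, and one can argue separately that such an $X$ cannot carry both a horizontal and a vertical line component; but as you correctly note, this requires a short additional argument beyond what either proof provides.
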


\begin{proof}

    By Proposition \ref{decompositionProp} we can write $$X=X_0\cup \bigcup_i (C_i\cap V_i)$$ 
    where $X_0$ is a finite set of isolated points of $X$, $C_i$ is a one dimensional irreducible Zariski closed set and $V_i$ is an open set. Let $\tilde L_i$ be irreducible polynomials such that 
    $$C_i=\left\{(x,y):\tilde L_i(x,y)=0\right\}.$$







        

    
    By Fact \ref{finiteSingularPointsFact} there is $a\in C_1 \cap X$, a regular point of $C_1$ such that $a\not\in C_k$ for $k\neq 1$, so either 
    $$\frac{\partial \tilde L_1}{\partial y}(a)\neq 0$$  or
    $$\frac{\partial \tilde L_1}{\partial x}(a)\neq 0.$$

   In the former case take $j=1$ and set 
    $Z:=t_a(X)$.
    
   In the latter case take $j=-1$ and $Z:=(t_a(X))^{-1}$.
    
   Write \begin{equation}\label{decompositionZ}Z=Z_0\cup \bigcup_i\left\{(x,y)\in V_i: L_i(x,y)=0\right\}\end{equation}
    where $Z_0$ is a finite set of isolated points of $Z$ and for all $i$, $ L_i$ is an irreducible polynomial and $V_i$ is open. 

    If $Z=t_a(X)$ with $a=(a_1,a_2)$,
    $$L_i(x,y)=\tilde L_i(x+_ma_1,y+_ma_2)$$
    and if $Z=(t_a(X))^{-1}$,
    $$L_i(x,y)=\tilde L_i(y+_m a_1,x+_m a_2).$$ 

   If $H$ is a subgroup of $\mathbb G_a$ and then:
   
   If $Z=t_a(X)$, $$\frac{\partial  L_1}{\partial y}(e,e)=\frac{\partial \tilde L_i}{\partial y}(a),$$
    and if $Z=(t_a(X))^{-1}$, $$\frac{\partial  L_1}{\partial y}(e,e)=\frac{\partial \tilde L_i}{\partial x}(a).$$
    
    If $H$ is the multiplicative group then:
    
If $Z=t_a(X)$, $$\frac{\partial  L_1}{\partial y}(e,e)=\frac{\partial \tilde L_i}{\partial y}(a)\frac{a_1}{a_2},$$
and if $X=(t_a(X))^{-1}$, $$\frac{\partial  L_1}{\partial y}(e,e)=\frac{\partial \tilde L_i}{\partial x}(a)\frac{a_1}{a_2}.$$  

    In any case 
    $$\frac{\partial  L_1}{\partial y}(e,e)\neq 0.$$ 

This shows that Clause 3 of Property $(\star)$ holds for $Y$ and $L_1$. Clause 1 will be gotten by removing the finite set of isolated points from $Z$ and then Equation \ref{decompositionZ} will give us Clause 2. So we are missing only Clause 4. 

 We state this as a general claim that we will use latter:

 \begin{claim}\label{existsFrClaim}
 Let $L_1,L_2,\ldots,L_r$ be a finite set of polynomials and $V_i\subseteq H\times H$ be open sets. Set $$Z=\bigcup_i \{x\in V_i:L_i(x)=0\}.$$

 Then there is a finite set $E\subseteq Z$ and some polynomials $L_{i,n_i}$ such that Clause 4 of Property $(\star)$ witnessed by $L_1$ and $L_{i,n_i}$ holds for $Z\setminus E$.
 \end{claim}
 \begin{claimproof}

 For each $i$ such that 
    $$\left\{q\in Z:L_i(q)=\frac{\partial L_i}{\partial y}(q)=0\right\}$$ is finite  we take $L_{i,n_i}=L_i$ (so $n_i=0$), in this case define: 
    $$E_i=\left\{q\in Z:L_i(q)=\frac{\partial L_i}{\partial y}(q)=0\right\}.$$

    If $i$ is such that 
    $$\left\{q\in Z:L_i(q)=\frac{\partial L_i}{\partial y}(q)=0\right\}$$ is infinite, by Lemma \ref{pickCoordinateLemmaP} there is some polynomial $G$ and some natural number $n$ such that 
    $L_i(x,y)=G(x,y^{p^n})$. Let $n_i$ be maximal with this property and let $L_{i,n_i}$ be the polynomial such that $$L_i(x,y)=L_{i,n_i}\left(x,y^{p^{n_i}}\right).$$ By the maximality of $n_i$ one has that $L_{i,n_i}(x,y)$ is not a polynomial in the variable $y^p$ so using again Lemma \ref{pickCoordinateLemmaP},

    $$E_i=\left\{q=(q_1,q_2)\in Z:L_i(q_1,q_2)=\frac{\partial L_{i,n_i}}{\partial y}\left(q_1,q_2^{p^{n_i}}\right)=0\right\}$$ is finite.

    So $E:=\bigcup_i E_i$ is a finite set such that $Z\setminus E$ satisfies Clause 4. of Property $(\star)$.
\end{claimproof}

    Thus, if $E$ is as provided by Claim \ref{existsFrClaim}, $E\cup Z_0 $ is a finite subset of $Z$ such that $Y:=Z\setminus (E\cup Z_0)$ has Property $(\star)$ witnessed by $L_i$ and $L_{i,n_i}$.
\end{proof}

If $G$ is a $\mathbb K$-definable group that is locally isomorphic to $\mathbb G_a$ we may also define Property $(\star)$ for subsets of $G\times G$. Fix $A\leq \mathbb G_a$ an infinite $\mathbb K$-definable subgroup and $i:A\to G$ an injective isomorphism with $G/i(G)$ finite. We may assume that $i$ is an inclusion so $A\leq G$.
Fix $e=g_1,g_2,\ldots, g_m$ a finite set of representatives for $G/A$ and let $G_k:=g_k\oplus A$ so $G=\bigcup_k G_k$ is a disjoint union. For $k,l$ let $i_{k,l}:A\times A\to G\times G$ be defined  by 
$i_{k,l}(x,y)=(g_k\oplus x,g_l\oplus y)$ and for $Z\subseteq G\times G$ let $Z^{(k,l)}\subseteq A\times A$ be the preimage of $Z$ via $i_{k,l}$.

\begin{defi}
We say that $Y\subseteq G\times G$ has Property $(\star)$ witnessed by some polynomials $L_{i},L_{i,n_i}$ and $L_{i}^{(k,l)}$ if $Y\cap (A\times A)$ has Property ($\star$) witnessed by $L_i$ and $L_{i,n}$ and for each pair $(k,l)$ such that $(G_k\times G_l)\cap Y\neq \emptyset$, we have that $(G_k\times G_l)\cap Y$ is infinite and $Y^{(k,l)}$ satisfies Clauses \ref{clause1}, \ref{clause2} and \ref{clause4} of Property $(\star)$ witnessed by the polynomials $L_{i}^{(k,l)}$.
\end{defi}

If $\phi(\bar x,\bar y,\bar d)$ is a formula in the language of valued fields and $L_i(x,y,\bar d)$, $L_{i,n}(x,y,\bar d)$ and $L_i^{(k,l)}(x,y,\bar d)$ are polynomials, then if we define:

$$Y(\bar d)=\{(\bar x,\bar y)\in G\times G: \mathbb K\models \phi(\bar x,\bar y,\bar d)\}$$ we have that

$$\left\{\bar d:Y(\bar d)\text{ has the Property }(\star)\text{ witnessed by }L_i(x,y,\bar d)\text{, }L_{i.n}(x,y,\bar d)\text { and }L_i^{(k,l)}(x,y,\bar d)\right\}$$ is $\mathbb K$-definable.

We have an analogous to Lemma \ref{lemmaYStar}:

\begin{lemma}\label{lemmaYStarG}

Let $X\subseteq G\times G$ be a one-dimensional $\mathbb K$-definable set. Then there is a point $a\in X$, $j\in \{-1,1\}$ and a finite set $E\subseteq t_a(X)^{j}$ such that $Y=t_a(X)^j\setminus E$ has Property $(\star)$ witnessed by some polynomials $L_i$, $L_{i,n_i}$ and $L_i^{(l,k)}$.
     
\end{lemma}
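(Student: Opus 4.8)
The plan is to reduce Lemma~\ref{lemmaYStarG} to Lemma~\ref{lemmaYStar} plus the structure of $G$ as a finite union $G=\bigcup_k G_k$ of cosets of $A\leq\mathbb G_a$. First I would run the argument of Lemma~\ref{lemmaYStar} inside the distinguished sheet $A\times A$. Concretely, by Proposition~\ref{decompositionProp} write $X=X_0\cup\bigcup_i(C_i\cap V_i)$ with $C_i$ one-dimensional irreducible Zariski closed, cut out by an irreducible polynomial $\tilde L_i$, and pick (using Fact~\ref{finiteSingularPointsFact}) a regular point $a\in C_1\cap X$ lying on no other $C_k$; this is where we choose $j\in\{-1,1\}$ so that, after translating by $a$ and possibly inverting, the polynomial $L_1$ cutting out the component through $(e,e)$ has $\frac{\partial L_1}{\partial y}(e,e)\neq 0$ (the computation of this partial derivative in the additive case is immediate, and here $H=\mathbb G_a$ throughout, so the multiplicative sub-case does not arise). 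Removing the finitely many isolated points gives Clauses~\ref{clause1}, \ref{clause2}, \ref{clause3}, and Claim~\ref{existsFrClaim} (applied to the polynomials $L_i$ describing $Z=t_a(X)^j$ restricted to $A\times A$, using Lemma~\ref{pickCoordinateLemmaP}) produces the $L_{i,n_i}$ and a finite exceptional set yielding Clause~\ref{clause4}. So $(t_a(X)^j\setminus E_0)\cap(A\times A)$ has Property~$(\star)$ witnessed by $L_i,L_{i,n_i}$.

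Next I would handle the remaining sheets. For each pair $(k,l)$ with $(G_k\times G_l)\cap t_a(X)^j\neq\emptyset$, transport everything to $A\times A$ via the affine bijection $i_{k,l}(x,y)=(g_k\oplus x,g_l\oplus y)$; since $i_{k,l}$ is a translation (hence a $\mathbb K$-definable homeomorphism in the valuation topology sending Zariski closed sets to Zariski closed sets), the preimage $(t_a(X)^j)^{(k,l)}$ is again a one-dimensional $\mathbb K$-definable subset of $A\times A$, so by Proposition~\ref{decompositionProp} it decomposes into relatively open pieces of irreducible curves cut out by polynomials $L_i^{(k,l)}$. Applying Claim~\ref{existsFrClaim} to each such family gives, after deleting a further finite set, Clauses~\ref{clause1}, \ref{clause2}, \ref{clause4} for $Y^{(k,l)}$ witnessed by the $L_i^{(k,l)}$. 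We do not need Clause~\ref{clause3} on the off-sheets (the definition of Property~$(\star)$ for subsets of $G\times G$ only demands Clauses~\ref{clause1},\ref{clause2},\ref{clause4} there), which is what makes the off-sheet analysis lighter than the base case.

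Two bookkeeping points need care. First, the requirement that "$(G_k\times G_l)\cap Y$ is infinite" whenever it is nonempty: after we have discarded only \emph{finitely many} points total, any sheet-intersection that was originally infinite stays infinite, and any sheet-intersection that was originally finite we simply fold entirely into the exceptional set $E$; so we may assume each nonempty $(G_k\times G_l)\cap Y$ is infinite. Second, the various finite exceptional sets — the isolated points of each sheet, the Claim~\ref{existsFrClaim} sets on the base sheet, and the Claim~\ref{existsFrClaim} sets on each off-sheet — must be amalgamated into a single finite $E\subseteq t_a(X)^j$; since there are only finitely many pairs $(k,l)$ (as $G/A$ is finite) and each contributes a finite set, $E$ is finite. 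Setting $Y:=t_a(X)^j\setminus E$ then gives the conclusion.

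The main obstacle I anticipate is purely organizational rather than mathematical: making sure the \emph{same} translation $a$ and the \emph{same} orientation $j$ that work on the base sheet $A\times A$ do not conflict with the requirements on the other sheets. Since $j$ and $a$ are forced by Clause~\ref{clause3}, which is only imposed on $A\times A$, and the off-sheet clauses are invariant under any choice of translation/orientation (Claim~\ref{existsFrClaim} applies to an arbitrary finite union of relatively-open-in-irreducible pieces), there is in fact no conflict — but verifying this cleanly, together with the coset-transport computation showing $i_{k,l}$ preserves the relevant structure, is the part that requires attention. Everything else is a direct transcription of the proof of Lemma~\ref{lemmaYStar}.
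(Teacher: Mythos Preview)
Your proposal is correct and follows essentially the same approach as the paper. The only organizational difference is that the paper first makes a preliminary translation so that $(e,e)\in X$ and the finite sheets are discarded, and then invokes Lemma~\ref{lemmaYStar} as a black box on $X\cap(A\times A)$ to obtain $a$, $j$, $E_0$, and the polynomials $L_i,L_{i,n_i}$; you instead unpack that lemma's proof inline. The treatment of the off-sheets---transport via $i_{k,l}$, then Proposition~\ref{decompositionProp} plus Claim~\ref{existsFrClaim}, then amalgamate the finite exceptional sets---is identical in both.
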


\begin{proof}
    Taking some translate and removing finitely many points from $X$ we may assume that $(e,e)\in X$ and also that $(G_k\times G_l)\cap Y$ is infinite whenever $(G_k\times G_l)\cap Y\neq \emptyset$. So we apply Lemma \ref{lemmaYStar} to $Y\cap ( A\times A)$ and find some $(a_1,a_2)\in Y\cap (A\times A)$, $j\in \{-1,1\}$ and some finite set $E_0\subseteq t_{a}(Y\cap (A\times A))^{j}$ such that $t_{a}(Y\cap (A\times A))^{j}\setminus E_0$ has property $(\star)$ witnessed by some polynomials $L_i$ and $L_{i,n_i}$.  Let $Z:=t_a(Y)\setminus E_0$. Then for each $(k,l)$ if $Z\cap (G_k\times G_l)$ is finite we remove those finitely many points from $Z$ and if $Z\cap G_k\times G_l$ is infinite we remove the finite set of isolated points of $Z^{(k,l)}$ and apply Proposition \ref{decompositionProp} and Claim \ref{existsFrClaim} to find some finite set $E^{(k,l)}\subseteq Z^{(k,l)}$ such that $Z^{(k,l)}\setminus E^{(k,l)}$ satisfies Clauses \ref{clause1}, \ref{clause2} and \ref{clause4} of Property $(\star)$ witnessed by some polynomials $L_i^{(k,l)}$. Thus, if we take $$E:=\bigcup_{k,l}i_{k,l}(E^{(k,l)})\subseteq Z$$ then $Z\setminus E$ has Property $(\star)$. 
 \end{proof}

 \section{Intersection theory in complete fields}\label{computations}

 In this section we present the main ingredients of intersection theory that we will use in order to find a field configuration for $\mathcal G$. Throughout this section we assume that $\mathbb K$ is complete and we fix $q\in K$ an arbitrary point.

 We start with a definition.

\begin{defi}

Given $X\subseteq K\times K$ $\mathbb K$-definable, $b=(b_1,b_2)\in X$ and $U\ni b_1$ an open subset of $K$, we say that an analytic function $h:U\to K$ is a \emph{Fr-power expansion of $X$ at $b$} if there is a natural number $n$ such that $b_2=\text{Fr}^{-n}( h(b_1))$ and $(s,\text{Fr}^{-n}( h(s)))\in X$ for all $s\in U$.

If $n=0$ we say that $h$ is a \emph{power expansion of $X$ at $b$}.

\end{defi}

From now on in this section we fix $Y\subseteq K\times K$ a one-dimensional $\mathbb K$ definable set containing $(q,q)$, $B\subseteq K$ an open neighborhood of $q$ and $h:B\to K$, a power expansion of $Y$ at $(q,q)$. For $s\in B$ we define $\bar s:=(s,h(s))$, an element of $Y.$

 \begin{defi}

  Let $(X_a)_{a\in Y}$ be a $\mathbb K$-definable family of $1$ dimensional subsets of $K\times K$ indexed by $Y$ . In this case we define:
  
Given $\alpha\in B$, $b=(b_1,b_2)\in X_{\bar\alpha}$,  $U\ni b_1$ and $V\ni \alpha$ open sets with $V\subseteq B$. We say that an analytic function $\Phi:U\times V\to K$ \emph{is a uniform Fr-power expansion of $(X_a)_{a\in Y}$ for $X_{\bar\alpha}$ at  $b$} if there is a natural number $n$ such that $b_2=\text{Fr}^{-n}(\Phi(b_1,\alpha))$ and for all $x\in U$ and all $s\in V$, $(x,\text{Fr}^{-n}(\Phi(x,s)))\in X_{\bar s}$.

 If moreover we assume that for all $a\in Y$, $(q,q)\in X_a$ we say that  $(X_a)_{a\in Y}$ is \emph{uniformly analytic at $(q,q)$} if there is $U\ni q$ an open subset of $B$, and an analytic function $H:U\times U\to K$ such that for all $s,x\in U$, $H(q,s)=q$ and $(x,H(x,s))\in X_{\bar s}$. In this case we say that $H$ is  a \emph{uniform power expansion of $(X_a)_{a\in Y}$ at $(q,q)$}.

\end{defi} 

First we prove:

\begin{lemma}\label{lemmaCanCompose}

Let $(X_a)_{a\in Y}$ be a $\mathbb K$-definable family of $1$-dimensional subsets of $K\times K$ indexed by $Y$ such that $(X_a)_{a\in Y}$ is uniformly analytic at $(q,q)$. Let $U\ni q$ be an open subset of $B$ and $H:U\times U\to K $ a uniform power expansion of $(X_a)_{a\in Y}$ at $(q,q)$. Then, for each $\beta \in U$, the family 

$$(X_{\bar\beta}\circ X_a)_{a\in Y}$$ is uniformly analytic at $(q,q)$.
 
Moreover, assume that $\alpha\in U$ and $b=(b_1,c)\in X_{\bar\alpha}$ are such that there exists a uniform Fr-power expansion of $(X_a)_{a\in Y}$ for $X_{\bar \alpha}$ at $b$. Let $b_2\in K$ be such that $b'=(c,b_2)\in X_{\bar \beta}$ and there is an Fr-power expansion of $X_{\bar \beta}$ at $b'$. Then there is a uniform Fr-power expansion of the family $(X_{\bar\beta}\circ X_a)_{a\in Y}$ for $X_{\bar\beta}\circ X_{\bar \alpha}$ at $(b_1,b_2)$.

\end{lemma}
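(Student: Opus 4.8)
The plan is to build the required expansions by composing the given ones. For the first statement, I would take the uniform power expansion $H : U \times U \to K$ of $(X_a)_{a\in Y}$ at $(q,q)$ and iterate it in the index argument: set $G(x,s) := H\big(H(x,s),\beta\big)$, defined on $U' \times U'$ where $U' \ni q$ is chosen (using continuity of $H$ and $H(q,q) = q$) so that $H(U' \times U') \subseteq U$. Then $G$ is analytic as a composition of analytic maps (the inner map $(x,s)\mapsto(H(x,s),\beta)$ takes values in $U\times U$ since $H(U'\times U')\subseteq U$ and $\beta\in U$), $G(q,s) = H(H(q,s),\beta) = H(q,\beta) = q$, and for $x,s \in U'$ the point $z := H(x,s)$ witnesses $(x, G(x,s)) \in X_{\bar\beta}\circ X_{\bar s}$, because $(x,z)\in X_{\bar s}$ and $(z, H(z,\beta)) = (z, G(x,s)) \in X_{\bar\beta}$. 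I would also note that $(q,q)\in X_{\bar\beta}\circ X_a$ for every $a\in Y$, using $(q,q)\in X_a$ and $(q,q)\in X_{\bar\beta}$, so the precondition in the definition of ``uniformly analytic at $(q,q)$'' is met.

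For the ``moreover'' part, write $\Phi : U_1 \times V \to K$ (exponent $n$) for the uniform Fr-power expansion of $(X_a)_{a\in Y}$ for $X_{\bar\alpha}$ at $b=(b_1,c)$, so $c = \text{Fr}^{-n}(\Phi(b_1,\alpha))$ and $(x,\text{Fr}^{-n}(\Phi(x,s)))\in X_{\bar s}$; and write $\psi : W \to K$, $\psi(u) = \sum_k a_k u^k$ (exponent $m$), for the Fr-power expansion of $X_{\bar\beta}$ at $b'=(c,b_2)$, so $b_2 = \text{Fr}^{-m}(\psi(c))$ and $(z,\text{Fr}^{-m}(\psi(z)))\in X_{\bar\beta}$ for $z\in W$. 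Composing these two membership relations by setting the intermediate coordinate $z = \text{Fr}^{-n}(\Phi(x,s))$ produces the candidate value $\text{Fr}^{-m}\big(\psi(\text{Fr}^{-n}(\Phi(x,s)))\big)$. The hard part is precisely that $\psi\circ\text{Fr}^{-n}$ is not a power series in its argument, so this candidate is not visibly of the required form $\text{Fr}^{-N}(\Theta(x,s))$ with $\Theta$ analytic. I would fix this by twisting the coefficients of $\psi$: put $\tilde\psi(w) := \sum_k a_k^{p^n} w^k$. Using that in characteristic $p$ the inverse Frobenius $\text{Fr}^{-n}$ is a continuous field automorphism of $K$ that divides valuations by $p^n$, one checks that $\tilde\psi$ converges on the open set $\text{Fr}^n(W) = (\text{Fr}^{-n})^{-1}(W)$ and that $\psi(\text{Fr}^{-n}(w)) = \text{Fr}^{-n}(\tilde\psi(w))$ on $\text{Fr}^n(W)$ (in characteristic $0$ we have $n=0$ and $\tilde\psi = \psi$, so there is nothing to check).

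Granting this, I would finish by setting $N := n+m$ and $\Theta := \tilde\psi\circ\Phi$, restricted to $U'\times V'$ with $U'\ni b_1$, $V'\ni\alpha$ chosen (via continuity of $\Phi$ and $\Phi(b_1,\alpha) = \text{Fr}^n(c)\in\text{Fr}^n(W)$) so that $\Phi(U'\times V')\subseteq\text{Fr}^n(W)$; then $\Theta$ is analytic and $V'\subseteq V\subseteq B$. For $x\in U'$, $s\in V'$, taking $z := \text{Fr}^{-n}(\Phi(x,s))\in W$ gives $(x,z)\in X_{\bar s}$ and
$$\text{Fr}^{-m}(\psi(z)) = \text{Fr}^{-m}\big(\text{Fr}^{-n}(\tilde\psi(\Phi(x,s)))\big) = \text{Fr}^{-N}(\Theta(x,s)),$$
whence $(z,\text{Fr}^{-N}(\Theta(x,s)))\in X_{\bar\beta}$ and so $(x,\text{Fr}^{-N}(\Theta(x,s)))\in X_{\bar\beta}\circ X_{\bar s}$; evaluating at $(b_1,\alpha)$ recovers $b_2 = \text{Fr}^{-N}(\Theta(b_1,\alpha))$. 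Apart from the Frobenius-twist point, everything here is routine bookkeeping about shrinking neighbourhoods, and it is that twist --- the only place where the positive-characteristic case genuinely differs --- that I expect to be the crux.
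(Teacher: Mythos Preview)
Your proposal is correct and follows essentially the same approach as the paper: for the first part you compose $H$ with itself at the fixed parameter $\beta$, and for the second part you compose the two Fr-power expansions and resolve the obstruction by twisting the coefficients of the outer series by $p^{n}$-th powers so as to commute $\text{Fr}^{-n}$ past it, then take $N=n+m$. The paper's proof is the same argument with slightly different notation (their $\Phi_1,\Phi_2,\tilde\Phi_2,n_1,n_2$ are your $\Phi,\psi,\tilde\psi,n,m$); if anything, your write-up is more careful about convergence of $\tilde\psi$ and about the value at $(b_1,\alpha)$.
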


\begin{proof}
   For the first part, by continuity as $H(q,s)=q$ for all $s\in U$, one can find an open set $U'$ such that the function $H_\beta:U'\times U'\to K$ defined by: 

    $$H_\beta(x,s):=H(H(x,s),\beta)$$ satisfies the conclusion of the lemma.

    
    For the last part let $\Phi_1(x,s)$ be an analytic function converging in some neighborhood $U_1\times V_{1}$ of $(b_1,\alpha)$ and let $n_1$ be a natural number such that for all $(x,s)\in U_1\times V_1$

    $$\left(x,\text{Fr}^{-n_1}(\Phi_1(x,s))\right)\in X_{\bar s}.$$
    
   Let $\Phi_2(x):U_2\to K$ be an analytic function converging in a neighborhood $U_{2}$ of $c$, and let $n_2$ be a natural number such that for all $x\in U_2$
    
    $$\left(x,\text{Fr}^{-n_2}(\Phi_2(x)\right)\in X_{\bar\beta}.$$

    Let $$\Psi_1(x,s):=\text{Fr}^{-n_1}(\Phi_1(x,s))$$ 
    and  $$\Psi_2(x):=\text{Fr}^{-n_2}(\Phi_2(x)).$$

    Therefore the graph of 
    $$x\mapsto \Psi_2(\Psi_1(x,s)) $$ 
    is contained in $X_{\bar\beta}\circ X_{\bar s}$ for all $(x,s)$ in some neighborhood of $(b_1,\alpha)$. Now note that 
    $$\Phi_2(\Psi_1(x,s))=\Phi_2(\text{Fr}^{-n_1}(\Phi_1(x,s))=\text{Fr}^{-n_1}(\tilde \Phi_2(\Phi_1(x,s))). $$

 Where $\tilde \Phi_2(x)$ is an analytic function such that for all $x$, 
 $$\text{Fr}^{-n_1}(\tilde \Phi_2(x))=\Phi_2(\text{Fr}^{-n_1}(x)).$$

This $\tilde \Phi_2(x)$ is obtained by changing all the coefficients of $\Phi_2(x)$ by their $p^{n_1}$-powers, more precisely, if $\Phi_2(x)$ has power expansion given by 
 $$\Phi_2(x)=\sum_i a_i x^{i} $$ then 
 $$\tilde\Phi_2(x)=\sum_I a_i ^{p^{n_1}} x^{i}.$$
 
 Notice that if $\Phi(x)$ converges at $a$ then $\tilde\Phi(x)$ converges at $a^{p_{n_1}}$ so $\tilde\Phi(x)$ is a convergent series at some non-empty open set.

 Therefore, 

 $$\Psi_2(\Psi_1(x,s))=\text{Fr}^{-n_2-n_1}(\tilde\Phi_2(\Phi_1(x,s))$$
 
 so we take $n=n_1+n_2$ and 
 $$\Phi(x,s)=\tilde\Phi_2 (\Phi_1(x,s))$$ in order to get a uniform Fr-power expansion of $(X_{\bar\beta}\circ X_a)_{a\in Y}$ for $X_{\bar\beta}\circ X_{\bar\alpha}$ at $(b_1,b_2)$.  \end{proof}

Now we need some definitions:

\begin{defi}\label{defidn}
Let $U\ni q$ open, $H:U\times U\to K$ analytic and assume that $H(q,s)=q$ for all $s\in U$. Then, $d_n(H,s)$ is the $n$-th coefficient of the power expansion of $x\mapsto H(x,s)$ around $q$. So $$H(x,s)= q+\sum_{n\geq 1} d_n(H,s) (x-q)^n.$$

Let $N(H)$ be the minimal natural number $n$ (if it exists) such that $$s_n(H):=\{d_n(H,s):s\in U\}$$ is infinite.

If $T:U\to K$ is a function in just one variable such that $T(q)=q$ we view it as a two variable function setting $T(x,s)=T(x)$. In this sense we define $d_n(T)$.
\end{defi}

      \begin{lemma}\label{propIntCreceGeneralVersionOrigen}
   Let $(X_a)_{a\in Y}$ be a $\mathbb K$-definable family of one dimensional subsets of $K\times K$ indexed by $Y$. Let $H:U\times U\to K$ be a uniform power expansion of $(X_a)_{a\in Y}$ at $(q,q)$ and let $N:=N(H)$.
   
   Let $Z\subseteq K\times K$ be a one dimensional $\mathbb K$-definable set with $(q,q)\in Z$ and let $T:U\to K$ be a power expansion of $Z$ at $(q,q)$. 
   Assume that  $\alpha\in U$ is such that
   $$d_n(H,\alpha)=d_n(T)$$ for all $n\leq N$. 
   Then, for any open $U'\times V'\subseteq K\times K$ containing $(q,q)$, there is $W\ni \alpha$, an open subset of $U$, such that for all $s\in W\setminus \{\alpha\}$

    $$|X_{\bar s} \cap Z \cap (U'\times V')|\geq 2.$$




   
\end{lemma}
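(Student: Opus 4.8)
The plan is to express the points of $X_{\bar s}\cap Z$ lying near $(q,q)$ as the zeros of a single analytic function and to count them via the Continuity of Roots theorem (Fact~\ref{continuityOfRootsTheoremf}). Put $F(x,s):=H(x,s)-T(x)$, an analytic function on $U\times U$ (the term $T(x)$ being a power series in $x-q$ alone), with $F(q,\alpha)=H(q,\alpha)-T(q)=q-q=0$. If $x\mapsto F(x,s)$ has expansion $\sum_{n\ge 1}c_n(s)(x-q)^n$ around $q$, then $c_n(s)=d_n(H,s)-d_n(T)$.

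I would first record two facts. By hypothesis $c_n(\alpha)=0$ for all $n\le N$, so $x\mapsto F(x,\alpha)$ vanishes at $q$ to some order $d\ge N+1\ge 2$ (recall $N=N(H)\ge 1$); in the main case this order $d$ is finite, that is $F(\cdot,\alpha)\not\equiv 0$, and I treat this case first. Secondly, by the definition of $N=N(H)$ the set $s_N(H)=\{d_N(H,s):s\in U\}$ is infinite, and by Proposition~\ref{coefficientsAreAnalyticProp} the map $s\mapsto d_N(H,s)$, hence also $c_N$, is analytic on $U$; being non-constant it has isolated zeros, so since $c_N(\alpha)=0$ there is an open $W_1\ni\alpha$ with $c_N(s)\ne 0$ for all $s\in W_1\setminus\{\alpha\}$.

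Next I would apply Continuity of Roots to $F$ at $(q,\alpha)$ with this multiplicity $d$: there are open sets $U_a\ni q$ and $U_b\ni\alpha$, which I shrink so that $U_a\subseteq U\cap U'$, $U_b\subseteq U$, $W_1\subseteq U$ and $H(U_a\times U_b)\subseteq V'$ (possible since $H(q,\alpha)=q\in V'$ and $H$ is continuous), such that $q$ is the only zero of $F(\cdot,\alpha)$ in $U_a$ and, for every $s\in U_b$, the function $x\mapsto F(x,s)$ has exactly $d$ zeros in $U_a$ counted with multiplicity. Fix $s\in W:=W_1\cap U_b$ with $s\ne\alpha$. If $F(\cdot,s)\equiv 0$ then $X_{\bar s}\cap Z$ is already infinite near $(q,q)$; otherwise, since $c_N(s)\ne 0$, the point $q$ is a zero of $x\mapsto F(x,s)$ of multiplicity at most $N<d$, so among its $d$ zeros in $U_a$ (with multiplicity) there is at least one $x_2\ne q$. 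For any zero $x\in U_a$ of $F(\cdot,s)$ one has $H(x,s)=T(x)$, hence $(x,H(x,s))\in X_{\bar s}$ ($H$ being a uniform power expansion of $(X_a)_{a\in Y}$) and it equals $(x,T(x))\in Z$ ($T$ being a power expansion of $Z$), and it lies in $U_a\times V'\subseteq U'\times V'$. The points arising from $x=q$ and from $x=x_2$ have different first coordinates and so are distinct, whence $|X_{\bar s}\cap Z\cap(U'\times V')|\ge 2$.

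I expect the degenerate case $F(\cdot,\alpha)\equiv 0$ — equivalently $T=H(\cdot,\alpha)$ near $q$ — to be the main obstacle: Continuity of Roots no longer applies at $(q,\alpha)$, and for $s\ne\alpha$ near $\alpha$ the factorisation $F(x,s)=(x-q)^N\bigl(c_N(s)+\cdots\bigr)$ with $c_N(s)\ne 0$ shows that $q$ may be the only zero of $F(\cdot,s)$ near $q$, so the argument above produces only one intersection point. In that case $X_{\bar\alpha}\cap Z$ is infinite, and one would need a separate argument (for instance, that this configuration does not occur in the intended applications, or that $X_{\bar s}\cap Z$ remains infinite for $s$ near $\alpha$) to recover the conclusion in full generality.
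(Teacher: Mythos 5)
Your main-case argument is correct and is essentially the paper's proof: set $F=H-T$ (the paper uses $T-H$), observe that $q$ is a zero of $F(\cdot,\alpha)$ of order at least $N+1$, apply Fact \ref{continuityOfRootsTheoremf} to get at least $N+1$ zeros of $F(\cdot,s)$ near $q$, and use the analyticity and non-constancy of $s\mapsto d_N(H,s)$ (the paper invokes Lemma \ref{puedoRestringirderivada}) to cap the multiplicity of the zero at $q$ by $N$ for $s\ne\alpha$, forcing a second, distinct zero. The degenerate case $F(\cdot,\alpha)\equiv 0$ that you flag is real but is also passed over silently in the paper's proof, which applies continuity of roots as if the multiplicity at $(q,\alpha)$ were finite; note, e.g., $H(x,s)=q+(x-q)+s(x-q)^2$ with $Z=X_{\bar\alpha}$ and $T=H(\cdot,\alpha)$, where $X_{\bar s}\cap Z$ reduces to the single point $(q,q)$ near $(q,q)$, so the stated conclusion genuinely needs $T\not\equiv H(\cdot,\alpha)$. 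In the only place the lemma is used, Lemma \ref{intersectionSubeLemmaAnalitic}, the hypothesis that $X_{\bar\alpha}\cap Z$ is finite rules this case out, since $F(\cdot,\alpha)\equiv 0$ would put the whole graph of $T$ inside $X_{\bar\alpha}\cap Z$; so your caveat identifies a missing hypothesis in the lemma's statement rather than a defect in your argument.
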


\begin{proof}

By shrinking $U'$ if necessary, we may assume that $U'\subseteq U$ and $T(x)\in V'$ for all $x\in U'$.

Let $F$ be the analytic function defined by:

$$F(x,s)=T(x)-H(x,s).$$

Our assumptions imply that $x=q$ is a zero of $F(x,\alpha)$ of order at least $N+1$. By Fact \ref{continuityOfRootsTheoremf} applied to the function $F(x,s)$ together with the open sets $U'$ and $U$, there is an open neighborhood $W\subseteq U$ of $\alpha$ such that for all $s\in W$ the function $F(x,s)$ has at least $N+1$ zeros (counting multiplicities) at $U'$. Moreover, using Lemma \ref{puedoRestringirderivada} with the function $L(s):=d_N(F, s)$, we may assume -possibly after shrinking  $W$- that $d_N(F,s)\neq 0$ for all $s\in W\setminus\{\alpha\}$. 

Therefore, for all $s\in W\setminus\{\alpha\}$ the function $f_s(x):=F(x,s)$ has only zeros of order at most $N$. Thus, for all $s\in W\setminus \{\alpha\}$ there are at least two distinct points $z_1,z_2\in U'$ such that

$$F(z_i,s)=0,$$ but this implies that $$(z_i,H(z_i,s))=(z_i,T(z_i))\in X_{\bar s} \cap Z \cap (U'\times V')$$ for $i=1,2.$

So for all $s\in W\setminus \{\alpha\}$ one has that:

\begin{equation*}
|X_{\bar s} \cap Z  \cap (U'\times V')|\geq 2.
 \end{equation*}
\end{proof}

\begin{lemma}\label{propIntCreceGeneralVersionLejos}
  
   Let $(X_a)_{a\in Y}$ be a $\mathbb K$-definable family of one dimensional subsets of $K\times K$ indexed by $Y$. Let $Z\subseteq K\times K$ be a $\mathbb K$-definable set of dimension $1$ with no isolated points.
   
Let $\beta\in B$ and $(x',y')\in X_{\bar \beta}\cap Z$. Assume that $\Phi:U\times V\to K$ is a uniform Fr-power expansion of $(X_a)_{a\in Y}$ for $X_\beta$ at $(x',y')$. Then, for any $U'\times V'$ open neighborhood of $(x',y')$, there is an open set $W\ni \beta$ contained on $V$ such that 

    $$|X_{\bar s} \cap Z \cap (U'\times V')|\geq 1$$
    
for all $s\in W$.

   
\end{lemma}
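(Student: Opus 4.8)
The plan is to describe the arc of $X_{\bar s}$ near $(x',y')$ by the given uniform Fr-power expansion $\Phi$, to describe $Z$ near $(x',y')$ by a single irreducible polynomial, and then to force the two to meet for every $s$ close to $\beta$ by an application of continuity of roots. First I would invoke Proposition~\ref{decompositionProp}: since $Z$ has no isolated points, $Z=\bigcup_i\{(x,y)\in V_i:L_i(x,y)=0\}$ with each $L_i$ irreducible and each $V_i$ open, and $(x',y')$ lies in one such piece, say the one with polynomial $L$ and open set $V_0$, so that $L(x',y')=0$ and $\{L=0\}\cap V_0\subseteq Z$. Let $n$ be the integer attached to $\Phi$, so $\text{Fr}^{-n}(\Phi(x',\beta))=y'$ and $(x,\text{Fr}^{-n}(\Phi(x,s)))\in X_{\bar s}$ for all $(x,s)\in U\times V$.

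Next I would dispose of the positive-characteristic twist. Writing $L(x,y)=\sum_{j,k}c_{jk}x^jy^k$ and using that $\text{Fr}^{-n}$ is a field automorphism, one obtains
$$L\big(x,\text{Fr}^{-n}(\Phi(x,s))\big)=\text{Fr}^{-n}\big(G(x,s)\big),\qquad G(x,s):=\sum_{j,k}c_{jk}^{\,p^n}\,x^{jp^n}\,\Phi(x,s)^k,$$
where $G$ is analytic on $U\times V$ (a polynomial in $x$ composed with finitely many powers of the analytic $\Phi$) and $G(x',\beta)=\text{Fr}^{n}(L(x',y'))=0$. Thus meeting $Z$ with the graph of $x\mapsto\text{Fr}^{-n}(\Phi(x,s))$ reduces to solving the genuinely analytic equation $G(x,s)=0$ inside a small box around $(x',y')$.

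Then comes the core step. Assuming $G(\cdot,\beta)$ is not identically zero near $x'$, the point $x'$ is a zero of it of some finite multiplicity $d\ge 1$; I would apply Fact~\ref{continuityOfRootsTheoremf} to $G(x,s)$ at $(x',\beta)$ to get an open box $U_0\times W\subseteq U\times V$ about $(x',\beta)$, shrunk — by continuity, and since the map $(x,s)\mapsto(x,\text{Fr}^{-n}(\Phi(x,s)))$ sends $(x',\beta)$ into the open neighbourhood $V_0\cap(U'\times V')$ of $(x',y')$ — so that this map carries $U_0\times W$ into $V_0\cap(U'\times V')$, and such that for every $s\in W$ the function $x\mapsto G(x,s)$ has at least one zero $x_s\in U_0$. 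The point $(x_s,\text{Fr}^{-n}(\Phi(x_s,s)))$ then lies in $X_{\bar s}$ (definition of uniform Fr-power expansion), lies in $U'\times V'$, and satisfies $L=0$ while lying in $V_0$, hence lies in $Z$; this yields $|X_{\bar s}\cap Z\cap(U'\times V')|\ge 1$ for all $s$ in the open set $W\ni\beta$, $W\subseteq V$, as required.

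The step I expect to be the real obstacle is the degenerate case $G(\cdot,\beta)\equiv 0$ near $x'$, i.e.\ a whole arc of $X_{\bar\beta}$ lying inside $Z$ at $(x',y')$. I would try to absorb it by writing $G(x,s)=\sum_{k\ge 0}g_k(x)(s-\beta)^k$ and dividing out the smallest power $(s-\beta)^{k_0}$ with $g_{k_0}\not\equiv 0$: if $G\equiv 0$ near $(x',\beta)$ then $X_{\bar s}$ locally contains an arc of $\{L=0\}$ for all nearby $s$ and the conclusion is immediate; if $g_{k_0}(x')=0$ one reruns the core step with $G$ replaced by $G/(s-\beta)^{k_0}$. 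The one remaining possibility — an arc of $X_{\bar\beta}$ coinciding locally with a component of $Z$ through $(x',y')$ while the neighbouring $X_{\bar s}$ avoid $Z$ there — is exactly the configuration excluded in the intended applications by finiteness of $X_{\bar\beta}\cap Z$, so at that point I would either invoke that finiteness (available wherever this lemma is used) or record it as an additional hypothesis.
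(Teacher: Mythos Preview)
Your approach is essentially identical to the paper's: localize $Z$ via Proposition~\ref{decompositionProp} to a single irreducible polynomial $L$, absorb the $\text{Fr}^{-n}$ by replacing $L(x,y)=\sum c_{jk}x^jy^k$ with the twisted polynomial $\tilde L(x,y)=\sum c_{jk}^{p^n}x^{jp^n}y^k$ so that $L(x,\text{Fr}^{-n}(\Phi(x,s)))=\text{Fr}^{-n}(\tilde L(x,\Phi(x,s)))$, set $F(x,s)=\tilde L(x,\Phi(x,s))$, and apply continuity of roots (Fact~\ref{continuityOfRootsTheoremf}) at $(x',\beta)$. Your $G$ is exactly the paper's $F$.

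You are in fact more careful than the paper on one point. The paper simply writes ``This is an analytic function vanishing at $(x',\alpha)$. Applying Fact~\ref{continuityOfRootsTheoremf}\ldots'' without checking that $F(\cdot,\beta)$ has a zero of \emph{finite} multiplicity at $x'$, which is what Fact~\ref{continuityOfRootsTheoremf} requires. Your worry about the degenerate case $G(\cdot,\beta)\equiv 0$ is legitimate for the lemma as stated; the paper silently relies on the fact that in the only place this lemma is invoked (inside Lemma~\ref{intersectionSubeLemmaAnalitic}) one has already assumed $X_{\bar\alpha}\cap Z$ finite, which rules out a local arc of $X_{\bar\beta}$ lying inside $Z$. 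Your suggestion to either factor out $(s-\beta)^{k_0}$ or to record the finiteness hypothesis is the right fix, and your identification of the one genuinely problematic sub-case (namely $g_{k_0}(x')\neq 0$) is accurate: that configuration really can fail the conclusion, and it is precisely what the finiteness of $X_{\bar\beta}\cap Z$ excludes.
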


\begin{proof}









As $Z$ has no isolated points and $(x',y')\in Z$, by Proposition \ref{decompositionProp}, there is some irreducible polynomial $L(x,y)$ and some open set $A\subseteq K\times K$ such that 
$$(x',y')\in \{(x,y)\in A:L(x,y)=0\} \subseteq Z.$$

Let $n_0$ be a natural number such that 
$$\left(x,\text{Fr}^{-n_0}(\Phi(x,s))\right)\in X_{\bar s}$$ 
for all $(x,s)$ in some neighborhood of $(x',\alpha)$. 

Define $\Psi(x,s)=\text{Fr}^{-n_0}(\Phi(x,s))$.

By shrinking $U'$ and $V$ we may assume $\Psi(x,s)\in V'$  for all $x\in U'$ and $s\in V$. We may also assume that $(x,\Psi(x,s))\in A$. Therefore, by our conditions on $L$ and $A$, we only need to prove that for all $s$ close enough to $\alpha$ there is some $x\in U'$ such that
$$L(x,\Psi(x,s))=0.$$

So we prove that for $s$ close enough to $\alpha$ the function 
$$L(x,\text{Fr}^{-n_0}(\Phi(x,s)))$$ 
has a zero in $U'$.
Let  $\tilde L$ be the analytic function defined by
$$\tilde L(x,y)=\sum_I a_{I}^{p^{n_0}} x^{i_1 p^{n_0}} y^{i_2}$$
where $$L(x,y)=\sum_I a_{I} x^{i_1} y^{i_2}.$$ 

So  
$$L\left(x,\text{Fr}^{-n_0}(\Phi(x,s))\right)=\text{Fr}^{-n_0}\left(\tilde L(x,\Phi(x,s))\right).$$

Let $$F(x,s):=\tilde L(x, \Phi(x,s)).$$ 

This is an analytic function vanishing at $(x',\alpha)$. Applying Fact \ref{continuityOfRootsTheoremf} to $F$ and the open sets $U'$ and $V$ we get $W\ni \alpha$, an open subset of $V$, such that for all $s\in W$ there is some $x\in U'$ with $\tilde L(x,\Phi(x,s))=0$ and $$L\left(x,\text{Fr}^{-n_0}( \Phi(x,s))\right)=\text{Fr}^{-n_0}\left(\tilde L(x, \Phi(x,s))\right)=\text{Fr}^{-n_0}(0)=0.$$


So for all $s\in W$ there is at least one point $x\in U'$ such that 
$$(x,\text{Fr}^{-n_0}(\Phi(x))\in (X_s \cap Z ) \cap (U'\times V'),$$ as required. \end{proof}

As a consequence of those two lemmas we have:

\begin{lemma}\label{intersectionSubeLemmaAnalitic}

Let $(X_a)_{a\in Y}$ be a $\mathbb K$-definable family of one dimensional subsets of $K\times K$ and let $H:U\times U\to K$ be a uniform power expansion of $(X_a)_{a\in Y}$ at $(q,q)$ and set $N:=N(H)$.

 Assume that for all $s\in U$ and for all $b\in X_{\bar s}$ the family $(X_a)_{a\in Y}$ admits a uniform Fr-power expansion for $X_{\bar s}$ at $b$.

Let $Z\subseteq K\times K$ be a $\mathbb K$-definable $1$-dimensional set with no isolated points and let $T:U\to K$ be a power expansion of $Z$ at $(q,q)$.

Then, if $\alpha\in U$ is such that $X_{\bar \alpha}\cap Z$ is finite and  $$d_n(H,\alpha)=d_n(T)$$ for all $n\leq N$, there is $W\ni \alpha $ an open set contained in $U$ such that 

    $$|X_{\bar s} \cap Z| > |X_{\bar\alpha} \cap Z|$$

   for all $s\in W\setminus \{\alpha\}$. In particular, there are infinitely many $a\in Y$ such that 
   
    $$|X_{a} \cap Z| > |X_{\bar \alpha} \cap Z|.$$
    
\end{lemma}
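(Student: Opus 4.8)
The plan is to combine the two previous lemmas — Lemma \ref{propIntCreceGeneralVersionOrigen} (which produces, near $\alpha$, at least two intersection points inside a small box around $(q,q)$) and Lemma \ref{propIntCreceGeneralVersionLejos} (which shows every intersection point of $X_{\bar\alpha}\cap Z$ persists into nearby fibers $X_{\bar s}$) — to deduce that the total intersection number strictly increases on a punctured neighborhood of $\alpha$. First I would enumerate the finite set $X_{\bar\alpha}\cap Z=\{p_1,\dots,p_k\}$, with $p_1=(q,q)$ after possibly relabelling; since $Z$ has no isolated points and $(q,q)\in Z$, there is a power expansion $T$ of $Z$ at $(q,q)$ as in the hypothesis. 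For each $p_j$ with $j\geq 2$, I would choose pairwise disjoint open boxes $U_j'\times V_j'$ around $p_j$, all disjoint from a fixed box $U_1'\times V_1'$ around $(q,q)$; shrinking if necessary, these $k$ boxes are mutually disjoint.

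Next I would apply Lemma \ref{propIntCreceGeneralVersionLejos} at each $p_j$ ($j\geq 2$): the hypothesis that $(X_a)_{a\in Y}$ admits a uniform Fr-power expansion for $X_{\bar\alpha}$ at every point of $X_{\bar\alpha}$ is exactly what is needed, so there is an open $W_j\ni\alpha$ with $|X_{\bar s}\cap Z\cap(U_j'\times V_j')|\geq 1$ for all $s\in W_j$. Simultaneously I would apply Lemma \ref{propIntCreceGeneralVersionOrigen} with the box $U_1'\times V_1'$: using $d_n(H,\alpha)=d_n(T)$ for all $n\leq N$, there is an open $W_1\ni\alpha$ with $|X_{\bar s}\cap Z\cap(U_1'\times V_1')|\geq 2$ for all $s\in W_1\setminus\{\alpha\}$. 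Setting $W:=W_1\cap W_2\cap\dots\cap W_k$ and using that the boxes are disjoint, for every $s\in W\setminus\{\alpha\}$ we get
$$|X_{\bar s}\cap Z|\;\geq\;\sum_{j=1}^{k}|X_{\bar s}\cap Z\cap(U_j'\times V_j')|\;\geq\;2+(k-1)\;=\;k+1\;>\;k\;=\;|X_{\bar\alpha}\cap Z|.$$
Finally, since $W\setminus\{\alpha\}$ contains an open ball, Corollary \ref{existsInf} (or directly the fact that $\{s : s\in B\}$ parametrizes infinitely many $\bar s\in Y$ via the injective-on-a-ball map $s\mapsto \bar s=(s,h(s))$) gives infinitely many $a\in Y$, namely $a=\bar s$ for $s\in W\setminus\{\alpha\}$, with $|X_a\cap Z|>|X_{\bar\alpha}\cap Z|$.

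The step I expect to be the main obstacle is the bookkeeping that makes the local contributions genuinely additive: one must ensure the chosen neighborhoods $U_j'\times V_j'$ are pairwise disjoint and that the two points produced by Lemma \ref{propIntCreceGeneralVersionOrigen} lie inside $U_1'\times V_1'$ and not among the $p_j$ with $j\geq 2$ — this is automatic once the boxes are disjoint, but it requires first fixing the finite data $\{p_1,\dots,p_k\}$ and only then invoking the two lemmas with those specific boxes. A minor point to check is that $(q,q)$ does belong to $X_{\bar\alpha}\cap Z$: it lies in $Z$ by hypothesis and in $X_{\bar\alpha}$ because $H(q,\alpha)=q$, so $(q,H(q,\alpha))=(q,q)\in X_{\bar\alpha}$; thus it is legitimate to include it as $p_1$. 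Everything else is a direct quotation of the two preceding lemmas.
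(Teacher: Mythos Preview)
Your proposal is correct and is essentially identical to the paper's own proof: enumerate $X_{\bar\alpha}\cap Z$ with $(q,q)$ as the first point, separate the points by pairwise disjoint boxes, apply Lemma~\ref{propIntCreceGeneralVersionOrigen} at $(q,q)$ to get two points and Lemma~\ref{propIntCreceGeneralVersionLejos} at the remaining points to get one each, then intersect the resulting neighborhoods of $\alpha$. Your additional remarks verifying $(q,q)\in X_{\bar\alpha}$ and justifying the ``in particular'' clause are sound and make explicit what the paper leaves implicit.
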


\begin{proof}

Write $$X_{\bar\alpha} \cap Z=\left\{(x_1,y_1)=(q,q),(x_2,y_2),\ldots,(x_n,y_n)\right\}.$$

For $i=1,\ldots,n$ let $U_i$ and $V_i$ be open neighborhoods of $x_i$ and $y_i$ respectively such that 
$$(U_j\times V_j)\cap (U_k\times V_k)=\emptyset$$
for $j\neq k$.

 Apply Lemma \ref{propIntCreceGeneralVersionOrigen} with $U'=U_1$ and $V'=V_1$ and find an open neighborhood $W_1$ of $\alpha$ such that for all $s\in W_1\setminus \{\alpha\}$,

 \begin{equation}\label{haydosGeneral}
|X_{\bar s} \cap Z  \cap (U_1\times V_1)|\geq 2.
 \end{equation}

 For $j=2,\ldots, n$, as the family $(X_a)_{a\in Y}$ admits an uniform Fr-power expansion for $X_{\bar\alpha}$ at $(x_1,y_1)$ we apply Lemma \ref{propIntCreceGeneralVersionLejos} with $U'=U_j$, $V'=V_j$ and $(x',y')=(x_j,y_j)$ and find $W_j$, an open neighborhood of $\alpha$, such that for all $s\in W_j$

 \begin{equation}\label{hayunoGeneral}
|X_{\bar s} \cap Z  \cap (U_j\times V_j)|\geq 1.
 \end{equation}

As 
$$(U_j\times V_j)\cap (U_k\times V_k)=\emptyset$$
for $j\neq k$, equations \ref{haydosGeneral} and \ref{hayunoGeneral} imply that if we define 

$$W=\bigcap_{j=1}^n W_j$$

then, for all $s\in W\setminus \{\alpha\}$ 

$$|X_{\bar s}\cap Z|\geq n+1 >  |X_{\bar\alpha}\cap Z|.$$ \end{proof}

\chapter{The Additive Case}\label{additiveCaseVersion}


In this chapter we prove Zilber's conjecture in the case where the strongly minimal structure is an expansion of a group locally isomorphic to $\mathbb G_a$ whose definable sets are $\mathbb K$-definable, more precisely:

\begin{thm}\label{thmAditiveVersion}
    Let $\mathbb K=(K,+,\cdot,0,1,v,\Gamma)$ be an algebraically closed  valued field with $\text{char} (K)=p\geq 0$. Let $(G,\oplus)$ be a $\mathbb K$-definable group and assume that it is locally isomorphic to $\mathbb G_a=(K,+)$. Let $\mathcal G=(G,\+,\ldots)$ be a first order structure expanding $(G,\+)$ whose definable sets are all $\mathbb K$-definable. Then, if $\mathcal G$ is strongly minimal and non locally modular it interprets an infinite field 
 \end{thm}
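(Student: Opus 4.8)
The plan is to produce a \emph{field configuration} for $\mathcal G$ — that is, a reduced rank $2$ group configuration — and then conclude by Fact \ref{fieldConfig}. Since $\mathcal G$ is strongly minimal and non locally modular, Fact \ref{existsXVersion} furnishes a $\mathcal G$-definable, strongly minimal $X\subseteq G\times G$ which is not $\mathcal G$-affine. Applying Lemma \ref{lemmaYStarG}, after replacing $X$ by a translate or the inverse of a translate and deleting a finite set, we may assume $X=Y$ has Property $(\star)$, witnessed by polynomials $L_i$, $L_{i,n_i}$, $L_i^{(k,l)}$; thus $Y$ has no isolated points, $(e,e)\in Y$, and near $(e,e)$ the main branch of $Y$ is the graph of an analytic function $h$ with $h(e)=e$ and $h'(e)\neq 0$, the Frobenius twists being recorded by the exponents $n_i$. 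Fixing an infinite $\mathbb K$-definable $A\leq\mathbb G_a$ with $[G:A]$ finite, we work with the additive structure on $A$; since $\acl_{\mathbb K}=\acl_{\mathbb K^{f}}$ (Fact \ref{aclIgualFact}), Morley rank of $\mathbb K$-definable sets is computed geometrically from Zariski closures.

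The first task is to build a $\mathbb K$-definable, almost faithful, rank $2$ family $\mathcal C=(C_t)_{t\in T}$ of plane curves through $(e,e)$ which is closed, off a set of smaller rank, under the composition $\circ$ of Section \ref{preliminaries}. The translates $(t_a(Y))_{a\in A\times A}$ already form a rank $2$ almost faithful family: Fact \ref{nonAffineIntersectsFiniteLemmaF} gives that generic translates of $Y$ meet in a finite set, and Lemma \ref{noGenericFiniteZLemma} gives that $Y$ meets any fixed one-dimensional definable set finitely, so the family has generic finite intersection; non-$\mathcal G$-affineness of $Y$ is precisely what prevents the parameter space from collapsing to rank $\leq 1$. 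To obtain closure under $\circ$ one enlarges the family by adjoining Zariski closures of curves obtained from $Y$ by finitely many translations, inverses, $\ominus$ and $\circ$ (e.g.\ $\cl(t_a(Y)\circ t_b(Y))$), iterating a few times: Lemma \ref{noIsolated} and Lemma \ref{lemmaSumaComp} preserve the absence of isolated points, while Lemma \ref{restaCompuestaFiniteIntersectionLemma} guarantees that the enlarged family still has generic finite intersection, so it remains almost faithful and of rank $2$.

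With $\mathcal C$ in hand, the analytic input of Section \ref{computations} converts into the genericity axioms of a group configuration. Fix a uniform power expansion $H$ of $\mathcal C$ at $(e,e)$ and put $N:=N(H)$ as in Definition \ref{defidn} (finite because $\mathcal C$ genuinely varies in $2$ parameters). Lemma \ref{intersectionSubeLemmaAnalitic} shows that if a member of $\mathcal C$ meets a fixed one-dimensional set in finitely many points and shares its first $N$ local coefficients with that set, then nearby members meet it in strictly more points; this forces the generic intersection number of a member of $\mathcal C$ with a fixed curve, and of two generic members, to be well-defined finite invariants, and forces a generic member to be determined by finitely much of its Taylor data at $(e,e)$. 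Combining this with Lemma \ref{lemmaCanCompose}, and with Lemma \ref{N} and Lemma \ref{lemmaN} (which say that $N$ and the Frobenius exponents add up under $\circ$), one obtains that the composition of germs at $(e,e)$ of curves of $\mathcal C$ is again, generically, the germ of a curve of $\mathcal C$, so the germs form a generically-defined group of rank $2$ acting on the rank $1$ set of germs of first-coordinate data. Choosing $c_1,c_2\in T$ generic independent, a point $p$ generic on $C_{c_1}$, writing $c_{12}$ for a parameter of $C_{c_1}\circ C_{c_2}$ and $p',p''$ for the points induced on the relevant curves, the tuple $(c_1,c_2,c_{12},p,p',p'')$ is a group configuration: collinearity encodes incidence of points with curves, non-collinear triples are independent by generic finite intersection, the curve coordinates have $\RM=2$ and the point coordinates have $\RM=1$. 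Passing to algebraic closures of the coordinates yields an interalgebraic reduced rank $2$ configuration, so by Fact \ref{fieldConfig} $\mathcal G$ interprets an infinite field.

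The principal difficulty is the positive-characteristic case, which is the new content beyond \cite{KR}. When $p>0$ the polynomials defining $Y$ may be polynomials in $y^{p^{n}}$, so $\partial/\partial y$ can vanish identically and the implicit, inverse and continuity-of-roots theorems apply not to $Y$ itself but to a Frobenius untwisting of it — this is exactly what Property $(\star)$ and the ``Fr-power expansions'' of Section \ref{computations} are designed to track. The two delicate points are therefore: (i) that the class of curves generated from $Y$ remains stable simultaneously under $\circ$ and under Frobenius untwisting, so that germ composition stays inside the family (here Lemma \ref{lemmaCanCompose} together with the additivity in Lemma \ref{N} and Lemma \ref{lemmaN} is indispensable); and (ii) that no rank is lost between the family $\mathcal C$ and the resulting configuration, i.e.\ that the configuration really has rank $2$ and is reduced — this is where non-$\mathcal G$-affineness of $Y$, via Fact \ref{existsXVersion}, enters essentially. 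The remaining ingredients — the metric/analytic estimates, continuity of roots, the identity theorem, and the bookkeeping with the cosets $G_k$ of $A$ in $G$ — are routine given the results quoted above.
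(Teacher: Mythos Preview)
Your proposal has a genuine gap at its core: the rank $2$ family you describe does not exist as stated. You write that ``the translates $(t_a(Y))_{a\in A\times A}$ already form a rank $2$ almost faithful family'' of curves through $(e,e)$ --- but translates of $Y$ by arbitrary $a\in A\times A$ do not pass through $(e,e)$, and if you restrict to $a\in Y$ (so that $t_a(Y)\ni(e,e)$) you get only a rank $1$ family. Enlarging by compositions and closures does not help: compositions of a rank $1$ family with itself still has rank $1$ parameter space. So your uniform power expansion $H$ at $(e,e)$, your $N(H)$, and your germ-group all float free of any actual $2$-parameter family.

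The paper's approach is substantially different and avoids this problem. It never builds a rank $2$ family closed under $\circ$; instead it constructs the specific rank $1$ family
\[
X_{a}=X_{a,c}:=(Y\ominus t_a(Y))\circ (Y\ominus t_c(Y))^{-1},\qquad a\in Y,
\]
and proves (Lemma \ref{existsHLemma}) that this family has $N(H)=1$, i.e.\ the \emph{first} derivative $s\mapsto H_x(0,s)$ already takes infinitely many values. This is the step you are missing and it is not automatic: the translates $t_a(Y)$ themselves may all have the same slope at $(e,e)$ (the case $N_+(f)>1$), and the subtraction-then-composition in $X_{a,c}$ is engineered precisely to push the variation down to first order --- Lemmas \ref{N} and \ref{lemmaN} are used for this computation, not (as you say) to show that ``$N$ adds up under $\circ$''. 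With $N=1$ the tangency relation $Z\tangent l_a$ of Section \ref{aNotionOfTangencyAdditive} then pins each parameter in the configuration to a slope in $K$, giving $\mathbb K$-interalgebraicity with an explicit standard configuration for $\mathbb G_a\ltimes\mathbb G_m$ (Diagram \ref{gcFullStructure}). The rank $2$ is read off from \emph{that} configuration, not from an abstract germ-group. Finally, the passage from complete $\mathbb K$ to arbitrary $\mathbb K$ is handled by packaging the intersection-growth statements as a first-order sentence (Lemma \ref{lemmaIntersectionSubeVersion}); your sketch invokes analytic expansions directly and never addresses this transfer.
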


Recall (Definition \ref{definitionLocllyIsomorphic}) that for us, an infinite $\mathbb K$-definable group $G$ is  locally isomorphic to $\mathbb G_a$ if there is $A\leq\mathbb G_a$ a $\mathbb K$-definable subgroup and $i:A\to G$ a $\mathbb K$-definable injective group homomorphism such that $G/i(A)$ is finite. 

    

From now on we fix $\mathcal G=(G,\+,\ldots)$ as in the hypothesis of Theorem \ref{thmAditiveVersion}, let $\ominus$ be the inverse operation on $G$. We also fix $A$ and $i:A\to G$ as in the definition of locally isomorphic to $\mathbb G_a$. If $Z$ is any $\mathcal G$-definable set, we compute the Morley rank and Morley degree of $Z$ in the sense of $\mathcal G$. We say that $Z$ is strongly minimal if it is strongly minimal in the sense of $\mathcal G$.

For notational reasons we will assume that $i:A\to G$ is indeed an inclusion, so $A\leq G$ is a subgroup of finite index. We fix $e=g_0,g_1,\ldots,g_m\in G$ such that for all $g\in G$ there is a unique $j\in\{0,\ldots,m\}$ and $a\in A$ such that $g=g_j\oplus a$. For $j=0,\ldots,m$ let $$G_j:=g_j\+ A=\{g_j\oplus a:a\in A\} $$ so $G$  is the disjoint union of $\{G_j\}_{j=0}^m$.  For $k,l\leq m$ let $i_{k,l}:A\times A\to G_k\times G_l$ defined by $i_{k,l}(x,y)=(g_k\oplus x,g_l\oplus y)$.

By Fact \ref{existsXVersion} (and the subsequent commentary) there is $X\subseteq G\times G$ a $\mathcal G$-definable strongly minimal set that is not $\mathcal G$-affine. From now on we fix such an $X$.





\section{A notion of tangency}\label{aNotionOfTangencyAdditive}

In order to interpret a field we find a group configuration in $\mathcal G$ that is interalgebraic (in $\mathbb K$) with a standard group configuration for the group $\mathbb G_a\ltimes\mathbb G_m$. Here $\mathbb G_a$ and $\mathbb G_m$ are the additive and multiplicative group of $\mathbb K$ respectively and the semidirect product is given by the action of $\mathbb G_m$ on $\mathbb G_a$ by multiplication.

More precisely, our group configuration will be interalgebraic in $\mathbb K$ with a rank $2$ group configuration of the form:

\begin{equation}\label{gcFullStructureDiagram}
 	\begin{tikzcd}
 	(a_0, a_1) \arrow[ddd, dash] \arrow[rrr, dash] &&& b \arrow[rrr, dash]  &&& a_0+a_1 b \arrow[dddllllll,  dash] 
  \\
&&&&&&
\\
& & && 
&&
\\
 (b_0,b_1)  \arrow[ddd, dash] &&& b_0+ a_0b_1+ a_1 b_1 b &&& \\ &&&&&&
 \\
 &&&&&&
 \\
 (a_0b_1+b_0,a_1b_1)  \arrow[uuuuuurrr, crossing over, dash] &&&&&&
  	\end{tikzcd}
\end{equation}

where $(a_0,a_1)$ and $(b_0,b_1)$ are $\mathbb K$-generic independent elements of $\mathbb G_a\ltimes \mathbb G_b$.


In this section we describe the relation that will lead to the $\mathbb K$-interalgebricity between this group configuration and the one that we find for $\mathcal G$. Although this is not the shortest way for showing the interalgebricity, we decided to present this approach since it gives a flavor of what was done in Section \ref{computations} but in a simpler setting. However, in the current section we do not discuss how to get a group configuration for $\mathcal G$, this will be done in Section \ref{theProof}.

\begin{defi}\label{defTangent}
For any $a\in K$ let 

$$l_a:=\{(x,ax):x\in K\}.$$

Given $Z\subseteq A\times A$ a one dimensional $\mathbb K$-definable set, we denote $Z \tangent l_a$ if $(0,0)\in Z$ and either $Z\cap l_a$ is infinite or there is an open ball $B\ni a$ such that for all $q\in B$ if $q\neq a$ then, 

$$|l_q\cap Z|>|l_a\cap Z|.$$

\end{defi}

This definition has two main properties:

Lemma \ref{claimTangentFiniteLemma} that shows that if $Z\tangent l_a$ then $a$ is $\mathbb K$-algebraic over the parameter defining $Z$. 

Lemma \ref{claimDerivativeEqualsImpliesTangentLemma} shows that in the complete metric case if $Z$ contains the graph of an analytic function $g$, then $Z\tangent l_{g'(0)}$. 

These two facts together imply that if $Z\ni (0,0)$ contains the graph of a function, then the derivative of the function at $0$ is algebraic over the parameter defining $Z$. 

For $Z\subseteq G\times G$ we extend the definition as follows:

\begin{defi}
    Given $Z\subseteq G\times G$ one dimensional and $\mathbb K$-definable, we denote $Z\tangent l_a$ if $(e,e)\in Z$ and $Z\cap (A\times A)\tangent l_a$.
\end{defi}

\begin{lemma} \label{claimTangentFiniteLemma}
    Let $Z\subseteq A\times A$ be a $1$-dimensional $\mathbb K$-definable set such that $(0,0)\in Z$. Then $$\{a\in K:Z\tangent l_a\}$$ is finite. 
\end{lemma}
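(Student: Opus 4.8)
The plan is to show that the property $Z \tangent l_a$ forces $a$ to lie in the algebraic closure (in the field sense) of a parameter $\bar d$ defining $Z$. Fix such a $\bar d$. First I would observe that the set $\{a \in K : Z \tangent l_a\}$ is $\mathbb{K}$-definable over $\bar d$: the condition ``$Z \cap l_a$ is infinite'' is first-order by Corollary \ref{existsInf}, and the condition ``there is an open ball $B \ni a$ such that $|l_q \cap Z| > |l_a \cap Z|$ for all $q \in B \setminus \{a\}$'' is expressible with quantifiers over $\Gamma$ and $K$, using that $|l_q \cap Z|$ being finite and bounded is a first-order condition (this uses that the generic fibre of a one-dimensional family of plane curves is finite, and Bezout-type bounds bound the intersection uniformly). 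So it suffices, by Fact \ref{aclIgualFact} and the fact that the only infinite $\mathbb{K}$-definable subset of $K$ contains an open ball (Corollary \ref{existsInf}), to rule out that $\{a : Z \tangent l_a\}$ contains an open ball — i.e. I will show this set is finite by showing it contains no open ball.

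So suppose toward a contradiction that there is an open ball $B_0$ with $Z \tangent l_a$ for every $a \in B_0$. Split $B_0$ into two pieces: $B_0' = \{a \in B_0 : |Z \cap l_a| = \infty\}$ and $B_0'' = B_0 \setminus B_0'$; one of these two is infinite hence contains an open ball. If $B_0'$ contains an open ball: since $Z$ is one-dimensional and the $l_a$ are distinct irreducible curves, $Z \cap l_a$ infinite would force $l_a$ to be an irreducible component of (the Zariski closure of) $Z$ for every $a$ in an infinite set, which is impossible as $Z$ has only finitely many components. So $B_0''$ contains an open ball $B_1$; for $a \in B_1$ the second clause of Definition \ref{defTangent} holds. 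Now consider the function $a \mapsto n(a) := |Z \cap l_a|$, which takes finite values on $B_1$. By the decomposition of Proposition \ref{decompositionProp} applied to the family $(l_a)_{a}$ and $Z$, the function $n(a)$ is $\bar d$-definable with finite image, so $B_1$ decomposes into finitely many $\bar d$-definable pieces on each of which $n$ is constant; again one such piece contains an open ball $B_2$. But the defining property of $B_1 \supseteq B_2$ says that for each $a \in B_2$ there is a ball $B \ni a$ on which every $q \ne a$ has $n(q) > n(a)$; intersecting with $B_2$ (on which $n$ is constant) gives a point $q \in B_2 \setminus\{a\}$ with $n(q) > n(a) = n(q)$, a contradiction.

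The main obstacle I expect is the definability/uniform-finiteness bookkeeping: establishing that ``$|l_a \cap Z|$'' is, uniformly in $a$, bounded and first-order definable, so that the set $\{a : Z \tangent l_a\}$ is genuinely $\mathbb{K}$-definable and so that $n(a)$ has finite image on the relevant balls. This is where one needs Proposition \ref{decompositionProp} together with a Bezout bound (Fact \ref{bezoutTheorem}) to control the number of intersection points of a line with a fixed curve, and one must be slightly careful on the finitely many $a$ for which $l_a$ is a component of $Z$ (handled by passing to a smaller ball as above). Once that is in place, the contradiction argument is the short combinatorial step sketched above, and the conclusion follows from Fact \ref{aclIgualFact} and Corollary \ref{existsInf}.
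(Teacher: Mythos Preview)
Your proposal is correct and follows essentially the same strategy as the paper: show the set is $\mathbb K$-definable, pass to an open ball inside it via Corollary~\ref{existsInf}, and derive a combinatorial contradiction from the second clause of Definition~\ref{defTangent}. The paper's version is terser---it simply picks $a\in B$ extremizing $|Z\cap l_a|$ and reads off the contradiction---whereas you pass to a sub-ball on which $n(a)=|Z\cap l_a|$ is constant; both yield the same contradiction, and your extra bookkeeping (explicit definability via a B\'ezout bound, ruling out the infinitely-many-slopes case via components) fills in details the paper leaves implicit. Note that your appeal to Fact~\ref{aclIgualFact} is not actually needed: once you have the contradiction on the ball, finiteness follows directly.
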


\begin{proof}
    Assume it is infinite, then, by Corollary \ref{existsInf}, there is an open ball $B\subseteq K$ such that for all $a\in B$, $Z\tangent l_a$. Let $a\in B$ be such that $|Z\cap l_a|\leq |Z\cap l_s|$  for all $s\in B$. By definition of $Z\tangent l_a$, there is an open ball $D$ centered on $a$ such that for all $q\in D\setminus \{a\}$

   $$|l_q\cap Z|>|l_a\cap Z|,$$ but as $B$ and $D$ are open sets containing $a$ there is some $q\in (B\cap D)\setminus a$. But this is a contradiction because by our choice of $a$, $|Z\cap l_a|\leq |Z\cap l_q|$. 
\end{proof}

\begin{lemma}\label{claimDerivativeEqualsImpliesTangentLemma}
 
Let $Z\subseteq A\times A$ be a $1$-dimensional $\mathbb K$-definable set with no isolated points and assume that $\mathbb K$ is complete. Suppose that $U\subseteq A$ is an open neighborhood of $0$ and $g:U\to A$ is an analytic function converging on $U$ such that $g(0)=0$ and $(x,g(x))\in Z$ for all $x\in U$. Then $Z\tangent l_{g'(0)}$.

\end{lemma}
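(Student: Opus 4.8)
The plan is to compare the intersection numbers $|l_a \cap Z|$ for the specific value $a = g'(0)$ against those for nearby $q$, using the power-series machinery of Section~\ref{computations}. First I would observe that it suffices to treat the case where $Z\cap l_{g'(0)}$ is finite, since if $Z\cap l_{g'(0)}$ is infinite then $Z\tangent l_{g'(0)}$ holds by definition. So assume $|Z\cap l_{g'(0)}| = n < \infty$.

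Next I would set up the situation so that Lemma~\ref{intersectionSubeLemmaAnalitic} (or the more elementary Lemmas~\ref{propIntCreceGeneralVersionOrigen} and~\ref{propIntCreceGeneralVersionLejos}) applies, viewing the lines $(l_a)_{a\in K}$ as a $\mathbb K$-definable family of one-dimensional sets parametrized by $a$. The key point is that the family of lines is \emph{uniformly analytic at $(0,0)$}: the uniform power expansion is simply $H(x,s) = sx$, which is analytic, satisfies $H(0,s) = 0$, and has $(x, H(x,s)) = (x,sx) \in l_s$. Here $d_1(H,s) = s$ takes infinitely many values, so $N(H) = 1$. Moreover, a power expansion of $Z$ at $(0,0)$ is provided by the given function $g$, namely $T(x) = g(x)$, with $d_1(T) = g'(0)$. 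The matching condition $d_n(H,\alpha) = d_n(T)$ for all $n \le N = 1$ reduces to the single equation $\alpha = g'(0)$, which holds by the choice $\alpha = g'(0)$. Since lines admit uniform Fr-power expansions everywhere (a line is the graph of the linear function $x \mapsto sx$, and near any of its points one can take the obvious analytic expansion with $n=0$), the hypotheses of Lemma~\ref{intersectionSubeLemmaAnalitic} are met, provided $Z$ has no isolated points — which is assumed.

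Applying Lemma~\ref{intersectionSubeLemmaAnalitic} with $(X_a)_{a} = (l_a)_a$, $H(x,s) = sx$, $N = 1$, $T = g$, and $\alpha = g'(0)$, we obtain an open set $W \ni g'(0)$ such that $|l_s \cap Z| > |l_{g'(0)} \cap Z|$ for all $s \in W \setminus \{g'(0)\}$. Shrinking $W$ to an open ball $B$ centered at $g'(0)$ (possible by Corollary~\ref{existsInf} or directly since open balls form a basis), we get exactly the condition in Definition~\ref{defTangent}: $(0,0) \in Z$ and for all $q \in B$ with $q \neq g'(0)$, $|l_q \cap Z| > |l_{g'(0)} \cap Z|$. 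Hence $Z \tangent l_{g'(0)}$.

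The main obstacle is bookkeeping rather than conceptual: one must carefully verify that the family of lines satisfies the uniform-analyticity and uniform-Fr-power-expansion hypotheses — i.e., that near \emph{every} point $b = (b_1, s b_1)$ of $l_s$ there is an analytic $\Phi$ with $(x, \Phi(x,s)) \in l_s$, which is immediate from $\Phi(x,s) = sx$ with $n = 0$. The other point requiring a line of argument is the reduction to the case $Z\cap l_{g'(0)}$ finite; but if that intersection is infinite, tangency is immediate from the definition, so there is nothing to prove. Once these are dispatched, the conclusion is a direct invocation of Lemma~\ref{intersectionSubeLemmaAnalitic}.
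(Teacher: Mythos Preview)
Your proposal is correct and is exactly the first route the paper itself points to: the paper opens the proof by remarking that the result ``follows from \ref{intersectionSubeLemmaAnalitic} applied to the family $(l_a)_{a\in U}$'', which is precisely what you do with $H(x,s)=sx$, $N(H)=1$, $T=g$, $\alpha=g'(0)$. The paper then, by choice, writes out a direct argument (continuity of roots at each intersection point, double zero at the origin from $F(x,q)=g(x)-qx$) as an illustrative unpacking of that lemma in this simple case; your invocation of the general lemma is equivalent and saves the repetition.
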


\begin{proof}
It follows from \ref{intersectionSubeLemmaAnalitic} applied to the family $(l_a)_{a\in U}$. However, in this case the proof is easier and still has the main arguments on intersection theory that we used so we present it as an example of the results on Section \ref{computations}.

If $Z\cap l_{g'(0)}$ is infinite we are done so assume that it is finite and list $Z\cap l_{g'(0)}=\{(0,0)=(x_1,y_1),(x_2,y_2),\ldots,(x_n,y_n)\}$. For $i=1,\ldots,n$ let $U_i$ and $V_i$ be open subsets of $K$ contained on $A$ such that $(x_i,y_i)\in U_i\times V_i$ and $(U_i\times V_i)\cap (U_j\times V_j)=\emptyset$ for $i\neq j$.  

After shrinking $U$, $U_1$ and $V_1$, if necessary, we may assume that $U\subseteq U_1$ and $g(x)\in V_1$ for all $x\in U_1$. 

Consider the analytic function:
$$F(x,q)=g(x)-qx.$$

Let $a:=g'(0)$, then $$F(0,a)=\frac{\partial F}{\partial x}(0,a)=0,$$ so, by Lemmma \ref{derivativeZeroIsDoubleLemma}, $0$ is (at least) a double zero for the function 
$$x\mapsto F(x,a).$$

Thus, we can apply Fact \ref{continuityOfRootsTheoremf} and find an open neighborhood $W_1$ of $a$ such that for all $q\in W_1$ the function
$$x\mapsto F(x,q)$$ has at least two zeros (counting multiplicities) in $U_1$. Moreover, by Lemma \ref{puedoRestringirderivada}, shrinking $W_1$ we may assume that for all $q\in W_1\setminus \{a\}$, $0$ is a simple zero of the function
$$x\mapsto F(x,q).$$

Therefore, for all $q\in W_1\setminus \{a\}$ there are at least two different points $z_1$ and $z_2$ in $U_1$ such that $F(z_1,q)=F(z_2,q)=0$. So $(z_1,g(z_1))$ and $(z_2,g(z_2))$ are different points of $Z\cap l_q$ that lie inside $U_1\times V_1$.

Now for $i>1$ as $(x_i,y_i)$ is not an isolated point of $Z$, using Proposition \ref{decompositionProp} there is some polynomial $L(x,y)$  such that (shrinking $U_i$ and $V_i$, if needed) 
$$\{(x,y)\in U_i\times V_i:L(x,y)=0\}\subseteq Z.$$

Consider the analytic function $F_i$ defined by:
$$F_i(x,q)=L(x,qx).$$

Note that $F_i(x_i,a)=0$, so we can use Fact \ref{continuityOfRootsTheoremf} and find an open neighborhood $W_i$ of $a$ such that for all $q\in  W_i$ there is at least one $z\in U_i$ such that $F_i(z,q)=0$. Moreover, by shrinking $V_i$ we may assume that for all $q\in W_i$ and for all $x\in U_i$, $qx\in V_i$.
Therefore $(z,qz)\in Z\cap l_q \cap (U_i\times V_i)$.

If we define $$W:=\bigcap_i W_i,$$  we have that for all $q\in W\setminus \{a\}$ and for all $i>1$, $$|Z\cap l_q \cap (U_1\times V_1)|\geq 2$$ and  $$|Z\cap l_q \cap (U_i\times V_i)|\geq 1.$$ 

Thus, $|Z\cap l_q|>|Z\cap l_a|$ for all $q\in W\setminus \{a\}$ so by definition $Z\tangent l_a$.
\end{proof}

 \section{Proof of Theorem \ref{thmAditiveVersion}}\label{theProof}

In this section we prove Theorem \ref{thmAditiveVersion}. In order to do so, we first prove Lemma \ref{existsHLemma} that, in the complete case, is the main tool we need to build a field. Then, in order to prove the general case, we present the statement and proof of Lemma \ref{lemmaIntersectionSubeVersion}. This is the main technical tool that we will use to build a field configuration. Finally, we present the proof of Theorem \ref{thmAditiveVersion}.




We will use a $\mathcal G$-definable set $Y\subseteq G\times G$ which is strongly minimal and non-$\mathcal G$ affine. By Fact \ref{nonAffineIntersectsFiniteLemmaF} if $Y$ is $\mathcal G$-definable it is not $\mathcal G$ affine if and only if for all but finitely many $a\in Y$ there are just cofinitely many $b\in Y$ such that $t_a(Y)\cap t_b(Y)$ is infinite. Now we  extend the notion for any $H$, $\mathbb K$-definable group, and any $Y\subseteq H\times H$ :

\begin{defi}\label{infiniteTranslatesDef} 
    Let $H$ be a $\mathbb K$ definable group. If $Y\subseteq H\times H$ is $\mathbb K$-definable, we say that $Y$ is \emph{not $H$-affine}  if for all but finitely many $a\in Y$ there are cofinitely many $b\in Y$ such that $t_a(Y)\cap t_b(Y)$ is infinite.
\end{defi}

Notice that if $\phi(\bar x,\bar y,\bar z)$ is a first order formula in the language of fields and 
$$Y(\bar d)=\{(\bar x,\bar y)\in K\times K:\mathbb K\models \phi(\bar x,\bar y,\bar d)\},$$ then by Corollary \ref{existsInf} 
\begin{quote}
   $$ \{\bar d:Y(\bar d)\text{ is not }H\text{-affine}\}$$
\end{quote}
is $\mathbb K$-definable.

\begin{lemma}\label{existsHLemma}

       Assume that $\mathbb K$ is complete. Let $Y^u\subseteq G\times G$ be $\mathbb K$-definable and not $G$-affine such that $Y:=Y^u\cap( A\times A)$ has Property $(\star)$ witnessed by some polynomials $L_i(x,y)$ and $L_{i,n_i}(x,y)$. Assume as well that $k,l$ are such that $i_{k,l}^{-1} (Y^u)$ satisfies Clauses \ref{clause1}, \ref{clause2} and \ref{clause4} of Property $(\star)$ witnessed by some polynomials $L_i^{(k,l)}(x,y)$.

      For $a,c\in Y^u$ set:

      $$X_{a,c}^{u}:=(Y^u-t_a(Y^u))\circ (Y^u-t_c(Y^u))^{-1}$$

       And for $a,c\in Y$:
      $$X_{a,c}:=X_{a,c}^{u}\cap (A\times A)=(Y-t_a(Y))\circ (Y-t_c(Y))^{-1}.$$
       
       Then, there is some open set $U\subseteq A\subseteq  K$ containing $0$, an element $c\in U$, $h:U\to A$ and $H:U\times U\to A$ analytic functions such that:

\begin{enumerate}

    \item $H(0,s)=0$ for all $s\in U$.

    \item For all $s\in U$, $\bar s:=(s,h(s))\in Y$.

     \item For all $s\in U$ there is $U_s\ni 0$ an open subset of $A$ such that $(x,H(x,s))\in X_{\bar s, \bar c}$ for all $x\in U_s$.

     \item The set $$s_1(H):=\left\{\frac{\partial H}{\partial x}(0,s):s\in U\right\}$$ is infinite.

     \item For each $a\in U$ and $b=(b_1,b_2)\in X_{\bar a,\bar c}$ there are neighborhoods $U_b$ of $b_1$ and $V_a$ of $a$, an analytic function $\Phi(x,s)$ defined in $U_b\times V_a$ and a natural number $n$ such that $(x,\text{Fr}^{-n}(\Phi(x,s)))\in X_{\bar s,\bar c}$ for all $(x,s)\in U_b\times V_a$.

     \item\label{frLejosGLemma} For each $a\in U$ and $b=(b_1,b_2)\in i_{k,l}^{-1} (X^{u}_{i(\bar a ),i( \bar c )})$ there are neighborhoods $U_b$ of $b_1$ and $V_a$ of $a$, an analytic function $\Phi(x,s)$ defined in $U_b\times V_a$ and a natural number $n$ such that $(x,\text{Fr}^{-n}(\Phi(x,s)))\in i_{k,l}^{-1} (X^u_{i(\bar s ),i(\bar {c} )})$ for all $(x,s)\in U_b\times V_a$.
\end{enumerate}


  \end{lemma}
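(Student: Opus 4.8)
The plan is to construct the functions $h$ and $H$ by unwinding the definitions of $Y^u - t_a(Y^u)$ and of composition, using the implicit/inverse function machinery of Section \ref{preliminariesOnValuedFields} together with the structure imposed by Property $(\star)$. First I would use Clause \ref{clause4} of Property $(\star)$: since $L_1$ is irreducible, $L_1(e,e)=0$, $L_1(x,y)=L_{1,0}(x,y)=L_1(x,y)$ with $n_1=0$, and $\frac{\partial L_1}{\partial y}(e,e)\neq 0$, the Implicit Function Theorem (Fact \ref{implicitFunctionTheoremf}) gives a neighbourhood $U\ni 0$ in $A$ and an analytic $h_0:U\to A$ with $h_0(0)=0$ whose graph lies in $Y$. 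More generally, for $a$ in a small neighbourhood of $0$ I would produce an analytic branch of $Y^u$ through $(a_1, h_0(a_1)+\text{correction})$; the point is that near $(e,e)$ the set $Y^u$ is locally the graph of an analytic function, and so is each translate $t_a(Y^u)$, hence $Y^u - t_a(Y^u)$ is locally (the closure of) the graph of the difference of two analytic functions. For the inverse $(Y^u - t_c(Y^u))^{-1}$ I would invert using Fact \ref{inversionf}, which is applicable because the relevant derivative at the base point is a difference that one can arrange to be nonzero by a generic choice of $c$ (this uses that $\frac{\partial L_1}{\partial y}(e,e)\neq 0$ and that the ``tangency'' locus is finite, cf. the philosophy of Lemma \ref{claimTangentFiniteLemma}). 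Composing, via Lemma \ref{lemmaCanCompose} applied to $X_{\bar s,\bar c} = (Y-t_{\bar s}(Y)) \circ (Y-t_{\bar c}(Y))^{-1}$, yields an analytic $H:U\times U\to A$ with $H(0,s)=0$, giving clauses 1, 2, 3.

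For clauses 5 and 6, the point is uniformity in the parameter $s$: the same implicit/inverse function constructions can be carried out in families, because Fact \ref{implicitFunctionTheoremf} and Fact \ref{continuityOfRootsTheoremf} (continuity of roots) produce the analytic branches and their domains uniformly when the defining data varies analytically. So for each $a\in U$ and each $b=(b_1,b_2)\in X_{\bar a,\bar c}$, I would trace $b$ back through the composition: $b$ comes from an intermediate point, $b_1$ maps into $Y-t_{\bar c}(Y)$ and the intermediate point maps to $b_2$ via $Y - t_{\bar a}(Y)$; at each stage one has either an honest analytic graph or, after applying a suitable inverse Frobenius $\text{Fr}^{-n}$ as permitted by Clause \ref{clause4} (which says $L_i(x,y)=L_{i,n_i}(x,y^{p^{n_i}})$ with $\frac{\partial L_{i,n_i}}{\partial y}\neq 0$ at the relevant point, so the $p^{n_i}$-th root of the $y$-coordinate is an analytic function of $x$), an analytic graph. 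Tracking the Frobenius twists through the composition is exactly the bookkeeping carried out in the proof of Lemma \ref{lemmaCanCompose}, with the coefficient-twisting $\tilde\Phi_2$, so I would invoke that lemma directly. Clause 6 is the same argument but carried out inside the coset block $G_k\times G_l$, where one works with $i_{k,l}^{-1}(Y^u)$ and its witnessing polynomials $L_i^{(k,l)}$, which by hypothesis satisfy Clauses \ref{clause1}, \ref{clause2}, \ref{clause4}.

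The real content — and the step I expect to be the main obstacle — is clause 4: showing $s_1(H)=\{\frac{\partial H}{\partial x}(0,s):s\in U\}$ is infinite. The idea is that $\frac{\partial H}{\partial x}(0,s)$ is, up to composition formulas, controlled by the first-order data of the analytic branches of $Y^u$ at the points $\bar s$, and if this set were finite then all these derivatives would be algebraic over the parameters, forcing the translates $t_{\bar s}(Y^u)$ to be ``tangent'' to a fixed curve in the sense of Definition \ref{defTangent} for all $s$ — which by Lemma \ref{claimTangentFiniteLemma} can only happen for finitely many values. Concretely: the tangent direction of the analytic branch of $Y - t_{\bar s}(Y)$ at $0$ is (the derivative of $h$ at $s$) minus (the derivative of $h$ at $0$) $ = h'(s) - h'(0)$, and by the differentiation rule and $N_+$-analysis of Lemma \ref{lemmaN} (using that $Y$ has Property $(\star)$ and in particular is not, locally, a graph with all exponents powers of $p$ — otherwise $Y$ would be $G$-affine, contradicting the hypothesis via Fact \ref{nonAffineIntersectsFiniteLemmaF}), the map $s\mapsto h'(s)$ is non-constant, so it takes infinitely many values; chasing this through the composition $H(x,s)=H_{\text{outer}}(H_{\text{inner}}(x,s),\text{fixed})$ and Lemma \ref{compositionMultLemma}-style coefficient formulas shows $\frac{\partial H}{\partial x}(0,s)$ inherits infinitely many values. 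The delicate part is ensuring the composition and the inversion by $(Y-t_{\bar c}(Y))^{-1}$ do not collapse this infinitude to a point; this is where one uses the non-$G$-affineness of $Y^u$ (equivalently, Fact \ref{nonAffineIntersectsFiniteLemmaF}, so that $t_{a_1}(Y^u)\cap t_{a_2}(Y^u)$ is finite for generic $a_1,a_2$) to guarantee that the family $X_{\bar s,\bar c}$ genuinely moves with $s$, and one fixes $c$ generically so that the inversion is an analytic isomorphism near the base point rather than a degenerate map.
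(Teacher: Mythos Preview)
Your plan has a genuine gap in the construction of $H$, and the same gap reappears in your argument for clause~4. You propose to parametrize $Y-t_{\bar c}(Y)$ near $(0,0)$ by the analytic function $x\mapsto h(x)-h_c(x)$ (where $h_c(x)=h(x+c)-h(c)$), and then invert it via Fact~\ref{inversionf}. The derivative of $h-h_c$ at $0$ is $h'(0)-h'(c)$, and you assert this can be made nonzero by a generic choice of $c$, and later that $s\mapsto h'(s)$ is nonconstant. In characteristic $p>0$ this can simply fail: if $h(x)=\kappa x+f(x^{p})$ for some analytic $f$ (which is perfectly compatible with $Y$ being not $G$-affine, since non-affineness is a statement about \emph{higher-order} coincidence of translates, not first-order), then $h'(x)\equiv\kappa$ is constant, so $h'(0)-h'(c)=0$ for every $c$ and Fact~\ref{inversionf} does not apply. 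No genericity of $c$ rescues this.

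The paper's proof confronts exactly this obstruction. One first uses the $N_+$-analysis (Lemma~\ref{lemmaN}) not merely to show something is nonconstant, but to identify the power of $p$ hidden in the difference: writing $h(x)=\kappa x+f(x^{p^n})$ with $n$ maximal, one finds $N_+(f)=p^m$ for some $m$, and then $(h-h_a)(x)=G_a(x^{p^m})$ for an analytic $G_a$ with $G_a'(0)=e_1(a)$ taking infinitely many values as $a$ varies. Now $G_c$ \emph{is} invertible for $c$ with $e_1(c)\neq 0$, and one sets $H(x,a):=G_a\circ G_c^{-1}(x)$. The verification that the graph of $H(\cdot,a)$ lies in $X_{\bar a,\bar c}$ is not immediate: one must pass through the intermediate point $z=\mathrm{Fr}^{-m}(G_c^{-1}(x))$, using that $(h-h_c)(z)=G_c(z^{p^m})=x$ and $(h-h_a)(z)=G_a(z^{p^m})=H(x,a)$. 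This Frobenius untwisting in the \emph{construction} of $H$ (not only in clauses~5 and~6) is the missing idea in your proposal; once it is in place, clause~4 follows because $\partial_x H(0,a)=e_1(a)/e_1(c)$ inherits infinitely many values directly from $e_1(a)$, with no appeal to $h'$ being nonconstant.
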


\begin{proof}

    As $Y$ satisfies Property $(\star)$ witnessed by $L_{i}(x,y,\bar d)$ and $L_{i,n_i}(x,y,\bar d)$ we have:

    $$Y=\bigcup_i\left\{(x,y)\in V_i: L_i(x,y,\bar d)=0\right\},$$ where each $V_i$ is some open subset of $K\times K$ intersecting the set of zeros of $L_i(x,y,\bar d)$ in an infinite set, $L_1(0,0)=0$ and 
    $$\frac{\partial L_1}{\partial y}(0,0)\neq 0.$$ 
    By Fact \ref{implicitFunctionTheoremf} there is an analytic function $h(x)$ converging in a neighborhood $U$ of $0$ such that $h(0)=0$ and $L_1(x,h(x))=0$ for all $x\in U$. Shrinking $U$ we may assume that for all $x\in U$, $(x,h(x))\in V_1$, therefore for all $x\in U,$  $(x,h(x))\in Y$.

    Let $n$ be the maximal natural number ($n$ may be $0$) such that there is some analytic function $f(x)$ with:

    $$h(x)=\kappa x + f(x^{p^n}).$$

Then $$h(x)= \kappa x + \sum_{i\geq 1}d_{ip^n}x^{ip^n}$$ for some coefficients $d_{ip^n}$.

As $Y$ is not $G$-affine it follows that $N_+(f)$ is finite as it is the first coefficient in which the expansion of $h_{a_1}(x)$ and $h_{a_2}(x)$ differs for $(a_1,a_2)$ $\acl$-generic independent elements. By Lemma \ref{lemmaN} there is $m$ such that $N_+(f)=p^m$. Thus, there is an open neighborhood $U_0$ of $0$ such that for all $a\in U_0$, if we define $$h_a(x)=h(x+a)-h(a)$$ and put $$h_a(x)=\sum_{n\geq 1} d_{n}(a)x^n,$$ then $d_{n}(a)=d_n$ for all $n$ such that 
$$s_n(h)=\{d_n(a):a\in U_0\}$$
is finite.

As the graph of $h_a(x)$ is contained in $t_{\bar a}(Y),$ $Y-t_{\bar a}(Y)$ contains the graph of 
$$(h-h_a)(x)=\sum_{i\geq 1}e_{i}(a)x^{ip^m}$$
where $e_{i}(a)=d_{ip^m}-d_{ip^m}(a)$.
By definition of $N_+(f)$, $\{e_{1}(a):a\in U\}$ is infinite so we can choose $c\in U$ such that $e_1(c)\neq 0$.

For  $a\in U$ let $$G_a(x)=\sum_{i\geq 1}e_{i}(a) x^i$$ so $$(h-h_a)(x)=G_a(x^{p^m}).$$ As $e_{1}(c)\neq 0$ it follows that $G'_c(0)\neq 0$, so Fact \ref{inversionf}, implies that there is an analytic function $G^{-1}_c$ defined in some neighborhood of $0$ such that $G_c(G_c^{-1}(z))=z$ for all $z$ in that neighborhood.

We prove now that the graph of $G_a \circ G_c^{-1}$ is contained in $(Y-t_{\bar a}(Y))\circ (Y-t_{\bar c}(Y))^{-1}=X_{\bar a,\bar c}$:

Let $y=G_a(G_c^{-1}(x))$ and we will prove that $(x,y)\in (Y-t_{\bar a}(Y))\circ (Y-t_{\bar c}(Y))^{-1}=X_{\bar a,\bar c}$. So we have to show that there is some $z$ with $(z,x)\in Y-t_{\bar c}(Y)$ and $(z,y)\in Y-t_{\bar a}(Y)$. Let $z=\text{Fr}^{-m}(G_c^{-1}(x))$. Then, $$(h-h_c)(z)=G_c(z^{p^m})=G_c(G_c^{-1}(x))=x$$ and as the graph of $h-h_c$ is contained in $Y-t_{\bar c}(Y)$, then $(z,x)\in Y-t_{\bar c}(Y)$. Similarly, $$(h-h_a)(z)=G_a(z^{p^m})=G_a(G_c^{-1}(x))=y$$ and as the graph of $h-h_a$ is contained in $Y-t_{\bar a}(Y)$, then $(z,y)\in Y-t_{\bar a}(Y)$, so $(x,y)\in (Y-t_{\bar a}(Y))\circ (Y-t_{\bar c}(Y))^{-1}$.

Now note that  
$$G_c^{-1}(x)=\frac{1}{e_{1}(c)}x + L(x)$$ 
for some analytic function $L$ having a zero of degree at least $2$ at $x=0$.

Therefore, $$G_a\circ G_c^{-1} (x) = \frac{e_{1}(a)}{e_{1}(c)} x + \sum_{i>1} f_{i}(a) x^{i}$$ for some coefficients $f_{i}(a)$. By Proposition \ref{coefficientsAreAnalyticProp}, each $f_{i}(a)$ is a power series in the variable $a$, and the same is true for $e_1(a)$. Therefore, there is a power series $H(x,a)$ such that for all $a$ in a neighborhood of $0$ one has that $H(x,a)=(G_a\circ G_c^{-1})(x)$, so 
$$H_x(0,a)=(G_a\circ G_c^{-1})'(0)=\frac{e_{1}(a)}{e_{1}(c)}$$ 
that takes infinitely many values as $a$ varies. 

We have already proved clauses 1-4 in the statement and only 5 and 6 are missing. Let us prove 5. Let $a\in U$ and let $(b_1,b_2)\in (Y-t_{\bar a}(Y))\circ (Y-t_{\bar c}(Y))^{-1}$, so there is some $z$ such that $(z,b_1)\in Y-t_{\bar c}(Y)$ and $(z,b_2)\in Y-t_{\bar a}(Y)$.

 We start proving:
 
\begin{claim}\label{nLClaim}
    There is a natural number $n$, open sets $U_2\ni z$ and $V\ni a$ and an analytic function 
    $$L:U_2\times V \to K$$
    such that 
    $$\text{Fr}^{-m}(L(x,s))\in t_{\bar s}(Y)$$
    for all $(x,s)\in U_2\times V$.
\end{claim}

\begin{claimproof}

As $(z,b_2)\in Y-t_{\bar a}(Y)$, there are $y_1,y_2$ such that $(z,y_1)\in Y$, $(z,y_2)\in t_{\bar a}(Y)$ and $b_2=y_1-y_2$.

By the Property $(\star)$ for $Y$, as $(z,y_1)\in Y$, there is a natural number $n$ and an irreducible polynomial $L_{i,n}$ such that: 
$$\displaystyle L_{i,n}\left(z,y^{p^n}_1\right)=0, $$   
   $$ \displaystyle\frac{\partial L_{i,n}}{\partial y}\left(z,y^{p^n}_1\right)\neq 0 $$

and for all $(x,y)$ in some neighborhood $V_i$ of $(z,y_1)$, if $L_{i,n}(x,y^{p^n})=0$ then $(x,y)\in Y$. 

By Fact \ref{implicitFunctionTheoremf} there is an analytic function  $g_1$ defined in some neighborhood $U'_1$ of $z$ such that 

$$g_1(z)=y^{p^n}_1$$


and  
$$L_{i,n}(x,g_1(x))=0$$ so that 
$$L_{i,n}(x, (\text{Fr}^{-n}(g_1(x)))^{p^n})=0$$
for all $x\in U_1$.

Shrinking $U'_1$ one may assume that for all $x\in U'_1$, 
$$(x,\text{Fr}^{-n} \circ g_1(x))\in V_1,$$ 
so 
$$(x,\text{Fr}^{-n} \circ g_1(x))\in Y$$
for all $x\in U'_1$ so that the graph of $\text{Fr}^{-n}\circ g_1$ is  contained in $Y$.

In the same way, as $(z,y_2)\in t_{\bar a}(Y)$, we can find a natural number $m$ and an analytic function $g_2$ defined in a neighborhood $U'_2$ of $z$ such that
$$g_2(z)=y^{p^m}_2$$

and such that the graph of $\text{Fr}^{-m}\circ g_2$ is contained on $t_{\bar a}(Y)$.

We affirm that for $s$ close enough to $a$, the graph of  $$\text{Fr}^{-m}(g_2(x-a+s))+h(a)-h(s)$$ (for $x\in U_2$) is contained in $t_{\bar s}(Y)$.

For proving this, let
$$y:=\text{Fr}^{-m}(g_2(x-a+s))+h(a)-h(s)$$
then $(x,y)\in t_{\bar s}(Y)$ if and only if 
$$(x+s,y+h(s))\in Y$$
which in turn, this is equivalent to 
$$(x+s-a,y+h(s)-h(a))\in t_{\bar a}(Y).$$ 

As the graph of $\text{Fr}^{-m}\circ g_2$ is contained in $t_{\bar a}(Y)$ it is enough to prove that  
$$\text{Fr}^{-m}(g_2(x+s-a))=y+h(s)-h(a)$$ but this is precisely the definition of $y$.

As $\text{Fr}^{-m}(g_2(x-a+s))+h(a)-h(s)=\text{Fr}^{-m}(L'(x,s))$ where $$L'(x,s):=g_2(x-a+s)+h(a)^{p^{m}}-h(s)^{p^{m}},$$ is an analytic function such that the graph of $\text{Fr}^{-m}(L'(x,s))$ is contained on $t_{\bar s}(Y)$ for $s$ close enough to $a$.  

Thus, for $x\in U_2:=U'_1\cap U'_2$ and for $s$ close enough to $a$,
$$\text{Fr}^{-n}(g_1(x)) - \text{Fr}^{-m}(L'(x,s))\in Y-Y_{\bar s}$$ 
Assume $n\geq m$  so $$\text{Fr}^{-n}(g_1(x)) -\text{Fr}^{-m}(L'( x,s))=\text{Fr}^{-n}\left( g_1(x)- \text{Fr}^{n-m}(L'( x,s))\right)$$
so we take $$L(x,s):= g_1(x)- \text{Fr}^{n-m}(L'( x,s))$$ and conclude.
\end{claimproof}

Similarly, as $(z,b_1)\in Y-t_{\bar c}(Y)$ there is an open set $U_1\ni z$, an analytic function 
$$g:U_1\to K$$ 
and a natural number $k_0$ such that 
$$\text{Fr}^{-k_0}(g(b_1))=z$$ and the graph of $\text{Fr}^{-k_0}\circ g$ is contained in $Y-t_{\bar c}(Y)$. Moreover, we may assume that $g'$ is never zero. So there exists an inverse for $g$, say $f$. Thus, the graph of $f\circ \text{Fr}^{k_0}$ is contained in $(Y-t_{\bar c}(Y))^{-1}$. 

Let $U_2$ and $L$ provided by Claim \ref{nLClaim}. Shrinking $U_1$ we may assume that $\hat x:=f(\text{Fr}^{k_0}(x))\in U_2$ for all $x\in U_1$. So for $s\in V$ and $x\in U_1$, $$(x,\text{Fr}^{-n}(L(\hat x,s))\in (Y-t_{\bar s}(Y))\circ (Y-t_{\bar c}(Y))^{-1}.$$
So we take $n$ and $$\Phi(x,s):=L(\hat x,s)$$ to get Clause 5 of the statement. Clause 6. follows similarly as Clause 5.
\end{proof}

\begin{quote}
\textbf{From now on we fix $Y\subseteq G\times G$ a one dimensional $\mathcal G$-definable not $\mathcal G$-affine set having the Property $(\star)$ witnessed by some polynomials $L_i$, $L_{i,n_i}$ and $L_i^{(l,k)}$. For $a,c\in Y$ we set:
\begin{equation}\label{defX}
    X_{a,c}:=(Y-t_a(Y))\circ (Y-t_c(Y))^{-1}.
\end{equation}}
\end{quote}

First we make a definition:
\begin{defi}
    Let $Y\subseteq K\times K$ be a $\mathbb K$-definable with $\dim(Y)=1$. We say that $Y$ has infinite translates if for all except finitely  many $a\in Y$ there are cofinitely many $b\in Y$ such that $t_a(Y)\cap t_b(Y)$ is infinite.
\end{defi}

We will find a group configuration for $\mathcal G$ that is interalgebraic (in $\mathbb K$) with the following shift, by an appropriate $t$, of the rank $2$ group configuration of Diagram \ref{gcFullStructure}.

 \begin{equation}\label{gcFullStructureIntro}
 	\begin{tikzcd}
 	(t+a_0, ta_1) \arrow[ddd, dash] \arrow[rrr, dash] &&& t+b \arrow[rrr, dash]  &&& t+a_0+a_1 b \arrow[dddllllll,  dash] 
  \\
&&&&&&
\\
& & && 
&&
\\
 (t+b_0,tb_1)  \arrow[ddd, dash] &&& t+b_0+ a_0b_1+ a_1 b_1 b &&& \\ &&&&&&
 \\
 &&&&&&
 \\
 (t+a_0b_1+b_0,ta_1b_1)  \arrow[uuuuuurrr, crossing over, dash] &&&&&&
	\end{tikzcd}
\end{equation}

In the complete case, we will use Lemma \ref{existsHLemma} to define a field configuration for $\mathcal G$ that is interalgebraic (in $\mathbb K$) with the one of Diagram \ref{gcFullStructureIntro}. However, to prove Theorem \ref{thmAditiveVersion} in the general case, we provide the following first order statement which, by completeness of the theory $ACVF_{p,q}$, can be proved in a complete field.

\begin{lemma}\label{lemmaIntersectionSubeVersion}

     Let $\phi(\bar x,\bar y,\bar z)$ be a formula in the language of valued fields and let $L_i(x,y,\bar z)$, $L_{i,n_i}(x,y,\bar z)$ and $L_{i,n_i}^{(l,k)}(\bar x,\bar y,\bar z)$ be polynomials. 

      For all $\bar d$, if  $$Y(\bar d):=\{(\bar x,\bar y)\in G^2:\mathbb K\models \phi(x,y,\bar d)\}$$ 

  is not $G$-affine and satisfies Property $(\star)$ witnessed by $L_i(x,y,\bar d)$, $L_{i,n}(x,y,\bar d)$ and $L_{i,n_i}^{(l,k)}(x,y,\bar d)$ then there is some $c\in Y$ such that if for all $a\in Y$ we set 
     
     $$X_{a}:=X_{a,c}$$ as defined on Equation \ref{defX}, there is an open ball $B\subseteq K$ such that for all $t\in B$ and for all $a_0$, $b_0$, $b$, $a_1$, $b_1\in K$, if 
     $$\mathfrak g_f:=(t,t a_1, t+a_0 , tb_1, t+b_0,  t a_1 b_1 ,  t+a_1b_0+a_0,  t+b,  t+a_1 b+a_0,  t+a_1 b_1 b + b_1 a_0 +b_0)\in  B^{10}$$

     there is $\mathfrak g=(\tau,\ \alpha_0,\ \alpha_1,\ \beta_0,\ \beta_1,\ \gamma_0,\ \gamma_1,\ \kappa,\ \eta,\mu)\in Y^{10}$ such that:
     
     If $f(a)$ is the function mapping the $i'$th coordinate of $\mathfrak g$ into the $i'$th coordinate of $\mathfrak g_f$ then: 
   \begin{equation}\label{tangentConditions}\tag{T}
   X_a\tangent l_{f(a)}\ \text{For all }a\in \mathfrak g.
    \end{equation}

    Moreover, if for $\varepsilon,\rho,\sigma\in Y$ we define:    
    $$Z(\varepsilon,\rho,\sigma):=(X_\tau\circ X_\varepsilon)\+(X_\rho\circ X_\sigma)\ominus (X_\tau\circ X_\rho)$$ then for each election $(a,r_0,r_1,r_2)$ of coordinates of $\mathfrak g$  we have:

    \begin{itemize}
        \item If $tf(a)=tf(r_0)+f(r_1)f(r_2)-tf(r_1)$ and $X_\tau\circ X_a\cap Z(r_0,r_1,r_2)$ is finite, then there are infinitely many $s\in Y$ with:
     $$| X_\tau\circ X_a\cap Z(r_0,r_1,r_2)|<| X_\tau\circ X_s\cap Z(r_0,r_1,r_2)|.$$ 

     \item If $tf(a)=f(r_1)f(r_2)$
    and $X_\tau\circ X_a\cap X_{r_1}\circ X_{r_2}$ is finite, there are infinitely many $s\in Y$ with:
    $$|X_\tau\circ X_a\cap X_{r_1}\circ X_{r_2}|<|X_\tau\circ X_s\cap X_{r_1}\circ X_{r_2}|.$$
    \end{itemize}  
 \end{lemma}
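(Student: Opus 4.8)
The statement is first order, so by completeness of $ACVF_{p,q}$ I may assume $\mathbb K$ is complete: the hypotheses on $Y(\bar d)$ are first order in $\bar d$ by the definability remarks following Definitions~\ref{infiniteTranslatesDef} and~\ref{versionLemmaBuenYdef}, and in the conclusion the relations ``$X_a\tangent l_\lambda$'', ``$X_\tau\circ X_a\cap Z$ is finite'' and ``there are infinitely many $s\in Y$ with $|X_\tau\circ X_s\cap Z|>n$'' are $ACVF$-expressible by Corollary~\ref{existsInf}, the remaining quantifiers (over $c\in Y$, over the centre and radius of $B$, over $\mathfrak g\in Y^{10}$, over $t,a_0,a_1,b_0,b_1,b$) being plain two-sorted quantifiers. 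The plan in the complete case is to combine Lemma~\ref{existsHLemma} with the intersection theory of Section~\ref{computations}.

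Apply Lemma~\ref{existsHLemma} with $Y^u:=Y(\bar d)$ to obtain a point $c\in Y$, an open $U\ni 0$ in $A$, and analytic maps $h\colon U\to A$ and $H\colon U\times U\to A$ with $H(0,s)=0$ for all $s$, with $\bar s:=(s,h(s))\in Y$, and — by clause~(3) — with the graph of $x\mapsto H(x,s)$ contained in $X_{\bar s}:=X_{\bar s,c}$ near $0$; moreover $\psi(s):=\frac{\partial H}{\partial x}(0,s)$ is analytic in $s$ (as $H$ is analytic, cf.\ Proposition~\ref{coefficientsAreAnalyticProp}) and has infinite image by clause~(4). Since a non-constant analytic function cannot have identically zero derivative (Identity Theorem, Fact~\ref{identityTheoremf}), the Inverse Function Theorem (Fact~\ref{inversionf}) yields an open ball $B$ with $0\notin B\subseteq\psi(U)$ and an analytic section $\theta\colon B\to U$ with $\psi\circ\theta=\mathrm{id}$; this $B$ is the ball of the statement. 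Given $t\in B$ and $a_0,a_1,b_0,b_1,b\in K$ with $\mathfrak g_f=(\lambda_1,\dots,\lambda_{10})\in B^{10}$, put $s_i:=\theta(\lambda_i)\in U$ and $\mathfrak g:=(\bar s_1,\dots,\bar s_{10})\in Y^{10}$, so $f(\bar s_i)=\psi(s_i)=\lambda_i$ for each $i$. Each $X_{\bar s_i}$ contains $(e,e)=(0,0)$ and, intersected with $A\times A$, contains the graph of $x\mapsto H(x,s_i)$, which vanishes and has derivative $\lambda_i$ at $0$, and it has no isolated points (Property~$(\star)$ for $Y$ and Lemma~\ref{noIsolated}); hence Lemma~\ref{claimDerivativeEqualsImpliesTangentLemma} gives $X_{\bar s_i}\cap(A\times A)\tangent l_{\lambda_i}$, i.e.\ $X_{\bar s_i}\tangent l_{\lambda_i}$, which is \eqref{tangentConditions}.

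For the ``moreover'' part write $\tau:=\bar s_1$, so $f(\tau)=t$. By Lemma~\ref{lemmaCanCompose} the family $\mathcal X_{\bar s}:=X_\tau\circ X_{\bar s}$ is uniformly analytic at $(0,0)$ with uniform power expansion $\widehat H(x,s):=H(H(x,s),s_1)$; thus $\widehat H(0,s)=0$ and $d_1(\widehat H,s)=\psi(s_1)\psi(s)=t\,f(\bar s)$, so — as $t\neq 0$ — $s_1(\widehat H)$ is infinite and $N(\widehat H)=1$; and by the second part of Lemma~\ref{lemmaCanCompose} together with clauses~(5)--(6) of Lemma~\ref{existsHLemma}, $(\mathcal X_{\bar s})$ admits a uniform Fr-power expansion for each fibre at each of its points. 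Now fix coordinates $(a,r_0,r_1,r_2)$ of $\mathfrak g$. The three pieces $X_\tau\circ X_{r_0}$, $X_{r_1}\circ X_{r_2}$, $X_\tau\circ X_{r_1}$ of $Z(r_0,r_1,r_2)$ each contain near $(0,0)$ the graph of an appropriate composition of the maps $H(\cdot,s_j)$ — an analytic function vanishing at $0$ with derivative $t f(r_0)$, $f(r_1)f(r_2)$, $t f(r_1)$ respectively — so, combining via $\+$ and $\ominus$, $Z(r_0,r_1,r_2)$ contains near $(0,0)$ the graph of an analytic $T$ with $T(0)=0$ and $d_1(T)=t f(r_0)+f(r_1)f(r_2)-t f(r_1)$, and $Z(r_0,r_1,r_2)$ has no isolated points (Lemma~\ref{noIsolated}). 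Under the hypothesis $t f(a)=t f(r_0)+f(r_1)f(r_2)-t f(r_1)$ one gets $d_1(\widehat H,s_a)=t f(a)=d_1(T)$, so if $X_\tau\circ X_a\cap Z(r_0,r_1,r_2)$ is finite then Lemma~\ref{intersectionSubeLemmaAnalitic}, applied to the family $(\mathcal X_{\bar s})$ and the set $Z(r_0,r_1,r_2)$, gives an open $W\ni s_a$ with $|\mathcal X_{\bar s}\cap Z(r_0,r_1,r_2)|>|\mathcal X_{\bar s_a}\cap Z(r_0,r_1,r_2)|$ for all $s\in W\setminus\{s_a\}$, hence infinitely many such $\bar s\in Y$; this is the first bullet. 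The second bullet is identical with $Z:=X_{r_1}\circ X_{r_2}$ (power expansion at $(0,0)$ with first coefficient $f(r_1)f(r_2)$), matched by $d_1(\widehat H,s_a)=t f(a)$ under the hypothesis $t f(a)=f(r_1)f(r_2)$.

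The delicate part will be the verification of the ``no isolated points'' and ``no vertical/horizontal line containing infinitely many points'' hypotheses needed to apply Lemmas~\ref{noIsolated}, \ref{restaCompuestaFiniteIntersectionLemma} and~\ref{intersectionSubeLemmaAnalitic}: one must check that $X_a$, $X_\tau\circ X_a$, $X_{r_1}\circ X_{r_2}$ and $Z(r_0,r_1,r_2)$ all inherit these from Property~$(\star)$ for $Y$, and — since $Y\subseteq G\times G$ rather than $A\times A$ — that the parts lying outside $A\times A$ are controlled via clause~\ref{frLejosGLemma} of Lemma~\ref{existsHLemma} and the $(k,l)$-clauses of Property~$(\star)$. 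Everything else is the routine transfer of the analytic computations of Section~\ref{computations} through the composition and $\+/\ominus$ operations.
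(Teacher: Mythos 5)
Your overall route is the paper's: reduce to the complete case by first-orderness, invoke Lemma \ref{existsHLemma}, get the tangency conditions from Lemma \ref{claimDerivativeEqualsImpliesTangentLemma}, and obtain the bullets from the intersection theory of Section \ref{computations}. However, two steps as written do not go through.

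First, your construction of the ball $B$ is wrong in positive characteristic. You argue that $\psi(s)=\frac{\partial H}{\partial x}(0,s)$ is a non-constant analytic function, hence has non-vanishing derivative somewhere by the Identity Theorem, and then apply the Inverse Function Theorem to get $B\subseteq\psi(U)$ with an analytic section $\theta$. But when $\mathrm{char}(K)=p>0$ a non-constant analytic function can perfectly well have identically zero derivative (e.g.\ $s\mapsto s^{p}$, and more to the point the coefficient functions arising here are typically series in $s^{p^{m}}$), so Fact \ref{inversionf} need not apply and no analytic section need exist. The paper avoids this entirely: $s_1(H)$ is an infinite $\mathbb K$-definable subset of $K$, so by Corollary \ref{existsInf} it contains an open ball $B$, and for each $\lambda_i\in B$ one merely \emph{chooses} some $s_i\in U$ with $d(s_i)=\lambda_i$ — surjectivity onto $B$ is all that is used, and your $\theta$ is never needed. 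The error is reparable, but the justification you give is false in exactly the characteristic-$p$ setting this lemma is designed to cover.

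Second, and more seriously, the core of the ``moreover'' part is left undone. You apply Lemma \ref{intersectionSubeLemmaAnalitic} directly to the family $(X_\tau\circ X_{\bar s})_{s}$ and the set $Z(r_0,r_1,r_2)$, but these are subsets of $G\times G$, whereas that lemma (and all of Section \ref{computations}) concerns subsets of $K\times K$. After restricting to $A\times A$ you only control $|X_\tau\circ X_{\bar s}\cap Z\cap(A\times A)|$; the cardinalities in the statement count points in all of $G\times G$, and a priori the contribution from a coset $G_k\times G_l$ with $(k,l)\neq(0,0)$ could \emph{drop} as $s$ moves away from $a$, destroying the strict inequality. The paper's proof spends most of its length on exactly this: for each $(k,l)$ it transports the data through $i_{k,l}$, uses Clause \ref{frLejosGLemma} of Lemma \ref{existsHLemma} together with Lemma \ref{lemmaCanCompose} to produce Fr-power expansions of the composed family on that coset, and applies Lemma \ref{propIntCreceGeneralVersionLejos} at each of the finitely many intersection points there to get a non-strict inequality $|{\cdot}\cap(G_k\times G_l)|\geq|{\cdot}\cap(G_k\times G_l)|$, which combined with the strict increase on $A\times A$ gives the claim. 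Your closing paragraph names the right ingredients but explicitly defers the argument; since this is the step where the hypothesis that $G$ is only \emph{locally} isomorphic to $\mathbb G_a$ actually bites, it cannot be waved through as ``routine transfer''.
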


\begin{proof}
    By the observations we made after the definition or Property $(\star)$ and after Definition \ref{infiniteTranslatesDef}, the statement of Lemma is first order on the parameter $\bar d$ so it is enough if we prove it for \textit{some} model of ACVF. In particular without loss of generality we may assume that $\mathbb K$ is complete.

    Let $\bar d$ be such that $Y:=Y(\bar d)\subseteq G\times G$ is not $G$-affine and satisfies the Property $(\star)$ witnessed by some polynomials $L_i$ and $L_{i,n_i}$ and $L_{i}^{(l,k)}(x,y,\bar z)$. Apply Lemma \ref{existsHLemma} with $Y^u=Y$ and fix $U$, $c\in U$, $h$ and $H$ as provided by the conclusion. For $s\in U$ call $$d(s):=\frac{\partial H}{\partial x}(0,s).$$
By Corollary \ref{existsInf} and Clause 4 of Claim \ref{existsHLemma}, there is some open ball $B\subseteq s_1(H)$. Let $t\in B$ and $a_0$, $b_0$, $b$, $a_1$, $b_1\in K$ such that
     $$\{t a_1, \ t+a_0 , \ tb_1,\  t+b_0,\  t a_1 b_1 ,\  t+a_1b_0+a_0,\  t+b, \ t+a_1 b+a_0,\  t+a_1 b_1 b + b_1 a_0 +b_0\}\subseteq B.$$

By definition of $B$ we can find  $\tau',\ \alpha'_0,\ \alpha'_1, \beta'_0,\ \gamma'_0,\ \kappa',\ \eta',\ \mu' \in U$ such that:

\begin{itemize}
    \item $d(\tau')=t$
    \item $d(\alpha'_0)=t+a_0$,
    \item $d(\beta'_0)=t+b_0$,
    \item $d(\gamma'_0)=t+a_0b_1+b_0$.
   
    \item $d(\kappa')=t+b$,
    \item $d(\eta')=t+ a_0+a_1b$,
    \item $d(\mu')=t+b_0+a_0b_1+a_1b_1b$,

    \item $d(\alpha'_1)=ta_1$,
    \item $d(\beta'_1)=tb_1$ and
     \item $d(\gamma'_1)=ta_1b_1$.
\end{itemize}

Define $\tau=(\tau',h(\tau'))$, $\alpha_0=(\alpha'_0,h(\alpha'_0))$, $\alpha_1=(\alpha'_1,h(\alpha'_1))$, $\beta_0=(\beta'_0,h(\beta'_0))$, $\beta_1=(\beta'_1,h(\beta'_1))$, $\gamma_0=(\gamma'_0,h(\gamma'_0))$, $\gamma_1=(\gamma'_1,h(\gamma'_1))$, $\kappa=(\kappa',h(\kappa'_0))$, $\eta=(\eta',h(\eta'_0))$ and $\mu=(\mu',h(\mu'_0))$. 

By Lemma \ref{claimDerivativeEqualsImpliesTangentLemma} and Clause 3 of the list of properties for $H$ and $h$ of Lemma \ref{existsHLemma}, this choice of 
$$(\tau,\alpha_0,\alpha_1,\beta_0,\beta_1,\gamma_0,\gamma_1,\kappa,\eta,\mu)$$ 
implies that the condition (\ref{tangentConditions}) of the statement holds.

Now we prove the bullets of the statement.


%
    

 Assume that $(a,r_0,r_1,r_2)$ are as in the first bullet of the conclusion

 \begin{claimproof}

  For $Z\subseteq G\times G$ let $Z^{*}:=Z\cap (A\times A)$ and $Z^{(k,l)}:=i_{k,l}^{-1}(Z)$.
  
    By the conclusion of Lemma \ref{existsHLemma}, $H$ is a uniform power expansion for $(X^*_a)_{a\in Y^*}$ at $(0,0)$.
        So $H_\tau(x,s)=H(H(x,s),\tau')$ is a uniform power expansion for $(X^*_\tau \circ X^*_a)_{a\in Y^*}$ at $(0,0)$.
      
        By Clause 5 of Lemma \ref{existsHLemma} for all $\alpha\in U$ and all $b\in X^*_{\bar\alpha}$, there is a uniform Fr-power expansion of $(X^*_a)_{a\in Y^*}$ for $X^*_{\bar\alpha}$ at $b$. As $\tau'\in U$, in particular, $X^*_\tau$ admits a Fr-power expansion at any point. 
        
        So by Lemma \ref{lemmaCanCompose}, for all $\alpha \in U$ and all $b\in X^*_\tau \circ X^*_{\bar\alpha}$, there is a uniform Fr-power expansion of $(X^*_\tau\circ X^*_a)_{a\in Y^*}$ for $X^*_\tau\circ X^*_{\bar\alpha}$ at $b$.

       Moreover, the function 
       $$T(x) := H(H(x,r'_0),\tau')+H(H(x,r_2'),r'_1)-H(H(x,r'_1),\tau')$$ is a power expansion for $(Z(r_0,r_1,r_2))^*$ at $(0,0)$ that satisfies $$T'(0)=tf(t_0) + f(r_1)f(r_2)-tf(r_1)$$
       

       and as $$\frac{\partial H_\tau}{\partial x}(0,a')=tf(a),$$ by our conditions on $(a,r_0,r_1,r_2)$ it follows that $$\frac{\partial H_\tau}{\partial x}(0,a')=T'(0)$$

       Let $Z:=Z(r_0,r_1,r_2)$, so we apply Lemma \ref{intersectionSubeLemmaAnalitic} (with $q=0$, the family $(X^*_\tau\circ X^*_a)_{a\in Y^*}$, the set $Z^*$ and the power expansion $T$ of $Z^*$ at $(0,0)$) and conclude that there is $W\ni a'$ open contained on $U$ such that 
       $$|X_{\tau'}^*\circ X_{a'}^*\cap Z^*|<|X^*_{\tau'}\circ X_s^*\cap Z^*|$$
        for all $s\in W\setminus{a'}$

       so

       \begin{equation}\label{g0creceEquation}
       |X_{\tau}\circ X_{a}\cap Z \cap (A\times A)|<|X_{\tau}\circ X_{i(\bar s)}\cap Z \cap (A\times A)|.
       \end{equation}

       for all $s\in W\setminus\{a'\}$.

       Now for each $k,l$ list $X_{\tau}\cap X_{a} \cap (G_k\times G_l) =\{(x_1,y_1),\ldots,(x_m,y_m)\}$

       So there are $(x^*_j,y^*_j)\in A\times A$ such that $i_{k,l}(x^*_j,y^*_j)=(x_j,y_j)$.

       Now fix $V_j\ni (x_j^*,y_j^*)$ pairwise disjoint open subsets of $A\times A$.




       By Clause \ref{frLejosGLemma} of Lemma \ref{existsHLemma}, for all $s\in U$ and all $\bar b\in X_{i(\bar s)}^{(k,l)}$ the family $(X_a^{(k,l)})_{a\in Y^*}$ admits an Fr-power expansion for $X_{i(\bar s)}^{(k,l)}$ at $b$. In particular, $X^{(k,l)}_{a}$ admits an Fr-power expansion at any point. Then by Lemma \ref{lemmaCanCompose}, for all $s\in U$ and $(x',y')\in X^{(k,l)}_\tau\circ X_{i(\bar s)}^{(k,l)}$, the family
       $$(X^{(k,l)}_\tau\circ X_a^{(k,l)})_{a\in Y}$$ admits an Fr-power expansion for $ X^{(k,l)}_\tau\circ X_{i(\bar s)}^{(k,l)}$ at $(x',y')$. Thus we may apply Lemma \ref{propIntCreceGeneralVersionLejos} with this family, the point $(x^*_j,y^*_j)$, the open $V_j$ and the set with no isolated points $Z^{(k,l)}$ to conclude that there is some $W^{(k,l)}_j\ni {a'}$ such that 

       $$|X_\tau^{(k,l)}\circ X_{i(\bar s)}^{(k,l)} \cap Z^{(k,l)}\cap V_j|\geq 1$$

       for all $s\in W^{(k,l)}_j$.

    As $V_j$ are disjoints, if $s\in W^{(k,l)}:=\bigcap_j W^{(k,l)}_j $ then 

     $$|X_\tau^{(k,l)}\circ X_{i(\bar {s})}^{(k,l)} \cap Z^{(k,l)}|\geq m.$$

     applying $i_{k,l}$ that is injective and onto $G_k\times G_l$ we get: 
    \begin{equation}\label{gnkCreceEcuacion}
    |X_\tau\circ X_{a} \cap Z \cap (G_k\times G_l)|\leq  |X_\tau\circ X_{i(\bar s)} \cap Z \cap (G_k\times G_l)|
\end{equation}

As $\{G_k\times G_l\}_{k,l}$ is a partition of $G\times G$, Equations \ref{g0creceEquation} and \ref{gnkCreceEcuacion} imply that 

$$|X_\tau\circ X_{a} \cap Z| < |X_\tau\circ X_{i(\bar s)} \cap Z|$$

for all $s\in W\cap \bigcap_{k,l} W^{(k,l)}$.  \end{claimproof}

  The second bullet has a similar proof, this completes the proof of Lemma \ref{lemmaIntersectionSubeVersion}. \end{proof}

We are ready now to present the proof of Theorem \ref{thmAditiveVersion}. 

\begin{proof} \textit{(Proof of Theorem \ref{thmAditiveVersion})}.

Let $\mathcal G=(G,\+,\ldots)$ be as in the hypothesis of the statement of Theorem \ref{thmAditiveVersion}. Let $X\subseteq G\times G$ be as provided by Fact \ref{existsXVersion} so $X$ is not $\mathcal G$-affine. We may assume that $X$ is strongly minimal. 

By Lemma \ref{lemmaYStarG} there is a $\mathcal G$-definable strongly minimal set $Y$ with Property $(\star)$. Moreover, $Y$ is also not $\mathcal G$-affine.

Then we can write:
\begin{equation}\label{decompY}
   Y=\bigcup_i C_i\cap V_i
\end{equation}

where each $C_i$ is a one-dimensional Zariski irreducible closed and each $V_i$ is open.

Let $c\in Y$ and $B\subseteq K$ an open ball provided by Lemma \ref{lemmaIntersectionSubeVersion}. For $a\in Y$ let

$$X_{a}:=X_{a,c}$$ as defined in Equation \ref{defX}.

Notice that $(X_a)_{a\in Y}$ is a $\mathcal G$-definable family. 

Let $t\in B$. Since addition and multiplication are continuous, there are $B_0$ and $B_1$ open balls around $0$ and $1$ respectively such that:

\begin{itemize}
  \item $tB_1\subseteq B,$
  \item $t+B_0\subseteq B,$
  \item $t B_1 B_1\subseteq B,$ 
  \item $t+B_1B_0+B_0\subseteq B,$ and
  \item $t+B_1B_1B_0+B_1B_0+B_0\subseteq B$.

\end{itemize}






Let $\bar d$ be some parameter such that $G$, $A$, $i:A\to G$ and $Y\subseteq G\times G$ are all $\mathbb K$-definable over $\bar d$. In this proof, whenever we take algebraic closure or we compute the dimension of tuples we will do it over $\bar d$. As $B_0$ and $B_1$ are open balls, let $(a_0,b_0,b)\in B^3_0$ be a triple of $\mathbb K$-dimension $3$ over $t,\bar d$ and let $(a_1,b_1)\in B_1^2$ be a tuple of $\mathbb K$-dimension $2$ over $a_0,b_0,b, t,\bar d$. Then $(a_0,b_0,b,a_1,b_1)$ is a tuple of $\mathbb K$-dimension $5$ over $t,\bar d$.

Then we have a $\mathbb K$-group configuration over $\bar d$ for $\mathbb G_a\ltimes\mathbb G_m$ given by:

\begin{equation}\label{gcFullStructure}
 	\begin{tikzcd}
 	(a_0, a_1) \arrow[ddd, dash] \arrow[rrr, dash] &&& b \arrow[rrr, dash]  &&& a_0+a_1 b \arrow[dddllllll,  dash] 
  \\
&&&&&&
\\
& & && 
&&
\\
 (b_0,b_1)  \arrow[ddd, dash] &&& b_0+ a_0b_1+ a_1 b_1 b &&& \\ &&&&&&
 \\
 &&&&&&
 \\
 (a_0b_1+b_0,a_1b_1)  \arrow[uuuuuurrr, crossing over, dash] &&&&&&
  	\end{tikzcd}
\end{equation}

Moreover by our choice of $B_0$ and $B_1$ we have that:
$$\{t a_1, \ t+a_0 , \ tb_1,\  t+b_0,\  t a_1 b_1 ,\  t+a_1b_0+a_0,\  t+b, \ t+a_1 b+a_0,\  t+a_1 b_1 b + b_1 a_0 +b_0\}\subseteq B.$$

Let $\tau,\ \alpha_0,\ \alpha_1,\ \beta_0,\ \beta_1,\ \gamma_0,\ \gamma_1,\ \kappa,\ \eta$ and $\mu$ be elements of $Y$ provided by Lemma \ref{lemmaIntersectionSubeVersion} 



    




We start proving:

\begin{claim}\label{genericFiniteIntersectionClaim}
    The families $(\cl(t_a(Y))_{a\in Y}$ and $(\cl(X_\tau \circ X_a))_{a\in Y}$ have both generic finite intersection.
\end{claim}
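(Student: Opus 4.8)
The two assertions are proved in cascade. For $(\cl(t_a(Y)))_{a\in Y}$: since $Y$ is $\mathcal G$-definable, strongly minimal and not $\mathcal G$-affine, Fact~\ref{nonAffineIntersectsFiniteLemmaF} gives that $t_a(Y)\cap t_b(Y)$ is finite for every pair $(a,b)\in Y^2$ that is $\mathcal G$-generic over $\bar d$. Now every $\mathcal G$-definable set is $\mathbb K$-definable, so $\acl_{\mathcal G}(\cdot)\subseteq\acl_{\mathbb K}(\cdot)$; hence a $\mathbb K$-generic pair $(a,b)\in Y^2$ over $\bar d$ is also $\mathcal G$-generic over $\bar d$, and for such a pair $t_a(Y)\cap t_b(Y)$ is finite. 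As $t_a(Y)$ is constructible of dimension $1$, the set $\cl(t_a(Y))=t_a(\cl(Y))$ is $1$-dimensional Zariski closed and $\cl(t_a(Y))\setminus t_a(Y)$ is finite, and likewise for $b$, so $\cl(t_a(Y))\cap\cl(t_b(Y))$ is finite. This proves the first assertion.

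For $(\cl(X_\tau\circ X_a))_{a\in Y}$, expand $X_a=X_{a,c}=(Y-t_a(Y))\circ(Y-t_c(Y))^{-1}$, so that
\[
X_\tau\circ X_a=\bigl((Y-t_\tau(Y))\circ(Y-t_c(Y))^{-1}\bigr)\circ(Y-t_a(Y))\circ(Y-t_c(Y))^{-1},
\]
in which only the factor $Y-t_a(Y)$ depends on $a$. I would then apply Lemma~\ref{restaCompuestaFiniteIntersectionLemma} three times. First, its subtraction clause applied to $(\cl(t_a(Y)))_{a\in Y}$ and the fixed curve $Z=\cl(Y)$ shows that $(\cl(Y-t_a(Y)))_{a\in Y}$ has generic finite intersection. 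Second, its composition clause applied to this family and the fixed curve $(Y-t_c(Y))^{-1}$ gives generic finite intersection for $(\cl(X_a))_{a\in Y}$. Third, the composition clause applied to $(\cl(X_a))_{a\in Y}$ and the fixed curve $\cl(X_\tau)$ yields the desired conclusion. All the sets involved have no isolated points --- by Clause~\ref{clause1} of Property~$(\star)$ for $Y$, and by Lemma~\ref{noIsolated} for the sets built from it via $-$ and $\circ$ (once the line conditions below are in place) --- so these Zariski closures are purely $1$-dimensional, as the lemma requires.

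\textbf{Main obstacle.} The composition clauses of Lemma~\ref{restaCompuestaFiniteIntersectionLemma} carry the side hypotheses that the curves in question contain no infinite horizontal or vertical line, and establishing these is the only delicate step. Replacing $Y$ by a cofinite subset (which changes none of the Zariski closures above) we may assume $\cl(Y)$ is irreducible --- by strong minimality $Y$ is infinite on only one $1$-dimensional component --- and not a line: otherwise $t_a(Y)$ would be cofinite in $\cl(Y)$ for every $a\in Y$, making $Y$ $\mathcal G$-affine. Then $Y$ contains no infinite horizontal or vertical line, the stabiliser of $\cl(Y)$ in $\mathbb G_a\times\mathbb G_a$ is finite, and hence $S:=\{d:Y\cap t_d(Y)\text{ is infinite}\}=\{d:t_d(\cl(Y))=\cl(Y)\}$ is finite. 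A horizontal line at height $y_0$ is an irreducible component of $\cl(Y-t_a(Y))$ exactly when $(a_1,a_2-y_0)\in S$; so the set of $a\in Y$ for which $\cl(Y-t_a(Y))$ meets some horizontal line in infinitely many points is $Y\cap(\pi_1(S)\times K)$, which is finite and may be deleted from the index set, while $Y-t_a(Y)$ never meets a vertical line infinitely since $Y$ and $t_a(Y)$ have finite fibres over the first coordinate. Choosing $c$ --- which in the proof of Lemma~\ref{existsHLemma} ranges over a cofinite subset of $U$ --- and $t$ (hence $\tau$) outside the corresponding finite bad sets secures the required no-line hypotheses for the fixed curves $Y-t_c(Y)$, $(Y-t_c(Y))^{-1}$ and $Y-t_\tau(Y)$; and the analogous statements for $X_a$ and for $X_\tau=(Y-t_\tau(Y))\circ(Y-t_c(Y))^{-1}$ follow by the same finite-fibre counting (an infinite horizontal line in a composition $W\circ V^{-1}$ forces one in $W$, etc.). Deleting finitely many indices does not affect the conclusion for the original index set $Y$, since a $\mathbb K$-generic pair in $Y^2$ avoids any fixed finite set.
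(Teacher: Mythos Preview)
Your argument for the first family contains a genuine gap. You assert that $\cl(t_a(Y))\setminus t_a(Y)$ is finite because $t_a(Y)$ is ``constructible of dimension $1$''. But $Y$ is not constructible in the field-theoretic sense: it is only $\mathbb K$-definable in ACVF, and by Property~$(\star)$ it has the form $Y=\bigcup_i(C_i\cap V_i)$ with each $C_i$ an irreducible algebraic curve and each $V_i$ open in the \emph{valuation} topology. Then $\cl(Y)=\bigcup_i C_i$, while $\cl(Y)\setminus Y$ consists of the points of the $C_i$ lying outside the $V_i$; this is typically infinite (e.g.\ whenever some $V_i$ is a proper ball). So finiteness of $t_a(Y)\cap t_b(Y)$ does not by itself yield finiteness of $\cl(t_a(Y))\cap\cl(t_b(Y))$.

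The paper bridges this gap by a component-and-indiscernibility argument. Assuming $\cl(t_{a_1}(Y))\cap\cl(t_{a_2}(Y))$ is infinite for a generic pair, one extends $(a_1,a_2)$ to an indiscernible sequence and finds a single component $C_i$ with $t_{a_1}(C_i)=t_{a_2}(C_i)$ (irreducibility forces equality, and indiscernibility together with the finiteness of the set of components forces the same index on both sides). One may then take $a_1\in C_i\cap V_i$ and choose $a_2$ \emph{valuation-close} to $a_1$ among the infinitely many parameters with $t_{a_2}(C_i)=t_{a_1}(C_i)$, so that the open sets $t_{a_1}(V_i)$ and $t_{a_2}(V_i)$ overlap; their intersection with the common curve is then an infinite subset of $t_{a_1}(Y)\cap t_{a_2}(Y)$, contradicting Fact~\ref{nonAffineIntersectsFiniteLemmaF}. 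The move to a nearby $a_2$ is precisely what reconnects the Zariski closure with the actual valuation-open pieces of $Y$, and your proof is missing this step.

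A smaller issue in your ``Main obstacle'': you reduce to $\cl(Y)$ irreducible by arguing that $\mathcal G$-strong minimality forces $Y$ to be infinite on only one algebraic component. That does not follow, since the pieces $C_i\cap V_i$ are $\mathbb K$-definable but need not be $\mathcal G$-definable, so $\mathcal G$-strong minimality cannot separate them. Your cascade for the second family via repeated use of Lemma~\ref{restaCompuestaFiniteIntersectionLemma} is, however, essentially the paper's own route and is fine once the first family is established correctly.
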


\begin{claimproof}
    Let $\bar d$ be a tuple of parameters defining the family $(\cl(t_a(Y))_{a\in Y}$. Let $a_1,a_2\in Y$ be $\mathbb K$-generic independent over $\bar d$ and assume that $$\cl(t_{a_1}(Y))\cap \cl(t_{a_2}(Y))$$ is infinite. By Lemma \ref{bastaIndiscernibleLemma} we may assume that there is $\mathcal I=(a_1,a_2,a_3,\ldots)$ an indiscernible sequence over $\bar d$ starting with $(a_1,a_2)$.

    By Equation \ref{decompY} 
    $$t_{a_1}(Y)=\bigcup_i (t_{a_1}(C_i)\cap t_{a_1}(V_i)),$$ then 
    $$\cl(t_{a_1}(Y))=\bigcup_i(t_{a_1}(C_i)).$$ Similarly
    $$\cl(t_{a_2}(Y))=\bigcup_i(t_{a_2}(C_i)).$$

    Thus, if $$\cl(t_{a_1}(Y))\cap \cl(t_{a_2}(Y))$$ is infinite, there are $i,j$ such that $t_{a_1}(C_i)\cap t_{a_2}(C_j)$ is infinite. 

    Since this property is definable and $\mathcal I$ is indiscernible, the same pair $C_i$ $C_j$ works for any pair of elements $a_k,a_l$ if $k<l$. As there are finitely many $C_i$ it implies that $C_i=C_j$. Fix such $i$.

   Now, we may assume that $a_1\in C_i\cap V_i$, so there are infinitely many $a_2\in C_i\cap V_i$ such that $t_{a_1}(C_i)=t_{a_2}(C_i)$. Moreover, if we take $a_2$ close enough to $a_1$ then $t_{a_1}(V_i)\cap t_{a_2}(V_i)\neq \emptyset$ and as it is anon empty open set, in particular it is infinite, so $$t_{a_1}(C_i\cap V_i)\cap t_{a_2}(C_i\cap V_i)$$ is infinite. But  $$t_{a_1}(C_i\cap V_i)\cap t_{a_2}(C_i\cap V_i)\subseteq t_{a_1}(Y)\cap t_{a_2}(Y),$$ contradicting Lemma \ref{nonAffineIntersectsFiniteLemmaF} so we conclude that the family $\cl(t_a(Y))_{a\in Y}$ has generic finite intersection.

    Thus by Lemma \ref{restaCompuestaFiniteIntersectionLemma}, if we define:

    $$\tilde X_a:= \cl\left(\cl\left(\cl(Y)-\cl(t_a(Y))\right) \circ \cl\left(\cl(Y)-\cl(t_c(Y))\right)^{-1}\right)$$

    then the family $(\tilde X_a)_{a\in Y}$ has generic finite intersection which implies that
    $$(\cl(X_a))_{a\in Y}$$ also has generic finite intersection. Applying Lemma \ref{restaCompuestaFiniteIntersectionLemma} again, the family 

    $$(\cl(\cl(X_\tau)\circ \cl(X_a)))_{a\in Y}$$ has generic finite intersection and since $$(\cl(\cl(X_\tau)\circ \cl(X_a))\subseteq \cl(X_\tau\circ X_a),$$  the family 

    $$(\cl(X_\tau\circ X_a))_{a\in Y}$$ has generic finite intersection.\end{claimproof}

Now we are ready to prove:

\begin{claim}\label{claimGCVersionAdditive}
    The following is a rank $2$ group configuration for the structure $\mathcal G$:

    \begin{equation}\label{GCReduct}
 	\begin{tikzcd}
 	(\alpha_0, \alpha_1) \arrow[ddd, dash] \arrow[rrr, dash] &&& \kappa \arrow[rrr, dash]  &&& \eta \arrow[dddllllll,  dash] 
  \\
&&&&&&
\\
& & && 
&&
\\
 (\beta_0,\beta_1)   \arrow[ddd, dash] && \mu  &&&& \\ &&&&&&
 \\
 &&&&&&
 \\
  (\gamma_0,\gamma_1)  \arrow[uuuuuurrr, crossing over, dash] &&&&&&
  	\end{tikzcd}
 	\end{equation}
\end{claim}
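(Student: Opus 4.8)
Fix once and for all the parameter set $A_0:=\acl_{\mathcal G}(\bar d\, c\,\tau)$; all ranks and algebraic closures below are over $A_0$ and all $\mathbb K$-dimensions are in the sense of Definition \ref{defiDimension}. Note $t=f(\tau)\in\acl_{\mathbb K}(\tau\bar d)$ is $\mathbb K$-algebraic over $A_0$, and $(a_0,a_1,b_0,b_1,b)$ is $\mathbb K$-generic independent over $A_0$ (we may assume genericity over $c\tau$, as these are chosen afterwards). The six vertices of \ref{GCReduct} are $a=(\alpha_0,\alpha_1)$, $b=(\beta_0,\beta_1)$, $c=(\gamma_0,\gamma_1)$ (the would-be rank-$2$ vertices) and $x=\kappa$, $y=\eta$, $z=\mu$ (rank $1$), with lines $\{a,x,y\}$, $\{b,z,y\}$, $\{c,x,z\}$, $\{a,b,c\}$, matching the $t$-shifted $\mathbb G_a\ltimes\mathbb G_m$ configuration \ref{gcFullStructureIntro}. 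The strategy is to translate every clause of the definition of a rank $2$ group configuration into a statement about the $\mathbb K$-dimensions of the corresponding tuples of $f$-images, using the two bullets of Lemma \ref{lemmaIntersectionSubeVersion} only for the four collinearity conditions.

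I first set up two dictionaries between $\RM_{\mathcal G}$ and $\dim$. For the upper bound: each vertex is a tuple from the $\mathcal G$-strongly minimal set $Y$, so for any subtuple $\bar\xi$ of $\mathfrak g$, $\RM_{\mathcal G}(\bar\xi)$ is at most the number of $Y$-coordinates it contains. For the lower bound: by Definition \ref{defTangent} and Lemma \ref{claimTangentFiniteLemma}, $f(\xi)\in\acl_{\mathbb K}(\xi\,\bar d)$ for every coordinate $\xi$ of $\mathfrak g$, and since $\acl_{\mathcal G}\subseteq\acl_{\mathbb K}$ and (Fact \ref{aclIgualFact}) $\acl_{\mathbb K}$ computes $\mathbb K$-dimension on tuples from $K$, we get $\RM_{\mathcal G}(\bar\xi)\ge\dim\big(f(\bar\xi)\big)$ for every subtuple $\bar\xi$. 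Now the tuple of $f$-images is, up to the harmless re-indexing between \ref{gcFullStructure} and the statement of Lemma \ref{lemmaIntersectionSubeVersion}, a copy shifted by the fixed scalar $t$ of the standard group configuration of $\mathbb G_a\ltimes\mathbb G_m$ built from the $\mathbb K$-generic independent data $(a_0,a_1,b_0,b_1,b)$; an entirely elementary computation with polynomials over $K$ then gives all the needed $\mathbb K$-dimensions: each rank-$2$ block of $f$-images has dimension $2$, each of $f(\kappa),f(\eta),f(\mu)$ has dimension $1$, disjoint subtuples are $\mathbb K$-independent, the three rank-$2$ blocks together have dimension $4$, and the additive-plus-multiplicative data of each of the three lines through a rank-$2$ vertex has dimension $3$. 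Combined with the upper bound this already yields $\RM_{\mathcal G}(a)=\RM_{\mathcal G}(b)=\RM_{\mathcal G}(c)=2$, $\RM_{\mathcal G}(x)=\RM_{\mathcal G}(y)=\RM_{\mathcal G}(z)=1$, pairwise independence of the six vertices, $\RM_{\mathcal G}(a,b)=4$, and $\RM_{\mathcal G}$ of each collinear triple $\ge$ its target value; in particular $\alpha_0,\alpha_1,\beta_0,\beta_1,\kappa$ are $\mathbb K$-generic independent in $Y$ over $A_0$, so $\RM_{\mathcal G}(\mathfrak g)\ge 5=2d+1$.

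What remains are the four collinearity (i.e.\ $\mathcal G$-dependence) conditions, and these are supplied by the bullets of Lemma \ref{lemmaIntersectionSubeVersion}. For each line I choose the election of coordinates dictated by the group law of $\mathbb G_a\ltimes\mathbb G_m$ — for instance $(\eta;\alpha_1,\alpha_0,\kappa)$ and $(\mu;\beta_1,\beta_0,\eta)$ for the first bullet and $(\gamma_0;\alpha_0,\beta_0)$ for the second — so that the hypothesis identity $tf(a)=tf(r_0)+f(r_1)f(r_2)-tf(r_1)$ (resp.\ $tf(a)=f(r_1)f(r_2)$) becomes a true polynomial identity in $t,a_0,a_1,b_0,b_1,b$, e.g.\ $t(t+a_0+a_1b)=t(t+a_0)+ta_1(t+b)-t\cdot ta_1$. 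I then argue the relevant intersection, say $X_\tau\circ X_\eta\cap Z(\alpha_1,\alpha_0,\kappa)$, is infinite: were it finite, the bullet would produce infinitely many $s\in Y$ whose intersection is strictly larger, yet by Bézout (Fact \ref{bezoutTheorem}) finite intersections of the $\mathbb K$-definable family $(\cl(X_\tau\circ X_s))_{s\in Y}$ with the fixed curve $\cl(Z(\alpha_1,\alpha_0,\kappa))$ are uniformly bounded, while by Claim \ref{genericFiniteIntersectionClaim} and Lemma \ref{noGenericFiniteZLemma} only finitely many $s$ give an infinite intersection — so a maximal finite intersection size would have to be exceeded, a contradiction. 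Once this intersection is infinite, $\{s\in Y:X_\tau\circ X_s\cap Z(\alpha_1,\alpha_0,\kappa)\text{ is infinite}\}$ is $\mathcal G$-definable over $\alpha_0\alpha_1\kappa A_0$ (strong minimality of $\mathcal G$), is not generic in $Y$ (same two references), hence is finite and contains $\eta$; thus $\eta\in\acl_{\mathcal G}(\alpha_0\alpha_1\kappa\, A_0)=\acl_{\mathcal G}(a,x,A_0)$, giving $\RM_{\mathcal G}(a,x,y)=\RM_{\mathcal G}(a,x)=3$. The same mechanism, with the appropriate elections, yields $\mu\in\acl_{\mathcal G}(b,y,A_0)$, the corresponding dependence on the line $\{c,x,z\}$, and $\gamma_0,\gamma_1\in\acl_{\mathcal G}(a,b,A_0)$; hence $\RM_{\mathcal G}(b,z,y)=\RM_{\mathcal G}(c,x,z)=3$ and $\RM_{\mathcal G}(a,b,c)=\RM_{\mathcal G}(a,b)=4$, and since $\eta,\mu,\gamma_0,\gamma_1$ all then lie in $\acl_{\mathcal G}(\alpha_0\alpha_1\beta_0\beta_1\kappa\, A_0)$ we also get $\RM_{\mathcal G}(\mathfrak g)\le 5$, so $=5$. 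This exhausts the clauses, so \ref{GCReduct} is a rank $2$ group configuration for $\mathcal G$.

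The step I expect to be the main obstacle is extracting ``the intersection is infinite'' cleanly from the bullets: it needs a mild maximality/stationarity argument on top of Lemma \ref{lemmaIntersectionSubeVersion}, namely that the uniform Bézout bound together with generic finite intersection prevents a maximal finite intersection from being increased. A secondary nuisance is the coordinate bookkeeping — tracking which of $\alpha_0,\alpha_1$ (etc.) plays the additive and which the multiplicative role, since the display \ref{gcFullStructure} and the tuple $\mathfrak g_f$ of Lemma \ref{lemmaIntersectionSubeVersion} differ by a permutation of coordinates and an equivalent presentation of the semidirect-product law; the elections above must be chosen consistently with that matching. Everything else is the routine rank/dimension accounting sketched in the previous two paragraphs. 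Once Claim \ref{claimGCVersionAdditive} is established, Fact \ref{groupConfig} gives a $\mathcal G$-interpretable group of Morley rank $2$ acting faithfully on a strongly minimal set, and Fact \ref{fieldConfig} then produces the required infinite interpretable field, completing the proof of Theorem \ref{thmAditiveVersion}.
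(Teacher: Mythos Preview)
Your overall architecture matches the paper's: establish $\mathbb K$-interalgebraicity with the shifted $\mathbb G_a\ltimes\mathbb G_m$ configuration via condition (\ref{tangentConditions}) and Lemma \ref{claimTangentFiniteLemma}, transfer the lower bounds on $\RM_{\mathcal G}$ from $\dim_{\mathbb K}$, and then use the bullets of Lemma \ref{lemmaIntersectionSubeVersion} for the four collinearity relations. The rank/dimension bookkeeping is fine.

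The gap is exactly where you flagged it. Your argument that the intersection $X_\tau\circ X_\eta\cap Z(\alpha_0,\alpha_1,\kappa)$ \emph{must} be infinite does not go through. The bullet of Lemma \ref{lemmaIntersectionSubeVersion} applies only to the specific coordinate $a=\eta$ of $\mathfrak g$ (because the identity $tf(a)=tf(r_0)+f(r_1)f(r_2)-tf(r_1)$ is what makes it fire); it tells you nothing about an arbitrary $s\in Y$ that happens to realise the maximal finite intersection size. So you cannot iterate to push past the B\'ezout bound, and there is no contradiction: it is entirely possible that the intersection at $\eta$ is finite while infinitely many other $s$ have finite intersection of strictly larger size.

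The fix is to drop the attempt to force the infinite case and instead do a straight case split, as the paper does. If the intersection at $\eta$ is finite, the bullet gives infinitely many $s\in Y$ with
\[
|X_\tau\circ X_\eta\cap Z(\alpha_0,\alpha_1,\kappa)| < |X_\tau\circ X_s\cap Z(\alpha_0,\alpha_1,\kappa)|,
\]
so the $\mathcal G$-definable set
\[
E=\{s\in Y: |X_\tau\circ X_s\cap Z(\alpha_0,\alpha_1,\kappa)|>n\}
\]
(with $n$ the fixed natural number on the left) is infinite, hence cofinite in the strongly minimal $Y$; since $\eta\notin E$, $\eta$ lies in the finite set $Y\setminus E$, which is $\mathcal G$-definable over $\alpha_0,\alpha_1,\kappa,A_0$. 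If the intersection is infinite, your argument via Claim \ref{genericFiniteIntersectionClaim} and Lemma \ref{noGenericFiniteZLemma} is correct as written. Either way $\eta\in\acl_{\mathcal G}(\alpha_0,\alpha_1,\kappa,A_0)$, and the remaining lines are handled identically. No B\'ezout bound or maximality argument is needed.
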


\begin{claimproof}

As the tuples on Diagram \ref{GCReduct} satisfy condition (\ref{tangentConditions}), by Lemma \ref{claimTangentFiniteLemma} one has that 
$$a_0\in \acl_{\mathbb K} (\alpha_0),$$ moreover 
$$1\geq \dim_{\mathbb K}(\alpha_0)=\dim_{\mathbb K} (a_0,\alpha_0)\geq \dim_{\mathbb K} (a_0)=1.$$

Then $\dim_{\mathbb K}(\alpha_0/a_0)=0$ so $ \alpha_0\in \acl_{\mathbb K}(a_0)$. Therefore, $a_0$ and $\alpha_0$ are $\mathbb K$-interalgebraic. The same is true with all the correspondent tuples in both diagrams, so we conclude that the Diagram \ref{GCReduct} is interalgebraic (in $\mathbb K$) with the Diagram \ref{gcFullStructure}.

Therefore, the rank computed in $\mathbb K$ of a tuple of elements of Diagram  \ref{GCReduct} is the same as the rank of the corresponding tuple on Diagram \ref{gcFullStructure}.  As $\mathcal G$ is a reduct of $\mathbb K$ the rank of tuples computed on $\mathcal G$ is bigger or equal than the rank of tuples computed on $\mathbb K$. So for proving our claim it is enough to prove that the rank computed in $\mathcal G$ does not increase so we have to prove that there are still algebraic relations between the tuples lying on the same lines of the diagram. 

Let us prove that
$$\eta\in \acl_{\mathcal G}(\alpha_0,\alpha_1,\kappa)$$

Let  
$$Z(\alpha_0,\alpha_1,\kappa):=(X_\tau \circ X_{\alpha_0})\+(X_{\alpha_1}\circ X_\kappa)\ominus(X_\tau \circ X_{\alpha_1}).$$

If $$X_\tau\circ X_\eta\cap Z(\alpha_0,\alpha_1,\kappa)$$ is finite, the algebraic relation follows from the first bullet on the conclusion of Lemma \ref{lemmaIntersectionSubeVersion} taking $(a,r_0,r_1,r_2)=(\eta,\alpha_0,\alpha_1,\kappa)$: There are infinitely many $s\in Y$ such that:

$$| X_\tau\circ X_\eta\cap Z(\alpha_0,\alpha_1,\kappa)|<| X_\tau\circ X_s\cap Z(\alpha_0,\alpha_1,\kappa)|.$$

As $Z(\alpha_0,\alpha_1,\kappa)$ is a $\mathcal G$-definable set with parameters $\alpha_0,\alpha_1,\kappa$, the set

$$E:=\{s\in Y: | X_\tau\circ X_\eta\cap Z(\alpha_0,\alpha_1,\kappa)|<| X_\tau\circ X_s\cap Z(\alpha_0,\alpha_1,\kappa)|\}$$ is also $\mathcal G$-definable with parameters $\alpha_0,\alpha_1,\kappa$. As $E$ is infinite and $Y$ is strongly minimal, $Y\setminus E$  is finite and as $\eta\in Y\setminus E$ we conclude that 
$$\eta\in \acl_{\mathcal G}(\alpha_0,\alpha_1,\kappa).$$

If $$X_\tau\circ X_\eta\cap Z(\alpha_0,\alpha_1,\kappa)$$ is infinite, in particular $\cl(X_\tau\circ X_\eta)\cap Z(\alpha_0,\alpha_1,\kappa)$ is infinite, by Claim \ref{genericFiniteIntersectionClaim} the family $(\cl(X_\tau\circ X_a))_{a\in Y}$ has generic finite intersection, so Lemma \ref{noGenericFiniteZLemma} implies that $\eta$ is not $\mathbb K$-generic over $(\alpha_0,\alpha_1,\kappa)$, in particular 
$$\left\{b\in Y:X_\tau \circ X_b\cap Z(\alpha_0,\alpha_1,\kappa)\text{ is infinite}\right\}$$ does not contain any $\mathbb K$-generic element so it is finite. Since it is $\mathcal G$-definable and contains $\eta$ we conclude $$\eta\in \acl_{\mathcal G}(\alpha_0,\alpha_1,\kappa).$$

 The rest of algebraic relations needed for the definition of group configuration are provided by either the first or the second bullet in the conclusion of Lemma \ref{lemmaIntersectionSubeVersion}. \end{claimproof}

 Then we use the group configuration provided by Claim \ref{claimGCVersionAdditive}, apply Fact \ref{fieldConfig} and conclude that there is an infinite field interpreted by $\mathcal G$. This finishes the proof of Theorem \ref{thmAditiveVersion}.
\end{proof}

\chapter{The Multiplicative Case}\label{multiplicativeCaseVersion}

In this chapter we prove Zilber's conjecture in the case where the universe of the strongly minimal structure is $M:=K\setminus \{0\}$, the definable sets are $\mathbb K$-definable and multiplication ($\cdot$) is definable. More precisely:

\begin{thm}\label{thmMultiplicativeVersion}
    Let $\mathbb K=(K,+,\cdot,v,\Gamma)$ be an algebraically closed  valued field with $\text{char} (K)=p\geq 0$. Let $M:=K\setminus\{0\}$ and let $\mathcal M=(M,\cdot,\ldots)$ be a strongly minimal and non locally modular first order structure expanding $\mathbb G_m=(M,\cdot)$ whose definable sets are all $\mathbb K$-definable. Then $\mathcal M$ interprets an infinite field. 
 \end{thm}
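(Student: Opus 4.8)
By Fact~\ref{existsXVersion} and the subsequent remark, fix a strongly minimal $\mathcal M$-definable set $X\subseteq M\times M$ that is not $\mathcal M$-affine, and by Lemma~\ref{lemmaYStar} applied with $H=\mathbb G_m$ replace it by a set $Y=t_a(X)^{j}\setminus E$ having Property~$(\star)$; note that multiplicative translation, coordinate inversion, and deletion of finitely many points all preserve strong minimality and the failure of $\mathcal M$-affineness. Since $\mathbb K$-completeness is used only to verify first-order statements and $ACVF_{p,q}$ is complete, we may assume $\mathbb K$ complete. Property~$(\star)$ and Fact~\ref{implicitFunctionTheoremf} furnish an analytic $h\colon U\to M$ with $U\ni 1$, $h(1)=1$, $h'(1)\neq 0$ and graph inside $Y$; for $a\in U$ set $h_a(x)=h(xa)h(a)^{-1}$ and expand $h_a(x)=1+\sum_{n\ge 1}b_{n,a}(x-1)^n$ around $1$. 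As in the additive case, $Y$ being non-$\mathcal M$-affine forces the least $n$ with $\{b_{n,a}:a\in U\}$ infinite to exist, and the analogue of Lemma~\ref{lemmaN} makes it a power $p^{l}$ of $p$ (so $l=0$ automatically when $p=0$, which recovers the characteristic-zero case).

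\textbf{Dichotomy and Case~A.} I distinguish whether some $\mathcal M$-definable one-dimensional subset of $M\times M$ has infinitely many first derivatives; equivalently, whether the reduction above can be arranged with $l=0$. If so, I mimic Chapter~\ref{additiveCaseVersion} with $\mathbb G_a$ replaced throughout by $\mathbb G_m$: this is legitimate because $\mathbb G_m$ is a one-dimensional algebraic group inside $\mathbb K$, so all the intersection theory of Section~\ref{computations} applies. Concretely, the multiplicative avatar of the curves $X_{a,c}=(Y\om t_a(Y))\circ(Y\om t_c(Y))^{-1}$ admits, via the analogue of Lemma~\ref{existsHLemma}, a uniform analytic expansion $H$ with $s_1(H)$ infinite — the point being that $(h/h_a)'(1)=b_1-b_{1,a}$ now genuinely varies and the relevant analytic inverse exists — after which the analogue of Lemma~\ref{lemmaIntersectionSubeVersion} produces a rank-$2$ group configuration for $\mathbb G_a\ltimes\mathbb G_m$ that is $\mathbb K$-interalgebraic with an $\mathcal M$-group configuration, and Fact~\ref{fieldConfig} yields an infinite interpreted field.

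\textbf{Case~B.} Otherwise $Y$ has $l\ge 1$, a genuinely positive-characteristic phenomenon: the maps $h/h_a$ have vanishing derivative at $1$, so the composition step of Case~A collapses. Instead, by the analogue of Lemma~\ref{lemmaN} the coefficient $b_{p^l,a}=b_{p^l}+\sum_{i>1}b_{ip^l}(a^{p^l})^i$ is, up to $\mathrm{Fr}^{l}$, a nonconstant polynomial in $a$ and, being Frobenius-linear, combines additively rather than multiplicatively under the operations on curves; I use this to extract from $(X_{a,c})$ an $\mathcal M$-interpretable group $G$ of Morley rank $1$. One then checks that $G$ is $\mathbb K$-definable, that the $\mathcal M$-induced structure $\mathcal G=(G,\oplus,\ldots)$ is strongly minimal, non-locally modular, and has only $\mathbb K$-definable sets, and — crucially — that $G$ is $\mathbb K$-definably locally isomorphic to $\mathbb G_a=(K,+)$ in the sense of Definition~\ref{definitionLocllyIsomorphic}, the isomorphism being read off from the map $a\mapsto b_{p^l,a}$ (a quotient of $\mathbb G_a$ by a finite subgroup being again, $\mathbb K$-definably, a subgroup of $\mathbb G_a$). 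Theorem~\ref{thmAditiveVersion} then applies to $\mathcal G$ and produces an infinite field interpretable in $\mathcal G$, hence in $\mathcal M$.

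\textbf{Main obstacle.} Case~A is little more than a transcription of Chapter~\ref{additiveCaseVersion}. The real work is Case~B: manufacturing the rank-$1$ group $G$ from the curve family and, above all, certifying that it is locally isomorphic to $\mathbb G_a$ and not to $\mathbb G_m$ or some other one-dimensional group — this is exactly where the hypothesis that no definable set has infinitely many first derivatives is spent, and where the Frobenius twist of characteristic $p$ is indispensable. Once $G$ is in hand, invoking Theorem~\ref{thmAditiveVersion} is routine.
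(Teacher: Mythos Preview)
Your dichotomy and overall architecture match the paper's: split on whether the family $\{t_a(Y)\}$ has infinitely many first-order slopes at $(1,1)$, build a field configuration directly in the rich case, and in the poor case construct a rank-$1$ group, identify it as locally $\mathbb G_a$, and invoke Theorem~\ref{thmAditiveVersion}. Two points are worth flagging.

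\textbf{Case A.} Your transcription of Chapter~\ref{additiveCaseVersion} with the curves $X_{a,c}=(Y\om t_a(Y))\circ(Y\om t_c(Y))^{-1}$ would presumably work, but the paper observes it is unnecessary: once $\{h_a'(1):a\in U\}$ is already infinite, the bare translates $X_a:=t_a(Y)$ suffice (Lemma~\ref{existsHLemmaMult}), and the field configuration comes from the elementary identities $(h\cdot g)'(1)=h'(1)+g'(1)$ and $(h\circ g)'(1)=h'(1)g'(1)$ of Lemma~\ref{multIsAdditionInDerivativeLemma}. The subtraction--composition trick was only ever a device to \emph{create} varying slopes, so here it is redundant.

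\textbf{Case B.} Here there is a genuine gap. First, the additivity you need does not come from your ``Frobenius-linearity of $a\mapsto b_{p^l,a}$'' (that formula is the additive-case computation and does not transplant to $h_a(x)=h(xa)/h(a)$); in the paper it comes from composing with a fixed $t_c(Y)^{-1}$, which normalises the expansion so that $d_1\equiv 1$, $d_n\equiv 0$ for $1<n<N$, and then Lemma~\ref{compositionMultLemma} gives $d_N$ additive under composition (Lemma~\ref{existsHEasyClaim}). Second, and more seriously, the step ``$G$ is locally isomorphic to $\mathbb G_a$, the isomorphism being read off from $a\mapsto b_{p^l,a}$'' is not justified: the group $G$ is produced abstractly by Hrushovski's theorem from a configuration in $Y$, and the coefficient map lives on $Y$, not on $G$. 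All you have is that the $\mathcal M$-configuration is $\mathbb K$-interalgebraic with a standard $\mathbb G_a$-configuration. Promoting this interalgebraicity to a $\mathbb K$-definable local isomorphism $A\hookrightarrow G$ with $A\le\mathbb G_a$ is the content of Theorem~\ref{gcIsLocallyAdditiveThm}, and the paper needs external input for it: the NIP group-chunk theorem of Montenegro--Onshuus (Fact~\ref{algebraicGrouChunkMontenegro}) to land a wide type-definable subgroup of $G$ into an algebraic group $H$, an argument that $H\cong\mathbb G_a$ from the interalgebraicity, and a compactness step \`a la Acosta to pass to a definable subgroup. You correctly identified this as the main obstacle, but ``one then checks'' understates it; this is where the real work of Chapter~\ref{multiplicativeCaseVersion} lies.
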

 From now on we fix $\mathcal M=(M,\cdot,\ldots)$ as in the hypothesis of Theorem \ref{thmMultiplicativeVersion}. If $Z$ is $\mathcal M$-definable, we compute the Morley rank and Morley degree of $Z$ in the sense of $\mathcal M$. We say that $Z$ is strongly minimal if it is strongly minimal in the sense of $\mathcal M$. 

 From now on, we fix $X$ as provided by Fact \ref{existsXVersion} (and the subsequent comment) so $X$ is $\mathcal M$-definable, strongly minimal and it is not $\mathcal M$-affine.

By Lemma \ref{lemmaYStar} there is $Y\subseteq M\times M$ having Property $(\star)$. Moreover, $Y$ is also $\mathcal M$-definable, strongly minimal and it is not $\mathcal M$-affine. 

We divide the proof of Theorem \ref{thmMultiplicativeVersion} in two cases:

In Section \ref{findingAGroup} we deal with the case in which $Y$ has finitely many slopes at $(1,1)$ -see Definition \ref{defiYinfiniteSlopes} bellow-. In this case we find a group configuration for $\mathcal M$ that is $\mathbb K$-inter algebraic with an standard group configuration for $\mathbb G_a$. Then we use this group configuration to find a $\mathcal M$-interpretable group that is locally isomorphic to the additive group so we may apply the results of Chapter \ref{additiveCaseVersion} to find a field interpretable in $\mathcal M$. 
 
In Section \ref{findingAField} we deal with the case in which $Y$ has infinitely many slopes at $(1,1)$. In this case we can find a field in the same way as we did in Chapter \ref{additiveCaseVersion} as in this case an analogous of key Lemma \ref{existsHLemma} -which is very particular of the additive case- is given almost for free for the family of translates of $Y$ after composing with the inverse of some fix translate (see Lemma \ref{existsHLemmaMult}). 

\begin{defi}\label{defidL1}
    Let $L(x,y)$ be any polynomial, for each $a=(a_1,a_2)\in M\times M$ such that $L(a_1,a_2)=0$ and $\frac{\partial L}{\partial y}(a_1,a_2)\neq 0$ we define:

    $$\mathfrak d(L,a):=\frac{\frac{-\partial L}{\partial x}(a_1,a_2)}{\frac{\partial L}{\partial y}(a_1,a_2)}\frac{a_1}{a_2}.$$
    \end{defi}
\begin{defi}\label{defiYinfiniteSlopes}

    Let $Z\subseteq M\times M$ be a $\mathbb K$-definable set of dimension $1$. We say that $Y$ having Property $(\star)$ witnessed by $L_i$ and $L_{i,n_i}$ \emph{has infinitely many slopes at $(1,1)$} if for all $V\ni(1,1)$ open there is some open set $U$ with $(1,1)\in U\subseteq V$ such that

    $\frac{\partial L_1}{\partial y}(a)\neq 0$ for all $a\in U$
    and 
    $$s_1(Y,L_1,U):=\left\{\mathfrak d(L_1,a):a=(a_1,a_2)\in Z\cap U\text{, }L_1(a_1,a_2)=0 \right\}$$
    is infinite.
\end{defi}

The motivation for this definition is the following observation:

\begin{lemma}\label{hInfDerivadasLemma}
    Assume that $\mathbb K$ is metric. Let $L_1(x,y)$ be a polynomial and let $D$ be its set of zeros. Let $h:U\to M$ be an analytic function such that for all $a\in U$, $L_1(a,h(a))=0$.
    
    Then $$h_a(x):=h(x\cdot a)\cdot h(a)^{-1}$$ is an analytic function converging in a neighborhood of $1$ whose graph is contained in $t_{a}(D)$ and  $$h_a'(1)=\frac{\frac{-\partial L_1}{\partial x}(a_1,a_2)}{\frac{\partial L_1}{\partial y}(a_1,a_2)}\frac{a_1}{a_2}.$$

    Moreover, if $Y\subseteq K\times K$ is a $1$ dimensional set with infinitely many slopes at $(1,1)$ witnessed by $L_i$ and $L_{i,n}$, is such that $(a,h(a))\in Y$ for all $a\in U$, then 
    $$\left\{h_{a}'(1):a\in U\right\}$$ is infinite.
\end{lemma}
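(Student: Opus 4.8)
The plan is to verify the three assertions in order, all by direct computation from the hypotheses.

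First I would check that $h_a(x) := h(x\cdot a)\cdot h(a)^{-1}$ is analytic in a neighborhood of $1$: since $h$ is analytic on $U$ and $a\in U$, the map $x\mapsto x\cdot a$ is analytic and sends a neighborhood of $1$ into $U$, so $x\mapsto h(x\cdot a)$ is analytic near $1$; multiplying by the constant $h(a)^{-1}$ (which makes sense because $h(a)\in M$, so $h(a)\neq 0$) keeps it analytic. Clearly $h_a(1) = h(a)h(a)^{-1} = 1$. To see that the graph of $h_a$ lies in $t_a(D)$ (where $t_a$ is the multiplicative translation), recall $t_a(D) = \{(x\cdot a_1^{-1}, y\cdot a_2^{-1}) : (x,y)\in D\}$ with $a = (a_1,a_2) = (a, h(a))$; a point $(x, h_a(x)) = (x, h(xa)h(a)^{-1})$ corresponds under this translation to $(xa, h(xa))$, which lies in $D$ since $L_1(xa, h(xa))=0$ by hypothesis for $xa\in U$.

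Next I would compute $h_a'(1)$. On one hand, differentiating $h_a(x) = h(xa)h(a)^{-1}$ via the chain rule gives $h_a'(1) = a\cdot h'(a)\cdot h(a)^{-1} = h'(a)\cdot \tfrac{a_1}{a_2}$ (writing $a_1 = a$, $a_2 = h(a)$). On the other hand, differentiating the identity $L_1(x, h(x)) = 0$ with respect to $x$ and using the implicit function theorem (Fact \ref{implicitFunctionTheoremf}, applicable since we are in the regime where $\frac{\partial L_1}{\partial y}$ does not vanish) yields $\frac{\partial L_1}{\partial x}(a_1,a_2) + \frac{\partial L_1}{\partial y}(a_1,a_2)\, h'(a) = 0$, hence $h'(a) = -\frac{\partial L_1/\partial x\,(a_1,a_2)}{\partial L_1/\partial y\,(a_1,a_2)}$. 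Substituting gives exactly $h_a'(1) = \frac{-\partial L_1/\partial x\,(a_1,a_2)}{\partial L_1/\partial y\,(a_1,a_2)}\cdot\frac{a_1}{a_2} = \mathfrak d(L_1, (a_1,a_2))$, matching Definition \ref{defidL1}.

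Finally, for the "Moreover" clause: the set $\{h_a'(1) : a\in U\}$ equals $\{\mathfrak d(L_1,(a_1,a_2)) : a\in U\}$ by the previous paragraph, and since $(a, h(a))\in Y$ with $L_1(a,h(a))=0$ for every $a\in U$, this set contains $s_1(Y, L_1, U)$ (shrinking $U$ if necessary so it is one of the neighborhoods witnessing Definition \ref{defiYinfiniteSlopes}), which is infinite by the assumption that $Y$ has infinitely many slopes at $(1,1)$. I do not expect a serious obstacle here; the only points requiring care are bookkeeping the multiplicative (as opposed to additive) translation $t_a$ correctly, and making sure the neighborhood $U$ on which $h$ is defined can be taken inside a slope-witnessing open set from Definition \ref{defiYinfiniteSlopes} — both are routine.
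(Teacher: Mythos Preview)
Your computation of $h_a'(1)$ via the chain rule and implicit differentiation is exactly what the paper does.

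For the ``Moreover'' clause, your route differs from the paper's. You argue directly: the set $\{h_a'(1):a\in U\}$ equals $\{\mathfrak d(L_1,(a,h(a))):a\in U\}$, and you want this to contain some $s_1(Y,L_1,W)$ with $W\ni(1,1)$ a witnessing neighborhood from Definition~\ref{defiYinfiniteSlopes}. That containment, however, needs the graph of $h$ to cover $\{L_1=0\}\cap Y$ in a neighborhood of $(1,1)$, which in turn requires $1\in U$ and $h(1)=1$ (so that the implicit function theorem identifies the graph of $h$ with the zero set of $L_1$ near $(1,1)$). This is true in the intended application (Lemma~\ref{existsHLemmaMult}), but the lemma as stated does not assume it, and your parenthetical ``shrinking $U$'' does not bridge the gap if $U$ is not already a neighborhood of $1$. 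Note also that the $U$ in Definition~\ref{defiYinfiniteSlopes} lives in $K\times K$, not in $K$, so the identification you make needs the local-graph argument spelled out.

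The paper avoids this by arguing the contrapositive using the irreducibility of $L_1$ (which comes from Property~$(\star)$): if $\{h_a'(1):a\in U\}$ were finite, then some value $c$ is hit infinitely often, so the polynomial $F(x,y)=c\,y\,\partial_y L_1 + x\,\partial_x L_1$ vanishes on infinitely many points of the irreducible curve $D=\{L_1=0\}$, hence on all of $D$; this forces $\mathfrak d(L_1,\cdot)\equiv c$ on $D$, so $s_1(Y,L_1,K\times K)=\{c\}$, contradicting infinitely many slopes. This works for any open $U$ in the domain of $h$, with no need to locate it near $1$. Your direct approach is cleaner when the extra hypothesis holds; the paper's is what is needed for the lemma as written.
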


\begin{proof}

From the chain rule applied to the equation 

$$L_1(a,h(a))=0$$ we get that 
$$h'(a)=\frac{\frac{-\partial L_1}{\partial x}(a,h(a))}{\frac{\partial L_1}{\partial y}(a,h(a))}$$

and $$h_a'(1)=h'(a)\frac{a}{h(a)}.$$

Now, assume that the graph of $h$ is contained on $Y\cap D$ and $\left\{h_{a}'(1):a\in U\right\}$ is finite, where $D$ is the vanishing set of $L_1$. In particular, there is some $c$ such that 
$$c=\frac{\frac{-\partial L_1}{\partial x}(a_1,a_2)}{\frac{\partial L_1}{\partial y}(a_1,a_2)}\frac{a_1}{a_2}$$ for infinitely many $(a_1,a_2)\in D$.

Thus, if $E$ is the vanishing set of 
$$F(x,y):=c y\frac{\partial L_1(x,y)}{\partial y}+x \frac{\partial L_1(x,y)}{\partial x},$$
then $D\cap E$ is infinite and as $D$ is Zariski Irreducible and $E$ is Zariski closed it follows that $D\subseteq E$ so $F$ vanishes on $D$ and $s_1(Y,L_1,K\times K)=\{c\}$, in particular $Y$ does not have infinitely many slopes at $(1,1)$.
\end{proof}

Notice again that if $\phi(x,y,\bar z)$ is a formula in the language of valued fields and $L_i(x,y,\bar z)$ and $L_{i,n}(x,y,\bar z)$ are polynomials, then if we define:

$$Y(\bar d)=\{(x,y)\in K\times K: \mathbb K\models \phi(x,y,\bar d)\}$$ we have that
$$\left\{\bar d:Y(\bar d)\text{ has infinitely many slopes at }(1,1)\text{ witnessed by }L_i(x,y,\bar d)\text{ and }L_{i.n}(x,y,\bar d)\right\}$$ is $\mathbb K$-definable.


\section{Finitely many slopes at $(1,1)$}\label{findingAGroup}

In this section we deal with the case in which $Y$ does not have infinitely many slopes at $(1,1)$. In this case we will find a $\mathcal M$-interpretable group which contains a subgroup of finite index that is $\mathbb K$-definably isomorphic to a subgroup of the additive group.

\subsection{Finding a group}\label{findingAGroupConfiguration}

In this sub-section we prove the following Proposition:

\begin{prop}\label{existsGroupConfigurationMult}
    Let $Y\subseteq M\times M$ be $\mathcal M$-definable and strongly minimal. Assume that $Y$ is not $\mathcal M$-affine and has property $(\star)$ witnessed by some polynomials $L_i$ and $L_{i,n_i}$. Assume also that is not the case that $Y$ has infinitely many slopes at $(1,1)$ witnessed by $L_i$ and $L_{i,n_i}$. Then there is a $\mathcal M$ group configuration, $\mathfrak g=(\alpha,\beta,\gamma,\kappa,\eta,\mu)$

\begin{equation}\label{gcReductGroup}
 	\begin{tikzcd}
 	\alpha \arrow[ddd, dash] \arrow[rrr, dash] &&& \kappa \arrow[rrr, dash]  &&& \eta \arrow[dddllllll,  dash] 
  \\
&&&&&&
\\
& & && 
&&
\\
 \beta  \arrow[ddd, dash] && \mu &&&& \\ &&&&&&
 \\
 &&&&&&
 \\
 \gamma  \arrow[uuuuuurrr, crossing over, dash] &&&&&&
  	\end{tikzcd}
\end{equation}

that is $\mathbb K$-interalgebraic with a standard group configuration for the additive group $(K,+)$.\end{prop}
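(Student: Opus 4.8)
The plan is to transcribe, almost verbatim, the field-building argument of Chapter~\ref{additiveCaseVersion} together with its multiplicative reworking in Section~\ref{findingAField}, but to exploit the failure of ``infinitely many slopes at $(1,1)$'' so that the family of curves we produce carries only a \emph{one}-dimensional, \emph{additive} modulus. This is precisely what forces the resulting configuration to be interalgebraic with a standard configuration of $(K,+)$ rather than of $\mathbb G_a\ltimes\mathbb G_m$ (which would instead give a field).

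First I would fix, via Fact~\ref{implicitFunctionTheoremf} applied to $L_1$, an analytic $h\colon U\to M$ with $h(1)=1$, $L_1(x,h(x))=0$ and $\operatorname{graph}(h)\subseteq Y$ near $1$, and then, mimicking the construction behind Lemma~\ref{existsHLemma} in its multiplicative form (as in Section~\ref{findingAField}, with $\circ$ and $\ominus$ taken in $\mathbb G_m$), set up an $\mathcal M$-definable family $(X_a)_{a\in Y}$ of curves through $(1,1)$ --- built from translates and composites of $Y$ --- together with a uniform power expansion $H\colon U\times U\to M$ of it at $(1,1)$, $H(1,s)=1$; write $H(x,s)=1+\sum_{n\ge1}d_n(H,s)(x-1)^n$ and $N:=N(H)$ as in Definition~\ref{defidn}. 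The two hypotheses pull in opposite directions. Because $Y$ is not $\mathcal M$-affine, the expansions of its translates already differ at a finite stage --- this is the role of Lemma~\ref{lemmaN} and Fact~\ref{nonAffineIntersectsFiniteLemmaF} in the additive proof of Lemma~\ref{existsHLemma} --- so $N<\infty$ and $s_N(H)=\{d_N(H,s):s\in U\}$ is infinite. Because $Y$ does \emph{not} have infinitely many slopes at $(1,1)$, the slope $d_1(H,s)=\tfrac{\partial H}{\partial x}(1,s)$ --- which by Definitions~\ref{defidL1} and~\ref{defiYinfiniteSlopes} and Lemma~\ref{hInfDerivadasLemma} is controlled by $\mathfrak d(L_1,\cdot)$ along the branch of $Y$ through $(1,1)$ --- takes only finitely many values for $s$ near $1$; shrinking $U$ and normalising, we may assume $d_1(H,\cdot)\equiv 1$, whence $N\ge 2$.

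The decisive point is additivity of the surviving modulus $\psi:=d_N(H,\cdot)$. By minimality of $N$ the coefficients $d_1(H,\cdot),\dots,d_{N-1}(H,\cdot)$ are constant, so Lemma~\ref{compositionMultLemma}, applied to $x\mapsto H(x,a)$ and $x\mapsto H(x,b)$, shows that the $N$-th coefficient of a uniform power expansion of $X_a\circ X_b$ at $(1,1)$ --- which exists and is obtained by composing, by Lemma~\ref{lemmaCanCompose} --- equals $\psi(a)+\psi(b)$ up to a fixed additive constant; moreover $\psi$ is $\mathbb K$-definable, in fact $\psi(a)\in\acl_{\mathbb K}(a)$ since it is read off the polynomial cutting $X_a$ out near $(1,1)$ (Proposition~\ref{coefficientsAreAnalyticProp}), and by Lemma~\ref{puedoRestringirderivada} it attains every value in some open ball. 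So, after an affine normalisation, composition of curves in the family intertwines with addition on $K$, and two members of the family have the same $N$-jet at $(1,1)$ exactly when their $\psi$-values coincide. I would then fix $a_0,b_0,t\in K$ that are $\mathbb K$-generic independent in $(K,+)$, write the standard configuration of $(K,+)$ in the shape of Diagram~\ref{gcReductGroup}, and use surjectivity of $\psi$ to pick $\alpha,\beta,\gamma,\kappa,\eta,\mu\in Y$ mapped by $\psi$ onto the corresponding entries. Interalgebraicity is then coordinatewise: $\psi(\alpha)\in\acl_{\mathbb K}(\alpha)$ while $\dim_{\mathbb K}\alpha\le 1=\dim_{\mathbb K}\psi(\alpha)$ gives $\alpha\in\acl_{\mathbb K}(\psi(\alpha))$ --- the $N$-jet analogue of Lemma~\ref{claimTangentFiniteLemma} --- and likewise for the other five coordinates.

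It remains to check that passing from $\mathbb K$ to its reduct $\mathcal M$ does not raise the ranks of the tuples of Diagram~\ref{gcReductGroup}, i.e. that the three collinear triples stay $\acl_{\mathcal M}$-dependent; this goes exactly as in Claim~\ref{claimGCVersionAdditive}, through an intersection-theoretic statement parallel to Lemma~\ref{lemmaIntersectionSubeVersion}. For each line one forms the appropriate $\mathcal M$-definable comparison set out of composites $X_\tau\circ X_a$ and differences of composites; when it meets $X_\tau\circ X_a$ infinitely, one uses the generic-finite-intersection lemmas (Lemmas~\ref{restaCompuestaFiniteIntersectionLemma}, \ref{noGenericFiniteZLemma}, \ref{bastaIndiscernibleLemma}, as in Claim~\ref{genericFiniteIntersectionClaim}) to force non-genericity of the relevant parameter, and otherwise one applies Lemma~\ref{intersectionSubeLemmaAnalitic}, whose hypothesis ``$d_n(H,\alpha)=d_n(T)$ for $n\le N$'' is exactly the additivity relation between the chosen entries, to conclude that the intersection number strictly grows on an infinite $\mathcal M$-definable set. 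Since $\psi$ has one-dimensional image, $\mathfrak g=(\alpha,\beta,\gamma,\kappa,\eta,\mu)$ is a rank $1$ configuration, interalgebraic by construction with a standard group configuration of $(K,+)$. The part I expect to be hardest is the very first step --- the multiplicative analogue of Lemma~\ref{existsHLemma} in this slope-frozen regime --- namely producing the family and $H$ and showing simultaneously that the first non-constant coefficient genuinely varies while the slope stays constant and nonzero (so that the family is not degenerate), with the bookkeeping of the Frobenius twists $L_{i,n_i}$ in characteristic $p>0$ so that the $N$-jet is the correct invariant; everything downstream is a faithful copy of the proof of Theorem~\ref{thmAditiveVersion}.
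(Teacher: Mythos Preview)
Your plan is essentially the paper's proof; the structure---build a family $(X_a)_{a\in Y}$ with uniform expansion $H$, exploit that finitely-many-slopes forces $N(H)\ge2$, use additivity of the $N$-th coefficient under composition (Lemma~\ref{compositionMultLemma}) to match an additive standard configuration, then run the intersection-number argument of Claim~\ref{claimGCVersionAdditive}---is exactly what the paper does via Lemmas~\ref{existsHEasyClaim} and~\ref{lemmaIntersectionSubeVersionMultGroup}.

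Two points where the paper is sharper than your sketch. First, you say the lower coefficients $d_1,\dots,d_{N-1}$ are \emph{constant} and then invoke Lemma~\ref{compositionMultLemma} ``up to a fixed additive constant''; but that lemma's additivity clause requires $d_n=0$ for $1<n<N$, not merely constant. The paper achieves this by the explicit choice $X_\alpha:=t_\alpha(Y)\circ t_c(Y)^{-1}$ for a well-chosen $c$ (Lemma~\ref{existsHEasyClaim}): composing with the inverse of a fixed translate kills all sub-$N$ coefficients simultaneously, which is why clause~3 there reads $d_1(H,s)=1$ and $d_n(H,s)=0$ for $1<n<N$. Your vaguer ``normalising'' and ``up to an additive constant'' could be made to work, but you would have to track that constant through both $H_\tau$ and $T$ and check it cancels; the paper's construction avoids this bookkeeping entirely. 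Second, you work analytically throughout and do not mention the transfer to non-complete $\mathbb K$; the paper packages the analytic content into a first-order statement (Lemma~\ref{lemmaIntersectionSubeVersionMultGroup}), using that ``infinitely many slopes of order $N$'' is definable (via Remark~\ref{remarkSlopeIsAlgebraic} and Lemma~\ref{slopeDefinable}), and proves it in a complete model---exactly parallel to Lemma~\ref{lemmaIntersectionSubeVersion}, as you anticipate.
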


To prove Proposition \ref{existsGroupConfigurationMult} we will use the results of Section \ref{computations}. Again we will need a first order statement analogous to Lemma \ref{lemmaIntersectionSubeVersion}.

We need a lemma first:

\begin{lemma}\label{slopeDefinable}
    Let $q\in K$, $U\ni q,V\subseteq K$ be open and $f:U\to V$ be a $\mathbb K$-definable and analytic function whose analytic expansion around $q\in U$ is given by $f(x)=\sum_{n\geq 0} c_n(x-q)^n$. Then $c_n\in \dcl(\bar d,q)$ for all $n\in \mathbb K$.
\end{lemma}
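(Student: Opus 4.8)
The plan is to induct on $n$, exhibiting at each stage a first-order formula over the parameters $\bar d$ (a tuple over which $f$, $U$, $V$ are $\mathbb K$-definable) and $q$ whose \emph{unique} solution is $c_n$; since the unique solution of a formula over a parameter set lies in its definable closure, this gives the claim. For $n=0$ nothing is needed: $c_0=f(q)\in\dcl(\bar d,q)$ outright.

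For the inductive step, suppose $c_0,\dots,c_{n-1}\in\dcl(\bar d,q)$. Then $g(x):=f(x)-\sum_{k=0}^{n-1}c_k(x-q)^k$ is $\mathbb K$-definable over $\bar d,q$, and, restricting attention to a ball around $q$ inside $U$ on which the expansion of $f$ at $q$ converges (such an expansion exists by Proposition \ref{coefficientsAreAnalyticProp} after translating $q$ to the origin), we have $g(x)=\sum_{k\ge n}c_k(x-q)^k$ there. Hence the $\bar d q$-definable partial function $h(x):=g(x)\cdot(x-q)^{-n}$, defined on a punctured ball around $q$, agrees there with the convergent power series $\tilde h(x):=\sum_{k\ge 0}c_{n+k}(x-q)^k$; convergence of $\tilde h$ follows from that of $g$, and the usual estimate via Lemma \ref{convergenciaFacilLemma} shows $\tilde h$ is continuous at $q$ with $\tilde h(q)=c_n$. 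Therefore $\lim_{x\to q}h(x)=c_n$ in the valuation topology.

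Consequently $c_n$ is the unique $y\in K$ satisfying
\[ \forall\varepsilon\in\Gamma\ \exists\delta\in\Gamma\ \forall x\ \big(x\in\operatorname{dom}(h)\wedge v(x-q)>\delta\ \rightarrow\ v(h(x)-y)>\varepsilon\big): \]
this asserts exactly $\lim_{x\to q}h(x)=y$, it is satisfied by $y=c_n$ by the previous paragraph, and uniqueness is immediate from the ultrametric inequality (picking $x\in\operatorname{dom}(h)$ with $v(x-q)$ large gives $v(y-y')\ge\min\{v(h(x)-y),v(h(x)-y')\}>\varepsilon$ for every $\varepsilon$, forcing $y=y'$). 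Since $\operatorname{dom}(h)$ and $h$ are $\bar d q$-definable, the displayed condition is a first-order formula over $\bar d,q$ with unique solution $c_n$, so $c_n\in\dcl(\bar d,q)$, completing the induction.

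I do not expect a serious obstacle here: the only point needing a little care is verifying that $\lim_{x\to q}h(x)=c_n$ actually holds, so that the defining formula is non-vacuous, and this is precisely where analyticity of $f$ and Lemma \ref{convergenciaFacilLemma} enter; the rest is the standard ``unique solution is definable'' principle. I would also note in passing that the argument is characteristic-free, as it works with the coefficients $c_n$ directly rather than with derivatives.
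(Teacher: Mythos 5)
Your proof is correct and follows essentially the same route as the paper: induct on $n$, subtract the already-definable partial sum, divide by the appropriate power of $(x-q)$, and recover $c_n$ as the (first-order definable, unique) limit of the resulting $\bar d q$-definable function. Your write-up is in fact slightly more careful than the paper's, since you verify convergence of the quotient series and uniqueness of the limit explicitly.
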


\begin{proof}
    We proceed by induction on $n$:

    $c_0=f(q)$ which is clearly definable over $q,\bar d$.
    $c_1=\lim_{x\to q}\frac{f(x)-f(q)}{x-q}$ which is definable over $q,\bar d$ by an standard $\varepsilon-\delta$ definition. For $N\geq 1$ $c_{N+1}=g'(q)=\lim_{x\to q} \frac{g(x)}{x-q}$ where
   \begin{equation*}
       g(x):=\begin{cases}
       \frac{f(x)-f_N(x)}{(x-q)^N}, & \text{ if }x\neq q\\
       0 & \text{ if }x=q\\
        \end{cases}
   \end{equation*}

   and $$f_N(x):= \sum_{n=0}^N c_n (x-q)^n$$ which by induction is $\mathbb K$-definable over $\bar d, q$.
\end{proof}

\begin{defi}
    Let $F(x,y)$ be a polynomial with vanishing set $D$ 
 let $f(x)$ be an analytic function defined in some neighborhood of $1$ whose graph is contained in $D$. For each $N\in\mathbb N$, each $a\in D$ such that $\frac{\partial F}{\partial y}(a)\neq 0$ and each $c\in D$ such that $\frac{\partial F}{\partial y}(c)\cdot\frac{\partial F}{\partial x}(c)\neq 0$ let $\mathfrak {dc}_N(F,a,c)$ be the coefficient of $(x-1)^N$ in the Taylor expansion around $1$ of  $f_a(g(x))$ where $f_a(x):=f(xa)/f(a)$ and $g(x)$ is the inverse of $f_c(x)$.    
\end{defi}

\begin{remark}\label{remarkSlopeIsAlgebraic}
    As the derivative of $F$, $-\frac{\partial F}{\partial x}/\frac{\partial F}{\partial y}$ is rational and rules of inverse, quotient and chain produce rational functions when applied to rational functions, it follows that the coefficient $\mathfrak {dc}_N(F,a,c)$ is a rational function in the variables $a,c$.
\end{remark}

\begin{defi}
    Let $Y\subseteq M\times M$ be a $\mathbb K$-definable set of dimension $1$, $N$ a natural number, $L(x,y)$ a polynomial and $c\in Y$. We say that \emph{$Y$ has infinitely many slopes of order $N$ respect to $c$ in $L$} if whenever $D$ is the vanishing set of $L$ then $(1,1)$ is an interior point of $Y\cap D$,
    $$\frac{\partial L(c)}{\partial y}\cdot \frac{\partial L(c)}{\partial x}\neq 0$$ 
    
    and for all $V\ni (1,1)$ open there is $U\ni (1,1)$ an open subset of $V$ such that:

        $$\{\mathfrak {dc}_N(D,a,c):a\in U\}$$ is infinite.

 \end{defi}

Notice that with this definition,  having infinitely many slopes of order $1$ coincides with Definition \ref{defiYinfiniteSlopes}.

    



Again, if $N$ is a natural number, $\phi(x,y,\bar z)$ is a formula in the language of valued fields and $L(x,y,\bar z)$ is a polynomial, then if we define:

$$Y(\bar d):=\{(x,y)\in K\times K: \mathbb K\models \phi(x,y,\bar d)\}$$ we have that

\begin{equation*}
\left\{(\bar d,c):Y(\bar d)\text{ has infinitely many slopes of order }N\text{ respect to }c\text{ in }L(x,y,\bar d)\right\}
\end{equation*}

is $\mathbb K$-definable.

We keep notation of Definition \ref{defidn} setting $q=1$.
First we prove:

\begin{lemma}\label{existsHEasyClaim}
    Assume that $\mathbb K$ is complete. Let $Y\subseteq K\times K$ be not $\mathbb G_m$-affine with the Property $(\star)$ witnessed by some polynomials $L_{i}$ and $L_{i,n_i}$.

 Then for each $B_1\ni (1,1)$ open, there is $c\in Y\cap B_1$, $V\ni 1$ open and functions $h(x)$ and $H(x,s)$ analytic in $V$ and $V\times V$ respectively such that if for $\alpha\in Y$ we define

     $$X_\alpha:=t_\alpha(Y)\circ t_c(Y)^{-1},$$

        then:
        \begin{enumerate}
             \item $\bar x:=(x,h(x))\in Y$ for all $x\in V$.
             \item The graph of $H(x,s)$ is contained on $X_{\bar s}$ for all $s\in V$.
            \item $d_1(H,s)=1$ and $d_n(H,s)=0$ for all $s\in V$ and all $1<n<N(H)$.
            
        \end{enumerate}
    \end{lemma}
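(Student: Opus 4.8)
The argument parallels that of Lemma~\ref{existsHLemma} in the additive case, but is much shorter: here the family $(X_a)$ is obtained by composing translates of $Y$ with the inverse of one \emph{fixed} translate, and the normalisation required in Clause~3 will fall out of the identity $H(x,c)=x$ rather than from a Lagrange--inversion computation. The statement is first order in the parameters defining $Y$, the $L_i$ and the $L_{i,n_i}$ (the observations after Property~$(\star)$ and Definition~\ref{infiniteTranslatesDef}), so we may assume $\mathbb K$ complete. By Property~$(\star)$ write $Y=\bigcup_i\{(x,y)\in V_i:L_i=0\}$ with $L_1$ irreducible, $L_1(1,1)=0$, $n_1=0$ and $\frac{\partial L_1}{\partial y}(1,1)\neq 0$; the Implicit Function Theorem (Fact~\ref{implicitFunctionTheoremf}) gives an analytic $h$ on a neighbourhood $V\ni 1$ with $h(1)=1$ and $L_1(x,h(x))=0$, and after shrinking $V$ so that $(x,h(x))\in V_1$ we obtain Clause~1. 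For $a\in V$ put $h_a(x):=h(xa)h(a)^{-1}$ (as in Lemma~\ref{hInfDerivadasLemma}): these are analytic near $1$, satisfy $h_a(1)=1$, and their graphs lie in $t_{\bar a}(Y)$.

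Next I would perform a Frobenius reduction and fix a base point. Since $Y$ is not $\mathbb{G}_m$-affine, $h$ is not constant near $1$ (otherwise near $(1,1)$ the set $Y$ would be a relatively open piece of the subgroup $\{y=1\}$, all its translates would coincide there, and generic translates of $Y$ would intersect infinitely). Let $m\ge 0$ be maximal with $h(x)=\tilde h(x^{p^{m}})$ for an analytic $\tilde h$ (so $m=0$ in characteristic $0$); put $\tilde h_b(u):=\tilde h(bu)\tilde h(b)^{-1}$, so that $h_a=\tilde h_{a^{p^{m}}}\circ\text{Fr}^{m}$. By maximality of $m$, $\tilde h$ is non-constant and not a power series in $u^{p}$, hence $\tilde h'\not\equiv 0$ and $\tilde h'$ has only isolated zeros. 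The standing hypothesis of this section --- that $Y$ has only finitely many slopes at $(1,1)$ --- is what makes the relevant slope ($a\mapsto h_a'(1)$, or the analogous quantity read off $\tilde h$) \emph{independent of $a$}: being analytic with finite image it is constant. Now choose a scalar $c$ close to $1$ with $\bar c:=(c,h(c))\in Y\cap B_1$ and $\tilde h'(c^{p^{m}})\neq 0$; then $\tilde h_{c^{p^{m}}}$ has non-vanishing derivative at $1$ and hence an analytic inverse near $1$ (Fact~\ref{inversionf}), and $h_c^{-1}=\text{Fr}^{-m}\circ\tilde h_{c^{p^{m}}}^{-1}$. This $\bar c$ is the point called ``$c$'' in the statement.

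Finally set $H(x,s):=\tilde h_{s^{p^{m}}}\bigl(\tilde h_{c^{p^{m}}}^{-1}(x)\bigr)$. By Proposition~\ref{coefficientsAreAnalyticProp} (and the analytic dependence of an inverse on its coefficients, Lemma~\ref{N}) this is analytic in $(x,s)$ on some $V\times V$, with $H(1,s)=1$. Since the two Frobenius powers cancel, $H(\cdot,s)$ is literally $h_s\circ h_c^{-1}$, so the graph of $H(\cdot,s)$ is contained in $t_{\bar s}(Y)\circ t_{\bar c}(Y)^{-1}=X_{\bar s}$ --- Clause~2. For Clause~3, the key observation is $H(x,c)=\tilde h_{c^{p^{m}}}\bigl(\tilde h_{c^{p^{m}}}^{-1}(x)\bigr)=x$, so $d_1(H,c)=1$ and $d_n(H,c)=0$ for $n\ge 2$; and for each $n<N(H)$ the set $s_n(H)=\{d_n(H,s):s\in V\}$ is finite by the very definition of $N(H)$, so the analytic function $s\mapsto d_n(H,s)$ has finite image, hence is constant (Fact~\ref{identityTheoremf}; cf.\ Lemma~\ref{puedoRestringirderivada}), hence equals its value at $s=c$. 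Thus $d_1(H,s)\equiv 1$ and $d_n(H,s)\equiv 0$ for $1<n<N(H)$.

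I expect the main obstacle to be the positive-characteristic bookkeeping: checking that $m$ is well defined, that $\tilde h$ retains invertibility at a suitably chosen $c$, and --- most delicately --- that the $\text{Fr}^{\pm m}$'s occurring in $h_s\circ h_c^{-1}$ genuinely cancel, so that $H$ is honestly analytic while the graph of $H(\cdot,s)$ is still contained in $X_{\bar s}$; this is precisely the Fr-power-expansion juggling carried out in Section~\ref{computations}. Intertwined with it is the point that $N(H)\ge 2$, i.e.\ that $d_1(H,\cdot)$ is genuinely constant, which is exactly where the finitely-many-slopes hypothesis is used. In characteristic $0$ ($m=0$) all of this is vacuous, and the lemma reduces to the Implicit Function Theorem plus the identity $H(x,c)=x$ and the constancy of the slope.
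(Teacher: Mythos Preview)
Your construction of $h$, the Frobenius reduction to $\tilde h$, the choice of $c$, and the definition $H(x,s)=\tilde h_{s^{p^m}}\circ\tilde h_{c^{p^m}}^{-1}$ are exactly what the paper does (with $g$ for your $\tilde h$ and $r$ for your $m$); the verification of Clauses~1--2 is likewise the same, including the observation that the two copies of $\text{Fr}^{\pm m}$ in $h_s\circ h_c^{-1}$ cancel so that $H$ is genuinely analytic while its graph sits in $X_{\bar s}$.

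Where you diverge is Clause~3. The paper spends most of its proof on an explicit coefficient computation: it writes out the expansions of $g_{a^{p^r}}$, $g_{c'}$, uses Lemma~\ref{N} to match the first $N-1$ coefficients of their inverses, invokes Lemma~\ref{compositionMultLemma} to compare $g_{a^{p^r}}\circ f$ with $g_{a^{p^r}}\circ g_{a^{p^r}}^{-1}=x$ up to order $N-1$, and finally computes $d_N(H,a)$ directly (Equation~\eqref{dNEquation}) to pin down $N(H)=N=N_0/p^r$. Your argument bypasses all of this with the single observation $H(x,c)=x$, so $d_n(H,c)=\delta_{n,1}$, together with the fact that an analytic function $s\mapsto d_n(H,s)$ with finite image is constant (a clean application of the Identity Theorem via $\prod_j(d_n(H,s)-c_j)\equiv 0$). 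This is shorter and conceptually cleaner; what you lose is the explicit identification $N(H)=N_0/p^r$, which the paper uses later to match $N(H)$ with the externally defined ``order of slopes'' $N$ in Lemma~\ref{lemmaIntersectionSubeVersionMultGroup}. Both arguments need $N(H)\ge 2$ to conclude $d_1(H,s)\equiv 1$; you correctly flag this as coming from the finitely-many-slopes hypothesis, and your treatment of that point is at the same level of detail as the paper's.
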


    \begin{proof}
   As $L_1(1,1)=0$ and $\frac{\partial L_1}{\partial y}\neq 0$ we may apply Fact \ref{implicitFunctionTheoremf} and find $V\ni 1$ open and $h:V\to K$ analytic such that 
    $$\bar x:=(x,h(x))\in Y$$ for all $x\in V$. Moreover, Property $(\star)$ implies that if $D$ is the set of zeros of $L_1(x,y)$, $(1,1)$ is an interior point of 
    $Y\cap D$. Thus, shrinking $V$ if necessary, we may assume that $$(x,h(x))\in Y\cap D$$ for all $x\in V$.

    For $a\in V$ let $h_a$ be the analytic function defined by
    $$h_a(x)=h(x\cdot a)\frac{1}{h(a)}, $$ 
    
    and define $H_0(x,s):=h_s(x)$ an analytic function. Shrinking $V$ we may assume that $H_0$ converges in $V\times V$.

   So the power expansion of $h_a(x)$ around $1$ is given by: 
    
    $$h_a(x)=1+\sum_{n\geq 1} d_n(H_0,a) (x-1)^n.$$

    Notice that the graph of $h_a$ is contained in $t_{\bar a}(Y)$. By Remark \ref{remarkSlopeIsAlgebraic}, for each $n$  $\mathfrak {dc}_n(F,x,c)= \mathfrak {dc}_n(F,y,c)$ is an algebraic relation between $x$ and $y$, so $N_0:=N(H_0)$ is finite as it is the first $n$ such that $\mathfrak {dc}_n(F,a_1,c)\neq \mathfrak {dc}_n(F,a_2,c)$ for any pair $(a_1,a_2)$ of algebraic independent elements.

     As $Y$ does not have infinitely many slopes at $(1,1)$, Lemma \ref{hInfDerivadasLemma} implies that $N_0>1$.

    Shrinking $U$ we may assume that for $n<N_0$, $d_n(H_0,s)$ is constant as $s$ varies in $V\setminus\{1\}$.

    Let $r$ be the maximal natural number ($r$ may be $0$) such that exists some function $g(x)$ analytic in some neighborhood of $1$ with $$h(x)=g(x^{p^r}).$$  
    Shrinking $V$ we assume that $g$ is analytic in $V$.    
    By our definition of $r$, $g$ is not a function in the variable $x^p$, thus by Fact \ref{identityTheoremf}, $g'(x)$ is not the zero function. Then, $g'(x)=0$ just for finitely many $x\in V$. Let $c'\in V$  such that $g'(c')\neq 0$ and $d_{N_0}(H_0,\text{Fr}^{-M}(c'))\neq 0$. Moreover, we choose $c'$ such that if we define $c''=\text{Fr}^{-r}(c')$ and $c:=(c'',h(c''))\in Y$, then $c\in Y\cap B_1$.

    As $$g_{c'}(1)=g'(c')\frac{c'}{g(c')}\neq 0,$$ by Fact \ref{implicitFunctionTheoremf} there is an inverse for $g_{c}$, say $f$ converging in an open neighborhood of $1$. Shrinking $V$ we may assume that both, $f$ and $g$, converge in $V$.

   Now we define:
   $$H(a,x):=g_{a^{p^r}} \circ f (x) = g(a^{p^r} f(x))\frac{1}{g(a^{p^r})}.$$

   This is analytic in a neighborhood of $(1,1)$, shrinking $V$ we may assume that it is analytic in $V\times V$.

   Now we prove that for $a\in V$ the graph of $H(x,a)$ is contained in $X_{\bar a}$.

   Let $(a,x_0)\in V\times V$ and let $y_0:=H(a,x_0)$ we will prove that $(x_0,y_0)\in X_{\bar a}$. Let $z:=f(x_0)$ so $g_c(z)=g(c'z)/g(c')=x_0$ and $g_a(z)=g(az)/g(a)=y_0$.
   
   Let $w:=\text{Fr}^{-r}(z)$. Then $$h_{c''}(w)=h(c'' w)\frac{1}{h(c'')}=g\left((c''w)^{p^r}\right)\frac{1}{g(c{''}^{p^r})}=g(c'z)\frac{1}{g(c')}=g_{c'}(z)=x_0.$$ As the graph of $h_{c''}$ is contained in $t_{c}(Y)$, $(w,x_0)\in t_{c}(Y)$ so 
   \begin{equation}\label{e1}
   (x_0,w)\in t_c(Y)^{-1}.    
   \end{equation}

   Moreover, 
   $$h_{a}(w)=h(a w)\frac{1}{h(a)}=g\left((aw)^{p^r} \right)\frac{1}{g(a^{p^r})}=g_{a^{p^M}}(w^{p^r})=g_{a^{p^r}}(z)=y_0.$$ And as the graph of $h_a$ is contained on $X_{\bar a}$ it follows that:
   \begin{equation}\label{e2}
       (w,y_0)\in X_{\bar a}. 
   \end{equation}

  Equations \ref{e1} and \ref{e2} together imply that
   $$(x_0,y_0)\in t_{\bar a}(Y)\circ t_c(Y)^{-1}=X_{\bar a}.$$  

Now we prove that -possibly after shrinking $V$- $d_1(H,s)=1$ and $d_n(H,s)=0$ for all $s\in V$ and all $1<n<N_0 p^r$.

Write the power expansion of $g_{a^{p^r}}$ and $g_{c'}$ around $1$ as:

$$g_{a^{p^r}}(x) = 1 + \sum_{n\geq 1} b_n(a)(x-1)^n$$

and 
$$g_{c'}(x) = 1 +  \sum_{n\geq 1} b_n(c'')(x-1)^n.$$

As $h_a(x)=g_{a^{p^r}}(x^{p^r})$ and $h_{c''}(x)=g_{c'}(x^{p^r})$, it follows that $h_a$ and $h_{c''}$ have power expansions around $1$ given by:

$$h_a(x)= 1+ \sum_{n\geq 1}b_n(a) (x-1)^{n p^r} $$
and 

$$h_{c''}(x)=1+\sum_{n\geq 1}b_n(c'') (x-1)^{np^r}.$$

So by definition of $d_n(H_0,a)$ we have that $b_n(a)=d_{np^r}(H_0, a)$ and $b_n(c'')=d_{np^r}(H_0,c'')$. Moreover, $d_k(H_0,a)=d_k(H_0,c'')=0$ for all $k\in \mathbb N$ such that $p^r\nmid k$. As we are assuming that $d_{N_0}(H_0,c'')\neq 0$ it follows that $N_0$ is a multiple of $p^r$ so $N_0=N p^r$ for some $N$. We are also assuming that for $n<N_0$, $d_n(H_0,s)$ is constant as $s$ varies on $V\setminus \{1\}$, in particular, for all $a\in V\setminus \{1\}$ and all $n<N_0$ 

$d_n(H_0,a)=d_n(H_0,c'')$ so:

$$h_{a}(x)= 1 + b_1 (x-1)^{p^r} + \ldots  + d_{N_0}(H_0,a)(x-1)^{N_0} + \sum_{n > N} d_{np^r}(H_0,a) (x-1)^{np^r}$$

and 

$$h_{c''}(x)= 1 + b_1(x-1)^{p^r} + \ldots  +  d_{N_0}(H_0,a)(x-1)^{N_0}+ \sum_{n > N} d_{np^r}(H_0,c'')(x-1)^{np^r},$$
where for $i< N$
$$b_i:=b_i(a)=d_{ip^r}(H_0,a)=d_{ip^r}(H_0,c'')=b_i(c'').$$

So 

$$g_{a^{p^r}}(x)=1+b_1(x-1)+\ldots+d_{N_0}(H_0,a)(x-1)^{N}+\sum_{n>N}d_{np^r}(H_0,a)(x-1)^n$$

and 

$$g_{c'}(x)=1+b_1(x-1)+\ldots+d_{N_0}(H_0,c'')(x-1)^{N}+\sum_{n>N}d_{np^r}(H_0,c'')(x-1)^n.$$

By Lemma \ref{N} (applied to the function $g_{c'}(x+1)-1$) as $f$ is the inverse of $g_{c'}$, for $n < N$ the coefficient of $(x-1)^n$ in the inverse of $g_{a^{p^r}}$ is the same as the coefficient of $(x-1)^n$ in $f(x)$. Thus, if the power expansion of $f$ around $1$ is given by:
$$f(x)=1 + c_1 (x-1) +\ldots + c_{N-1}(x-1)^{N-1}+c_{N} (x-1)^{N}+ \sum_{n>N} c_n (x-1)^n,$$ 

the inverse of $g_{a^{p^r}}$ has a power expansion around $1$ of the form:

$$g_{a^{p^r}}^{-1}(x)=1 + c_1 (x-1) +\ldots + c_{N-1}(x-1)^{N-1}+ c_{N}(a) (x-1)^{N}+ \sum_{n>N} e_n (x-1)^{n}$$ for some coefficients $e_n$.

As for $n<N$ the coefficient of $(x-1)^n$ in $f$ is the same as the coefficient of $(x-1)^n$ in $g^{-1}_{a^{p^r}}$, Lemma \ref{compositionMultLemma} implies that for $n<N$ the coefficient of $(x-1)^n$ in 
$$g_{a^{p^r}}\circ f$$ is the same as the coefficient of $(x-1)^n$ in


$$g_{a^{p^r}}\circ g^{-1}_{a^{p^r}}=1+(x-1).$$ 
Therefore,
\begin{equation}\label{ln}
    g_{a^{p^r}}\circ f (x) = 1 + (x-1) + d_{N}(H,a)(x-1)^{N}+\sum_{n>N} d_n(H,a)(x-1)^n
\end{equation}
so $d_1(H,a)=1$ and $d_n(H,a)=0$ for all $1<n<N$. Then it is enough if we show that $N(H)=N$. It follows from Equation \ref{composeCoefEquation} that the coefficient $d_{N}(H,a)$ in the Equation \ref{ln} is given by:

\begin{equation}\label{dNEquation}
    d_{N}(H,a)=b_1 c_{N} + b_2 f_2+\ldots+ b_{N-1} f_{N-1} + d_{N_0}(H_0,a) c_{1}^{N}
\end{equation}

where for $n=2,\ldots, N-1$

$$f_n=\sum_{i_1+\ldots+i_n=N} b_{i_1}\cdots b_{i_n}.$$

Notice that for $n=2,\ldots,N-1$, $f_n$ only depends on $b_1,\ldots,b_{N-1}$. So the first $N-1$ terms on the right side of Equation \ref{dNEquation} only depend on $b_1,\ldots,b_{N-1}$ and $c_1,\ldots, c_N$ that are constant as $a$ varies. So as $d_{N_0}(H_0,a)$ takes infinitely many values as $a$ varies, the same is true for $d_{N}(H,a)$, thus $N(H)=N$. \end{proof}

Now we present a first order statement analogous to Lemma \ref{lemmaIntersectionSubeVersion}.

\begin{lemma}\label{lemmaIntersectionSubeVersionMultGroup}

     Let $\phi(x,y,\bar z)$ be a formula in the language of valued fields, $L_i(x,y,\bar z)$ and $L_{i,n_i}(x,y,\bar z)$ be polynomials and let $N > 1$ be a natural number, then 

      For all $\bar d$ the following is true:
      
      Assume that
      $$Y:=\{(x,y)\in K^2:\mathbb K\models \phi(x,y,\bar d)\}$$
       is not $\mathbb G_m$-affine, has Property $(\star)$ and does not have infinitely many slopes at $(1,1)$ witnessed by $L_i(x,y,\bar d)$ and $L_{i,n_i}(x,y,\bar d)$.
       
      Then, for each open ball $B_1\ni (1,1)$, there is some $c\in Y\cap B_1$ such that whenever $N$ is the minimum natural number such that $Y$ has infinitely many slopes of order $N$ respect to $c$ in $L_1$ and we define:

   $$X_a:=t_a(Y)\circ t_c(Y)^{-1},$$
   
   there is an open ball $B\subseteq K$ such that for all $t\in B$ and for all $a_0$, $b_0$, $b\in K$, if 
     $$\mathfrak g_t:=(t,t+a_0, t+b, t+a_0+b, t+b_0, t+a_0+b_0+b)\in B^6$$
          
     there is $\mathfrak g:=(\tau,\alpha,\beta,\gamma,\kappa,\eta,\mu)\in Y^6$ such that:
   \begin{equation*}\label{tangentConditionsMultGroup}\tag{TN}
   f(a):=\mathfrak{dc}_N(L_1,a,c) \text{ maps the }i\text{'th coordinate of }\mathfrak g \text{ to the }i\text{'th coordinate of }\mathfrak g_t
    \end{equation*}

    Moreover, for all $a,r_1,r_2$ in $\mathfrak g$ if 
    $$t+f(a)=f(r_1)+f(r_2)$$
   and $X_\tau \circ X_a\cap X_{r_1}\circ X_{r_2}$ is finite, there are infinitely many $s\in Y$ with:
   $$|X_\tau\circ X_a\cap X_{r_1}\circ X_{r_1}|<| X_\tau\circ X_s\cap X_{r_1}\circ X_{r_2}|.$$

 \end{lemma}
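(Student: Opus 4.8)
The plan is to mirror the proof of Lemma~\ref{lemmaIntersectionSubeVersion}, replacing Lemma~\ref{existsHLemma} by Lemma~\ref{existsHEasyClaim} and the linear tangency $\tangent$ by the higher-order slope data $\mathfrak{dc}_N$. First I would note that, for each fixed $N$, the whole statement is first order in $\bar d$: being not $\mathbb G_m$-affine, having Property $(\star)$, and not having infinitely many slopes at $(1,1)$ are first order (by the remarks after Definitions~\ref{versionLemmaBuenYdef} and~\ref{infiniteTranslatesDef} and after Lemma~\ref{hInfDerivadasLemma}); ``$Y$ has infinitely many slopes of order $N$ with respect to $c$ in $L_1$'' is first order (by the remark following its definition, since $\mathfrak{dc}_N$ is a rational function by Remark~\ref{remarkSlopeIsAlgebraic}); quantification over open balls $B,B_1$ is expressible in $ACVF_{p,q}$; finiteness of the relevant generic intersections and the ``infinitely many $s\in Y$'' clause are first order by Corollary~\ref{existsInf}. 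Since the relevant minimal $N$ always exists (its finiteness is proved inside Lemma~\ref{existsHEasyClaim}), it suffices to prove the statement, for each fixed $N$, in one complete model; so from now I assume $\mathbb K$ complete.

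Next I would fix $B_1\ni(1,1)$ and apply Lemma~\ref{existsHEasyClaim} to get $c\in Y\cap B_1$, an open $V\ni 1$, and analytic $h\colon V\to K$ and $H\colon V\times V\to K$ with $(x,h(x))\in Y$, $H(1,s)=1$, the graph of $x\mapsto H(x,s)$ contained in $X_{\bar s}$ (where $\bar s=(s,h(s))$ and $X_\alpha=t_\alpha(Y)\circ t_c(Y)^{-1}$), $d_1(H,s)=1$ and $d_n(H,s)=0$ for $1<n<N(H)$. Unwinding the construction in that lemma, $H(a,\cdot)$ is exactly $h_a\circ h_{c''}^{-1}$ (the Frobenius twists in its definition cancel), so $d_N(H,s)=\mathfrak{dc}_N(L_1,\bar s,c)$ for all $s$, and $N:=N(H)$ is precisely the minimal order for which $Y$ has infinitely many slopes with respect to $c$ in $L_1$. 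Write $f(a):=\mathfrak{dc}_N(L_1,a,c)$. By Corollary~\ref{existsInf} the infinite set $s_N(H)=\{d_N(H,s):s\in V\}$ contains an open ball $B$. Given $t\in B$ and $a_0,b_0,b$ such that all coordinates of $\mathfrak g_t$ lie in $B\subseteq s_N(H)$, I pick preimages $\tau',\alpha',\beta',\gamma',\kappa',\eta',\mu'\in V$ under $s\mapsto d_N(H,s)$ and set $\tau=(\tau',h(\tau'))$, $\alpha=(\alpha',h(\alpha'))$, and so on; the resulting tuple $\mathfrak g$ of elements of $Y$ satisfies condition (TN) by the identification $d_N(H,s)=f(\bar s)$.

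For the ``moreover'' clause I would invoke Section~\ref{computations}. Since $H(1,s)=1$ with graph in $X_{\bar s}$, $H$ is a uniform power expansion of $(X_a)_{a\in Y}$ at $(1,1)$; by Lemma~\ref{lemmaCanCompose} the function $H_\tau(x,s):=H(H(x,s),\tau')$ is a uniform power expansion of $(X_\tau\circ X_a)_{a\in Y}$ at $(1,1)$, and by Lemma~\ref{compositionMultLemma} (using $d_1(H,\cdot)=1$ and vanishing of the intermediate coefficients) $d_1(H_\tau,\cdot)=1$, $d_n(H_\tau,\cdot)=0$ for $1<n<N$, and $d_N(H_\tau,s)=t+d_N(H,s)=t+f(\bar s)$; in particular $N(H_\tau)=N$. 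For coordinates $r_1,r_2$ of $\mathfrak g$, the function $T(x):=H(H(x,r_2'),r_1')$ has graph in $X_{r_1}\circ X_{r_2}$ with $T(1)=1$, $d_n(T)=d_n(H_\tau,\cdot)$ for $n<N$, and $d_N(T)=f(r_1)+f(r_2)$ (again Lemma~\ref{compositionMultLemma}); and $Z:=X_{r_1}\circ X_{r_2}$ has no isolated points by Lemma~\ref{noIsolated}, whose hypotheses hold since $Y$, carrying Property $(\star)$, has no isolated points and no infinite intersection with a vertical or horizontal line. The hypothesis $t+f(a)=f(r_1)+f(r_2)$ reads $d_N(H_\tau,a')=d_N(T)$, and all lower coefficients already agree, so Lemma~\ref{intersectionSubeLemmaAnalitic}, applied to $(X_\tau\circ X_a)_{a\in Y}$ with uniform power expansion $H_\tau$, to $Z$ and to its power expansion $T$, yields—when $X_\tau\circ X_a\cap Z$ is finite—an open $W\ni a'$ with $|X_\tau\circ X_a\cap Z|<|X_\tau\circ X_{\bar s}\cap Z|$ for $s\in W\setminus\{a'\}$; since $W$ is infinite, this produces infinitely many elements of $Y$ as required.

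The one step that is not mere bookkeeping, and the main obstacle I foresee, is verifying the remaining hypothesis of Lemma~\ref{intersectionSubeLemmaAnalitic}: that $(X_a)_{a\in Y}$ (hence $(X_\tau\circ X_a)_{a\in Y}$, via Lemma~\ref{lemmaCanCompose}) admits a uniform Fr-power expansion for $X_{\bar s}$ at \emph{every} point, not only at $(1,1)$. This is the multiplicative analogue of clauses (5)--(6) of Lemma~\ref{existsHLemma}, and it is in fact cleaner here because there is no finite-index subgroup to track: near any point the sets $t_{\bar s}(Y)$ and $t_c(Y)^{-1}$ are, after a Frobenius twist, graphs of analytic functions (the implicit function theorem applied to the polynomials $L_{i,n_i}$ provided by Property~$(\star)$, whose $y$-derivatives are nonvanishing there by clause~\ref{clause4}), these graphs vary analytically in $s$, and Fr-power-expandability is preserved under composition by Lemma~\ref{lemmaCanCompose}. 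Once this is established, the conclusion follows exactly as above, completing the proof.
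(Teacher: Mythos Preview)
Your proposal is correct and follows essentially the same approach as the paper's own proof: reduce to the complete case by first-orderness, invoke Lemma~\ref{existsHEasyClaim} to obtain $c$, $h$, $H$ with $N=N(H)$, pick preimages in $s_N(H)$ to realize condition~(TN), and then apply Lemma~\ref{intersectionSubeLemmaAnalitic} to the family $(X_\tau\circ X_a)_{a\in Y}$ using Lemma~\ref{compositionMultLemma} to match the $N$th coefficients. You are in fact more explicit than the paper about the one nontrivial hypothesis---the existence of uniform Fr-power expansions for $(X_a)_{a\in Y}$ at arbitrary points of $X_{\bar s}$---which the paper simply asserts; your sketch (Property~$(\star)$ clause~\ref{clause4}, the implicit function theorem, and closure under composition via Lemma~\ref{lemmaCanCompose}) is exactly the right justification.
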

 
    \begin{proof}
    By the observation we made after Lemma \ref{hInfDerivadasLemma} the statement of lemma is first order on the parameter $\bar d$ so it is enough if we prove it for \textit{some} model of ACVF. In particular, without loss of generality, we may assume that $\mathbb K$ is complete.

    Let $\bar d$ be such that $Y:=Y(\bar d)$ is not $\mathbb G_m$-affine, satisfies Property $(\star)$ witnessed by some polynomials $L_i$ and $L_{i,n_i}$ and also that that $Y$ does not have infinitely many slopes at $(1,1)$ witnessed by $L_1$.
    

   Let $c\in Y$, $V$, $U$, $h$ and $H$ be as provided by Lemma \ref{existsHEasyClaim} applied to $Y$ and $B_1$. Shrinking $U$ we may assume that $H$ is analytic in $U\times U$ and $h$ is analytic in $U$. So if $N$ is such that the conditions of the Statement of Lemma \ref{lemmaIntersectionSubeVersionMultGroup} the only option is that $N=N(H)$ so we may assume that this is the case.

    As $s_N(H)$ is infinite, by Corollary \ref{existsInf} there is some open ball $B\subseteq s_N(H)$. Let $t\in B$ and $a_0$, $b_0$, $b\in K$ such that:
     $$\{t + a_0,t+b_0, t+a_0+b_0, t+b, t+a_0+b, t+a_0+b_0+b\}\subseteq B.$$
In this proof, for $s\in U$ we will write $d(s)$ instead of $d_N(H,s)$.

By definition of $B$ we can find  $\tau',\ \alpha',\ \beta',\ \gamma',\ \kappa',\ \eta', \mu'\in U$ such that:

\begin{itemize}
    \item $d(\tau')=t$
    \item $d(\alpha')=t+a_0$,
    \item $d(\beta')=t+b_0$,
    \item $d(\gamma')=t+a_0+b_0$,
      \item $d(\kappa')=t+b$,
    \item $d(\eta')=t+ a_0+b$ and
    \item $d(\mu')=t+a_0+b_0+b$.
\end{itemize}

Define $\tau=(\tau',h(\tau'))$, $\alpha=(\alpha',h(\alpha'))$, $\beta=(\beta',h(\beta'))$, $\gamma=(\gamma',h(\gamma'))$, $\kappa=(\kappa',h(\kappa'))$, $\eta=(\eta',h(\eta'))$ and $\mu=(\mu',h(\mu'))$. 

This choice of 
$$(\tau,\alpha,\beta,\gamma,\kappa,\eta,\mu)$$ 
imply that the conditions (\ref{tangentConditionsMultGroup}) of the statement holds.

Let us prove the `moreover' part 


%
    


 Let $a,r_1,r_2$ be coordinates of $\mathfrak g$ such that $$t+f(a)=f(r_1)+f(r_2)$$ and assume that 
 
 $$X_\tau\circ X_a\cap X_{r_1} \circ X_{r_1}$$ is finite
       
    Definition of $H$ implies that $H$ is a uniform power expansion for $(X_a)_{a\in Y}$ at $(1,1)$.
        So $H_\tau(x,s):=H(H(x,s),\tau')$ is a uniform power expansion for $(X_\tau \circ X_a)_{a\in Y}$ at $(1,1)$.
      
        For all $s\in U$ and all $(b_1,b_2)\in X_{\bar s}$, there is a uniform Fr-power expansion of $(X_a)_{a\in Y}$ for $X_{\bar s}$ at $(b_1,b_2)$. As $\tau'\in U$, in particular $X_\tau$ admits a Fr-power expansion at any point. 
        
        So by Lemma \ref{lemmaCanCompose}, for all $s \in U$ and all $(b_1,b_2)\in X_\tau \circ X_{\bar s}$, there is a uniform Fr-power expansion of $(X_\tau\circ X_a)_{a\in Y}$ for $X_\tau\circ X_{\bar s}$ at $(b_1,b_2)$.

       Moreover, the function 
       $$T(x) := H(H(x,r_2'),r_1')$$ is a power expansion for $Z:=X_{r_2}\circ X_{r_1}$ at $(1,1)$ that satisfies $$d_N(T)=f(r_1)+f(r_2)$$

       As $H_\tau(x,1)= h_{\tau'}( h(x))$ then by Clause 3 of Lemma \ref{existsHEasyClaim} we may apply Lemma \ref{compositionMultLemma} and conclude that:

       $$d_N(H_\tau,a')=d_N(H,\tau')+d_N(H,a')=t+f(a')=f(r_1)+f(r_2)=d_N(T)$$

       so we apply Lemma \ref{intersectionSubeLemmaAnalitic} (with $q=1$, the family $(X_\tau\circ X_a)_{a\in Y}$, the set $Z$ and the power expansion $T$ of $Z$ at $(1,1)$) and conclude. 

  This completes the proof of Lemma \ref{lemmaIntersectionSuberVersionMultiplicative}. \end{proof}

 Now we prove Proposition \ref{existsGroupConfigurationMult}. 

 \begin{proof}(Proof of Proposition \ref{existsGroupConfigurationMult})

 Let $Y$ be $\mathcal M$-definable, not $\mathcal M$-affine and strongly minimal and assume that $Y$ has Property $(\star)$  witnessed by some polynomials $L_i$ and $L_{i,n_i}$.  As for each $N$, 
 \begin{multline*}
      Y(N):=\{c\in Y: N \text{ is minimal such that }Y\text{ has infinitely many slopes} \\\text{ of order }N  \text{ with respect to }c\text{ witnessed by }L_i\text{ and }L_{i,n_i}\}
 \end{multline*}
 is a $\mathbb K$ definable set, there is some $N$ and some open ball $B_1\ni (1,1)$ such that $Y\cap B_1\subseteq Y(N)$. 





    
        

Let $c\in Y\cap B_1$ and $B\subseteq K$ an open ball provided by Lemma \ref{lemmaIntersectionSubeVersionMultGroup} applied to $N$ and $B_1$. Thus, $N$ is the minimal natural number such that $Y$ has infinitely many slopes of order $N$ respect to $c$ witnessed by $L_i$ and $L_{i,n_i}$. For $a\in Y$ let
$$X_{a}:=t_a(Y)\circ t_c(Y)^{-1}.$$
Notice that $(X_a)_{a\in Y}$ is a $\mathcal M$-definable family. 

Let $t\in B$. Since addition is continuous, there is $B_0$, an open ball around $0$ such that:
$$t+B_0\subseteq B.$$






Let $(a_0,b_0,b)\in B^3_0$ be a triple of $\mathbb K$-dimension $3$ over $t,d$ (where $d$ is the parameter defining $Y$) 

Then, we have a rank 1 $\mathbb K$-group configuration for $\mathbb G_a$ given by:

\begin{equation}\label{gcFullStructureMultGroup}
 	\begin{tikzcd}
 	a_0 \arrow[ddd, dash] \arrow[rrr, dash] &&& b \arrow[rrr, dash]  &&& a_0+b \arrow[dddllllll,  dash] 
  \\
&&&&&&
\\
& & && 
&&
\\
 b_0  \arrow[ddd, dash] &&& a_0+ b_0+ b &&& \\ &&&&&&
 \\
 &&&&&&
 \\
 a_0+b_0  \arrow[uuuuuurrr, crossing over, dash] &&&&&&
  	\end{tikzcd}
\end{equation}

Moreover, by our choice of $B_0$ we have that:

$$\{ t+a_0 ,\  t+b_0,\   t+a_0+b_0,\  t+b, \ t+b+a_0,\  t+ b + a_0 +b_0\}\subseteq B.$$

Let $\tau,\ \alpha,\ \beta,\ \gamma,\ \kappa,\ \eta$ and $\mu$ be elements of $Y$ provided by Lemma \ref{lemmaIntersectionSubeVersionMultGroup}

Now we are ready to prove:

\begin{claim}\label{claimGCVersionMultGroup}
    The following is a rank $1$ group configuration for the structure $\mathcal M$:

    \begin{equation}\label{GCReductMultGroup}
 	\begin{tikzcd}
 	\alpha \arrow[ddd, dash] \arrow[rrr, dash] &&& \kappa \arrow[rrr, dash]  &&& \eta \arrow[dddllllll,  dash] 
  \\
&&&&&&
\\
& & && 
&&
\\
 \beta   \arrow[ddd, dash] && \mu  &&&& \\ &&&&&&
 \\
 &&&&&&
 \\
  \gamma  \arrow[uuuuuurrr, crossing over, dash] &&&&&&
  	\end{tikzcd}
 	\end{equation}
\end{claim}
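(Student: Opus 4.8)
The plan is to mimic almost verbatim the proof of Claim \ref{claimGCVersionAdditive} from the additive case, since we have set up all the analogous ingredients: the diagram \ref{GCReductMultGroup} and its group-theoretic companion \ref{gcFullStructureMultGroup} live on corresponding lines, and Lemma \ref{lemmaIntersectionSubeVersionMultGroup} provides the tangency conditions (\ref{tangentConditionsMultGroup}) together with the ``moreover'' clause about intersection numbers increasing. First I would establish the $\mathbb K$-interalgebraicity of \ref{GCReductMultGroup} with \ref{gcFullStructureMultGroup}: the tangency-type condition (\ref{tangentConditionsMultGroup}) says that $f(a)=\mathfrak{dc}_N(L_1,a,c)$ maps the coordinates of $\mathfrak g$ to the coordinates of $\mathfrak g_t$; one then invokes the analogue of Lemma \ref{claimTangentFiniteLemma} (the finiteness of the set of ``slopes'' attained by a fixed $1$-dimensional set, which follows from Corollary \ref{existsInf} exactly as there) to conclude that each coordinate of $\mathfrak g_t$ is $\mathbb K$-algebraic over the corresponding coordinate of $\mathfrak g$; a dimension count $1\ge\dim_{\mathbb K}(\alpha)=\dim_{\mathbb K}(a_0,\alpha)\ge\dim_{\mathbb K}(a_0)=1$ then gives that $\alpha$ and $a_0$ are $\mathbb K$-interalgebraic, and likewise for all pairs. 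Hence ranks of tuples of coordinates of \ref{GCReductMultGroup} computed in $\mathbb K$ agree with those of the corresponding tuples of \ref{gcFullStructureMultGroup}.

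Next, since $\mathcal M$ is a reduct of $\mathbb K$, Morley ranks in $\mathcal M$ are bounded below by $\mathbb K$-dimensions, so to prove \ref{GCReductMultGroup} is a rank $1$ group configuration for $\mathcal M$ it suffices to check that the required \emph{algebraic} dependences among collinear points still hold with $\acl$ computed in $\mathcal M$. The typical relation to verify is $\eta\in\acl_{\mathcal M}(\alpha,\kappa)$. Following the additive proof, I would set $Z:=X_\kappa\circ X_\alpha$ (the set playing the role of $X_{r_1}\circ X_{r_2}$) and distinguish two cases. If $X_\tau\circ X_\eta\cap Z$ is finite, then because the relevant additive identity $t+f(\eta)=f(\alpha)+f(\kappa)$ holds by construction, the ``moreover'' clause of Lemma \ref{lemmaIntersectionSubeVersionMultGroup} gives infinitely many $s\in Y$ with $|X_\tau\circ X_\eta\cap Z|<|X_\tau\circ X_s\cap Z|$; the set $E$ of such $s$ is $\mathcal M$-definable over $\{\alpha,\kappa\}$, infinite, hence cofinite by strong minimality of $Y$, so $Y\setminus E\ni\eta$ is finite and $\eta\in\acl_{\mathcal M}(\alpha,\kappa)$. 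If instead $X_\tau\circ X_\eta\cap Z$ is infinite, I would invoke a generic-finite-intersection statement: one first proves, exactly as in Claim \ref{genericFiniteIntersectionClaim} using Lemma \ref{nonAffineIntersectsFiniteLemmaF}, Lemma \ref{restaCompuestaFiniteIntersectionLemma} and Equation \ref{decompY}, that the family $(\cl(X_\tau\circ X_a))_{a\in Y}$ has generic finite intersection; then Lemma \ref{noGenericFiniteZLemma} forces $\eta$ to be $\mathbb K$-non-generic over $(\alpha,\kappa)$, so $\{b\in Y: X_\tau\circ X_b\cap Z\text{ infinite}\}$ is a finite $\mathcal M$-definable set containing $\eta$, again giving $\eta\in\acl_{\mathcal M}(\alpha,\kappa)$. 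The remaining collinearities ($\gamma$ on line $\{\beta,\mu\}$ via $\beta,\gamma$; $\mu$ on line $\{\beta,\eta\}$; $\mu$ on line $\{\gamma,\kappa\}$; etc.) are handled identically, each time checking that the corresponding additive identity among the $f$-images holds — which is guaranteed by how $\mathfrak g_t$ was built from $a_0,b_0,b$ — and applying the ``moreover'' clause.

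The rank and non-degeneracy axioms of the group configuration (each point has the prescribed rank, collinear triples are dependent, non-collinear triples are independent, total rank $2d+1=3$) then follow from the $\mathbb K$-interalgebraicity together with the fact that \ref{gcFullStructureMultGroup} is a bona fide rank $1$ group configuration for $\mathbb G_a$, once we know ranks in $\mathcal M$ did not strictly increase past the $\mathbb K$-ranks — which is exactly what the $\acl_{\mathcal M}$-dependences above secure. I expect the main obstacle to be purely bookkeeping: making sure that the particular additive identities $t+f(r_1)+f(r_2)$ etc.\ actually match the specific choices $d(\tau')=t$, $d(\alpha')=t+a_0$, $\ldots$ made in the proof of Lemma \ref{lemmaIntersectionSubeVersionMultGroup}, so that every edge of \ref{GCReductMultGroup} is covered by an instance of the ``moreover'' clause; and, secondarily, stating and proving the $\mathbb G_m$-version of Claim \ref{genericFiniteIntersectionClaim} with the composition $\circ$ rather than the subtraction $\ominus$, which requires the ``no infinitely many points on a vertical/horizontal line'' hypotheses of Lemma \ref{restaCompuestaFiniteIntersectionLemma} — these hold here because $X_a=t_a(Y)\circ t_c(Y)^{-1}$ is built from graphs of finite-to-finite correspondences coming from Property $(\star)$. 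After the group configuration is in hand, I would conclude as before: since it is $\mathbb K$-interalgebraic with a standard rank $1$ group configuration for $(K,+)$, it \emph{is} a reduced rank $1$ group configuration, so by Fact \ref{groupConfig} it yields an $\mathcal M$-interpretable rank $1$ group interalgebraic, over $\mathbb K$, with $(K,+)$ — which is the statement of Proposition \ref{existsGroupConfigurationMult}.
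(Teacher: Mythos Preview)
Your proposal is correct and follows essentially the same approach as the paper. One minor discrepancy: for the $\mathbb K$-interalgebraicity step you invoke an ``analogue of Lemma \ref{claimTangentFiniteLemma}'', but here the condition (\ref{tangentConditionsMultGroup}) is an explicit functional equation $f(\alpha)=t+a_0$ with $f(a)=\mathfrak{dc}_N(L_1,a,c)$ a rational function of $a$ (Remark \ref{remarkSlopeIsAlgebraic}, or Lemma \ref{slopeDefinable}), so $a_0\in\dcl_{\mathbb K}(\alpha,t,c,\bar d)$ directly --- the paper uses this definability rather than a tangency-finiteness argument, which is simpler and avoids having to formulate and prove a new lemma.
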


\begin{claimproof}

As the tuples in Diagram \ref{GCReductMultGroup} satisfy condition (\ref{tangentConditionsMultGroup}), by Lemma \ref{slopeDefinable}
$$a\in \acl_{\mathbb K} (\alpha),$$ moreover 
$$1\geq \dim_{\mathbb K}(\alpha)=\dim_{\mathbb K} (a,\alpha)\geq \dim_{\mathbb K} (a)=1.$$

Then $\dim_{\mathbb K}(\alpha/a)=0$ so $ \alpha\in \acl_{\mathbb K}(a)$. Therefore, $a$ and $\alpha$ are $\mathbb K$-interalgebraic. The same is true with all the corresponding tuples in both diagrams, so we conclude that the Diagram \ref{GCReductMultGroup} is interalgebraic (in $\mathbb K$) with the Diagram \ref{gcFullStructureMultGroup}.

Therefore, the rank computed in $\mathbb K$ of a tuple of elements of Diagram  \ref{GCReductMultGroup} is the same as the rank of the corresponding tuple in Diagram \ref{gcFullStructureMultGroup}.  As $\mathcal M$ is a reduct of $\mathbb K$ the rank of tuples computed on $\mathcal M$ is at least the dimension of tuples computed in $\mathbb K$. So to prove our claim it is enough to prove that the rank computed in $\mathcal M$ does not increase. For doing so we have to prove that there are still algebraic relations between the tuples lying on the same line of the diagram. 

Let us prove that
$$\eta\in \acl_{\mathcal G}(\alpha,\kappa)$$

If $ X_\tau\circ X_\eta\cap X_\alpha\circ X_\kappa$ is infinite we proceed in exactly the same way as we did in the proof of Theorem \ref{thmAditiveVersion}, so we may assume that it is finite.

In this case the algebraic relation follows from the  conclusion of Lemma \ref{lemmaIntersectionSubeVersionMultGroup}: 

There are infinitely many $s\in Y$ such that:

$$| X_\tau\circ X_\eta\cap X_\alpha\circ X_\kappa|<| X_\tau\circ X_s\cap  X_\alpha\circ X_\kappa|.$$

As $ X_\alpha\circ X_\kappa$ is a $\mathcal M$-definable set with parameters $\alpha,\kappa$, the set

$$E:=\{s\in Y: | X_\tau\circ X_\eta\cap  X_\alpha\circ X_\kappa|<| X_\tau\circ X_s\cap  X_\alpha\circ X_\kappa|\}$$ is also $\mathcal M$-definable with parameters $\alpha,\kappa$. As $E$ is infinite and $Y$ is strongly minimal, $Y\setminus E$  is finite and as $\eta\in Y\setminus E$ we conclude that 
$$\eta\in \acl_{\mathcal M}(\alpha,\kappa).$$


 The rest of algebraic relations needed for the definition of group configuration have analogous proofs \end{claimproof}

 This finishes the proof of Proposition \ref{existsGroupConfigurationMult}.
\end{proof}

As a Corollary we have:

\begin{prop}\label{propExistsGroupMult}
    If $Y$ has finitely many slopes at $(1,1)$, there is $(G,\oplus)$ a $\mathcal M$-interpretable group of Morley rank $1$ and $(\alpha_0,\beta_0,\kappa_0)\in G^3$ and $(a_0,b_0,b)\in \mathbb G_a^3$ $\mathbb K$-generics such that $(\alpha_0,\beta_0,\alpha_0\+\beta_0)$ is $\mathbb K$-inter algebraic with $(a_0,b_0,a_0+b_0)$.
    \end{prop}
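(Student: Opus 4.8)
The plan is to run Proposition \ref{existsGroupConfigurationMult} and then extract the group from the resulting configuration via the group configuration theorem, Fact \ref{groupConfig}. First I would note that the standing $Y$ (fixed at the beginning of Chapter \ref{multiplicativeCaseVersion}) is $\mathcal M$-definable, strongly minimal, not $\mathcal M$-affine, and has Property $(\star)$ witnessed by certain $L_i,L_{i,n_i}$; since ``$Y$ has finitely many slopes at $(1,1)$'' (Definition \ref{defiYinfiniteSlopes}) is precisely the negation of the hypothesis excluded in Proposition \ref{existsGroupConfigurationMult}, that Proposition applies. It produces an $\mathcal M$-group configuration $\mathfrak g=(\alpha,\beta,\gamma,\kappa,\eta,\mu)$ (arranged as in Diagram \ref{gcReductGroup}) that is $\mathbb K$-interalgebraic with a standard group configuration of $(K,+)$; unwinding the definition of a standard group configuration of an abelian group, the latter has the form $\mathfrak g_{\mathbb G_a}=(a_0,b_0,a_0+b_0,b,a_0+b,a_0+b_0+b)$ for some $\mathbb K$-generic independent $a_0,b_0,b$, and the interalgebraicity holds coordinate by coordinate ($\alpha$ with $a_0$, $\beta$ with $b_0$, $\gamma$ with $a_0+b_0$, $\kappa$ with $b$, and so on).

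The one point that needs an argument is that $\mathfrak g$ is a \emph{reduced} group configuration, so that the ``in addition'' clause of Fact \ref{groupConfig} is available. This is automatic here: every coordinate of $\mathfrak g$ is an element of the strongly minimal set $Y$, hence has $\RM_{\mathcal M}$ at most $1$, and being a coordinate of a rank $1$ group configuration it has $\RM_{\mathcal M}$ exactly $1$. If one replaces a coordinate $\alpha$ by some $\alpha'\in\acl_{\mathcal M}(\alpha)$ and still has a group configuration, then $\RM_{\mathcal M}(\alpha')=1$ as well (an algebraic tuple cannot occupy a node of a group configuration), and additivity of Morley rank in the strongly minimal structure $\mathcal M$ then gives $\alpha\in\acl_{\mathcal M}(\alpha')$; so $\mathfrak g$ is reduced.

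Now I would apply Fact \ref{groupConfig} to $\mathfrak g$: it yields a minimal $\mathcal M$-interpretable group $G$ with $\RM_{\mathcal M}(G)=1$ — hence $G$ is connected of Morley rank one and so abelian — and, because $\mathfrak g$ is reduced, a group configuration of $G$ interalgebraic with $\mathfrak g$, necessarily a standard one $\mathfrak g_G=(\alpha_0,\beta_0,\alpha_0\oplus\beta_0,\kappa_0,\alpha_0\oplus\kappa_0,\alpha_0\oplus\beta_0\oplus\kappa_0)$ for a triple $(\alpha_0,\beta_0,\kappa_0)$ of $\mathcal M$-generic independent elements of $G$. Finally I would chain the two interalgebraicities: since the $\mathcal M$-definable sets are $\mathbb K$-definable we have $\acl_{\mathcal M}\subseteq\acl_{\mathbb K}$, so $\mathfrak g_G$ is also $\mathbb K$-interalgebraic with $\mathfrak g$, and composing with the $\mathbb K$-interalgebraicity between $\mathfrak g$ and $\mathfrak g_{\mathbb G_a}$ we obtain coordinatewise that $\alpha_0$ is $\mathbb K$-interalgebraic with $a_0$, $\beta_0$ with $b_0$, and $\alpha_0\oplus\beta_0$ with $a_0+b_0$ (and $\kappa_0$ with $b$). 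In particular $(\alpha_0,\beta_0,\alpha_0\oplus\beta_0)$ is $\mathbb K$-interalgebraic with the $\mathbb K$-generic triple $(a_0,b_0,a_0+b_0)$, which is the conclusion.

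The main obstacle is largely bookkeeping — keeping straight the three configurations ($\mathfrak g_G$ in $G$, $\mathfrak g$ in $Y$, $\mathfrak g_{\mathbb G_a}$ in $\mathbb K$) and the passage from $\mathcal M$-interalgebraicity to $\mathbb K$-interalgebraicity; the only genuinely non-formal step is checking reducedness of $\mathfrak g$, and that is forced because all of its coordinates already live in a strongly minimal set, so no node can be shrunk without dropping below rank one.
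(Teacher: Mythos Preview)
Your proposal is correct and follows the same route as the paper: the paper's proof is the single sentence ``It is a consequence of Proposition \ref{existsGroupConfigurationMult} and Fact \ref{groupConfig},'' and you have simply unpacked what that sentence entails. In particular, your observation that $\mathfrak g$ is automatically reduced (because all six nodes of a rank~$1$ configuration already have Morley rank~$1$, so no coordinate can be replaced by a strictly smaller algebraic tuple) is exactly the point one needs to invoke the ``in addition'' clause of Fact \ref{groupConfig}, and the paper leaves this implicit.
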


    \begin{proof}
        It is a consequence of Proposition \ref{existsGroupConfigurationMult} and Fact \ref{groupConfig}.
    \end{proof}

\subsection{Finding a group locally isomorphic to $\mathbb G_a$}

Remember that for us an infinite $\mathbb K$-definable Abelian group $G$ is locally isomorphic to $\mathbb G_a=(K,+)$ if there are $A\leq \mathbb G_a$ a $\mathbb K$-definable subgroup and $i:A\to G$ a $\mathbb K$-definable injective group homomorphism such that $G/i(A)$ is finite.

In this sub-section we describe a way to use the group that we found in Sub-section \ref{findingAGroupConfiguration} to find a group locally isomorphic to $\mathbb G_a$.

\begin{thm}\label{gcIsLocallyAdditiveThm}
Let $\mathbb K=(K,+,\cdot,v,\Gamma)$ be a saturated model of ACVF and let $K_0\leq K$ be a $2^{\omega}$-saturated sub-model.  Let $\mathcal M=(M,\cdot,\ldots)$ be a strongly minimal and not locally modular structure expanding the multiplicative group whose definable sets are all $\mathbb K$-definable. Assume that there is a $\mathcal M$-interpretable Abelian group $(G,\oplus)$ over $K_0$ and $\mathfrak g_0=(\alpha_0,\beta_0,\alpha_0\+\beta_0)\in G^3,$ a tuple of pairwise $\mathbb K$-generic independent elements. Assume as well that there are $(a_0,b_0)\in \mathbb G^2_a$ such that $\mathfrak g_a=(a_0,b_0,a_0+b_0)$ is a tuple of pairwise $\mathbb K$-generic independent elements that, moreover, is $\mathbb K$-interalgebraic with $\mathfrak g_0$ over $K_0$. 

Then there is a finite subgroup $F\leq G$ such that  $(G/F,\oplus)$ is locally isomorphic to $\mathbb G_a$.
\end{thm}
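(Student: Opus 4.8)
The plan is to use the hypotheses to produce a \(\mathbb K\)-definable group isomorphic (after quotienting by a finite subgroup) to a subgroup of \(\mathbb G_a\), by transporting the generic group law on \(G\) through the interalgebraicity \(\mathfrak g_0 \leftrightarrow \mathfrak g_a\). First I would recall the standard principle that a group configuration of \(\mathcal M\) which is \(\mathbb K\)-interalgebraic with a standard group configuration of \(\mathbb G_a\) recovers, up to finite data, the same generic group chunk: since \(\alpha_0,\beta_0,\alpha_0\oplus\beta_0\) are pairwise \(\mathbb K\)-generic independent and \(\mathbb K\)-interalgebraic over \(K_0\) with \(a_0,b_0,a_0+b_0\), the germ of the map \((\alpha_0,\beta_0)\mapsto \alpha_0\oplus\beta_0\) is, via the (finite-to-finite) correspondence, a \(\mathbb K\)-definable partial map whose graph agrees generically with the graph of addition on a Zariski-dense subset. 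By the Weil–Hrushovski group-chunk theorem applied inside \(\mathbb K\) (which is \(\omega\)-stable of finite rank on \(K^n\), hence has definable generic group chunks), the partial generic group law on \(G\) obtained this way is \(\mathbb K\)-definably isomorphic, on generics, to the generic group chunk of \(\mathbb G_a\); thus there is a \(\mathbb K\)-definable group \(G'\) and a \(\mathbb K\)-definable generic isomorphism of group chunks between \(G\) and \(G'\), and \(G'\) embeds generically into \(\mathbb G_a\).

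Next I would upgrade ``generic isomorphism of group chunks'' to an honest statement about \(G\) itself. A \(\mathbb K\)-definable generic bijection between the group \(G\) (all of which is \(\mathcal M\)-interpretable, hence \(\mathbb K\)-interpretable) and a \(\mathbb K\)-definable subgroup \(A_0\leq \mathbb G_a\), compatible with the group operations on generics, extends by the usual argument (every element is a product of two generics) to a \(\mathbb K\)-definable group homomorphism \(i\) from a finite-index \(\mathbb K\)-definable subgroup \(A\leq \mathbb G_a\) into \(G\), with finite kernel \(F\) and with \(G/i(A)\) finite; concretely one sets \(i(a) = \varphi(a_1)\oplus\varphi(a_2)\) where \(a=a_1+a_2\) with \(a_1,a_2\) generic, checks well-definedness using genericity and the generic homomorphism property, and checks it is a homomorphism on a finite-index subgroup of \(\mathbb G_a\). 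Then \(F:=\ker i\) is finite, \(i\) descends to an injective \(\mathbb K\)-definable homomorphism \(A/(A\cap F)\to G/F\), and \((G/F)/\,\mathrm{im}(\bar i)\) is finite since \(G/i(A)\) is; this is exactly Definition \ref{definitionLocllyIsomorphic}, so \((G/F,\oplus)\) is locally isomorphic to \(\mathbb G_a\).

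The technical care needed is: (1) all the ``generic'' reasoning must be done over a small model (this is why \(K_0\) is taken \(2^{\omega}\)-saturated, so that generic independent tuples over \(K_0\) exist and the group \(G\) is defined over \(K_0\)), and one must keep track that the group-chunk data produced is defined over \(\operatorname{acl}\) of \(K_0\) together with the configuration parameters, which remains small; (2) one must invoke Fact \ref{aclIgualFact} to know that algebraic closure computed in \(\mathbb K\) coincides with field-algebraic closure, so the interalgebraicity between \(\mathfrak g_0\) and \(\mathfrak g_a\) is genuinely witnessed by finite sets definable in the field reduct, which is what makes the Weil group-chunk machinery (a statement about \(\mathbb K^f\)) applicable; (3) the passage from a \(\mathbb K\)-definable group generically embedding into \(\mathbb G_a\) to an actual \(\mathbb K\)-definable \emph{subgroup} of \(\mathbb G_a\) uses that connected one-dimensional \(\mathbb K\)-definable subgroups of \(\mathbb G_a\) are exactly the \(\mathbb K\)-definable subgroups, together with the fact that a \(\mathbb K\)-definable group generically in bijection with a subgroup of \(\mathbb G_a\) is, up to finite kernel and finite index, isomorphic to such a subgroup.

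The main obstacle I expect is item (2)–(3) interface: ensuring that the group chunk recovered from the configuration really is \emph{additive}, i.e. that the generic group law transported from \(G\) to the \(a_0,b_0,a_0+b_0\) side is genuinely (up to \(\mathbb K\)-definable isomorphism) the addition law and not merely \emph{some} rank-one group law — one must use that \(\mathfrak g_a\) is specifically the \emph{standard} configuration \((a_0,b_0,a_0+b_0)\) of \(\mathbb G_a\), so the recovered group is \(\mathbb K\)-definably isogenous to \(\mathbb G_a\); and then that any \(\mathbb K\)-definable group isogenous to \(\mathbb G_a\) is, after killing a finite subgroup, a \(\mathbb K\)-definable subgroup of \(\mathbb G_a\) (in positive characteristic one has Frobenius-twisted embeddings, but these still land inside \(\mathbb G_a\) as \(\mathbb K\)-definable subgroups, which is all Definition \ref{definitionLocllyIsomorphic} requires). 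Handling the positive-characteristic case cleanly, so that the homomorphism \(i\) and the finite subgroup \(F\) are produced uniformly, is where I would spend the most effort.
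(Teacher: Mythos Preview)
Your overall architecture matches the paper's --- recover $\mathbb G_a$ from the configuration, then produce a finite-index $\mathbb K$-definable subgroup of $G$ mapping with finite kernel into $\mathbb G_a$ --- but the mechanism you propose for the second step does not work in ACVF. You write that the Weil--Hrushovski group-chunk theorem applies ``inside $\mathbb K$ (which is $\omega$-stable of finite rank on $K^n$)'': this is false. ACVF is NIP but unstable (the value group is linearly ordered), so the usual stable group-chunk machinery and the ``every element is a product of two generics'' argument for extending a generic homomorphism to a global one are not available in $\mathbb K$. You try to repair this via Fact~\ref{aclIgualFact}, reducing to $\mathbb K^f$; but while the interalgebraicity correspondence and the configuration on the $\mathbb G_a$ side do live in ACF, the group $G$ and its operation $\oplus$ are only $\mathbb K$-definable, not $\mathbb K^f$-definable, so you cannot run the Weil argument for $G$ inside ACF either. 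This is the genuine gap.

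The paper closes exactly this gap by invoking a result specific to definable groups in NIP theories (Fact~\ref{algebraicGrouChunkMontenegro}, due to Montenegro--Onshuus, using a finitely additive invariant measure on $G$): this produces a \emph{type-definable} wide subgroup $D\leq G$ together with a $\mathbb K$-definable finite-to-one homomorphism $\phi:D\to H$ into an algebraic group. One then pushes the configuration $\mathfrak g_0$ into $D$ (this is where the $2^\omega$-saturation of $K_0$ is used: $|G/D|\leq 2^\omega$, so one can name coset representatives inside a small model $K_1\supseteq K_0$ and translate generics into $D$), transfers the interalgebraicity with $\mathfrak g_a$ to conclude $H\cong\mathbb G_a$ in ACF, and finally uses compactness together with the fact that type-definable subgroups of $\mathbb G_a$ are intersections of definable ones to upgrade $D$ to a definable subgroup $C\leq G$ with a finite-to-one homomorphism $C\to\mathbb G_a$. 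Your sketch correctly anticipates the need for a finite kernel $F$ and a finite-index image, but the tool that actually produces the homomorphism in the valued-field setting is the NIP group-chunk theorem, not the stable one.
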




    

For proving Theorem \ref{gcIsLocallyAdditiveThm} we need some preliminary results, first we state a well known fact that follows, for example, from Theorems 10.4 (b) and 10.2 of \cite{wagon}.

\begin{fact}\label{haarMeasureFact}
If $G$ is an Abelian group, there is a finite and finitely additive invariant measure on the subsets of $G$. 
\end{fact}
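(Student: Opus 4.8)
The plan is to prove the classical fact that every abelian group is amenable, i.e.\ carries a left-invariant, finitely additive probability measure defined on all of $\mathcal P(G)$; since rescaling by a positive constant preserves finite additivity and invariance, a probability measure is exactly what is needed for the asserted ``finite and finitely additive invariant measure''. I would reduce the construction to a fixed-point statement. Work with the real Banach space $\ell^\infty(G)$ of bounded functions $G\to\mathbb R$ and its dual $\ell^\infty(G)^\ast$. Let $M\subseteq\ell^\infty(G)^\ast$ be the set of \emph{means}: positive linear functionals $\phi$ with $\phi(\mathbf 1)=1$, where $\mathbf 1$ is the constant function $1$. Then $M$ is non-empty (evaluation at any fixed point of $G$ lies in $M$), convex, and contained in the closed unit ball of $\ell^\infty(G)^\ast$; by Banach--Alaoglu the unit ball is weak-$\ast$ compact, and $M$ is weak-$\ast$ closed inside it, so $M$ is a non-empty weak-$\ast$ compact convex set.

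Next I would let $G$ act. Left translation $(g\cdot f)(x):=f(g^{-1}x)$ is a linear isometry of $\ell^\infty(G)$, so its adjoint $T_g$ maps $M$ into $M$, is weak-$\ast$ continuous and affine, and the family $\{T_g:g\in G\}$ is commuting precisely because $G$ is abelian. By the Markov--Kakutani fixed point theorem, a commuting family of continuous affine self-maps of a non-empty compact convex subset of a locally convex Hausdorff topological vector space has a common fixed point; let $\phi\in M$ be such a fixed point. I would then define $\mu(A):=\phi(\chi_A)$ for $A\subseteq G$, where $\chi_A$ is the indicator function. Finite additivity follows from linearity of $\phi$ and the identity $\chi_{A\cup B}=\chi_A+\chi_B$ for disjoint $A,B$; non-negativity from positivity of $\phi$; the measure is finite, indeed $\mu(G)=\phi(\mathbf 1)=1$; and left-invariance holds because $T_g\phi=\phi$ and $\chi_{gA}=g\cdot\chi_A$. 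This produces exactly the required measure.

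The only genuine content beyond bookkeeping is the fixed-point theorem itself together with the verification that its hypotheses hold in this setting --- namely that $M$ is weak-$\ast$ compact (Banach--Alaoglu plus weak-$\ast$ closedness of $M$) and that the adjoints $T_g$ are weak-$\ast$ continuous and pairwise commuting. If one prefers to avoid Markov--Kakutani, there is a more hands-on route: show that the class of groups admitting such a measure contains every finite group (normalized counting measure) and $\mathbb Z$ (apply a Banach limit to the Ces\`aro averages over $\{1,\dots,n\}$), and is closed under subgroups, quotients, group extensions, and directed unions; since each finitely generated abelian group is a quotient of some $\mathbb Z^n$, itself an iterated extension of copies of $\mathbb Z$, and an arbitrary abelian group is the directed union of its finitely generated subgroups, the result follows. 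In that approach the delicate step is closure under directed unions, which again needs a compactness argument (the means on $\bigcup_i G_i$ form a closed subset of the product of the compact spaces of means on the $G_i$, from which one extracts a coherent limit). I would present the Markov--Kakutani argument as the main line of proof.
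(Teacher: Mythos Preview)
Your argument is correct and is one of the standard proofs that abelian groups are amenable: realize means as a weak-$\ast$ compact convex set in $\ell^\infty(G)^\ast$, act by the commuting family of adjoint translations, and apply Markov--Kakutani to obtain an invariant mean, which then restricts to the desired finitely additive invariant probability measure on $\mathcal P(G)$. The alternative bootstrap route you sketch (closure under extensions and directed unions) is also a well-known proof.

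The paper, however, does not prove this fact at all: it is stated as a ``Fact'' and simply attributed to Theorems~10.4(b) and~10.2 of Wagon's book. So there is no proof in the paper to compare against; your proposal supplies a complete argument where the paper only gives a reference.
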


As we are working on an NIP theory, the following is an instance of Theorem 2.19 of \cite{montenegroOnshuus}.

\begin{fact}\label{algebraicGrouChunkMontenegro}
    Let $(G,\oplus)$ be a $\mathbb K$-definable Abelian group and let $\mu_0$ be as provided by Fact \ref{haarMeasureFact}. Then there is a type definable subgroup $D$ of $G$ such that $\mu_0(X)>0$ for all definable $X\supseteq D$ (we say then that $D$ is wide), an algebraic group $(H,+_H)$ and a $\mathbb K$-definable finite-to-one group homomorphism $\phi:D\to H$. 
\end{fact}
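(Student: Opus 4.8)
The plan is to run the group $G$ through the algebraic group-chunk machinery of Facts~\ref{haarMeasureFact} and~\ref{algebraicGrouChunkMontenegro}, to identify the resulting one-dimensional algebraic group as $\mathbb G_a$ using the interalgebraicity hypothesis, and finally to read off the finite subgroup $F$ and the embedding of a subgroup of $\mathbb G_a$ into $G/F$. First I would record the relevant features of $G$. As every $\mathcal M$-definable set is $\mathbb K$-definable, $G$ is $\mathbb K$-interpretable and Abelian; since $\alpha_0$ is $\mathbb K$-interalgebraic with the $\mathbb K$-generic $a_0\in\mathbb G_a$ we get $\dim_{\mathbb K}(\alpha_0)=1$, hence $\dim_{\mathbb K}(G)=1$, and because $\mathcal M$ is a reduct of $\mathbb K$ (so $\RM_{\mathcal M}\geq\dim_{\mathbb K}$ on tuples) $G$ is strongly minimal in $\mathcal M$, hence connected. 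Now apply Fact~\ref{haarMeasureFact} to obtain a finitely additive invariant measure $\mu_0$ on $G$, and then Fact~\ref{algebraicGrouChunkMontenegro} to obtain a wide type-definable subgroup $D\le G$, an algebraic group $(H,+_H)$, and a $\mathbb K$-definable finite-to-one homomorphism $\phi\colon D\to H$. Since $D$ is wide, every $\mathbb K$-definable superset of $D$ has positive $\mu_0$-measure, hence is infinite, hence is $\mathbb K$-full-dimensional; so $\dim_{\mathbb K}(D)=1$ and, $\phi$ being finite-to-one, $\dim H=1$. Thus $H$ is $\mathbb G_a$, $\mathbb G_m$, or an elliptic curve $E$.

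The heart of the argument is ruling out the last two. By widness and saturation there is $g\in D$ realizing the $\mathcal M$-generic type of $G$ over $K_0$ (every finite conjunction of a cofinite $\mathcal M$-definable set with infinite $\mathbb K$-definable supersets of $D$ is nonempty), and then an independent such $g'\in D$; such $g$ are moreover $\mathbb K$-generic in $G$, since $1=\dim_{\mathbb K}(g)\le\RM_{\mathcal M}(g)\le 1$. Then $(g,g',g\oplus g')$ has the same type over $K_0$ as $\mathfrak g_0$, so — using that $\mathfrak g_0$ is $K_0$-interalgebraic with $\mathfrak g_a$ — it is $K_0$-interalgebraic with a standard group configuration of $\mathbb G_a$; applying the $\mathbb K$-definable finite-to-one map $\phi$ coordinatewise, $(\phi(g),\phi(g'),\phi(g)+_H\phi(g'))$ is a standard group configuration of $H$ that is $\mathbb K$-interalgebraic with a standard configuration of $\mathbb G_a$. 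By the group configuration theorem (Hrushovski; cf.\ Fact~\ref{groupConfig}), which recovers a group from a configuration uniquely up to isogeny, $H$ is isogenous to $\mathbb G_a$; since $H$ and $\mathbb G_a$ are algebraic groups and definable homomorphisms between algebraic groups in ACVF are (up to Frobenius) algebraic, this is an algebraic isogeny, and as $\Hom(\mathbb G_m,\mathbb G_a)=\Hom(E,\mathbb G_a)=\Hom(\mathbb G_a,\mathbb G_m)=\Hom(\mathbb G_a,E)=0$ in every characteristic, the only one-dimensional algebraic group isogenous to $\mathbb G_a$ is $\mathbb G_a$ itself. Hence $H=\mathbb G_a$.

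To finish I would upgrade $\phi$ to a total map. By compactness and the usual group-chunk argument, $\phi$ extends to a $\mathbb K$-definable finite-to-one homomorphism from a $\mathbb K$-definable finite-index subgroup of $G$; since $G$ is connected this subgroup is $G$ itself, so we may assume $\phi\colon G\to\mathbb G_a$ is a $\mathbb K$-definable finite-to-one homomorphism. Put $F:=\ker\phi$, a finite subgroup of $G$. Then $\phi$ induces a $\mathbb K$-definable injective homomorphism $\bar\phi\colon G/F\hookrightarrow\mathbb G_a$; let $A:=\bar\phi(G/F)$, which is an infinite $\mathbb K$-definable subgroup of $\mathbb G_a$, and set $i:=\bar\phi^{-1}\colon A\to G/F$. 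Then $i$ is a $\mathbb K$-definable injective homomorphism with $(G/F)/i(A)$ trivial, in particular finite, so by Definition~\ref{definitionLocllyIsomorphic} $G/F$ is locally isomorphic to $\mathbb G_a$, as required.

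The main obstacle is the crux step $H=\mathbb G_a$: one must make sure that the group configuration transported through $\phi$ genuinely is interalgebraic with $\mathfrak g_a$ and that the induced correspondence between $H$ and $\mathbb G_a$ is really generically additive, so that the group configuration theorem (equivalently, Weil's group-chunk theorem) together with the vanishing of the relevant $\Hom$ groups can be invoked; one must also verify that the wide chunk $D$ can be replaced by a finite-index subgroup of $G$ — and hence, by connectedness, by $G$ — which is exactly where the strong minimality of $G$ is used. The remaining work — widness, the dimension counts, and the passage from the type-definable data $(D,\phi)$ to honestly $\mathbb K$-definable data — is routine.
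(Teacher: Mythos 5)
Your proposal does not prove the statement in question. The statement to be established is Fact \ref{algebraicGrouChunkMontenegro} itself --- the existence, for an arbitrary $\mathbb K$-definable Abelian group $G$, of a wide type-definable subgroup $D$, an algebraic group $H$, and a $\mathbb K$-definable finite-to-one homomorphism $\phi:D\to H$. In your second paragraph of setup you write ``then Fact~\ref{algebraicGrouChunkMontenegro} to obtain a wide type-definable subgroup $D\le G$, an algebraic group $(H,+_H)$, and a $\mathbb K$-definable finite-to-one homomorphism $\phi\colon D\to H$'': that is, you invoke as a black box exactly the statement you were asked to prove, which makes the argument circular. What you have actually written is (a sketch of) a proof of the downstream result, Theorem \ref{gcIsLocallyAdditiveThm}, which \emph{uses} this fact to conclude that $G/F$ is locally isomorphic to $\mathbb G_a$; note also that your version silently adds hypotheses (interalgebraicity of $\mathfrak g_0$ with an additive configuration, strong minimality of $G$ in $\mathcal M$) that are not present in the fact, which is stated for an arbitrary $\mathbb K$-definable Abelian group.

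For the record, the paper does not prove this fact either: it is cited as an instance of Theorem 2.19 of \cite{montenegroOnshuus}, applicable because ACVF is NIP and $\mu_0$ (from Fact \ref{haarMeasureFact}) makes $G$ definably amenable. A self-contained proof would have to go through the measure-stabilizer machinery for definably amenable NIP groups: one constructs $D$ as the stabilizer of the invariant measure (or of a wide generic type), proves it is type-definable of bounded index and wide, and then uses that $\acl_{\mathbb K}$ agrees with $\acl$ in the field reduct (Fact \ref{aclIgualFact}) together with the group configuration theorem over the algebraically closed field to manufacture the algebraic group $H$ and the finite-to-one $\phi$. None of that content appears in your write-up, so as a proof of the stated fact there is a complete gap; as a proof of Theorem \ref{gcIsLocallyAdditiveThm} it is closer to the mark, but that is a different statement.
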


Now we present the proof of Theorem \ref{gcIsLocallyAdditiveThm}.

\begin{proof}(Proof of Theorem \ref{gcIsLocallyAdditiveThm})

Let $G$, $\mathfrak g_0$ and $\mathfrak g_a$ as in the hypothesis of Theorem \ref{gcIsLocallyAdditiveThm}.

We will use Fact \ref{algebraicGrouChunkMontenegro} but for this we need to ensure that $G$ is $\mathbb K$-definable, so we prove:

\begin{claim}
    $G$ is in $\mathbb K$-definable bijection with a $\mathbb K$-definable set.
\end{claim}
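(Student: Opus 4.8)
\textit{Proof proposal.} The plan is to eliminate the interpreting quotient defining $G$ by reducing to the coding of finite sets of field elements, which is possible because the field reduct $\mathbb K^f$ is an algebraically closed field and hence has elimination of imaginaries, while the nontrivial input will be the strong minimality of $G$.

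First, since $G$ is $\mathcal M$-interpretable and every $\mathcal M$-definable set is $\mathbb K$-definable, $G$ is $\mathbb K$-interpretable: we may write $G=D/E$ with $D\subseteq K^{k}$ a $\mathbb K$-definable set, $E$ a $\mathbb K$-definable equivalence relation, and $\pi:D\to G$ the quotient map. The second and main step is to arrange that $E$ has all classes finite. Here we use that $\RM_{\mathcal M}(G)=1$ together with the fact that, in the strongly minimal structure $\mathcal M$, Morley rank is definable and additive along definable fibrations: if $D$ is chosen of minimal $\mathcal M$-Morley rank among $\mathcal M$-definable sets admitting an $\mathcal M$-definable surjection onto $G$, then the generic fibre of $\pi$ must be finite, so the $\mathbb K$-definable set $D^{\circ}:=\{a\in D:\pi^{-1}(\pi(a))\text{ is finite}\}$ has $\pi(D^{\circ})$ cofinite in $G$ (its complement is an $\mathcal M$-definable, hence by strong minimality finite, proper subset of $G$). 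Iterating this — peeling off $D^{\circ}$, re-expressing the remaining finitely many points of $G$ as a quotient and repeating — terminates by descent of $\mathcal M$-Morley rank, and one obtains a presentation $G=D/E$ in which every $E$-class is finite. (We may also record that by Fact~\ref{aclIgualFact} the exceptional points live in $\acl_{\mathbb K^{f}}$ of the base parameters, which makes this bookkeeping routine.)

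For the final step, each $E$-class is now a finite subset of $K^{k}$. Since $\mathbb K^{f}$ is algebraically closed it eliminates imaginaries, so there is a $\mathbb K^{f}$-definable — a fortiori $\mathbb K$-definable — map $\rho$ assigning to every finite subset of $K^{k}$ a canonical code $\rho(\cdot)\in K^{m}$ for suitable $m$. Then $a\mapsto\rho([a]_{E})$ is a $\mathbb K$-definable map on $D$, constant on $E$-classes and injective on the quotient, so it induces a $\mathbb K$-definable bijection between $G=D/E$ and the $\mathbb K$-definable set $G':=\{\rho([a]_{E}):a\in D\}\subseteq K^{m}$, proving the claim.

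The main obstacle is the second step: ensuring that $G$ can be interpreted with finite classes. This is exactly where strong minimality of $G$ (and of $\mathcal M$) is indispensable, since ACVF does not eliminate imaginaries in $\mathcal L_{R,v}$, and quotients with infinite classes — e.g. quotients by the valuation — genuinely cannot be coded inside $K$; everything downstream is then a formal consequence of elimination of imaginaries in the field reduct.
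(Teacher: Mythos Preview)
Your strategy is the same as the paper's: write $G=W/E$ with $W$ an $\mathcal M$-definable (hence $\mathbb K$-definable) set and $E$ an equivalence relation with finite classes, then code $W/E$ inside some $K^{m}$ using elimination of finite imaginaries in the field reduct. The paper handles the first step in one line by citing Lemma~8.2.9 of \cite{marker} (strongly minimal structures eliminate imaginaries up to finite); your final step is the same appeal to EI in algebraically closed fields.

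The gap is in your justification of the first step. You assert that if $D$ has minimal $\mathcal M$-Morley rank among $\mathcal M$-definable sets surjecting onto $G$ then the generic fibre of $\pi$ is finite, citing additivity and definability of Morley rank. But those two facts only yield $\RM(D)=1+r$ with $r$ the generic fibre rank; neither produces, when $r\geq 1$, a competitor $D'$ of smaller rank still surjecting onto $G$. Constructing such a $D'$ is exactly weak elimination of imaginaries in strongly minimal theories, whose proof genuinely uses the exchange property of $\acl$ rather than rank bookkeeping alone. So the assertion is true but unproven by your argument --- you are tacitly invoking precisely the lemma the paper cites. (Your ``iteration'' afterwards is harmless but overstated: once generic fibres are finite, the finitely many remaining points of $G$ are handled in a single step by naming representatives, not by a genuine rank descent.)
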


\begin{claimproof}
As $G$ is $\mathcal M$-interpretable and any strongly minimal structure eliminates imaginaries except by finite, (Lemma 8.2.9 of \cite{marker}) we may assume that $G=W/E$ with $W$ $\mathcal M$-definable and $E$ a $\mathcal M$-definable equivalence relation on $W$ with finite classes. 

But then $W$ and $E$ are also $\mathbb K$-definable and as any field eliminates finite imaginaries (Lemma 3.2.16 of \cite{marker})  $W/E$ is in $\mathbb K$-definable bijection with some $\mathbb K$-definable set. 
\end{claimproof}
So we may assume that $G$ is $\mathbb K$-definable, then we may apply Fact \ref{algebraicGrouChunkMontenegro} and find an algebraic group $(H,+_H)$, a wide $\mathbb K$-type definable subgroup $D\subseteq G$ and $\phi:D\to H$, a $\mathbb K$-definable finite-to-one group homomorphism.
Now we prove:
\begin{claim}
In this situation:
\begin{enumerate}

    \item There is some small model $K_1$ extending elementary to $K_0$ and 

    $$\mathfrak g'_a=(a'_0,b'_0,a'_0+b'_0)\in \mathbb G_a^3$$
    and 
    $$\mathfrak g_H=(\alpha_H,\beta_H,\alpha_H +_H \beta_H)\in H^3,$$

which are inter-algebraic over $K_1$ and, in addition, the elements of each tuple are pairwise generic independent over $K_1$.

    \item $H$ is $\mathbb K$-definably isomorphic to $\mathbb G_a$.
\end{enumerate}
\end{claim}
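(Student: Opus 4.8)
The plan is to prove the two assertions of the Claim using the algebraic group $(H,+_H)$, the wide type-definable subgroup $D\leq G$ and the $\mathbb K$-definable finite-to-one homomorphism $\phi\colon D\to H$ supplied by Fact~\ref{algebraicGrouChunkMontenegro}.

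\textbf{Proof of (1).} First I record that $\dim_{\mathbb K}G=1$ and $\dim_{\mathbb K}D=1$: since the configuration $\mathfrak g_0$ is interalgebraic over $K_0$ with $\mathfrak g_a$ and $a_0$ is $\mathbb K$-generic in $\mathbb G_a$, we get $\dim_{\mathbb K}(\alpha_0/K_0)=\dim_{\mathbb K}(a_0/K_0)=1$, so $\alpha_0$ is $\mathbb K$-generic in $G$; and by invariance and finite additivity $\mu_0$ assigns every finite subset of $G$ measure $0$, hence the wide set $D$ is infinite and its Zariski closure is $1$-dimensional. Next, the interalgebraicity over $K_0$ of the standard configurations $\mathfrak g_0$ of $G$ and $\mathfrak g_a$ of $\mathbb G_a$ forces, by the uniqueness part of Hrushovski's group configuration theorem (cf.\ Fact~\ref{groupConfig}, the group attached to a reduced configuration being unique up to interalgebraicity), that $G$ and $\mathbb G_a$ are interalgebraic as groups over $K_0$; concretely there is a $K_0$-definable subgroup $R\leq G\times\mathbb G_a$ with both coordinate projections finite-to-one and of finite index. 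Fix a small model $K_0\preceq K_1$ containing parameters for $D$, $H$, $\phi$ and $R$, shrink $D$ to the still wide, finite-index subgroup $D\cap\mathrm{pr}_G(R)$, and pick $\alpha,\beta\in D$ with $(\alpha,\beta)$ $\mathbb K$-generic independent over $K_1$, so that $\alpha\oplus\beta\in D$ and $(\alpha,\beta,\alpha\oplus\beta)$ is pairwise $\mathbb K$-generic independent. Set
$$\mathfrak g_H:=(\phi(\alpha),\ \phi(\beta),\ \phi(\alpha)+_H\phi(\beta))=(\phi(\alpha),\phi(\beta),\phi(\alpha\oplus\beta))\in H^3,$$
choose $(\alpha,a_0'),(\beta,b_0')\in R$ and set $\mathfrak g_a':=(a_0',b_0',a_0'+b_0')\in\mathbb G_a^3$. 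Since $\phi$ and the two projections of $R$ are finite-to-one and $R$ is a subgroup, each coordinate of $\mathfrak g_H$ and of $\mathfrak g_a'$ is $\acl_{\mathbb K}$-interalgebraic over $K_1$ with the corresponding coordinate of $(\alpha,\beta,\alpha\oplus\beta)$; hence $\mathfrak g_H$ and $\mathfrak g_a'$ are pairwise $\mathbb K$-generic independent over $K_1$ and interalgebraic over $K_1$, proving (1).

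\textbf{Proof of (2).} Replace $H$ by the identity component of the Zariski closure of $\phi(D)$ and $D$ by the still wide finite-index subgroup $\phi^{-1}(H)\cap D$. Then $H$ is a connected $1$-dimensional algebraic group, commutative because it is the Zariski closure of the image of the abelian group $D$, and $\mathfrak g_H$ is a standard group configuration of $H$ inside the field reduct $\mathbb K^{f}$ (where $\mathbb K$-genericity coincides with Morley-rank genericity). By (1) together with Fact~\ref{aclIgualFact}, $\mathfrak g_H$ is interalgebraic over $K_1$, inside $\mathbb K^{f}$, with the standard configuration $\mathfrak g_a'$ of $\mathbb G_a$; so by the group configuration theorem $H$ and $\mathbb G_a$ are interalgebraic as groups over $K_1$ in $\mathbb K^{f}$, and since $\mathbb K^{f}$ is $\omega$-stable the type-definable group witnessing this is definable, hence an algebraic group, with algebraic finite-kernel projections onto finite-index subgroups of $H$ and of $\mathbb G_a$. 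Thus $H$ is a connected $1$-dimensional algebraic group isogenous to $\mathbb G_a$; as $\mathbb G_m$ and elliptic curves admit no nontrivial morphism to or from $\mathbb G_a$, $H$ must be unipotent, and the only connected $1$-dimensional unipotent algebraic group over an algebraically closed field is $\mathbb G_a$. Therefore $H\cong\mathbb G_a$ as algebraic, hence as $\mathbb K$-definable, groups, which is (2).

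\textbf{The main obstacle.} The crux of the argument is the conversion of interalgebraicity of \emph{group configurations} into an honest (type-)definable \emph{isogeny of groups}: this draws on the uniqueness clause of Hrushovski's configuration theorem and on the rigidity of algebraic groups — namely that, $\mathbb K^{f}$ being $\omega$-stable, the relevant type-definable groups and homomorphisms are automatically definable, hence algebraic — supplemented by the classification of $1$-dimensional algebraic groups. A secondary but necessary technical point is the bookkeeping with the intermediate model $K_1$: one must enlarge $K_0$ so as to absorb all relevant parameters while keeping $\alpha,\beta$ and the chosen preimages in $R$ generic over $K_1$, and verify that the successive finite-index shrinkings of $D$ preserve width.
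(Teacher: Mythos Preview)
Your proof of part (2) is fine and essentially matches the paper's argument (which defers the endgame to a separate Remark using the classification of one-dimensional algebraic groups). The problem is in part (1).

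In part (1) you invoke ``the uniqueness part of Hrushovski's group configuration theorem (cf.\ Fact~\ref{groupConfig})'' to produce a $K_0$-definable subgroup $R\leq G\times\mathbb G_a$ with both projections finite-to-one and of finite index. But Fact~\ref{groupConfig} as stated in the paper is a theorem about group configurations in a strongly minimal structure $\mathcal N$. Here the interalgebraicity between $\mathfrak g_0$ and $\mathfrak g_a$ holds in $\mathbb K$, and $\mathbb K$ is not strongly minimal --- indeed ACVF is not even stable (it has a definable linear order on $\Gamma$) --- so Fact~\ref{groupConfig} does not apply. You cannot pass to the strongly minimal reduct $\mathbb K^f$ either, since $G$ is only $\mathbb K$-definable, not $\mathbb K^f$-definable. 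So the existence of $R$ is unjustified, and everything built on it (the shrinking of $D$, the choice of $a_0',b_0'$ via $R$) collapses.

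The paper avoids this entirely. Since $D$ is wide it has bounded index in $G$, so $|G/D|\leq 2^\omega$; choose coset representatives $\{c_s:s\in G/D\}$ and let $K_1$ be a small model containing $K_0$ and all the $c_s$. Now take $(\alpha',\beta')\in G^2$ generic over $K_1$ realizing a non-forking extension of $\tp(\alpha_0,\beta_0/K_0)$; the interalgebraicity with $\mathbb G_a$ from the hypothesis transfers to give $(a_0',b_0')$. Then simply translate: $\alpha_D:=\alpha'\ominus c_{\pi(\alpha')}$ and $\beta_D:=\beta'\ominus c_{\pi(\beta')}$ lie in $D$, are interalgebraic over $K_1$ with $\alpha',\beta'$ (the translations are by $K_1$-elements), and applying $\phi$ gives $\mathfrak g_H$. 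No isogeny $R$ between $G$ and $\mathbb G_a$ is ever needed at this stage; the passage to an algebraic group happens only after landing in $H$ via $\phi$, at which point one is working entirely in $\mathbb K^f$ and the group configuration machinery is available --- which is exactly how part (2) proceeds.
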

\begin{claimproof}
    
Let us prove Clause 1:

As $D$ is wide in $G$ it has bounded index so $|G/D|\leq 2^{\omega}$. For each $s\in G/D$ let $c_s\in G$ be a representative for $s$, then  $K_0\cup \{c_s:s\in G/D\}$ is small so there is some small model $K_1$ containing it.

Let $\pi:G\to G/D$ be the canonical projection.

Then we may take $\alpha',\beta',\alpha'\oplus \beta'\in G$ realizing a type over $K_1$ extending $\tp(\alpha_0,\beta_0, \alpha_0\+\beta_0/K_0)$ such that $(\alpha',\beta')$ is $\acl$ generic over $K_1$. Then, as $(\alpha_0,\beta_0, \alpha_0\+\beta_0)$ is $\mathbb K$-inter-algebraic with $(a_0,b_0,a_0+b_0)$ there is some tuple $(a'_0,b'_0)\in \mathbb G_a^2$ such that $(\alpha',\beta',\alpha'\+\beta')$ is $\mathbb K$-inter algebraic over $K_1$ with $(a'_0,b'_0,a'_0+b'_0)$.

Let $\alpha_D:=\alpha'\ominus c_{\pi(\alpha')}$ and $\beta_D:=\beta'\ominus c_{\pi(\beta')}$.

By choice of $c_s$ it follows that $\alpha_D,\beta_D\in D$.

As $(\alpha_D,\beta_D,\alpha_D\oplus\beta_D)$ is $\mathbb K$-inter algebraic over $K_1$ with $(\alpha',\beta',\alpha'\oplus\beta')$, it is also inter-algebraic with $(a'_0,b'_0,a'_0+b'_0)$. Taking $\alpha_H=\phi (\alpha_D)$ and $\beta_H=\phi(\beta_D)$ we get the result.

For proving Clause 2 notice that by Fact \ref{aclIgualFact} the algebraic relations between $g'_a$ and $g_H$ occurs on ACF. By strong minimality of ACF, the Zariski closure of $H$ is an algebraic group such that for any two generic independent $(\rho,\eta)\in H^2$ there are $(x,y)\in \mathbb G_a^2$ such that $(\rho,\eta,\zeta +_H \eta)$ is ACF-inter-algebraic with $(x,y,x+y)$. This implies\footnote{This may be a well known fact but we still include a proof after the proof of Theorem \ref{gcIsLocallyAdditiveThm}.}that $H$ is definably isomorphic in ACF with $\mathbb G_a$ \end{claimproof}
 
From now on we may assume that $(H,+_H)=(\mathbb G_a,+)$. Now we show that we can reduce to a definable group and not just a type definable one. 
\begin{claim}
    There is $C\leq G$ a definable subgroup with $D\subseteq C$ and $\phi':C\to \mathbb G_a$ a finite-to-one definable group homomorphism extending $\phi$.
\end{claim}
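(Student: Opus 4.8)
\emph{Sketch.} The plan is to replace the type-definable pair $(D,\phi)$ coming from Fact~\ref{algebraicGrouChunkMontenegro} by genuinely $\mathbb K$-definable data, using the invariant finitely additive measure of Fact~\ref{haarMeasureFact} to control how quickly the wide set $D$ generates a subgroup. First I would write $D=\bigcap_n D_n$ with the $D_n$ a decreasing chain of $\mathbb K$-definable symmetric subsets of $G$ containing $e$ (possible by saturation, as $D$ is a subgroup), and I will write $X^{(n)}$ for the set of $\oplus$-sums of $n$ elements of a set $X$. Let $\mu$ be an invariant finitely additive measure on $G$ as in Fact~\ref{haarMeasureFact}, normalised to a probability measure, and set $\varepsilon:=\mu(D)>0$, which makes sense because $D$ is wide. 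By the Kneser-type stabilisation statement for such measures on abelian groups there is a natural number $k$, which may be taken to depend only on $\varepsilon$, such that for every wide $\mathbb K$-definable symmetric $A\ni e$ with $\mu(A)\ge\varepsilon$ the iterated sumset $A^{(2k)}$ is a subgroup of $G$.

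Next I would fix a $\mathbb K$-definable partial function $f$ on $G$ with $f|_D=\phi$ and put $J:=100k$. Since $\phi$ is a homomorphism on $D$, we have $\sum_{i=1}^{J}\phi(z_i)=0$ whenever $z_1,\dots,z_J\in D$ sum to $e$, and these conditions, for fixed $J$, are relatively definable on the $D_n$; so by compactness there is $m$ with $D_0:=D_m$ satisfying $D_0^{(J)}\subseteq\operatorname{dom}(f)$ and $\sum_{i=1}^{J}f(z_i)=0$ for all $z_1,\dots,z_J\in D_0$ summing to $e$. Since $\mu(D_0)\ge\mu(D)=\varepsilon$, the choice of $k$ gives that $C:=D_0^{(2k)}$ is a $\mathbb K$-definable subgroup of $G$, of bounded index (it contains $D$) and with $D\subseteq D_0\subseteq C$. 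I then define $\phi'\colon C\to\mathbb G_a$ by $\phi'(g):=\sum_{i=1}^{2k}f(x_i)$ for any $x_1,\dots,x_{2k}\in D_0$ with $g=x_1\oplus\cdots\oplus x_{2k}$; well-definedness and the homomorphism law are immediate from the vanishing property above, since every relation to be checked involves at most $J$ summands from $D_0$. The map $\phi'$ is $\mathbb K$-definable (its graph is cut out by an existential quantifier over $D_0^{2k}$), it extends $\phi$ because the representation $d\oplus e\oplus\cdots\oplus e$ gives $\phi'(d)=f(d)=\phi(d)$ for $d\in D$, and $\ker\phi'$ is a $\mathbb K$-definable subgroup of $C$ meeting $D$ in $\ker\phi$, which is finite as $\phi$ is finite-to-one; since $D$ has bounded index in $C$, $\ker\phi'$ has bounded cardinality, and a $\mathbb K$-definable subset of the saturated model of bounded size is finite, so $\phi'$ is finite-to-one.

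The main obstacle is the generation step: one needs to know that some \emph{bounded} iterated sumset $D_0^{(2k)}$ of the wide definable set $D_0$ is already a subgroup, not merely that $\bigcup_k D_0^{(k)}$ is an increasing union of definable sets forming a bounded-index subgroup. This is precisely where the invariant finitely additive measure of Fact~\ref{haarMeasureFact} together with Kneser-type sumset estimates are indispensable; plain compactness does not suffice. A secondary point is the order of the choices — the generation length $k$ must be fixed from $\varepsilon=\mu(D)$ before one shrinks to the small set $D_0$ on which $f$ is additive up to length $J$ — and this is harmless exactly because $\mu(D_n)\ge\mu(D)$ for every term of the chain.
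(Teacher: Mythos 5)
Your construction breaks down at the very first quantitative step: you set $\varepsilon:=\mu(D)>0$ ``because $D$ is wide'', but wideness as defined in the paper (and as delivered by Fact~\ref{algebraicGrouChunkMontenegro}) only says that $\mu_0(X)>0$ for \emph{each} definable $X\supseteq D$; it gives no uniform lower bound on $\inf_n\mu_0(D_n)$, and for a merely finitely additive measure the value $\mu_0\bigl(\bigcap_n D_n\bigr)$ can perfectly well be $0$ while every $\mu_0(D_n)$ is positive. Your whole scheme needs $k$ (hence $J=100k$) to be fixed \emph{before} you descend the chain to a $D_m$ on which $f$ is additive up to length $J$; since descending may shrink the measure, which in turn forces a larger generation length, the argument is circular unless $\inf_n\mu_0(D_n)>0$, and nothing in the paper's hypotheses supplies that. (The ``Kneser-type'' bounded-generation statement itself is salvageable by a pigeonhole argument on disjoint translates \emph{once} a uniform lower bound $\mu(A)\ge\varepsilon>0$ is in hand, so that is not the essential problem; the essential problem is that the lower bound is unavailable.) A secondary but real worry is that even when it applies, bounded generation only tells you $\langle D_0\rangle=D_0^{(N)}$ for \emph{some} $N\le N(\varepsilon)$, so you must still match $N$ against the additivity length of $f$, which is exactly the circularity above.

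The paper takes a different and measure-free route here (following Proposition 4.7 of \cite{acosta}): by compactness one extends $\phi$ to a definable finite-to-one, operation-respecting map on definable sets $D\subseteq U_1\subseteq U_0$ with $U_1=U_1^{-1}$ and $U_1\+U_1\subseteq U_0$, arranges $\phi\arrowvert_{U_1}$ to be exactly $m$-to-one, and then exploits the structure of the \emph{target}: in ACVF every type-definable subgroup of $\mathbb G_a$ is an intersection of definable subgroups (Proposition 36 of \cite{acostaACVF}), so one can sandwich a definable subgroup $B\le\mathbb G_a$ between $\phi(D)$ and $\phi(U_1)$ and take $C:=\phi^{-1}(B)\cap U_1$; closure of $C$ under $\+$ and inversion then follows from the exact $m$-to-one count. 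This sidesteps entirely the need to generate a definable subgroup inside $G$ from a wide set, which is where your argument gets stuck. If you want to keep a generation-style argument, you would need to replace the appeal to $\mu(D)>0$ by some input of this kind on the target group or on the chain $(D_n)$.
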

\begin{claimproof}
    We follow the proof of Proposition 4.7 of \cite{acosta}.
By compactness there are $\mathbb K$-definable sets $U_1$ and $U_0$ with $U_1=U_1^{-1}$, $D\subseteq U_1\subseteq U_0\subseteq H$ and $U_1 \oplus U_1\subseteq U_0$ such that the formula defining $\phi$ also defines a function $\phi:U_0\to \mathbb G_a$ that is still finite to one and respects the group operation.

Moreover, as $D$ is a group and $\phi$ is a finite-to-one group homomorphism, all the fibers have the same size, say $m$. So by compactness we can also choose $U_1$ such that $\phi \arrowvert_{U_1}$ is $m$-to-one.

By compactness and the fact that each type definable subgroup of $\mathbb G_a$ is an intersection of definable groups (Proposition 36 of \cite{acostaACVF}), there is some definable group $B\subseteq \mathbb G_a$ such that $\pi_2(C)\subseteq B\subseteq \pi_2(U_1)$.

Take $$C:=\phi^{-1}(B)\cap U_1.$$ 
Clearly $C$ is definable, contains $D$ and $\phi$ is defined on $D$ and respects the group operation. So we only have to prove that $C$ is a subgroup of $G$. If $a\in C$ then $a^{-1}\in U_1$ and $\phi(a^{-1})=-\phi(a)\in B$ so $a^{-1}\in C$. Now let $a,b\in C$ so $a\+ b \in U_0$, moreover $\phi(a\+ b)=\pi(a)+\pi(b)$. As $B$ is a subgroup containing $\phi(a)$ and $\phi(b)$ it follows that $\phi(a\+ b)\in B$. In particular, as $B\subseteq \phi(U_1)$, there is some $c\in U_1$ such that $\phi(a\+ b)=\pi_2(c)$. So $a\+ b$ is one of the $m$ preimages (via $\phi$) of $\phi(c)$ so it follows from our conditions on $U_1$ that $a\+ b\in U_1$ so $a\+ b\in C$. \end{claimproof}

Let $F=\ker \phi$ and $A=\phi(C)$ so $\phi$ induces a finite-to-one group isomorphism $C/F\to  A$, thus the inverse of $\phi$ is injective and its image is $C/F$. As $\mu_0(C)>0$ and $\mu_0(G)=1$ it follows that $C$ has finite index in $G$ so $C/F$ has finite index in $G/F$ and then $G/F$ is locally isomorphic to $\mathbb G_a$.
\end{proof}

\begin{remark}\label{finiteQuotientAdditiveIsAdditiveLemma}
    Let $F\leq \mathbb G_a$ be a finite subgroup, then $\mathbb G_a/F$ is $\mathbb K^f$-definably isomorphic to $\mathbb G_a$.
\end{remark}
\begin{proof}
     For any $a\in\mathbb G_a$ let $\langle a\rangle$ be the subgroup of $\mathbb G_a$ generated by $a$.
     By induction it is enough to prove that if $a\in \mathbb G_a$ is such that $\langle a\rangle$ is finite, then $\mathbb G_a/ \langle a\rangle$ is $\mathbb K^f$-definable isomorphic to $\mathbb G_a$.

     Notice that if $\text{char} K=0$ then $\langle a\rangle$ is infinite so we may assume that $\text{char}(K)=p>0$.

     Define $$\phi:\mathbb G_a\to \mathbb G_a$$ the $\mathbb K^f$-definable group homomorphism given by $$\phi(x)= (a^{-1}x)^{p}-(a^{-1}x).$$
     Then, $\phi$ is onto $\mathbb G_a$ and 
     $$\ker (\phi)=\left\{x:a^{-1}x\in \{0,\ldots,p-1\}\right\}=\langle a \rangle.$$\end{proof}

\begin{remark}
    Let $(H,+_H)$ be an infinite $\mathbb K^f$-definable Abelian group and assume that for each $(\rho,\eta)\in H^2$ generic there is $(x,y)\in \mathbb G^2_a$ such that $(\rho,\eta,\rho +_H\eta)$ is $\mathbb K^f$-interalgebraic with $(x,y,x+y)$. Then $H$ is $\mathbb K^f$-definably isomorphic to $\mathbb G_a$.
\end{remark}    

\begin{proof}
    We will work in the pure field structure $\mathbb K^f$ which is strongly minimal. Consider the $\mathbb K^f$ definable group $\mathbb G_a\times H$ and let $\pi_1:\mathbb G_a\times H\to \mathbb G_a$ and $\pi_2:\mathbb G_a\times H\to H$ be the projection on the first and second coordinate respectively.
    By strongly minimality $D:=\stab^{f}(\rho,x)\leq \mathbb G_a\times H$ is a $\mathbb K^f$-type definable subgroup of $\mathbb G_a\times H$ such that for all but finitely many $a\in \mathbb G_a$ and for all but finitely may  $\alpha\in H$ the fibers $\pi_2^{-1}(\alpha)\cap D$  and $\pi_1^{-1}(a)\cap D$ are both non-empty and finite. In particular $H$ is strongly minimal in $\mathbb K^f$. 
    Again, by strong minimality of $\mathbb K^f$, each type definable group is an intersection of $\mathbb K^f$-definable groups. In particular, there is $C\leq \mathbb G_a\times H$ a $\mathbb K^f$-definable subgroup such that the fibers of both projections are finite and non-empty.

    Let $F_1:=\pi_1(\ker(\pi_2\arrowvert_C))$. This is a finite subgroup of $\mathbb G_a$. By Remark \ref{finiteQuotientAdditiveIsAdditiveLemma} $\mathbb G_a/F_1$ is $\mathbb K^{f}$ definable isomorphic to $\mathbb G_a$ so we may replace $\mathbb G_a$ by this quotient and assume that $\pi_2\arrowvert_C$ is injective. Then $\pi_2:C\to H$ is an isomorphism with its image, which is cofinite but as it is a subgroup of $H$ it has to be equal to $H$ so $\pi_2:C\to H$ is onto. In the same way, $\pi_1:C\to \mathbb G_a$ is onto. Then if we call $F:=\pi_2(\ker(\pi_1\arrowvert_C))$ then $H/F$ is isomorphic to $\mathbb G_a$ via $(\pi_2\arrowvert_C)^{-1}\circ \pi_1$. 

   If $H$ is an elliptic curve then it is complete so $H/F$ is also complete but $\mathbb G_a$ is not complete so $H$ can not be an elliptic curve. Moreover, if $H$ is the multiplicative group then, by a proof similar to the proof of Remark \ref{finiteQuotientAdditiveIsAdditiveLemma}, it follows that $H/F$ is isomorphic to $\mathbb G_m$ which is not isomorphic to $\mathbb G_a$ as the first has torsion of any order and the second only has torsion of order multiples of $p$. So $H$ can not be isomorphic to $\mathbb G_m$. As the only $\mathbb K^f$ definable strongly minimal groups up to $\mathbb K^{f}$ definable isomorphism are the elliptic curves $\mathbb G_m$ and $\mathbb G_a$, it follows that $H$ is $\mathbb K^f$-definably isomorphic to $\mathbb G_a$.
\end{proof}

\section{Infinitely many slopes at $(1,1)$}\label{findingAField}

In this Section we prove Theorem \ref{thmMultiplicativeVersion} assuming that there is some $\mathcal M$-definable $Y\subseteq M\times M$ with infinitely many slopes at $(1,1)$. More precisely, we prove:

\begin{prop}\label{propMultiplicativeUnderAssumption}
    Assume that there is some $\mathcal M$-definable strongly minimal set $Y\subseteq M\times M$ having infinitely many slopes at $(1,1)$ witnessed by some polynomials $L_i$ and $L_{i,n_i}$, then the structure $\mathcal M$ interprets an infinite field. 
\end{prop}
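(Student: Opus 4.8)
The plan is to mimic, in the multiplicative setting, the strategy already executed in Chapter~\ref{additiveCaseVersion} for the additive case, replacing the additive implicit-function/power-series machinery by its multiplicative counterpart developed in this section. First I would invoke Lemma~\ref{lemmaYStar} to replace $Y$ by a $\mathcal M$-definable, strongly minimal, non-$\mathcal M$-affine set still having Property $(\star)$ and, by the hypothesis of Proposition~\ref{propMultiplicativeUnderAssumption} together with the observation following Lemma~\ref{hInfDerivadasLemma} (the witnessing data is preserved under the reductions used in Lemma~\ref{lemmaYStar}, or at worst one reruns the argument keeping track of slopes), still having infinitely many slopes at $(1,1)$ witnessed by some $L_i$ and $L_{i,n_i}$. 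The key technical input is then the multiplicative analogue of the central Lemma~\ref{existsHLemma} of the additive chapter; the point of isolating ``infinitely many slopes at $(1,1)$'' is precisely that this analogue comes almost for free here: applying Fact~\ref{implicitFunctionTheoremf} to $L_1$ around $(1,1)$ produces an analytic $h$ with $(x,h(x))\in Y$, and the translates $h_a(x)=h(xa)h(a)^{-1}$ together with composition with the inverse of a fixed translate $t_c(Y)^{-1}$ give a uniformly analytic family $(X_a)_{a\in Y}$, $X_a:=t_a(Y)\circ t_c(Y)^{-1}$, with $d_1(H,s)=1$ and $N(H)$ finite because the slopes $\mathfrak d(L_1,a)$ (hence the higher coefficients $\mathfrak{dc}_N(L_1,a,c)$) already assume infinitely many values. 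This is essentially the content of Lemma~\ref{existsHEasyClaim}, and the slope data is $\mathbb K$-algebraic over the parameter by Remark~\ref{remarkSlopeIsAlgebraic} and Lemma~\ref{slopeDefinable}.

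Next I would transfer everything to a first-order statement in the style of Lemma~\ref{lemmaIntersectionSubeVersion}: there is $c\in Y$ and an open ball $B\subseteq s_N(H)$ (via Corollary~\ref{existsInf}) such that for every $t\in B$ and every $\mathbb K$-generic configuration $\mathfrak g_t$ of the rank~$2$ group $\mathbb G_a\ltimes\mathbb G_m$ built inside $B$ from generic $a_0,a_1,b_0,b_1,b$, there is a tuple $\mathfrak g\in Y^{10}$ whose image under $a\mapsto \mathfrak{dc}_N(L_1,a,c)$ is $\mathfrak g_t$, and moreover the appropriate intersection numbers $|X_\tau\circ X_a\cap Z|$ strictly increase along an infinite subfamily whenever the corresponding slope identity holds and the intersection in question is finite. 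The proof of this statement runs exactly as in Lemma~\ref{lemmaIntersectionSubeVersion}: uniform Fr-power expansions exist everywhere (Lemma~\ref{lemmaCanCompose}), the relevant composites $X_\tau\circ X_a$ and the auxiliary sets $Z(\varepsilon,\rho,\sigma)$ admit power expansions at $(1,1)$ whose $N$-th coefficient is the prescribed multiplicative-semidirect-product expression, and Lemma~\ref{intersectionSubeLemmaAnalitic} then yields the strict increase of intersection cardinalities, with the case split according to whether the intersection is finite or infinite handled as in the additive proof (the infinite case uses generic finite intersection via Claim~\ref{genericFiniteIntersectionClaim}, which goes through verbatim using Lemma~\ref{restaCompuestaFiniteIntersectionLemma} and Lemma~\ref{noGenericFiniteZLemma} — here one must check the no-vertical/no-horizontal-line hypotheses for $Y$, which follow from Property~$(\star)$). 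Since $ACVF_{p,q}$ is complete, proving it in a complete model suffices.

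With this in hand, the construction of the field configuration for $\mathcal M$ proceeds as in the proof of Theorem~\ref{thmAditiveVersion}: pick $t\in B$, shrink to balls $B_0\ni 0$, $B_1\ni 1$ small enough that all the semidirect-product expressions stay in $B$, take generic $(a_0,b_0,b)\in B_0^3$ and $(a_1,b_1)\in B_1^2$, form the standard rank~$2$ group configuration $\mathfrak g_t$ for $\mathbb G_a\ltimes\mathbb G_m$, pull back to $\mathfrak g=(\tau,\alpha_0,\alpha_1,\dots)\in Y^{10}$, and verify that the induced diagram is a rank~$2$ group configuration for $\mathcal M$: it is $\mathbb K$-interalgebraic with $\mathfrak g_t$ by Lemma~\ref{slopeDefinable} and a dimension count, so ranks computed in $\mathbb K$ agree, and the intersection-growth statement shows the collinearity relations persist in $\mathcal M$ (an infinite $\mathcal M$-definable exceptional set inside strongly minimal $Y$ is cofinite, forcing the relevant coordinate into $\acl_{\mathcal M}$ of the other two). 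Then Fact~\ref{fieldConfig} produces the interpreted infinite field. The main obstacle is bookkeeping rather than conceptual: one must set up the multiplicative intersection-theoretic Lemma so that its $N$-th coefficient identities reproduce exactly the $\mathbb G_a\ltimes\mathbb G_m$ multiplication table — this is where Lemma~\ref{compositionMultLemma} is essential, since unlike the additive case the relevant expansions are around $1$ and one needs $d_1(H,s)=1$, $d_n(H,s)=0$ for $1<n<N$ to make the ``$d_N$ of a composite is the sum of the $d_N$'s'' identity hold — and in verifying that the higher-slope invariant $\mathfrak{dc}_N$, not just $\mathfrak d=\mathfrak{dc}_1$, is still $\mathbb K$-algebraic and controls the configuration.
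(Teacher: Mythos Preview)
Your overall strategy is right, and the end of the argument (pick generics in $B$, pull back to $Y^{10}$, check $\mathbb K$-interalgebraicity, use intersection-growth to force $\acl_{\mathcal M}$-relations, apply Fact~\ref{fieldConfig}) matches the paper. But you have imported the machinery from the \emph{wrong} case and this breaks your key technical step.

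Under the hypothesis ``infinitely many slopes at $(1,1)$'', the set $s_1(Y,L_1,U)$ is infinite; by Lemma~\ref{hInfDerivadasLemma} this means the \emph{first} coefficient $d_1(H_0,s)=h_s'(1)$ already takes infinitely many values, i.e.\ $N_0=1$. Lemma~\ref{existsHEasyClaim}, which you invoke, is proved precisely under the opposite assumption: its proof uses ``$Y$ does not have infinitely many slopes at $(1,1)$'' to deduce $N_0>1$, and the normalisation $d_1(H,s)=1$ is obtained by cancelling the \emph{constant} first coefficient against that of the fixed translate $t_c(Y)$. When $N_0=1$, composing with $t_c(Y)^{-1}$ gives $d_1(H,s)=h_s'(1)/h_c'(1)$, which still varies with $s$; your claim $d_1(H,s)=1$ is therefore false here, and with it the hypothesis of the ``in addition'' clause of Lemma~\ref{compositionMultLemma} (which in any event requires $N\geq 2$). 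So the identity ``$d_N$ of a composite is the sum of the $d_N$'s'' is not available to you in the form you state.

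What the paper does in this case is simpler: it takes $X_a:=t_a(Y)$ with no composition by an inverse (Lemma~\ref{existsHLemmaMult}), so that $H(x,s)=h(xs)/h(s)$ and $s_1(H)$ is infinite directly. The replacement for Lemma~\ref{compositionMultLemma} is the elementary Lemma~\ref{multIsAdditionInDerivativeLemma}: for analytic functions fixing $1$, one has $(h\cdot g)'(1)=h'(1)+g'(1)$ and $(h\circ g)'(1)=h'(1)\cdot g'(1)$, so at order $N=1$ composition already multiplies slopes and pointwise $\cdot$ adds them --- exactly the $\mathbb G_a\ltimes\mathbb G_m$ table. The interalgebraicity invariant is then the first-order slope $\mathfrak d(L_1,a)$, not $\mathfrak{dc}_N(L_1,a,c)$, and Lemma~\ref{lemmaIntersectionSuberVersionMultiplicative} is the correct first-order transfer statement. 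Once you make this substitution, the rest of your outline goes through as written.
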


The assumption that $Y$ has infinitely many slopes at $(1,1)$  allows us to use the family of translates of $Y$, $(t_a(Y))_{a\in Y}$ to interpret a field similarly as we did in Chapter \ref{additiveCaseVersion}. 

From now on in this section we fix $Y\subseteq M\times M$, a $\mathcal M$-definable set having infinitely many slopes at $(1,1)$ witnessed by some polynomials $L_i$ and $L_{i,n}$. For $\alpha\in Y$ we define:

\begin{equation}
    X_\alpha:=t_\alpha(Y).
\end{equation}

In Chapter \ref{additiveCaseVersion} our definition of $X_\alpha$ was more complicated because we needed to ensure that the family has infinitely many derivatives at $(0,0)$. Now we are making this assumption at $(1,1)$ for the family of translates, so we can to prove a statement analogous to Lemma \ref{existsHLemma} but for this simpler family of curves:

\begin{lemma}\label{existsHLemmaMult}
    Assume that $\mathbb K$ is complete and $Y\subseteq K\times K$ has infinitely many slopes at $(1,1)$ witnessed by some polynomials $L_i(x,y)$ and $L_{i,n}(x,y)$. Then, there is some open set $U\subseteq K$ containing $0$, $h:U\to K$ and $H:U\times U\to K$ analytic functions such that:

\begin{enumerate}

    \item $H(1,s)=1$ for all $s\in U$.

    \item For all $s\in U$, $\bar s:=(s,h(s))\in Y$.

     \item For all $s\in U$ there is a neighborhood $U_s\subseteq K$ of $1$ such that $(x,H(x,s))\in X_{\bar s}$ for all $x\in U_s$.

     \item The set $$s_1(H):=\left\{\frac{\partial H}{\partial x}(1,s):s\in U\right\}$$ is infinite.

     \item For each $a\in U$ and $b=(b_1,b_2)\in X_{\bar a}$ there are neighborhoods $U_b$ of $b_1$ and $V_a$ of $a$, an analytic function $\Phi(x,s)$ defined in $U_b\times V_a$ and a natural number $n$ such that for all $(x,s)\in U_b\times V_a$ one has that $(x,\text{Fr}^{-n}(\Phi(x,s)))\in X_{\bar s}$.
\end{enumerate}
\end{lemma}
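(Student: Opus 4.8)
The plan is to follow the blueprint of the proof of Lemma~\ref{existsHLemma}, but the argument is much shorter: here the family $(X_\alpha)_{\alpha\in Y}=(t_\alpha(Y))_{\alpha\in Y}$ is just the multiplicative translates of $Y$, and the hypothesis that $Y$ has infinitely many slopes at $(1,1)$ supplies for free exactly the ``infinitely many derivatives'' property that in the additive case had to be engineered via the composition $X_{a,c}=(Y-t_a(Y))\circ(Y-t_c(Y))^{-1}$.

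First I would unpack the hypothesis. Since the phrase ``$Y$ has infinitely many slopes at $(1,1)$ witnessed by $L_i$ and $L_{i,n}$'' presupposes (Definition~\ref{defiYinfiniteSlopes}) that $Y$ has Property $(\star)$ witnessed by those polynomials, Clause~\ref{clause3} of $(\star)$ gives $(1,1)\in Y$, $L_1$ irreducible, $L_1(1,1)=0$ and $\frac{\partial L_1}{\partial y}(1,1)\neq 0$, and Clause~\ref{clause2} shows that $(1,1)$ is an interior point of $Y\cap D$, where $D$ is the zero set of $L_1$. Applying the Implicit Function Theorem (Fact~\ref{implicitFunctionTheoremf}) to $L_1$ at $(1,1)$ gives an analytic $h$ on a neighbourhood of $1$ with $h(1)=1$ and $L_1(x,h(x))=0$; after shrinking we may assume $(x,h(x))\in Y\cap D$ for all $x$ in its domain, which is Clause~2 with $\bar s=(s,h(s))$. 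Then I would set $H(x,s):=h(xs)\,h(s)^{-1}$. As $h$ is analytic near $1$ with $h(1)=1\neq 0$, this is analytic on some $U\times U$ with $1\in U$ (shrink so that $h$ is nonvanishing on $U$); $H(1,s)=1$ is Clause~1; and for fixed $s$ the map $x\mapsto H(x,s)$ is precisely the translate $h_s$ of Lemma~\ref{hInfDerivadasLemma}, whose graph lies in $t_{\bar s}(Y)=X_{\bar s}$, giving Clause~3 (with $U_s$ a small ball around $1$). For Clause~4, Lemma~\ref{hInfDerivadasLemma} computes $\frac{\partial H}{\partial x}(1,s)=h_s'(1)=\mathfrak d(L_1,\bar s)$, and its ``moreover'' part — applied with this $h$, using that $(x,h(x))\in Y\cap D$ and that $Y$ has infinitely many slopes at $(1,1)$ — says that $\{h_s'(1):s\in U\}$ is infinite, i.e.\ $s_1(H)$ is infinite.

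Finally, Clause~5. Given $a\in U$ and $b=(b_1,b_2)\in X_{\bar a}=t_{\bar a}(Y)$, the point $(u_1,u_2):=(b_1 a,\ b_2 h(a))$ lies in $Y$, so by Clauses~\ref{clause2} and~\ref{clause4} of $(\star)$ there is $i$ with $L_i(u_1,u_2)=0$, $L_i(x,y)=L_{i,n_i}(x,y^{p^{n_i}})$, $\frac{\partial L_{i,n_i}}{\partial y}(u_1,u_2^{p^{n_i}})\neq 0$, and the zero set of $L_i$ near $(u_1,u_2)$ contained in $Y$. The Implicit Function Theorem applied to $L_{i,n_i}$ at $(u_1,u_2^{p^{n_i}})$ gives analytic $g$ near $u_1$ with $g(u_1)=u_2^{p^{n_i}}$ and $L_{i,n_i}(x,g(x))=0$, so $x\mapsto(x,\text{Fr}^{-n_i}(g(x)))$ parametrises a piece of $Y$ through $(u_1,u_2)$. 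Translating multiplicatively, for $s$ near $a$ and $x$ near $b_1$ one checks, using $\text{Fr}^{-n}(\alpha)\,\beta=\text{Fr}^{-n}(\alpha\beta^{p^n})$, that $\big(x,\ \text{Fr}^{-n_i}\!\big(\Phi(x,s)\big)\big)\in X_{\bar s}$ where $\Phi(x,s):=g(xs)\,h(s)^{-p^{n_i}}$ is analytic on a neighbourhood of $(b_1,a)$ (here $h(s)^{-p^{n_i}}=(h(s)^{p^{n_i}})^{-1}$ is analytic since $h$ is nonvanishing near $a$); taking $n=n_i$ gives Clause~5.

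I expect the only mildly delicate point to be the bookkeeping in Clause~5 — carrying the Frobenius twist $\text{Fr}^{-n_i}$ through the multiplicative translation so that the resulting parametrisation is genuinely analytic in $(x,s)$ and lands in $X_{\bar s}$ — but this is routine, and everything else is essentially read off from Lemma~\ref{hInfDerivadasLemma}, Property~$(\star)$, and the Implicit Function Theorem.
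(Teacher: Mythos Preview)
Your proposal is correct and follows essentially the same approach as the paper's proof: apply the Implicit Function Theorem to $L_1$ at $(1,1)$ to get $h$, set $H(x,s)=h(xs)h(s)^{-1}$, read off Clauses~1--3 directly, invoke Lemma~\ref{hInfDerivadasLemma} for Clause~4, and for Clause~5 pull back $(b_1,b_2)\in X_{\bar a}$ to $(b_1 a,\, b_2 h(a))\in Y$, apply the Implicit Function Theorem to the appropriate $L_{i,n_i}$, and translate multiplicatively to get $\Phi(x,s)=g(xs)\,h(s)^{-p^{n_i}}$. Your identity $\mathrm{Fr}^{-n}(\alpha)\beta=\mathrm{Fr}^{-n}(\alpha\beta^{p^n})$ is exactly what makes the Frobenius bookkeeping go through, and your $\Phi$ agrees with the paper's $\Phi(x,s)=g(xs)\cdot\mathrm{Fr}^{n}(h(s)^{-1})$.
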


\begin{proof}
 As $Y$ satisfies condition $(\star)$ witnessed by $L_{i}(x,y,\bar d)$ and $L_{i,n}(x,y,\bar d)$ one has that:

    $$Y=\bigcup_i\left\{(x,y)\in V_i: L_i(x,y,\bar d)=0\right\},$$ where each $V_i$ is some open subset of $K\times K$ intersecting the set of zeros of $L_i(x,y,\bar d)$, $L_1(1,1)=0$ and 
    $$\frac{\partial L_1}{\partial y}(1,1)\neq 0.$$ 
    By Fact \ref{implicitFunctionTheoremf} there is an analytic function $h(x)$ converging in a neighborhood $U$ of $1$ such that $h(1)=1$ and $L_1(x,h(x))=0$ for all $x\in U$. Shrinking $U$ we may assume that for all $x\in U$, $(x,h(x))\in V_1$, therefore for all $x\in U,$  $(x,h(x))\in Y$. 

    Moreover, for $s\in U$ the function $h_s(x)=h(x\cdot s)\cdot(h(s))^{-1}$ is analytic in a neighborhood of $1$ and its graph is contained on $t_{(s,h(s))}(Y)$ so we take 
    $$H(x,s)=\frac{h(x\cdot s)}{h(s)}$$ in order to get conditions 1-3. As
    $$\frac{\partial H}{\partial x}(1,s)=h_s'(1),$$ condition $4$ follows from Lemma \ref{hInfDerivadasLemma}.

Only condition 5 is missing. For this let $a\in U$ and let $(b_1,b_2)\in t_{\bar a}(Y)$, thus $$c=(c_1,c_2):=(a\cdot b_1,h(a)\cdot b_2)\in Y.$$

By the property $(\star)$ on $Y$, as $(c_1,c_2)\in Y$, there is some natural number $n$ and an irreducible polynomial $L_{i,n}$ such that: 
$$\displaystyle L_{i,n}\left(c_1,c_2^{p^n}\right)=0, $$   
   $$ \displaystyle\frac{\partial L_{i,n}}{\partial y}\left(c_1,c_2^{p^n}\right)\neq 0 $$

and for all $(x,y)$ in some neighborhood $V_i$ of $(c_1,c_2)$, if $L_{i,n}(x,y^{p^n})=0$ then $(x,y)\in Y$. 

By Fact \ref{implicitFunctionTheoremf} there is an analytic function  $g$ defined in some neighborhood $U_1$ of $c_1$ such that 

$$g(c_1)=c^{p^n}_2$$


and  
$$L_{i,n}(x,g(x))=0$$
for all $x\in U_1$ so that 
$$L_{i,n}(x, (\text{Fr}^{-n}(g(x)))^{p^n})=0.$$

Shrinking $U_1$ one may assume that for all $x\in U_1$, 
$$(x,\text{Fr}^{-n} \circ g(x))\in V_1,$$ 
so 
$$(x,\text{Fr}^{-n} \circ g(x))\in Y$$
for all $x\in U_1$ so that the graph of $\text{Fr}^{-n}\circ g$ is  contained in $Y$.

Thus, for all $x\in U_1$ and for $s$ close enough to $a$, $$(x\cdot s^{-1}, \text{Fr}^{-n} \circ g(x) \cdot h(s)^{-1})\in Y_{\bar s}$$
so the graph of $x\mapsto \text{Fr}^{-n}( g(x\cdot s))\cdot h(s)^{-1}$ is contained on $t_{\bar s}(Y)$

Thus, we can take this $n$ and $$\Phi(x,s):=g(x\cdot s)\cdot \text{Fr}^{n}\left(h(s)^{-1}\right)$$ to get Clause 5 of the statement.\end{proof}

This Lemma allows us to construct a group configuration for $\mathcal M$ in a very similar way as in Section \ref{theProof}. The reason why the construction of the additive case also works for the multiplicative case is the following observation:

\begin{lemma}\label{multIsAdditionInDerivativeLemma}
Assume that $\mathbb K$ is metric. Let $U\ni 1$ be open and $h:U\to K$ and $g:U\to K$ be analytic functions such that $h(1)=g(1)=1$, then:
$$(h\cdot g)'(1)=h'(1)+g'(1),$$ $$(h/g)'(1)=h'(1)-g'(1)$$and $$(h\circ g)'(1)=h'(1)\cdot g'(1).$$

\end{lemma}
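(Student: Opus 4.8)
The plan is to read off all three identities from the power series expansions of $h$ and $g$ around $1$. Write
$$h(x)=1+\sum_{n\geq 1}a_n(x-1)^n,\qquad g(x)=1+\sum_{n\geq 1}b_n(x-1)^n,$$
both convergent in some open ball about $1$ by analyticity; here $a_1=h'(1)$ and $b_1=g'(1)$. For the product, multiplying the two series — legitimate, with the product convergent, by the observation following Proposition \ref{coefficientsAreAnalyticProp} — gives $(h\cdot g)(x)=1+(a_1+b_1)(x-1)+\sum_{n\geq 2}c_n(x-1)^n$ for suitable $c_n$, so the coefficient of $(x-1)$ yields $(h\cdot g)'(1)=a_1+b_1=h'(1)+g'(1)$.

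For the quotient, note that since $g(1)=1\neq 0$ the function $1/g$ is analytic in a neighbourhood of $1$ (it is $F\circ g$ with $F(y)=1/y$ analytic at $y=1$, expanded as the convergent series $1-(y-1)+(y-1)^2-\cdots$, and $g(x)-1\to 0$ as $x\to 1$). Differentiating the identity $g(x)\cdot(1/g)(x)=1$ at $x=1$ gives $g'(1)\cdot 1+1\cdot(1/g)'(1)=0$, hence $(1/g)'(1)=-b_1$. Applying the product rule just established to $h\cdot(1/g)$ gives $(h/g)'(1)=h'(1)+(1/g)'(1)=h'(1)-g'(1)$. For the composition, substitute the expansion of $g$ into that of $h$: since $g(x)-1\to 0$ near $1$, $(h\circ g)(x)=1+\sum_{n\geq 1}a_n(g(x)-1)^n=1+a_1b_1(x-1)+O((x-1)^2)$, the only contribution to the coefficient of $(x-1)$ coming from the term $a_1(g(x)-1)$; thus $(h\circ g)'(1)=a_1b_1=h'(1)\cdot g'(1)$. (Alternatively this is the $N=1$ instance of Equation \ref{composeCoefEquation} in the proof of Lemma \ref{compositionMultLemma}.)

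There is essentially no obstacle here. The only points that warrant a word are that products, reciprocals and compositions of analytic functions are again analytic near the relevant points, and that the termwise manipulations of power series are valid — all standard over a complete valued field and already covered by the material on power series recalled earlier, in particular Lemma \ref{convergenciaFacilLemma}, Proposition \ref{coefficientsAreAnalyticProp} and the remark immediately after it.
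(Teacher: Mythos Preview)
Your argument is correct. The paper's proof takes a slightly different, shorter route: it simply invokes the Leibniz rule and the chain rule for analytic functions and then substitutes $h(1)=g(1)=1$ to simplify. You instead work directly from the power series expansions, in effect \emph{deriving} those rules in this special case from the coefficient manipulations (and you note the connection to Lemma~\ref{compositionMultLemma}). Your approach is a touch more self-contained, since it does not presuppose the standard differentiation rules over a complete valued field; the paper's approach is terser, since those rules are routine once one has analyticity. Either way the content is the same one-line computation.
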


\begin{proof}
    By Leibniz rule we have that:
$$(h\cdot g)'(1)=h'(1)\cdot g(1) + h(1)\cdot g'(1)=h'(1)+g'(1).$$

Using again Leibniz rule we get:

$$(h/g)'(1)=\frac{h'(1)g(1)-h(1)g'(1)}{g(1)^2}=h'(1)-g'(1).$$

Finally, by chain rule we get:
$$(h\circ g)'(1)=h'(g(1))\cdot g'(1)=h'(1)\cdot g'(1).$$\end{proof}

From now on the proof is very similar to the proof of Theorem \ref{thmAditiveVersion}.

Remember from Definition \ref{defidL1} that for $L(x,y)$ a polynomial and $a\in K\times K$ such that $L(a)=0\neq \frac{\partial L}{\partial y}(a)$ we set:
$$\mathfrak d(L,a):=\frac{\frac{-\partial L}{\partial x}(a_1,a_2)}{\frac{\partial L}{\partial y}(a_1,a_2)}\frac{a_1}{a_2}.$$

First, we prove a first order statement (in the language of valued fields) analogous to Lemma \ref{lemmaIntersectionSubeVersion}:

\begin{lemma}\label{lemmaIntersectionSuberVersionMultiplicative}
Let $\phi(x,y,\bar e)$ be a formula with parameters $\bar e$, let $L_i(x,y,\bar e)$ and $L_{i,n}(x,y,\bar e)$ be polynomials. 

For all $\bar d$, if 
$$Y=\{(x,y)\in K^2: \mathbb K\models \phi(x,y,\bar d)\}$$ has infinitely many slopes at $(1,1)$ witnessed by $L_i(x,y,\bar d)$ and $L_{i,n}(x,y,\bar d)$,  then if for $a \in Y$ we define

$$X_a:=t_a(Y)$$

There is an open ball $B$ such that for all
$t\in B$ and for all $a_0$, $b_0$, $b$, $a_1$, $b_1\in K$, if 
     $$\mathfrak g_f:=(t,t a_1, \ t+a_0 , \ tb_1,\  t+b_0,\  t a_1 b_1 ,\  t+a_1b_0+a_0,\  t+b, \ t+a_1 b+a_0,\  t+a_1 b_1 b + b_1 a_0 +b_0)\in B^{10}$$
 
     there is $\mathfrak g=(\tau,\alpha_0,\alpha_1,\beta_0,\beta_1,\gamma_0,\gamma_1,\kappa,\eta,\mu)\in Y^{10}$ such that:
   \begin{equation}\label{tangentConditionsMult}\tag{TM}
    f(a):=\mathfrak d(L_1,a) \text{ maps the }i\text{'th coordinate of }\mathfrak g\text{ into the }i\text{'th coordinate of }\mathfrak g_f.
    \end{equation}

    Moreover, if for $r_0,r_1,r_2\in Y$ we define: 
        $$Z(r_0,r_1,r_2):=(X_\tau\circ X_{r_0})\cdot (X_{r_1}\circ X_{r_2})\menosmult(X_\tau\circ X_{r_1})$$ 
        then, for all $a,r_0,r_1,r_2$ coordinates of $\mathfrak g$ we have:
        \begin{itemize}
    \item If $tf(a)=tf(r_0)+f(r_1)f(r_2)-tf(r_1)$
       and $X_\tau\circ X_a\cap Z(r_0,r_1,r_2)$ is finite, there are infinitely many $s\in Y$ with:
     $$| X_\tau\circ X_a\cap Z(r_0,r_1,r_2)|<| X_\tau\circ X_s\cap Z(r_0,r_1,r_2)|.$$
    \item If $tf(a)=f(r_1)f(r_2)$
    and $X_\tau\circ X_a\cap X_{r_1}\circ X_{r_2}$ is finite, there are infinitely many $s\in Y$ with:
    $$|X_\tau\circ X_a\cap X_{r_1}\circ X_{r_2}|<|X_\tau\circ X_s\cap X_{r_1}\circ X_{r_2}|.$$
    \end{itemize}



     





     

    
\end{lemma}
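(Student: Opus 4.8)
The plan is to transcribe the proof of Lemma~\ref{lemmaIntersectionSubeVersion}, substituting the multiplicative tools of the present section for the additive ones of Chapter~\ref{additiveCaseVersion}; the argument is in fact shorter, since $M=\mathbb G_m$ has no proper definable subgroup of finite index, so the coset decomposition $i_{k,l}$ of the additive proof disappears. First, ``$Y(\bar d)$ has infinitely many slopes at $(1,1)$ witnessed by $L_i(x,y,\bar d)$ and $L_{i,n}(x,y,\bar d)$'' is a first-order condition on $\bar d$ (the observation after Lemma~\ref{hInfDerivadasLemma}), and the conclusion is, for fixed $\bar d$, a first-order statement as well (``open ball'' is parametrized by a centre and a radius, ``infinitely many $s$ with $\dots$'' is expressible by Corollary~\ref{existsInf}, and cardinality comparisons of definable finite sets are first-order); since $ACVF_{p,q}$ is complete, it suffices to prove the lemma in one model, so I may assume $\mathbb K$ is complete. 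Apply Lemma~\ref{existsHLemmaMult} to obtain an open $U\ni 1$ and analytic $h\colon U\to K$, $H\colon U\times U\to K$ with properties (1)--(5); recall from its proof that $H(x,s)=h(xs)/h(s)=h_s(x)$. Set $d(s):=\frac{\partial H}{\partial x}(1,s)$. By property~(4) the set $s_1(H)=\{d(s):s\in U\}$ is infinite, so Corollary~\ref{existsInf} gives an open ball $B\subseteq s_1(H)$, which I choose so that $0\notin B$; this is the $B$ of the statement.

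Fix $t\in B$ and $a_0,b_0,b,a_1,b_1$ with $\mathfrak g_f\in B^{10}$. Since each coordinate of $\mathfrak g_f$ lies in $B\subseteq s_1(H)$, pick $\tau',\alpha_0',\alpha_1',\beta_0',\beta_1',\gamma_0',\gamma_1',\kappa',\eta',\mu'\in U$ whose $d$-values are, in order, the ten coordinates of $\mathfrak g_f$, and put $\tau=(\tau',h(\tau'))$, \dots, $\mu=(\mu',h(\mu'))$; by property~(2) these all lie in $Y$. The graph of $h_s$ is contained in $X_{\bar s}=t_{\bar s}(Y)$ and $L_1(s,h(s))=0$ on $U$, so Lemma~\ref{hInfDerivadasLemma} yields $\mathfrak d(L_1,\bar s)=h_s'(1)=\frac{\partial H}{\partial x}(1,s)=d(s)$ for $s\in U$. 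Hence $f(\bar s)=\mathfrak d(L_1,\bar s)=d(s)$, and the way the preimages were chosen is precisely condition (TM).

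For the two displayed bullets I reproduce the intersection-theoretic part of the proof of Lemma~\ref{lemmaIntersectionSubeVersion}. By Lemma~\ref{existsHLemmaMult}, $H$ is a uniform power expansion of $(X_a)_{a\in Y}$ at $(1,1)$, so $H_\tau(x,s):=H(H(x,s),\tau')$ is a uniform power expansion of $(X_\tau\circ X_a)_{a\in Y}$ at $(1,1)$; by property~(5) and Lemma~\ref{lemmaCanCompose} this composed family is uniformly analytic at $(1,1)$ and admits, for every $s\in U$, a uniform Fr-power expansion for $X_\tau\circ X_{\bar s}$ at each of its points. Since $s_1(H)$ is infinite, $N(H)=1$; and the composition clause of Lemma~\ref{multIsAdditionInDerivativeLemma} gives $\frac{\partial H_\tau}{\partial x}(1,s)=d(\tau')\,d(s)=t\,d(s)$, so $s_1(H_\tau)$ is infinite (here $t\neq 0$) and $N(H_\tau)=1$ as well. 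For the first bullet put $T(x):=\frac{H(H(x,r_0'),\tau')\,H(H(x,r_2'),r_1')}{H(H(x,r_1'),\tau')}$; by Lemma~\ref{lemmaSumaComp} the graph of $T$ lies, near $(1,1)$, in $Z(r_0,r_1,r_2)=(X_\tau\circ X_{r_0})\cdot(X_{r_1}\circ X_{r_2})\menosmult(X_\tau\circ X_{r_1})$, which has no isolated points by Lemma~\ref{noIsolated}, so $T$ is a power expansion of $Z(r_0,r_1,r_2)$ at $(1,1)$; by Lemma~\ref{multIsAdditionInDerivativeLemma} (product $\mapsto$ sum, quotient $\mapsto$ difference) and the chain rule, $T'(1)=t f(r_0)+f(r_1)f(r_2)-t f(r_1)$, which under the bullet's hypothesis equals $t\,d(a')=\frac{\partial H_\tau}{\partial x}(1,a')$. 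Since $N(H_\tau)=1$, only this first coefficient needs to be matched, so Lemma~\ref{intersectionSubeLemmaAnalitic} (with $q=1$, the family $(X_\tau\circ X_a)_{a\in Y}$, the set $Z(r_0,r_1,r_2)$, the power expansion $T$) produces an open $W\ni a'$ with $|X_\tau\circ X_{\bar s}\cap Z(r_0,r_1,r_2)|>|X_\tau\circ X_{\bar a}\cap Z(r_0,r_1,r_2)|$ for all $s\in W\setminus\{a'\}$, and $W\setminus\{a'\}$ is infinite by Corollary~\ref{existsInf}, giving infinitely many $\bar s\in Y$. The second bullet is identical with $Z:=X_{r_1}\circ X_{r_2}$ and $T(x):=H(H(x,r_2'),r_1')$, where $T'(1)=f(r_1)f(r_2)$.

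Apart from this bookkeeping, the genuinely delicate points are: (i) checking that $Z(r_0,r_1,r_2)$ and $X_{r_1}\circ X_{r_2}$ are one-dimensional, have no isolated points, and contain no infinite vertical or horizontal family of points, so that Lemma~\ref{noIsolated}, the composition operation, and the Fr-power-expansion machinery are available --- this is handled exactly as in Chapter~\ref{additiveCaseVersion}, using that the components of $Y$ are honest binary polynomials (clause 4 of Property~$(\star)$); and (ii) the observation that $N(H_\tau)=1$, which is what lets Lemma~\ref{intersectionSubeLemmaAnalitic} be invoked after matching only the first-order coefficient of the two power expansions. I expect (i) to be the only place calling for real care, and it goes through verbatim as in the additive case, so the whole proof reduces to transcribing the additive argument with the differentiation rules replaced by Lemma~\ref{multIsAdditionInDerivativeLemma}.
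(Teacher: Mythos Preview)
Your proof is correct and follows essentially the same route as the paper's: reduce to the complete case by first-orderness, invoke Lemma~\ref{existsHLemmaMult} to get $U$, $h$, $H$, choose the ten preimages in $U$, verify (TM) via Lemma~\ref{hInfDerivadasLemma}, and then apply Lemma~\ref{intersectionSubeLemmaAnalitic} after matching first derivatives with Lemma~\ref{multIsAdditionInDerivativeLemma}. You are in fact slightly more explicit than the paper on two points---you isolate the observation $N(H_\tau)=1$ (so only first-order coefficients need to match) and you note $0\notin B$ so that $t\neq 0$---which the paper uses implicitly without comment.
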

\begin{proof}
    It is very similar to the proof of Lemma \ref{lemmaIntersectionSubeVersion}.

       As the statement of Lemma is a first order assertion on the parameter $\bar d$, it is enough if we prove it for some model of ACVF. In particular, without loss of generality we may assume that $\mathbb K$ is complete.

    Fix $U, h$ and $H$ as provided by Lemma \ref{existsHLemmaMult}. For $s\in U$ call $$d(s):=\frac{\partial H}{\partial x}(1,s).$$
By Corollary \ref{existsInf} and Clause 4 of Claim \ref{existsHLemmaMult}, there is some open ball $B$ contained on $s_1(H)$. Let $t\in B$ and $a_0$, $b_0$, $b$, $a_1$, $b_1\in K$ such that
     $$\{t a_1, \ t+a_0 , \ tb_1,\  t+b_0,\  t a_1 b_1 ,\  t+a_1b_0+a_0,\  t+b, \ t+a_1 b+a_0,\  t+a_1 b_1 b + b_1 a_0 +b_0\}\subseteq B.$$

By definition of $B$ we can find  $\tau',\ \alpha'_0,\ \beta'_0,\ \gamma'_0,\ \kappa',\ \eta'$ and $\mu'$ elements of $U$ such that:

\begin{itemize}
    \item $d(\tau')=t$
    \item $d(\alpha'_0)=t+a_0$,
    \item $d(\beta'_0)=t+b_0$,
    \item $d(\gamma'_0)=t+a_0b_1+b_0$,
       \item $d(\kappa')=t+b$,
    \item $d(\eta')=t+ a_0+a_1b$,
    \item $d(\mu')=t+b_0+a_0b_1+a_1b_1b$,
    \item $d(\alpha'_1)=ta_1$,
    \item $d(\beta'_1)=tb_1$ and
     \item $d(\gamma'_1)=ta_1b_1$.
\end{itemize}

Define $\tau=(\tau',h(\tau'))$, $\alpha_0=(\alpha'_0,h(\alpha'_0))$, $\alpha_1=(\alpha'_1,h(\alpha'_1))$, $\beta_0=(\beta'_0,h(\beta'_0))$, $\beta_1=(\beta'_1,h(\beta'_1))$, $\gamma_0=(\gamma'_0,h(\gamma'_0))$, $\gamma_1=(\gamma'_1,h(\gamma'_1))$, $\kappa=(\kappa',h(\kappa'_0))$, $\eta=(\eta',h(\eta'_0))$ and $\mu=(\mu',h(\mu'_0))$. 

By Lemma \ref{hInfDerivadasLemma} and Clause 3 of the list of properties for $H$ and $h$, this choice of 
$$(\tau,\alpha_0,\alpha_1,\beta_0,\beta_1,\gamma_0,\gamma_1,\kappa,\eta,\mu)$$ 
implies that the conditions (\ref{tangentConditionsMult}) of the statement holds.

Now we prove the first bullet of the `moreover' part of the statement:

Let $a,r_0,r_1,r_2$ such that 
$$tf(a)=tf(r_0)+f(r_1)f(r_2)-tf(r_1)$$ and assume that $ X_\tau\circ X_a\cap Z(r_0,r_1,r_2)$ is finite.

        As $H$ is a uniform analitification for $(X_a)_{a\in Y}$ at $(1,1)$,  $H_\tau(x,s)=H(H(x,s),\tau')$ is a uniform analitification for $(X_\tau \circ X_a)_{a\in Y}$ at $(1,1)$.
      
        By Clause 5 of Lemma \ref{existsHLemma} for all $\alpha\in U$ and all $b\in X_{\alpha}$, there is a uniform Fr-analitification of $(X_a)_{a\in Y}$ for $X_\alpha$ at $(1,1)$. As $\tau'\in U$, in particular, $X_\tau$ admits a Fr-analitification at any point. 
        
        So by Lemma \ref{lemmaCanCompose}, for all $\alpha \in U$ and all $b\in X_\tau \circ  X_\alpha$, there is a uniform Fr-analitification of $(X_\tau\circ X_a)_{a\in Y}$ for $X_\tau\circ X_\alpha$ at $b$.

       Moreover, the function 
       $$T(x) := H(H(x,r'_0),\tau')\cdot H(H(x,r_2'),r'_1)\cdot^{-1} H(H(x,r'_1),\tau')$$ is an analitification for $Z(r_0,r_1,r_2)$ at $(1,1)$. By Lemma \ref{multIsAdditionInDerivativeLemma}, $$T'(1)=tf(r_0) + f(r_1)f(r_2)-tf(r_1)$$

       so by hypothesis $T'(1)= tf(a)$

       Thus, $$\frac{\partial H_\tau}{\partial x}(1,a)=tf(a)=T'(1)$$

       so we apply Lemma \ref{intersectionSubeLemmaAnalitic} and conclude. 

  Second bullet has similar proof, this completes the proof of Lemma \ref{lemmaIntersectionSubeVersion}
\end{proof}

Now we present the proof of Proposition \ref{propMultiplicativeUnderAssumption}. It is very similar to the proof of Theorem \ref{thmAditiveVersion}.

\begin{proof}\textit{(Proof of Proposition \ref{propMultiplicativeUnderAssumption})}

Let $Y\subseteq M\times M$ be a $\mathcal M$-definable not $\mathcal M$-affine strongly minimal set with infinitely many slopes at $(1,1)$ witnessed by some polynomials $L_i$ and $L_{i,n}$.

Let $B\subseteq K$ an open ball provided by Lemma \ref{lemmaIntersectionSuberVersionMultiplicative}. For $a\in Y$ let

$$X_{a}=t_a(X).$$

Notice that $(X_a)_{a\in Y}$ is  $\mathcal M$-definable family. 


Let $t\in B$. Given that addition and multiplication are continuous functions, there are $B_0$ and $B_1$ open balls around $0$ and $1$ respectively such that:

\begin{itemize}
  \item $tB_1\subseteq B,$
  \item $t+B_0\subseteq B,$
  \item $t B_1 B_1\subseteq B,$ 
  \item $t+B_1B_0+B_0\subseteq B,$ and
  \item $t+B_1B_1B_0+B_1B_0+B_0\subseteq B$.

\end{itemize}

As $B_0$ and $B_1$ are open balls, let $(a_0,b_0,b)\in B^3_0$ be a triple of $\mathbb K$-dimension $3$ over $t,d$, and let $(a_1,b_1)\in B_1^2$ be a tuple of $\mathbb K$-dimension $2$ over $a_0,b_0,b, t,d$. Then $(a_0,b_0,b,a_1,b_1)$ is a tuple of $\mathbb K$-dimension $5$ over $t,d$.

Then we have a standard group configuration for $\mathbb G_a\ltimes\mathbb G_m$ given by:

\begin{equation}\label{gcFullStructureMultiplicative}
 	\begin{tikzcd}
 	(a_0, a_1) \arrow[ddd, dash] \arrow[rrr, dash] &&& b \arrow[rrr, dash]  &&& a_0+a_1 b \arrow[dddllllll,  dash] 
  \\
&&&&&&
\\
& & && 
&&
\\
 (b_0,b_1)  \arrow[ddd, dash] &&& b_0+ a_0b_1+ a_1 b_1 b &&& \\ &&&&&&
 \\
 &&&&&&
 \\
 (a_0b_1+b_0,a_1b_1)  \arrow[uuuuuurrr, crossing over, dash] &&&&&&
  	\end{tikzcd}
\end{equation}

Moreover, by our choice of $B_0$ and $B_1$ we have that:

$$\{t a_1, \ t+a_0 , \ tb_1,\  t+b_0,\  t a_1 b_1 ,\  t+a_1b_0+a_0,\  t+b, \ t+a_1 b+a_0,\  t+a_1 b_1 b + b_1 a_0 +b_0\}\subseteq B.$$

Let $\tau,\ \alpha_0,\ \alpha_1,\ \beta_0,\ \beta_1,\ \gamma_0,\ \gamma_1,\ \kappa,\ \eta$ and $\mu$ be elements of $Y$ provided by Lemma \ref{lemmaIntersectionSubeVersion} 



    





\begin{claim}\label{claimGCVersionMultiplicative}
    The following is a rank $2$ group configuration for the structure $\mathcal M$:

    \begin{equation}\label{GCReductMultiplicative}
 	\begin{tikzcd}
 	(\alpha_0, \alpha_1) \arrow[ddd, dash] \arrow[rrr, dash] &&& \kappa \arrow[rrr, dash]  &&& \eta \arrow[dddllllll,  dash] 
  \\
&&&&&&
\\
& & && 
&&
\\
 (\beta_0,\beta_1)   \arrow[ddd, dash] && \mu  &&&& \\ &&&&&&
 \\
 &&&&&&
 \\
  (\gamma_0,\gamma_1)  \arrow[uuuuuurrr, crossing over, dash] &&&&&&
  	\end{tikzcd}
 	\end{equation}
\end{claim}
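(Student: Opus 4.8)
The plan is to mirror, almost verbatim, the argument already carried out in \Cref{claimGCVersionAdditive} for the additive case, since the set-up of \Cref{lemmaIntersectionSuberVersionMultiplicative} is the exact multiplicative analogue of \Cref{lemmaIntersectionSubeVersion}. First I would recall that by \Cref{hInfDerivadasLemma} (together with \Cref{remarkSlopeIsAlgebraic}) the relation $X_a \tangent l_{f(a)}$ with $f(a)=\mathfrak d(L_1,a)$ encoded in condition (\ref{tangentConditionsMult}) forces $a_0 \in \acl_{\mathbb K}(\alpha_0)$, and conversely, comparing dimensions,
\[
1 \geq \dim_{\mathbb K}(\alpha_0) = \dim_{\mathbb K}(a_0,\alpha_0) \geq \dim_{\mathbb K}(a_0) = 1,
\]
so $\alpha_0$ and $a_0$ are $\mathbb K$-interalgebraic; the same holds coordinate-by-coordinate (here for the pairs $(\alpha_0,\alpha_1)$ etc. one applies the argument to each entry, using that $ta_1$, $t+a_0$, ... are the images under $f$). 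Hence Diagram \ref{GCReductMultiplicative} is $\mathbb K$-interalgebraic with the standard rank $2$ group configuration \ref{gcFullStructureMultiplicative} for $\mathbb G_a \ltimes \mathbb G_m$, so all the incidences and independences of a rank $2$ group configuration hold when ranks are computed in $\mathbb K$.

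Next, since $\mathcal M$ is a reduct of $\mathbb K$, Morley rank in $\mathcal M$ is at least $\mathbb K$-dimension on these tuples, so it suffices to prove the reverse inequalities, i.e.\ that the required algebraic dependences along the lines of the diagram still hold in $\mathcal M$. I would treat the collinearity of $(\alpha_0,\alpha_1)$, $\kappa$, $\eta$ as the representative case, showing $\eta \in \acl_{\mathcal M}(\alpha_0,\alpha_1,\kappa)$. Set
\[
Z := (X_\tau \circ X_{\alpha_0})\cdot(X_{\alpha_1}\circ X_\kappa)\menosmult(X_\tau\circ X_{\alpha_1}),
\]
a $\mathcal M$-definable set over $\alpha_0,\alpha_1,\kappa$. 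If $X_\tau\circ X_\eta \cap Z$ is finite, then the first bullet of \Cref{lemmaIntersectionSuberVersionMultiplicative} (applied with $(a,r_0,r_1,r_2)=(\eta,\alpha_0,\alpha_1,\kappa)$, whose slope identity $tf(\eta)=tf(\alpha_0)+f(\alpha_1)f(\kappa)-tf(\alpha_1)$ is exactly the group-law relation read off Diagram \ref{gcFullStructureMultiplicative}) yields infinitely many $s\in Y$ with $|X_\tau\circ X_\eta\cap Z| < |X_\tau\circ X_s\cap Z|$; this defines an infinite $\mathcal M$-definable subset $E$ of $Y$ over $\alpha_0,\alpha_1,\kappa$, and by strong minimality of $Y$ its complement is finite, forcing $\eta\in\acl_{\mathcal M}(\alpha_0,\alpha_1,\kappa)$. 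If instead $X_\tau\circ X_\eta\cap Z$ is infinite, I would invoke the exact analogue of \Cref{genericFiniteIntersectionClaim}: the family $(\cl(t_a(Y)))_{a\in Y}$ has generic finite intersection (by \Cref{nonAffineIntersectsFiniteLemmaF}, $Y$ not $\mathcal M$-affine, and the decomposition \ref{decompY}), hence by \Cref{restaCompuestaFiniteIntersectionLemma} so does $(\cl(X_\tau\circ X_a))_{a\in Y}$ (here $X_a=t_a(Y)$ so the composition hypotheses on horizontal/vertical lines need a brief check, using that $Y$ has no infinitely many points on any such line after the Property $(\star)$ reduction); then \Cref{noGenericFiniteZLemma} shows $\eta$ is not $\mathbb K$-generic over $(\alpha_0,\alpha_1,\kappa)$, so $\{b\in Y: X_\tau\circ X_b\cap Z \text{ infinite}\}$ contains no $\mathbb K$-generic, is therefore finite, is $\mathcal M$-definable and contains $\eta$, giving again $\eta\in\acl_{\mathcal M}(\alpha_0,\alpha_1,\kappa)$.

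The remaining collinearities — namely $(\beta_0,\beta_1),\mu,(\gamma_0,\gamma_1)$ (or the symmetric groupings dictated by Diagram \ref{gcFullStructureMultiplicative}) — are handled identically, each time selecting the appropriate quadruple $(a,r_0,r_1,r_2)$ of coordinates of $\mathfrak g$ so that the relevant slope identity matches a bullet of \Cref{lemmaIntersectionSuberVersionMultiplicative}: those of the form ``$tf(a)=tf(r_0)+f(r_1)f(r_2)-tf(r_1)$'' use the first bullet with the set $Z(r_0,r_1,r_2)$, and those of the form ``$tf(a)=f(r_1)f(r_2)$'' use the second bullet with the set $X_{r_1}\circ X_{r_2}$. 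Once Diagram \ref{GCReductMultiplicative} is established as a rank $2$ group configuration for $\mathcal M$, it is in particular reduced after passing to the interalgebraic reduced configuration guaranteed by \Cref{groupConfig}, and then \Cref{fieldConfig} produces an infinite field interpretable in $\mathcal M$, completing the proof. The main obstacle I anticipate is purely bookkeeping: checking that for the simpler family $X_a = t_a(Y)$ (as opposed to the $X_{a,c}$ of the additive chapter) the hypotheses of \Cref{restaCompuestaFiniteIntersectionLemma} and \Cref{noGenericFiniteZLemma} about not containing infinitely many points on vertical/horizontal lines are genuinely met — this should follow from the Property $(\star)$ normalization of $Y$ and irreducibility of the $L_i$, but it must be verified carefully so that the generic-finite-intersection claim goes through verbatim.
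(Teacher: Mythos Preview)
Your proposal is correct and follows essentially the same route as the paper's own proof: establish $\mathbb K$-interalgebraicity of Diagram~\ref{GCReductMultiplicative} with Diagram~\ref{gcFullStructureMultiplicative} via condition~(\ref{tangentConditionsMult}) and the rationality of $\mathfrak d(L_1,-)$, then show the required $\mathcal M$-algebraic dependences by the two-case analysis (finite vs.\ infinite intersection with $Z(\alpha_0,\alpha_1,\kappa)$) using Lemma~\ref{lemmaIntersectionSuberVersionMultiplicative} and the generic-finite-intersection machinery of Claim~\ref{genericFiniteIntersectionClaim}. If anything, you are slightly more explicit than the paper, which in the infinite-intersection case simply writes ``proceed in the same fashion as in the proof of Theorem~\ref{thmAditiveVersion}'' and does not spell out the horizontal/vertical-line check that you flag as the one point requiring care.
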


\begin{claimproof}

As the tuples on Diagram \ref{GCReduct} satisfies conditions (\ref{tangentConditions}), by Lemma \ref{claimTangentFiniteLemma} one has that 
$$a_0\in \acl_{\mathbb K} (\alpha_0),$$ moreover 
$$1\geq \dim_{\mathbb K}(\alpha_0)=\dim_{\mathbb K} (a_0,\alpha_0)\geq \dim_{\mathbb K} (a_0)=1.$$

Then $\dim_{\mathbb K}(\alpha_0/a_0)=0$ so $ \alpha_0\in \acl_{\mathbb K}(a_0)$. Therefore, $a_0$ and $\alpha_0$ are $\mathbb K$-interalgebraic. The same is true with all the correspondent tuples in both diagrams, so we conclude that the Diagram \ref{GCReduct} is interalgebraic (in $\mathbb K$) with the Diagram \ref{gcFullStructure}.

Therefore, the rank computed on $\mathbb K$ of a tuple of elements of Diagram  \ref{GCReductMultiplicative} is the same as the rank of the correspondent tuple on Diagram \ref{gcFullStructureMultiplicative}.  As $\mathcal M$ is a reduct of $\mathbb K$ the rank of tuples computed on $\mathcal M$ is bigger or equal than the rank of tuples computed on $\mathbb K$. So for proving our claim, it is enough to prove that the rank computed on $\mathcal M$ does not increase. For doing so we have to prove that there are still algebraic relations on the tuples lying on the same line of the diagram. 

Let us prove that
$$\eta\in \acl_{\mathcal G}(\alpha_0,\alpha_1,\kappa)$$

Let  
$$Z(\alpha_0,\alpha_1,\kappa):=(X_\tau \circ X_{\alpha_0})\cdot (X_{\alpha_1}\circ X_\kappa)\menosmult(X_\tau \circ X_{\alpha_1}).$$

If $$X_\tau\circ X_\eta\cap Z(\alpha_0,\alpha_1,\kappa)$$ is infinite we proceed in the same fashion as in the proof of Theorem \ref{thmAditiveVersion}. So let us assume that it is not the case, then the algebraic relation follows from the first bullet on the conclusion of Lemma \ref{lemmaIntersectionSuberVersionMultiplicative} applied to $(a,r_0,r_1,r_2)=(\eta,\alpha_0,\alpha_1,\kappa)$, by the conclusion there are infinitely many $s\in Y$ such that:

$$| X_\tau\circ X_\eta\cap Z(\alpha_0,\alpha_1,\kappa)|<| X_\tau\circ X_s\cap Z(\alpha_0,\alpha_1,\kappa)|.$$

As $Z(\alpha_0,\alpha_1,\kappa)$ is a $\mathcal M$-definable set with parameters $\alpha_0,\alpha_1,\kappa$, the set

$$E:=\{s\in Y: | X_\tau\circ X_\eta\cap Z(\alpha_0,\alpha_1,\kappa)|<| X_\tau\circ X_s\cap Z(\alpha_0,\alpha_1,\kappa)|\}$$ is also $\mathcal M$-definable with parameters $\alpha_0,\alpha_1,\kappa$. As $E$ is infinite and $Y$ is strongly minimal, $Y\setminus E$  is finite and as $\eta\in Y\setminus E$ we conclude that 
$$\eta\in \acl_{\mathcal M}(\alpha_0,\alpha_1,\kappa).$$


 The rest of algebraic relations needed for the definition of group configuration are provided by either the first or the second bullet in the conclusion of Lemma \ref{lemmaIntersectionSubeVersion}. 
\end{claimproof}

 Then we use the group configuration provided by Claim \ref{claimGCVersionMultiplicative}, apply Fact \ref{fieldConfig} and conclude that there is an infinite field interpreted by $\mathcal M$. This finishes the proof of Proposition \ref{propMultiplicativeUnderAssumption}.
\end{proof}

\section{Proof of Theorem \ref{thmMultiplicativeVersion}}

Now we present the proof of Theorem \ref{thmMultiplicativeVersion}:

\begin{proof}(Proof of Theorem \ref{thmMultiplicativeVersion})

Let us assume that that $\mathbb K$ is $\lambda$-saturated for some big enough cardinal $\lambda$.

Let $\mathcal M=(M,\cdot,\ldots)$ as in the statement of Theorem \ref{thmMultiplicativeVersion}. Fix $X\subseteq M\times M$ strongly minimal and not $\mathcal M$-affine. By Lemma \ref{lemmaYStar} there is $Y\subseteq M\times M$ a $\mathcal M$-definable with parameters $\bar d$ that is strongly minimal and not $\mathcal M$-affine and there is also a finite number of polynomials $L_i$ and $L_{i,n_i}$ such that $Y$ has the Property $(\star)$ witnessed by $L_i$ and $L_{i,n_i}$. 

If $Y$ has infinitely many slopes at $(1,1)$ witnessed by $L_i$ and $L_{i,n_i}$ we apply Proposition \ref{propMultiplicativeUnderAssumption} and conclude. So we may assume that this is not the case, therefore we may apply Proposition \ref{existsGroupConfigurationMult}, let $a_0,b_0,b\in \mathbb G_a$ and $\mathfrak g=(\alpha,\beta,\gamma,\kappa, \eta,\mu)\in Y^6$ be as provided by the conclusion. 

Apply Theorem \ref{gcIsLocallyAdditiveThm} and find $G$ a $\mathcal M$-interpretable group that is locally isomorphic to a subgroup of $\mathbb G_a$. As $(G,\oplus)$ is $\mathcal M$-definable, if we denote by $\mathcal G=(G,\oplus, \ldots)$ the first order structure induced by $\mathcal M$ on $G$, then $\mathcal H$ is strongly minimal and non locally modular so satisfies the hypothesis of Theorem \ref{thmAditiveVersion}, so there is an infinite field interpretable in $\mathcal G$ and as $\mathcal G$ is interpretable in $\mathcal M$, a copy of the same infinite field is interpretable in $\mathcal M$.
\end{proof}

\newpage
\bibliographystyle{alpha}
\bibliography{biblio}
\end{document}